\newcommand\DMO[2]{\DeclareMathOperator{#1}{#2}}
\newcounter{enumitemp}
\newenvironment{enumeratecontinue}{
  \setcounter{enumitemp}{\value{enumi}}
  \begin{enumerate}
  \setcounter{enumi}{\value{enumitemp}}
}
{
  \end{enumerate}
}
\numberwithin{equation}{section}
\newcommand\pref[1]{(\ref{#1})}
\newcommand{\nb}[1]{#1\nobreakdash-}
\newcommand\ds\displaystyle
\theoremstyle{plain}
\newtheorem*{maintheorem}{Main Theorem}
\newtheorem{theorem}{Theorem}[section]
\newtheorem{proposition}[theorem]{Proposition}
\newtheorem{lemma}[theorem]{Lemma}
\newtheorem{sublemma}[theorem]{Sublemma}
\newtheorem{corollary}[theorem]{Corollary}
\theoremstyle{definition}
\newtheorem{definition}[theorem]{Definition}
\DeclareMathOperator{\Out}{Out}
\DeclareMathOperator{\Aut}{Aut}
\DeclareMathOperator{\rank}{rank}
\DeclareMathOperator{\Length}{Length}
\DeclareMathOperator{\Energy}{Energy}
\DeclareMathOperator{\Stab}{Stab}
\DeclareMathOperator\diam{diam}
\DeclareMathOperator\image{image}
\newcommand\reals{{\mathbf R}}
\newcommand\I{{\mathcal I}}
\newcommand{\bdy}{\partial}
\newcommand{\from}{\colon}
\newcommand\composed{\circ}
\newcommand\suchthat{\bigm|}
\newcommand\inv{{-1}}
\newcommand\union{\cup}
\newcommand\abs[1]{\left| #1 \right|}
\newcommand\intersect{\cap}
\newcommand\restrict{\bigm|}
\newcommand\subgroup{<}
\newcommand\cross{\times}
\newcommand\C{\mathcal C}
\newcommand\D{\mathcal D}
\newcommand\E{\mathcal E}
\newcommand\F{\mathcal F}
\newcommand\K{{\mathcal K}}
\newcommand\M{\mathcal M}
\newcommand\<\langle
\renewcommand\>\rangle
\newcommand\wh\widehat
\newcommand\disjunion\sqcup
\DeclareMathOperator\interior{int}
\DeclareMathOperator\Fr{Fr}
\newcommand\act\curvearrowright
\newcommand\X{\mathcal{X}}
\newcommand\CV\X
\newcommand\BookZero{\cite{BFH:laminations}}
\newcommand\BookOne{\cite{BFH:TitsOne}}
\newcommand\ti {\tilde}
\DMO\Core{Core}
\DMO\ACore{{\hat{\mathcal{C}}}}
\DMO\truss{truss}
\newcommand\wt\widetilde
\newcommand\FS{\mathcal{FS}}
\newcommand\collapsesto\succ
\newcommand\collapse\collapsesto
\newcommand\collapses\collapsesto
\newcommand\expandsto\prec
\newcommand\expand\expandsto
\newcommand\expands\expandsto
\newcommand\Hull{H}
\newcommand\indexemph[1]{\index{#1}\emph{#1}}
\title{The free splitting complex of a free group I:  Hyperbolicity}
\author{Michael Handel and Lee Mosher}
\begin{document}

\maketitle

Given a free group $F_n$ of finite rank $n \ge 2$, a \indexemph{free splitting} over $F_n$ is a minimal, simplicial action of the group $F_n$ on a simplicial tree $T$ such that the stabilizer of each edge of $T$ is the trivial subgroup of $F_n$. A free splitting is denoted $F_n \act T$, or just $T$ when the group and its action are understood. Although the tree $T$ is allowed to have vertices of valence~$2$, there is a unique \emph{natural cell structure} on $T$ the vertices of which are the points of valence~$\ge 3$. We say that $T$ is a \emph{$k$-edge free splitting} if $k$ is the number of natural edge orbits, a number which can take on any value from $1$ to $3n-3$. The equivalence relation amongst free splittings is \indexemph{conjugacy}, where two free splittings of $F_n$ are conjugate if there exists an $F_n$-equivariant homeomorphism between them. See the beginning of Section~\ref{SectionFreeSplittingComplex} for the details of these definitions.

The \emph{free splitting complex} of $F_n$, denoted $\FS(F_n)$, is a simplicial complex of dimension $3n-4$ having a simplex $\<T\>$ of dimension $k$ for each conjugacy class of $k+1$-edge free splittings $F_n \act T$. Given another free splitting $F_n \act S$, the simplex $\<S\>$ is a face of $\<T\>$ if and only if there is a \emph{collapse map}\index{map!collapse} $T \mapsto S$, which collapses to a point each edge in some $F$-invariant set of edges of $T$. We write $T \collapsesto S$ for the relation ``$T$ collapses to $S$'', and $S \expandsto T$ for the inverse relation ``$S$ expands to $T$''. There is a natural left action of the outer automorphism group $\Out(F_n)$ on $\FS(F_n)$, where $\phi \in \Out(F_n)$ acts on the conjugacy class of a free splitting $F_n \act T$ by precomposing the action by an automorphism of $F_n$ representing $\phi$. The free splitting complex was introduced by Hatcher in \cite{Hatcher:HomStability} in its role as the sphere complex of a connected sum of $n$ copies of the 3-manifold $S^2 \cross S^1$. A careful construction of an isomorphism between the \nb{1}skeletons of $\FS(F_n)$ and Hatcher's sphere complex can be found in \cite{AramayonSouto:FreeSplittings}, and that proof extends with little trouble to the entire complexes. In Section~\ref{SectionFSInTermsOfCollapsing} we shall give a rigorous construction of the free splitting complex given purely in tree language. 

The complex $\FS(F_n)$ is regarded as one of several $\Out(F_n)$ analogues of the curve complex of a surface --- another competing analogue is the free factor complex of $F_n$ introduced by Hatcher and Vogtmann in \cite{HatcherVogtmann:FreeFactors}. The analogies are imperfect in each case: Hatcher and Vogtmann showed that the free factor complex, like the curve complex, has the homotopy type of a wedge of spheres of constant dimension \cite{HatcherVogtmann:FreeFactors}; by contrast, Hatcher showed that $\FS(F_n)$ is contractible \cite{Hatcher:HomStability}. On the other hand we showed in \cite{HandelMosher:distortion} that simplex stabilizers of $\FS(F_n)$ are all undistorted subgroups of $\Out(F_n)$, just as simplex stabilizers of the curve complex of a surface are undistorted subgroups of its mapping class group; by constrast, we also showed that the simplex stabilizers of the free factor complex of $F$ are, most of them, distorted in $\Out(F_n)$.

\bigskip

Here is our main result, an analogue to the theorem of Masur and Minsky \cite{MasurMinsky:complex1} on the hyperbolicity of the curve complex:

\begin{maintheorem}
The free splitting complex $\FS(F_n)$, with its geodesic simplicial metric, is Gromov hyperbolic.
\end{maintheorem}

By comparison Bestvina and Feighn have proved that the free factor complex is Gromov hyperbolic \cite{BestvinaFeighn:FFCHyp}.

In rank $n=2$, the Main Theorem is well known, because the simplicial complex $\FS(F_2)$ contains the Farey graph as a coarsely dense subcomplex, and the Farey graph is quasi-isometric to an $\reals$-tree and is therefore Gromov hyperbolic (see e.g.\ Example~5.2 of \cite{Manning:pseudocharacters}).

One should contrast the Main Theorem with the result of Sabalka and Savchuk \cite{SabalkaSavchuk:NotHyperbolic} which says that the ``edge splitting graph'' of $F_n$ is not hyperbolic --- this is the \nb{1}dimensional subcomplex of $\FS(F_n)$ spanned by the $0$-simplices corresponding to those $1$-edge free splittings $F \act T$ that have $2$ vertex orbits. Their result has an analogue in a theorem of Schleimer \cite{Schleimer:CurveComplexNotes} that on a closed, oriented surface of genus~$\ge 3$, the subcomplex of the curve complex spanned by separating curves is not hyperbolic. 

\bigskip

In Part II of this work we shall determine the dynamics of the action of elements of $\Out(F_n)$ on $\FS(F_n)$, showing in particular that $\phi \in \Out(F_n)$ acts loxodromically on $\FS(F_n)$ if and only if, in the terminology and notation of \BookOne, there exists an element $\Lambda$ of the set $\mathcal{L}(\phi)$ of attracting laminations such that the free factor support of $\Lambda$ is the whole group $F_n$.

\bigskip

\subsection*{Outline of the proof} 

Outside of applying the hyperbolicity axioms of Masur and Minsky our methods of proof, although intricate, are mostly self contained, depending on basic tools from the theory of group actions on trees including Bass-Serre theory and Stallings folds. Beyond the methods there are important motivations coming from the proof of Masur and Minsky, in particular the definition of the projection maps that play a role in verifying the Masur--Minsky axioms.

\subparagraph{Section~\ref{SectionFreeSplittingComplex}.} We give the basic concepts underlying the construction of the free splitting complex $\FS(F_n)$, including definitions of collapse maps, and Lemma~\ref{LemmaFSSimplices} which contains the technical results about free splittings that are needed to verify that $\FS(F_n)$ is, indeed, a simplicial complex. The proof of that lemma is given in Section~\ref{SectionFSSimplicesProof}. Collapse maps are also needed to understand the first barycentric subdivision $\FS'(F_n)$, which is what we actually use in our proof of hyperbolicity. In brief, $\FS'(F_n)$ has a \emph{vertex} for each conjugacy class of free splitting $F \act T$, and an oriented edge for each collapse relation $T \collapsesto S$. Since the composition of two collapse maps is a collapse map, the collapse relation is transitive, from which it follows that each geodesic in the \nb{1}skeleton of $\FS'(F_n)$ is a ``zig-zag path'' that alternates between collapses and expansions.

\subparagraph{Sections~\ref{SectionFoldPaths} and~\ref{SectionMasurMinsky}.} Following Stallings method \cite{Stallings:folding} as extended by Bestvina and Feighn \cite{BestvinaFeighn:bounding}, we define a system of paths in $\FS'(F)$ called \emph{fold paths}. We also review the criterion for hyperbolicity due to Masur and Minsky \cite{MasurMinsky:complex1}, which is concerned with familes of paths and projection maps to those paths that satisfy certain axioms, which we refer to as the \emph{Coarse Retraction}, \emph{Coarse Lipschitz}, and \emph{Strong Projection} axioms. 

The first step of progress on the Main Theorem is the statement of Proposition~\ref{PropFoldContractions} which asserts the existence of a system of projection maps, one such map from the ambient space $\FS'(F_n)$ to each fold path, that satisfy the Masur-Minsky axioms.

\subparagraph{Section~\ref{SectionCombing}.} We introduce the concept of combing of fold paths. The combing process has as input a fold path $S_0 \mapsto\cdots\mapsto S_K$ plus a single edge in $\FS'(F_n)$ with one endpoint $S_K$ and opposite endpoint denoted $S'_K$, which can be either a collapse $S_K \collapsesto S'_K$ or an expand $S_K \expandsto S'_K$. The output is a fold path (roughly speaking) from some $S'_0$ to $S'_K$ which stays a uniformly bounded distance from the input path, and which has the following rather strong asynchronous fellow traveller property: every free splitting along the input fold path from $S_0$ to $S_K$ is connected by a single edge to some free splitting along the output path from $S'_0$ to $S'_K$. The result of the combing process is a \emph{combing rectangle}, the general form of which is depicted in Figure~\ref{FigureCombingRectangle}. These rectangles are certain commutative diagrams of fold maps and collapse maps that can be viewed as living in the \nb{1}skeleton of $\FS'(F_n)$. We use many such diagrams throughout the paper, both as formal tools and as visualization aids.

Section~\ref{SectionCombingRectangles} contains basic definitions and properties regarding combing rectangles. In this section we also take the next step of progress in the proof of the Main Theorem, by using combing to define the system of projections maps to fold paths, and we state Proposition~\ref{PropProjToFoldPath} which asserts that these particular projection maps satisfy the Mazur Minsky axioms. Section~\ref{SectionCombingConstructions} contains the statements and proofs of various useful constructions of combing rectangles.

\subparagraph{Section~\ref{SectionFSU}.} We introduce \emph{free splitting units} as a way of subdividing a fold path into subpaths each of which has uniformly bounded diameter in $\FS'(F_n)$ (see Proposition~\ref{LemmaUnitsLipschitz}) but which nevertheless measure progress through $\FS'(F_n)$ (as stated later in Proposition~\ref{PropFoldPathQuasis}). Section~\ref{SectionDiamBounds} contains important diameter bounds for subsegments of fold paths. Section~\ref{SectionFSUDefsPropsApps} uses these diameter bounds to formulate the definition of free splitting units. Once they are defined, we are able to use the diameter bounds to quickly verify the \emph{Coarse Retraction} axiom; see Proposition~\ref{PropCoarseRetract}. 

\subparagraph{Section~\ref{SectionMainProof}.} We verify the \emph{Coarse Lipschitz} and \emph{Strong Projection} axioms, completing the proof of the Main Theorem. In this section we also verify that when a fold path is parameterized by free splitting units it becomes a quasigeodesic in $\FS'(F_n)$; see Proposition~\ref{PropFoldPathQuasis}. See the beginning of Section~\ref{SectionMainProof} for a sketch of the proof of the Main Theorem.

\section{The free splitting complex}
\label{SectionFreeSplittingComplex}

We begin with some basic notations used throughout the paper. 

For the rest of the paper we shall fix a free group $F$ of finite rank~$\ge 2$. 

A \emph{graph} is a 1-dimensional simplicial complex equipped with the CW topology. A \emph{tree} $T$ is a contractible graph. \emph{Simplicial maps} between graphs and trees are maps taking each vertex to a vertex, and taking each edge to a vertex or to another edge preserving barycentric coordinates. We use $G \act T$ to denote an action of a group $G$ on~$T$, which by definition is a homomorphism $G \mapsto \Aut(T)$ from $G$ to the group of simplicial automorphisms of $T$. The action associates to each $\gamma \in G$ a simplicial automorphism of $T$ denoted $x \mapsto \gamma \cdot x$, a notation that extends to subsets of $T$ by $\gamma \cdot A = \{\gamma \cdot x \suchthat x \in A\}$. The \emph{stabilizer} of a subset $A \subset T$ is the subgroup $\Stab_T(A) = \{\gamma \in G \suchthat \gamma \cdot A = A\}$. Given two actions $G \act S,T$, a function $f \from S \to T$ is said to be \emph{equivariant} if $f(\gamma \cdot x) = \gamma \cdot f(x)$ for all $x \in S$, $\gamma \in G$. 

Given a set $A$ and a subset $B \subset A$ we denote the set theoretic complement as~$A-B$. Given a graph $X$ and a subgraph $Y \subset X$ we denote the graph theoretic complement as $X \setminus Y$, whose topological description is the closure of $X-Y$.

\subsection{Free splittings, maps, natural vertices and edges, edgelets}
\label{SectionBasic}
Recall from the introduction that a \emph{free splitting of $F$}\index{free splitting} is an action $F \act T$ where $T$ is a tree that is not a point, the action is \emph{minimal} meaning that there is no proper $F$-invariant subtree, and for every edge $e \subset T$ the subgroup $\Stab_T(e)$ is trivial. We use without comment the basic fact that every homeomorphism of a tree $T$ either fixes a point or translates along a properly embedded copy of $\reals$ called its \emph{axis}, and that minimality of an action $F \act T$ is equivalent to the statement that $T$ is the union of the axes of the elements of $F$ that have no fixed point in~$T$. We also use without comment the fact that every free splitting is cocompact, that is, there is a finite number of orbits of vertices and of edges; this follows from Bass-Serre theory \cite{ScottWall} combined with the fact that the rank of $F$ is finite.

Given a free splitting $F \act T$, from Bass-Serre theory \cite{ScottWall} it follows that the set of conjugacy classes in $F$ of nontrivial vertex stabilizers of $T$ forms a free factor system in the sense of \BookOne, which means that by appropriate choice of representatives $H_1 = \Stab_T(v_1),\ldots,H_k=\Stab_T(v_k)$ of each conjugacy class --- where $v_1,\ldots,v_k$ are the corresponding vertex orbit representatives --- there exists a free factorization of the form $F = H_1 * \cdots * H_k * B$, with $B$ possibly trivial. We refer to this free factor system as the \emph{vertex group system} of~$F \act T$, and denote it $\F(T)$. Notice that a free splitting $F \act T$ is properly discontinuous if and only if $\F(T) = \emptyset$, if and only if every vertex has finite valence.

\begin{definition}[Maps between free splittings]
Given free splittings $F \act S,T$, a \indexemph{map} from $S$ to $T$ is defined to be an $F$-equivariant simplicial map $f \from S \to T$.
\end{definition}
We will encounter several different kinds of maps, most commonly ``collapse maps'' defined in Section~\ref{SectionCollapseMaps}, ``foldable maps'' defined in Section~\ref{SectionFoldableMaps}, and ``folds'' defined in Section~\ref{SectionFoldFactorizations}. The category of maps will usually suffice for much of this paper, but we will occasionally have to consider more general equivariant continuous functions between free splittings, for example conjugacies.

We will sometimes emphasize the role of the action of $F$ by referring to a ``free splitting over $F$'' or a ``map over $F$'', and we shall use similar terminology for more complicated objects introduced later on that are built out of free splittings and maps over $F$.

Recall from the introduction that a \indexemph{conjugacy} between free splittings $F \act S,T$ is an equivariant homeomorphism between $S$ and $T$. A conjugacy \emph{need not be} a map as just defined, i.e.\ it need not take vertices to vertices or edges to edges, and even if it does it need not preserve barycentric coordinates. Notice that if one is given a map $f \from S \to T$ as just defined --- an equivariant simplicial map --- then $f$ is a conjugacy if and only if it is locally injective: for if $f$ is locally injective then it is evidently injective, and it is surjective by minimality of the action $F \act T$, and so $f$ is a simplicial isomorphism and hence a homeomorphism.

Given a free splitting $F \act T$, recall also from the introduction the \indexemph{natural cell structure} on $T$, a CW structure whose $0$-skeleton is the set of \emph{natural vertices} which are the vertices of valence~$\ge 3$. Implicit in the definition of the natural cell structure is the fact that each point of~$T$ which is not a natural vertex is contained in the interior of a unique \emph{natural edge}, which is an arc of $T$ each of whose endpoints is a natural vertex and none of whose interior points is a natural vertex. If this fact were not true then $T$ would contain a valence~$1$ vertex, violating minimality, or $T$ would contain arbitrarily long simplicial arcs with no natural vertices. In the latter case, by cocompactness it would follow that $T$ is homeomorphic to a line: but then either the action would be properly discontinuous implying that $F$ has rank~$1$ which is a contradiction; or the kernel of the action would be a free factor of corank~$1$, contradicting that edge stabilizers are trivial. We have also defined the notion of a $k$-edge free splitting $F \act T$ meaning that $T$ has $k$ orbits of natural edges; this notion is invariant under conjugacy. In terms of Bass-Serre theory \cite{ScottWall}, the number of orbits of natural vertices of a free splitting $F \act T$ equals the number of points in the quotient graph of groups $T/F$ which either have a nontrivial group or have valence~$\ge 3$. 

The word ``natural'' in this context refers to naturality in the category of free splittings and conjugacies: every conjugacy is an automorphism of the natural cell structure, and in particular preserves the numbers of orbits of natural vertices and edges. On this basis one might have wished to refer to a valence~$1$ vertex as ``natural'', were it not for the fact that $T$ has no vertices of valence~$1$, by virtue of minimality of the action $F \act T$.

\paragraph{Remark on terminology.} Outside of discussions involving natural cell structures and nonsimplicial conjugacies, we work primarily in the simplicial category: a free splitting $F \act T$ comes equipped with a simplicial structure on the tree $T$ which is invariant under the action of~$F$; maps between free splittings are $F$-equivariant simplicial maps. This will be particularly convenient when we encounter subcomplexes of the simplicial structure which are not subcomplexes of the natural cell structure, for example in the results of Sections~\ref{SectionPushingDownPeaks} and~\ref{SectionProofFSUContraction} where the heart of the proof of the Main Theorem resides.

For any free splitting $F \act T$, in order to distinguish between the natural edges of~$T$ and the edges of the given simplicial structure on $T$ we shall refer to the latter as the \emph{edgelets}\index{edgelet} of $T$. This word is meant to evoke the phenomenon that, fairly often, there are many, many, many edgelets in a single natural edge, and we often visualize the edgelets as being very, very, very tiny.

\subsection{Collapse maps.} 
\label{SectionCollapseMaps}
In order to define the free splitting complex of $F$ rigorously we need some preliminaries regarding collapse maps. 

Given two free splittings $F \act S,T$, a map $f \from S \to T$ is called a \emph{collapse map}\index{map!collapse}\index{collapse} if $f$ is injective over the interior of each edgelet of $T$. The \emph{collapsed subgraph} $\sigma \subset S$ is the $F$-equivariant subgraph which is the union of those edgelets of $F$ which are collapsed to a vertex by the map $f$. We put $\sigma$ into the notation by writing $f \from S \xrightarrow{[\sigma]} T$, the square brackets highlighting that $\sigma$ is the name of the collapsed graph, whereas the notation $S \xrightarrow{f} T$ tells us the name of the collapse map $f$ itself. Note that $\sigma \subset S$ is a \emph{proper} subgraph, meaning that $\sigma \ne S$. 

Here are some basic facts about collapse maps. Items~\pref{ItemCollapseComponents} and~\pref{ItemNondegIsComponent} will be used without mention throughout the paper. Item~\pref{ItemCollapseFrontierHull} will be needed for the proof of Proposition~\ref{PropCBE}.

\begin{lemma}\label{LemmaCollapseProps}
For any free splittings $F \act S,T$, any collapse map $f \from S \xrightarrow{[\sigma]} T$, and any vertex $v \in T$, the following hold:
\begin{enumerate}
\item \label{ItemCollapseComponents}
The subgraph $f^\inv(v)$ is connected.
\item \label{ItemNondegIsComponent}
$f^\inv(v)$ does not degenerate to a point if and only if it is a component of $\sigma$.
\item \label{ItemCollapseFrontierHull}
$f^\inv(v)$ is the convex hull of its frontier in $S$.
\end{enumerate}
\end{lemma}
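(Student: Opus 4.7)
My plan is to prove (1) and (2) simultaneously by establishing a canonical factorization of $f$, and then to prove (3) via a minimality argument in~$S$.

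For (1) and (2), I will show that $f$ factors as $S \xrightarrow{q} S/\sigma \xrightarrow{\bar f} T$, where $q$ identifies each connected component of $\sigma$ to a single point and $\bar f$ is a simplicial $F$-isomorphism. That $f$ factors through $q$ follows from continuity: $f$ sends $\sigma$ into the discrete vertex set of $T$, so it is constant on each component of~$\sigma$. The collapse-map hypothesis (unique edgelet preimage for each edgelet of $T$) immediately yields that $\bar f$ is a bijection on edges. For injectivity on vertices, if $\bar f$ identified two distinct vertices of the tree $S/\sigma$, then the embedded path between them would map to a nontrivial closed walk in $T$; such a walk must backtrack on some edgelet, forcing two distinct edges of $S/\sigma$ to map to the same edgelet of $T$, contradicting the edge bijection. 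Thus $\bar f$ is an isomorphism, so $f^{-1}(v) = q^{-1}(\bar f^{-1}(v))$ is the $q$-preimage of a single vertex of $S/\sigma$: either a single vertex of $S$ not in $\sigma$, or a full component of $\sigma$. This proves (1), and (2) follows since components of $\sigma$ are unions of edgelets and hence non-degenerate.

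For (3), the single-vertex case is immediate. Suppose instead $f^{-1}(v) = C$ is a component of~$\sigma$; the inclusion $\Hull_S(\partial C) \subseteq C$ is automatic since $C$ is itself a convex subtree of $S$ containing $\partial C$. For the reverse inclusion, I construct an $F$-invariant subtree and appeal to minimality. Define
\[
  S^* \;=\; (S \setminus \sigma) \;\cup\; \bigcup_{C' \text{ a component of } \sigma} \Hull_S(\partial C'),
\]
which is $F$-invariant because $F$ permutes components of $\sigma$ and the frontier and convex-hull operations are equivariant. To see $S^*$ is connected, take any path in $S$ between two points of~$S^*$; each maximal segment lying in a component $C'$ of $\sigma$ has both endpoints in $\partial C'$, and since $C'$ is itself a subtree of~$S$, the segment is the unique arc in $C'$ between them, which lies in $\Hull_S(\partial C')$. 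The remaining pieces lie in $S \setminus \sigma$, so the whole path is in~$S^*$. Being a connected $F$-invariant subgraph of the tree~$S$, $S^*$ is an $F$-invariant subtree, so by minimality of the action $F \act S$ we have $S^* = S$. Any edge $e$ of $C$ is therefore in~$S^*$; since $e \in \sigma$ it does not lie in $S \setminus \sigma$, hence $e \in \Hull_S(\partial C'')$ for some component $C''$ of~$\sigma$. That convex hull is contained in $C''$, and components of $\sigma$ are pairwise disjoint, so $C'' = C$ and $e \in \Hull_S(\partial C)$, as required.

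The main obstacle, I expect, is the connectedness of $S^*$ in the argument for (3): this is the step that actually invokes the structure of each component of~$\sigma$ as a subtree of~$S$, enabling segments within a component to be rewritten as arcs in the convex hull of the frontier. Everything else reduces to standard tree-theoretic arguments once the unique-preimage property in the definition of a collapse map is unpacked.
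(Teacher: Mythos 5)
Your proof is correct. For parts (\ref{ItemCollapseComponents}) and (\ref{ItemNondegIsComponent}) your factorization $S \xrightarrow{q} S/\sigma \xrightarrow{\bar f} T$ is a tidier packaging of the same core argument the paper uses: the paper takes two points of $f^\inv(v)$, observes that if the arc between them did not map to $v$ then its image would be a nondegenerate finite tree and the arc would have to carry two distinct edgelets onto the same edgelet of $T$, violating injectivity over edgelet interiors; your ``closed walk must backtrack'' step is exactly this fold. For part (\ref{ItemCollapseFrontierHull}), however, you take a genuinely different route. The paper argues locally: an edgelet $e \subset f^\inv(v)$ outside the hull forces one complementary half-tree $S_1$ of $e$ to contain no frontier point of $f^\inv(v)$, hence $S_1 \subset \sigma$; choosing $\gamma \in F$ with axis in $S_1$ and examining closest-point projections then pushes every point of $S$ into $S_1 \subset \sigma$, contradicting properness of $\sigma$. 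You instead build the global $F$-invariant subtree $S^* = (S\setminus\sigma) \cup \bigcup_{C'} \Hull_S(\partial C')$ and invoke minimality of $F \act S$ directly. Both arguments ultimately rest on minimality (the paper's axis exists because $S$ is a union of axes), but yours trades the closest-point computation for the convexity check that $S^*$ is connected, which you correctly identify as the crux and carry out correctly; it also proves the statement simultaneously for all components of $\sigma$ rather than one at a time. Two points you gloss over, both harmless: surjectivity of $f$ (needed for $\bar f$ to be a bijection rather than merely an injection on edges, and for $\bar f^\inv(v)$ to be nonempty) follows from minimality of $F \act T$; and the fact that $S/\sigma$ is again a simplicial tree is used without comment, though the paper itself relies on the same fact when constructing collapse maps from prescribed collapse graphs.
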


\begin{proof} Denote $\sigma_v = f^\inv(v)$. Given vertices $w_1 \ne w_2 \in \sigma_v$, if the segment $[w_1,w_2]$ does not map to $v$ then $f[w_1,w_2]$ is a nondegenerate finite tree and there must exist two edgelets in $[w_1,w_2]$ with the same image in that tree, contradicting the definition of a collapse~map; this proves that $\sigma_v$ is connected. If $\sigma_v$ is nondegenerate, i.e.\ if it contains an edgelet, then each of its edgelets being in $\sigma$ it follows by connectivity that $\sigma_v$ is a subset of $\sigma$. It is moreover a maximal connected subset of $\sigma$ --- a component of $\sigma$ --- because any edgelet of~$S$ incident to a vertex of $\sigma_v$ but not in $\sigma_v$ does not have constant image under $f$ and so is not contained in $\sigma$. This proves~\pref{ItemCollapseComponents} and~\pref{ItemNondegIsComponent}.

To prove \pref{ItemCollapseFrontierHull}, let $\Fr$ be the frontier of $\sigma_v$ in $S$ and let $\Hull \subset S$ be the convex hull of $\Fr$. By connectivity we have $\Hull \subset \sigma_v$. If the opposite conclusion did not hold then there would be an edgelet $e \subset \sigma_v \setminus \Hull$. Only one of its two complementary components $S \setminus e = S_0 \disjunion S_1$ can contain a point of $\Fr$, and so up to interchanging indices we have $\Hull \subset S_0$. Since $S_1$ is disjoint from $\Fr$ but contains the point $x = e \intersect S_1 \subset e \subset \sigma_v$, it follows that $S_1 \subset \sigma_v \subset \sigma$. The point $x$ is the unique frontier point of $S_1$. Choose $\gamma \in F$ having an axis $L$ contained in $S_1$. Let $z$ be the point of $L$ closest to $x$. For each $y \in S \setminus S_1$, $z$ is also the point of $L$ closest to $y$, and so $\gamma(z)$ is the point of $L$ closest to $\gamma(y)$. But $\gamma(z) \ne z$ and so $\gamma(y) \in S_1 \subset \sigma$, implying that $y \in \sigma$ and contradicting properness of $\sigma$.
\end{proof}

From Lemma~\ref{LemmaCollapseProps}~\pref{ItemCollapseComponents}, given a collapse map $f \from S \xrightarrow{[\sigma]} T$ it follows that $\sigma$ determines $T$ up to simplicial conjugacy, in that the map $S \mapsto T$ induces a simplicial isomorphism between $T$ and the quotient tree obtained from $S$ by collapsing each component of $\sigma$ to a point, and furthermore this simplicial isomorphism is $F$-equivariant. In this situation we often say that \emph{$T$ is obtained by collapsing $\sigma$}.

Furthermore, any choice of collapsed subgraph may be used, in the sense that for any free splitting $F \act S$ and any $F$-equivariant, proper subgraph $\sigma \subset S$ there exists a free splitting $T$ and a collapse map $S \xrightarrow{[\sigma]} T$. The tree $T$ is defined as the quotient of $S$ obtained by collapsing to a point each component of $\sigma$. Since $\sigma$ is proper, $T$ is not a point. Since $\sigma$ is equivariant, the action $F \act S$ descends to an action $F \act T$. This action is minimal because $T$ is a union of axes of elements of $F$: for each edge $e \subset T$ there exists a unique pre-image edge $e' \subset S$ such that $e'$ maps to $e$, and there exists $\gamma \in F$ whose axis in $S$ contains $e'$, so the axis of $\gamma$ in $T$ contains~$e$. The stabilizer of an edge $e \subset T$ equals the stabilizer of the pre-image edge and so is trivial. This shows that $F \act T$ is a free splitting, and by construction the quotient map $S \xrightarrow{[\sigma]} T$ is a collapse map.

The (nonsimplicial) conjugacy type of the collapsed tree actually depends only on the ``natural core'' of the collapsed subgraph. To be precise, given a free splitting $F \act S$ and a proper, $F$-equivariant subgraph $\sigma \subset S$, define the \emph{natural core}\index{natural core} of $\sigma$ to be the largest natural subcomplex of $S$ contained in $\sigma$ whose components are all nondegenerate. For any collapse maps $S \xrightarrow{[\sigma]} T$, $S \xrightarrow{[\sigma']} T'$, if $\sigma,\sigma'$ have the same natural core then there exists a conjugacy $T \to T'$, although this conjugacy need not be a simplicial map with respect to the given simplicial structures of $T,T'$.

\bigskip

Given free splittings $F \act S,T$, we say that $S$ \emph{collapses} to $T$\index{collapse} or that $T$ \emph{expands} to~$S$,\index{expansion} denoted $S \collapsesto T$ or $T \expandsto S$, if there exists a function $S \mapsto T$ which is a collapse map with respect to some simplicial subdivisions of the natural cell structures on $S$ and~$T$. These relations are well-defined on the conjugacy classes of $S,T$, indeed $S \collapsesto T$ if and only if there exist a function $S \mapsto T$ which is a collapse map with respect to the natural cell structures themselves. Even when it is known that $S \collapsesto T$, notice that there might not exist a collapse map $S \mapsto T$ without first changing the simplicial structures on $S$ and/or~$T$, for example if $T$ is subdivided so finely that it has more edgelet orbits than~$S$. The collapse and expand relations are transitive, e.g.\ if $S \collapsesto S' \collapsesto S''$ then $S \collapsesto S''$, for if $S \mapsto S' \mapsto S''$ are collapse maps of natural cell structures then the composition $S \mapsto S''$ is a collapse map of natural cell structures. 

In several places throughout the paper we use without comment the fact that every free splitting $F \act T$ has a \emph{properly discontinuous expansion} $T \expands S$, meaning that the free splitting $F \act S$ is properly discontinuous; see \cite{HandelMosher:distortion}, Section~3.2 for a proof, under the heading ``How to construct trees in $\K^T_n$'', Steps~1 and~2. When a properly discontinuous expansion $T \expands S$ is chosen, with collapse map $S \xrightarrow{[\sigma]} T$, the vertex group system of $T$ is represented in $S$ as the conjugacy classes of the stabilizers of the infinite components of~$\sigma$.

\subsection{The free splitting complex in terms of collapse maps.}
\label{SectionFSInTermsOfCollapsing}
The following result contains the technical facts needed to justify the construction of the simplicial complex $\FS(F)$. For any free splitting $F \act T$ and any proper $F$-invariant natural subgraph $\sigma \subset T$ let $T \xrightarrow{[\sigma]} T_\sigma$ be the corresponding collapse map, the quotient map obtained by collapsing to a point each component of $\sigma$. If $T$ is a $(K+1)$-edge free splitting then for each $k=0,\ldots,K$ let $\F_k(T)$ be the set of conjugacy classes of $(k+1)$-edge free splittings of the form $T_\sigma$, indexed by those natural subgraphs $\sigma \subset T$ that contain exactly $K-k$ natural edge orbits of $T$. There are exactly $\binom{K+1}{k+1}=\frac{(K+1)!}{(k+1)! (K-k)!}$ choices of such $\sigma$, although a priori one does not know where the cardinality of the set $\F_k(T)$ lies in the interval from $1$ to $\binom{K+1}{k+1}$, because one does not know whether collapsing two distinct $F$-invariant natural subgraphs results in nonconjugate free splittings. Furthermore one does not know a priori how the conjugacy class of $T$ depends on, say, the set $\F_0(T)$ of conjugacy classes of 1-edge collapses of~$T$. The following lemma resolves these issues as one might hope; the lemma will be proved in Section~\ref{SectionFSSimplicesProof}.

\begin{lemma}\label{LemmaFSSimplices} For any free splittings $F \act T,T'$ the following hold:
\begin{enumerate}
\item\label{ItemSimplexExists}
For any two $F$-equivariant natural subgraphs $\sigma_1,\sigma_2 \subset T$ we have $\sigma_1 = \sigma_2$ if and only if $T_{\sigma_1}$, $T_{\sigma_2}$ are conjugate.
\item\label{ItemSimplexUnique}
$\F_0(T)=\F_0(T')$ if and only if $T,T'$ are conjugate.
\end{enumerate}
\end{lemma}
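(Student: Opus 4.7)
For part (1), the only if direction is immediate since $\sigma_1 = \sigma_2$ makes $T_{\sigma_1}$ and $T_{\sigma_2}$ literally the same free splitting by construction. For the if direction, given an $F$-equivariant homeomorphism $\phi \from T_{\sigma_1} \to T_{\sigma_2}$, my plan is to identify $\sigma_i$ as the union of non-singleton fibers of the collapse $p_i \from T \to T_{\sigma_i}$ (which is justified by Lemma~\ref{LemmaCollapseProps}), and then argue that the two fiber partitions of $T$ must coincide. I would pass to the Bass--Serre quotients: the collapse maps $p_i$ descend to contractions $T/F \to T_{\sigma_i}/F$ of the marked graph of groups $T/F$ along the subgraphs $\sigma_i/F$, and $\phi$ descends to a marking-preserving isomorphism $T_{\sigma_1}/F \simeq T_{\sigma_2}/F$. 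Invoking rigidity --- that any marking-preserving self-equivalence of $T/F$ must be the identity, since centralizers of minimal actions of $F$ of rank $\ge 2$ are trivial --- forces the two contractions to coincide, giving $\sigma_1/F = \sigma_2/F$ and hence $\sigma_1 = \sigma_2$ by $F$-invariance.

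For part (2), the only if direction follows by applying any conjugacy $T \to T'$ to $F$-equivariant natural subgraphs: it induces a bijection between them that preserves conjugacy classes of the associated collapses. For the converse, assume $\F_0(T) = \F_0(T')$. Applying part (1) with $\sigma$ equal to the complement of each single natural edge orbit, the set $\F_0(T)$ is canonically parametrized by the natural edge orbits of $T$; in particular $T$ and $T'$ have the same number $K+1$ of natural edge orbits, and the one-edge collapse data match up pairwise. My plan is to reconstruct $T$ from this data as a common refinement: the diagonal $F$-equivariant map from $T$ into the product of representatives of $\F_0(T)$ has its minimal invariant subtree conjugate to $T$ itself, so the identical construction applied to $\F_0(T')$ yields $T'$, producing the required $F$-equivariant homeomorphism $T \cong T'$.

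The main obstacles I foresee are the following. In part (1), the rigidity claim that marking-preserving self-equivalences of $T/F$ are trivial needs the combinatorial rigidity of free-product decompositions together with minimality of the action; the descent of $\phi$ through the collapse must also be handled with care because $\phi$ need not be simplicial, so one may first want to replace $\phi$ by a simplicial representative in its conjugacy class before descending. In part (2), the principal difficulty is verifying that the common refinement genuinely recovers $T$ on the nose up to conjugacy, which comes down to showing that the diagonal embedding into the product is injective and that its minimal subtree carries exactly the $K+1$ natural edge orbits encoded in $\F_0(T)$; this relies on part (1) together with the collapse-map structure from Lemma~\ref{LemmaCollapseProps}, but the combinatorial bookkeeping may be intricate.
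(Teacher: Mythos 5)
Both substantive directions of your proposal have genuine gaps, and in each case the missing ingredient is the same: some conjugacy-invariant way of matching natural edges across different trees. The paper supplies this by assigning to each oriented natural edge $\eta$ of a free splitting a clopen decomposition $\bdy F = \C_-(\eta) \disjunion \C_+(\eta)$ (defined via a properly discontinuous expansion and shown to be independent of choices), and the whole lemma is then read off from the facts that distinct natural edges have distinct clopen pairs and that the pair is preserved under collapse maps. Your proposal has no substitute for this invariant.

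In part (1), the rigidity you invoke concerns self-equivalences of $T/F$, but no self-equivalence of $T$ or of $T/F$ is produced by the data: you have two collapse maps $T \to T_{\sigma_1}$, $T \to T_{\sigma_2}$ and a conjugacy $\phi \from T_{\sigma_1} \to T_{\sigma_2}$ between the \emph{targets}, and there is no map to compose these into a self-map of $T$ (a lift of $\phi$ to $T$ need not exist). What must be shown is that $\phi$ carries the image of each natural edge $e \subset T$ under the first collapse to the image of $e$ under the second; that is precisely the edge-matching problem, and "the two contractions coincide" is an assertion of the conclusion, not a consequence of rigidity. Moreover, the rigidity statement itself (self-conjugacies of a free splitting fix every natural vertex and edge) appears in the paper only as a \emph{corollary} of this lemma, proved via the clopen invariant, so invoking it here is circular absent an independent proof; "triviality of centralizers" is not one. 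In part (2), the reconstruction of $T$ as "the minimal invariant subtree of the product of representatives of $\F_0(T)$" is not a well-defined construction: the product of trees is not a tree, and it has no canonical minimal invariant subtree --- if there were two non-conjugate free splittings $T,T'$ with $\F_0(T)=\F_0(T')$, both would embed diagonally into that product, so the well-definedness of your reconstruction is equivalent to the statement you are trying to prove. The paper instead shows directly that the $F$-set of clopens $\C(T)=\union_\eta\{\C_-(\eta),\C_+(\eta)\}$ determines $T$: oriented natural edges correspond to the clopens themselves, unoriented edges to two-element partitions of $\bdy F$, and natural vertices (of finite or infinite valence) to certain partitions $\D \subset \C(T)$ characterized by a maximality condition with respect to the relation $\D \sqsubset \C$; recovering the vertices and the incidence relation is where the real work lies, and it is absent from your outline.
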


By applying item~\pref{ItemSimplexExists} of this lemma we may define a collapse $T \collapse U$ to be \emph{proper}\index{collapse!proper and improper} if it satisfies any of the following equivalent conditions: $U,T$ are not conjugate; \emph{for any} map $T \xrightarrow{[\sigma]} U$ which is a collapse map with respect to some subdivision of the natural cell structures, the natural core of $\sigma$ is nonempty. We also refer to the collapse maps of the latter type as \emph{proper collapse maps}. Notice that properness of a collapse relation $T \collapse U$ is also equivalent to the statement that \emph{there exists} a map $T \xrightarrow{[\sigma]} U$ which is a collapse map with respect to some subdivision of the natural structures, such that the natural core of $\sigma$ is nonempty. A collapse relation $T \collapse U$ which is not proper is~\emph{improper}.

Before proving this lemma we apply it to the construction of $\FS(F)$. From item~(1) it follows that we can associate an abstract $K$-simplex denoted $\<T\>$ to the conjugacy class of each $(K+1)$-edge free splitting $F \act T$, where the $k$-dimensional faces of $\<T\>$ are labelled by the conjugacy classes of those free splittings of the form~$T_\sigma$ such that $\sigma$ contains exactly $K-k$ natural edge orbits of $T$, and where $T_\sigma$ is a face of $T_{\sigma'}$ if and only if $\sigma' \subset \sigma$. We can then glue these simplices together, where for each collapse relation $T \collapses U$ the simplex $\<U\>$ is glued to the unique face of the simplex $\<T\>$ that is labelled by the conjugacy class of $U$ and where the gluing preserves the labelling of subfaces. From item~(2) it follows that the result of these gluings is a simplicial complex. We have proved:

\begin{corollary}
There exists a simplicial complex $\FS(F)$ whose $K$-simplices $\<T\>$ are in one-to-one correspondence with the conjugacy classes of $K+1$-edge free splittings $F \act T$, such that for any pair of simplices $\<T\>$, $\<U\>$ we have $\<U\> \subset \<T\>$ if and only if $U \expandsto T$.
\end{corollary}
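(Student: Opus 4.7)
The plan is to construct $\FS(F)$ as a quotient of the disjoint union of one abstract simplex per conjugacy class of free splitting, where the quotient identifications are dictated by the collapse relation, and then to use the two parts of Lemma~\ref{LemmaFSSimplices} to verify both that this quotient is a bona fide simplicial complex and that face-containment coincides with the expand relation.

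For each conjugacy class of $(K+1)$-edge free splitting $F \act T$, I introduce a single abstract $K$-simplex $\<T\>$ whose $k$-dimensional subfaces are labelled by the conjugacy classes in $\F_k(T)$: the $F$-equivariant natural subgraph $\sigma \subset T$ with $K-k$ natural edge orbits contributes the face labelled by $T_\sigma$, and a containment $\sigma' \subset \sigma$ translates into the face $T_\sigma$ being a subface of $T_{\sigma'}$, since $T_{\sigma'}$ further collapses to $T_\sigma$. Item~\pref{ItemSimplexExists} of Lemma~\ref{LemmaFSSimplices} is exactly what makes this labelling both well-defined and injective: distinct equivariant natural subgraphs yield nonconjugate collapses, so $\F_k(T)$ realises all $\binom{K+1}{k+1}$ faces exactly once, and $\<T\>$ acquires the correct combinatorial structure of a $K$-simplex with its full complement of subfaces.

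Next, form the disjoint union $\bigsqcup_T \<T\>$ over conjugacy classes and identify, for each collapse $T \collapsesto U$, the simplex $\<U\>$ with the face of $\<T\>$ labelled by the conjugacy class of $U$, via the unique label-preserving bijection of subfaces. The key point that this yields a simplicial complex (rather than something weaker like a $\Delta$-complex with distinct simplices on the same vertex set) is item~\pref{ItemSimplexUnique}: after identification, the vertex set of $\<T\>$ equals $\F_0(T)$, so two distinct simplices with the same vertex set would force $\F_0(T)=\F_0(T')$ with $T,T'$ nonconjugate, which item~\pref{ItemSimplexUnique} forbids. The equivalence $\<U\> \subset \<T\> \iff U \expandsto T$ then follows: the backward direction is built into the gluing, while the forward direction identifies any subsimplex of $\<T\>$ with some $T_\sigma$, which by construction satisfies $T_\sigma \expandsto T$.

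The main obstacle I anticipate is checking consistency of the gluings across composed collapses — for instance, verifying that if $T \collapsesto V \collapsesto U$ then the identification of $\<U\>$ as a face of $\<T\>$ is the same whether performed directly along $T \collapsesto U$ or in two stages through $\<V\>$. This should reduce, via transitivity of collapse maps on natural cell structures together with item~\pref{ItemSimplexExists}, to the assertion that both routes identify $\<U\>$ via the same $F$-equivariant natural subgraph of $T$, namely the union of those natural edges of $T$ that collapse in the composed map; once this coherence is in place, the two bijections of labelled subfaces agree on the nose, and the quotient structure is unambiguous.
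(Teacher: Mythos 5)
Your proposal is correct and follows essentially the same route as the paper: item~(1) of Lemma~\ref{LemmaFSSimplices} is used to build each abstract $K$-simplex $\<T\>$ with faces labelled by the conjugacy classes $T_\sigma$, the simplices are glued along collapse relations via label-preserving identifications, and item~(2) is invoked exactly as you describe to rule out distinct simplices sharing a vertex set, so that the quotient is a genuine simplicial complex. The coherence of gluings under composed collapses that you flag is handled in the paper by the same observation you make, namely transitivity of the collapse relation on natural cell structures together with item~(1).
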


The alternate and more well known approach to this corollary is to appeal to Hatcher's construction of the sphere complex \cite{Hatcher:HomStability}; see for example Aramayona--Souto \cite{AramayonSouto:FreeSplittings} which constructs the 1-skeleton of $\FS(F)$ in this manner.

The dimension of $\FS(F)$ equals $3 \cdot \rank(F) - 4$, the number $3 \cdot \rank(F)-3$ being the maximum number of natural edge orbits of a free splitting $F \act T$, the maximum occuring if and only if every natural vertex of $T$ has valence~$3$ (which implies that the action $F \act T$ is properly discontinuous).

We usually work with the first barycentric subdivision of $\FS(F)$, denoted $\FS'(F)$. Gromov hyperbolicity of $\FS(F)$ and $\FS'(F)$ are equivalent because, as with any connected simplicial complex, the identity map is a quasi-isometry between their geodesic simplicial metrics (connectivity follows from Hatcher's proof of contractibility \cite{Hatcher:HomStability}, or from the construction of Stallings fold paths reviewed in Section~\ref{SectionFoldPaths}). The simplicial complex $\FS'(F)$ has one $0$-simplex associated to each conjugacy class of free splittings, and it has a $k$-simplex associated to each sequence of conjugacy classes of free splittings obtained from any chain of $k$ proper expansions $T_0 \expandsto T_1 \expandsto \cdots \expandsto T_k$. In particular, an edge in $\FS'(F)$ oriented from $S$ to $T$ can be written uniquely as either an expand $S \expandsto T$ or a collapse $S \collapsesto T$; uniqueness follows from asymmetry of the collapse relation, which is a consequence of Lemma~\ref{LemmaFSSimplices}~\pref{ItemSimplexExists}. 

As mentioned earlier, the relations of collapse and expand are transitive. It follows that every geodesic in the one-skeleton of $\FS'(F)$ can be written as an alternating sequence of expands and collapses, for example starting with an expand $T_0 \expand T_1 \collapse T_2 \expand T_3 \collapse T_4 \expand T_5 \collapse \cdots$ or starting with a collapse $T_0 \collapse T_1 \expand T_2 \collapse T_3 \expand T_4 \collapse T_5 \expand \cdots$. Any edge path in $\FS'(F)$ that alternates between expands and collapses is called a \emph{zig-zag path}\index{zig-zag} in $\FS'(F)$.

Throughout the paper, given free splittings $F \act S,T$, we use the notation $d(S,T)$ to denote the length of the shortest edge path in the simplicial complex $\FS'(F)$ between the vertices represented by $S$ and $T$. We must prove that this metric is Gromov hyperbolic.

\subsection{Proof of Lemma \ref{LemmaFSSimplices}}
\label{SectionFSSimplicesProof}
While the proof is surely standard, we are not aware of any proof in the literature, so we provide the details.

To each free splitting $F \act S$ and each oriented natural edge $\eta \subset S$ we associate a clopen decomposition $\bdy F = \C_-(\eta) \disjunion \C_+(\eta)$ as follows. Choose a proper expansion $S \expandsto R$ with collapse map $f \from R \to S$. Let $\eta_R \subset R$ be the unique oriented natural edge that maps to $\eta$ under the collapse $R \mapsto S$. The subgraph $R \setminus \eta_R$ has two components, incident to initial and terminal vertices of $\eta_R$, whose end spaces are $\C_-(\eta), \C_+(\eta) \subset \bdy F$, respectively. If one chooses any other proper expansion $S \expandsto R'$ with oriented natural edge $\eta_{R'}$ mapping to $\eta$, then as shown in \cite{HandelMosher:distortion} Lemma~17 there exists a sequence of collapses and expansions $R=R_0,\ldots,R_K=R'$ and oriented natural edges $\eta_R=\eta_{R_0} \subset R_0, \eta_{R_1} \subset R_1, \ldots, \eta_{R'}=\eta_{R_K} \subset R_K$ such that for each $k=1,\ldots,K$ the edges $\eta_{k-1}$, $\eta_k$ correspond to each other under the collapse map between $R_{k-1}$ and $R_k$ (whichever direction that map goes). It immediately follows that $\C_-(\eta_k)$, $\C_+(\eta_k)$ are each constant along this sequence. This shows that $\C_-(\eta),\C_+(\eta)$ are both well-defined independent of the choice of $R$. Denote the unordered pair by $\C(\eta)=\{\C_-(\eta),\C_+(\eta)\}$.

Note that for each $\gamma \in F$ and $\eta \subset S$ we have $\C(\gamma \cdot \eta) = \gamma \cdot \C(\eta)$. Also, given natural edges $\eta \ne \eta' \subset S$ we have $\C(\eta) \ne \C(\eta')$: for the proof we may assume $S$ is proper, so $S \setminus (\eta \union \eta')$ has three components, each infinite; choosing a ray in each we see that three of the four sets $\C_-(\eta) \intersect \C_-(\eta')$, \, $\C_-(\eta) \intersect \C_+(\eta')$, \, $\C_+(\eta) \intersect \C_-(\eta')$, \, $\C_+(\eta) \intersect \C_+(\eta')$ are nonempty, and so $\C(\eta) \ne \C(\eta')$. Also, for any collapse $S \xrightarrow{g} T$ and any edges $\eta_S \subset S$, $\eta_T \subset T$ such that $g(\eta_S)=\eta_T$, we have $\C(\eta_S) = \C(\eta_T)$, for in defining $\C(\eta_S)$ we can choose any proper expansion with collapse $R \xrightarrow{f} S$, in defining $\C(\eta_T)$ we can choose the same $R$ with collapse $R \xrightarrow{f} S \xrightarrow{g} T$, and one sees that the same edge of $R$ maps to $\eta_S$ and to $\eta_T$ under these collapse maps.

Consider now $T$, $T_{\sigma_1}$, and $T_{\sigma_2}$ as in~\pref{ItemSimplexExists} and suppose there exists a conjugacy $T_{\sigma_1} \to T_{\sigma_2}$, inducing a bijection of natural edges. If $e_i \subset T_{\sigma_i}$ $(i=1,2)$ correspond under this bijection, pull back under the collapse maps $T \to T_{\sigma_i}$ to obtain natural edges $e'_i \subset T$. From the previous paragraph it follows that $\C(e'_1) = \C(e_1) = \C(e_2) = \C(e'_2)$ which implies that $e'_1=e'_2$. Thus, a natural edge of $T$ is collapsed by $T \mapsto T_{\sigma_1}$ if and only if it is collapsed by $T \mapsto T_{\sigma_2}$, which implies that $\sigma_1=\sigma_2$. This proves~\pref{ItemSimplexExists}.

\bigskip

To prove \pref{ItemSimplexUnique}, given a free splitting $F \act T$, let $\C(T) = \union_{\eta\subset T} \{\C_-(\eta), \C_+(\eta)\}$ taken over all oriented natural edges $\eta \subset T$. The set $\C(T)$ is an $F$-invariant set of clopens in $\bdy F$ depending only on the conjugacy class of $T$. Since $\C(T) = \union_{T' \in \F_0(T)} \C(T')$, it follows that $\F_0(T)$ determines $\C(T)$, and so it suffices to show that $\C(T)$ determines the conjugacy class of $T$. The set $\C(T)$ does determine the oriented edges of $T$, which are in bijective, $F$-equivariant correspondence with $\C(T)$ itself via $\eta \leftrightarrow \C_+(\eta)$. Also, the unoriented edges of $T$ are in bijective, $F$-equivariant correspondence with subsets of $\C(T)$ of cardinality~$2$ which are partitions of $\bdy F$. It remains to show that $\C(T)$ also determines the vertices of $T$ and the ``initial vertex'' relation between oriented edges and vertices. 

Associated to each natural vertex $v \in T$ there is a subset $\D(v) \subset \C(T)$ consisting of all $\C_+(\eta) \in \C(T)$ such that $v$ is the initial vertex of $\eta$. If we can show that $\C(T)$ determines the collection $\{\D(v) \suchthat \text{$v$ is a natural vertex of $T$}\}$ then we will be done, because the initial vertex relation is then also determined: $v$ is an initial vertex of $\eta$ if and only if $\C_+(\eta) \in \D(v)$. Noting that the valence of $v$ equals the cardinality of $\D(v)$, we show first that $\C(T)$ determines the finite cardinality sets~$\D(v)$.

Define a relation on the set of subsets of $\C(T)$: given two subsets $\D,\C \subset \C(T)$ we write $\D \sqsubset \C$ if for every $D \in D$ there exists $C \in \C$ such that $D \subset C$.

If $v \in T$ is a natural vertex of finite valence then $\D(v)$ is a partition of $\bdy F$ of finite cardinality $\ge 3$. Furthermore, for every cardinality~2 subset $\C \subset \C(T)$ which is a partition of $\bdy F$ --- i.e.\ every subset of the form $\C=\{\C_-(\eta),\C_+(\eta)\}$ for some oriented natural edge $\eta \subset T$ --- if $\D(v) \sqsubset \C$ then there exists $D \in \D(v)$ and $C \in \C$ such that $D=C$. 

We claim that the converse holds: suppose $\D \subset \C(T)$ is a partition of $\bdy F$ of finite cardinality $\ge 3$, and suppose that $\D$ satisfies the property that for every $\C \subset \C(T)$ of cardinality~$2$ which is a partition of $\bdy F$, if $\D \sqsubset \C$ then there exists $D \in \D$ and $C \in \C$ such that $D=C$; then it follows that there exists a natural vertex $v \in T$ such that $\D=\D(v)$. To prove this claim, write $\D = \{\C_+(\eta_i)\}_{i \in I}$ for some finite set~$I$. Note that if $i \ne j \in I$ then $\eta_i,\eta_j$ have disjoint interiors, because otherwise they are opposite orientations of the same edge $\eta$ and $\D=\{\C_+(\eta),\C_-(\eta)\}$, contradicting that $\D$ has cardinality $\ge 3$. Also, if $i \ne j$ then the shortest path in~$T$ intersecting both of $\eta_i,\eta_j$ intersects them in their respective initial vertices, because $\C_+(\eta_i) \intersect \C_+(\eta_j) = \emptyset$. It follows that $T - \union_{i \in I} \interior(\eta_i)$ has a component $\tau$ that intersects each $\eta_i$ in its initial endpoint. If $\eta$ is any oriented natural edge such that $\tau\intersect \eta$ is the initial vertex of $\eta$ then $\C_+(\eta) \in \D$, for otherwise $\C_+(\eta) \subset \bdy F - \union \D$, contradicting that $\D$ is a partition of $\bdy F$. It follows that $\{\eta_i\}$ is precisely the set of oriented edges not in $\tau$ but with initial vertex in $\tau$. Suppose that  $\tau$ is a nondegenerate tree. If $\tau$ has finite diameter, pick any natural edge $\eta \subset \tau$, and note that $\D \sqsubset \C(\eta)$ but there does not exist any $D \in \D$ and $C \in \C(\eta)$ for which $D=C$, a contradiction. If $\tau$ has infinite diameter, any ray in $\tau$ determines an element of $\bdy F - \union \D$, contradicting that $\D$ partitions $\bdy F$. It follows that $\tau$ is a degenerate tree, a natural vertex $v \in T$ of cardinality $\ge 3$, and that $D=\D(v)$, proving the claim.

To summarize, the natural, finite valence vertices of $T$ are determined by $\C(T)$ in the following manner: they are in $F$-equivariant bijective correspondence, via the correspondence $v \leftrightarrow \D(v)$, with the subsets $\D \subset \C(T)$ which are partitions of $\bdy F$ of finite cardinality $\ge 3$, having the property that for every every two-element subset $\E \subset \C(T)$ which is a partition of $\bdy F$, if $\D \sqsubset \E$ then there exists $D \in \D$ and $E \in \E$ such that $D=E$.

It remains to describe a similar scheme by which $\C(T)$ determines the infinite valence vertices of~$T$. If $v \in T$ is a natural vertex of infinite valence then $\D(v)$ is an infinite partition of $\bdy F - \bdy \Stab_T(v)$, it is invariant under the action of the free factor $\Stab_T(v) \subgroup F$ on $\bdy F$, and it has the property that for any cardinality~$2$ subset $\C \subset \C(T)$ which is a partition of $\bdy F$, if $\D(v) \sqsubset \C$ then there exists $D \in \D(v)$ and $C \in \C$ such that $D=C$. Conversely, let $\D \subset \C(T)$ be an infinite subset for which there exists a proper, nontrivial free factor $A \subgroup F$ such that $\D$ is a clopen partition of $\bdy F - \bdy A$ and $\D$ is invariant under the action of $A$ on $\C(T)$, and for any cardinality~$2$ subset $\E \subset \C(T)$ which is a partition of $\bdy F$, if $\D \sqsubset \E$ then there exists $D \in \D$ and $E \in \E$ such that $D=E$. Under these conditions we must prove that there exists a vertex $v \in A$ such that $A = \Stab_T(v)$ and $\D=\D(v)$. Just as in the finite valence case, writing $\D = \{\C_+(\eta_i)\}_{i \in I}$ where the index set $I$ is now infinite, there is a component $\tau$ of $T - \union_{i \in I} \interior(\eta_i)$ that intersects each $\eta_i$ in its initial endpoint. Since $\D$ is $A$-invariant, the collection $\{\eta_i\}_{i \in I}$ is also $A$-invariant, and so $\tau$ is $A$-invariant. The set $\{\eta_i\}_{i \in I}$ is precisely the set of oriented natural edges not in $\tau$ but with initial vertex in $\tau$, for if $\eta$ is an oriented natural edge such that $\tau\intersect \eta$ is the initial vertex of $\eta$ then $\C_+(\eta) \subset \bdy F - \bdy A$, and if $\C_+(\eta) \not\in \D$ then $\C_+(\eta) \subset (\bdy F - \bdy A) - \union \D$, contradicting that $\D$ is a partition of $\bdy F - \bdy A$. 

If $\tau$ is nondegenerate and of finite diameter then we obtain the same contradiction as in the case where $\D$ is finite. Suppose $\tau$ is nondegenerate and of infinite diameter. The action of the free factor $A$ on $T$ has a unique, minimal invariant subtree $T^A$, and so $T^A \subset \tau$. If $T^A$ is nondegenerate then for any edge $\eta \subset T^A$ we have $\D \sqsubset \C_\pm(\eta)$ but no $D \in \D$ equals any $C \in \C(\eta)$, a contradiction. The tree $T^A$ is therefore degenerate, $T^A = \{v\}$ where $v \in T$ is the unique vertex for which $\Stab_T(v)=A$. Any ray in $\tau$ therefore defines an element of $\bdy F - \bdy A$, but the element defined is not in $\union\D$, a contradiction. It follows that $\tau$ must be degenerate, $\tau=\{v\}$ and $\D=\D(v)$ for some natural vertex $v \in T$, and the proof of Lemma \ref{LemmaFSSimplices} is~complete.

\subparagraph{Remark.} For any free splitting $F \act S$, any self-conjugacy $f \from S \to S$ restricts to the identity map on the vertex set of $S$, because $f$ maps each natural edge $\eta \subset S$ to itself preserving orientation. This is true because, as shown at the beginning of the proof of the corollary, if $\eta \ne \eta' \subset S$ are natural edges then $\C(\eta) \ne \C(\eta')$, and $\C_-(\eta) \ne \C_+(\eta)$.

\section{Fold paths}
\label{SectionFoldPaths}
We define the class of fold paths between vertices of $\FS'(F)$, using a method pioneered by Stallings \cite{Stallings:folding} for factoring maps of graphs into products of folds. This method was extended to the category of group actions on trees by Bestvina and Feighn \cite{BestvinaFeighn:bounding}. We refer to the latter paper for some details, although these details are considerably simplified in the category of free splittings.

\subsection{Directions, gates, and foldable maps} 
\label{SectionFoldableMaps}
First we set up some of the basic definitions which are used throughout the paper. We will also prove a tree-theoretic version of the First Derivative Test, Lemma~\ref{LemmaFDT}.

Given any graph $X$ and a vertex $v \in X$, the set of \emph{directions of $X$ at $v$},\index{direction} denoted $D_v X$, is defined to be the set of germs of oriented arcs in $X$ with initial vertex~$v$. Each direction at $v$ is uniquely represented by an oriented edgelet with initial vertex $v$. The union of the sets $D_v X$ over all vertices $v \in X$ is denoted $DX$. Given a subgraph $Y \subset X$, the subset of $DX$ represented by oriented edgelets $e \subset X \setminus Y$ having initial vertex in $Y$ is denoted $D_Y X$.

Given two free splittings $F \act S,T$ and a map $f \from S \to T$, the \emph{derivative}\index{derivative} of $f$ is a partially defined map $df \from DS \to DT$ whose domain is the set of directions of oriented edgelets $e$ on which $f$ is nonconstant, and whose value on the direction of $e$ is the direction of the oriented edgelet $f(e)$. Given a subgraph $W \subset S$, if $f$ is nonconstant on each edgelet representing a direction in the set $D_W S$ then we obtain by restriction a map $d_W f \from D_W S \to DT$; as a special case, when $W=\{v\}$ is a vertex we obtain a map $d_v f \from D_v S \to D_{f(v)} T$. 

Suppose now that the map $f \from S \to T$ is nonconstant on all edgelets of $S$, so $df \from DS \to DT$ has full domain of definition. For each vertex $v \in S$ the set $D_v S$ partitions into \emph{gates}\index{gate} which are the nonempty subsets of the form $(d_v f)^\inv(\delta)$ for $\delta \in D_{f(v)} T$. Every gate is a finite set, indeed we have:

\begin{lemma}\label{LemmaGateBound}
For any free splittings $F \act S,T$, for any map $f \from S \to T$ which is nonconstant on each edgelet of $S$, and for any vertex $v \in S$, the cardinality of each gate of $D_v S$ is $\le 2 \rank(F)$.
\end{lemma}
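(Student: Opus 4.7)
My plan is a Stallings-fold argument combined with a rank computation. Set up notation: let the gate $\mathcal{G}$ at $v$ have $k$ directions $d_1, \ldots, d_k$, represented by edgelets $e_1, \ldots, e_k$ at $v$ with terminal endpoints $v_1, \ldots, v_k$, all mapped by $f$ to a common edgelet $e_\delta \subset T$ with other endpoint $w$, so $f(v_i) = w$ for each $i$.

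First I would establish two structural facts, both exploiting triviality of edge stabilizers. The oriented edgelets $e_1, \ldots, e_k$ lie in pairwise distinct $F$-orbits: any $\gamma \in F$ with $\gamma \cdot e_i = e_j$ as oriented edges at $v$ fixes $v$, so lies in $F_v \subseteq F_{f(v)}$, and then $\gamma \cdot \delta = \delta$; since $F_{f(v)}$ acts freely on $D_{f(v)} T$ (edge stabilizers being trivial), we conclude $\gamma = 1$ and $i = j$. Moreover, the vertex stabilizers $F_{v_1}, \ldots, F_{v_k}$ (and $F_v$) have pairwise trivial intersections: any common element of $F_{v_i} \cap F_{v_j}$ would fix both $v_i$ and $v_j$, hence the unique segment between them in $S$, which passes through $v$ and contains the edgelets $e_i, e_j$, forcing the element to be trivial.

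Next I would descend to the quotient graph of groups $G = F \backslash S$. The edges $\bar e_i$ at $\bar v$ in $G$ are distinct and all map under the induced $\bar f \colon G \to H = F \backslash T$ to the single edge $\bar e_\delta$ at $\bar f(v)$. I would then perform $k-1$ Stallings folds at $\bar v$, folding the $\bar e_i$ pairwise. Letting $m$ denote the number of distinct $F$-orbits among $\{v_1, \ldots, v_k\}$, these folds split into $k-m$ ``Type A'' folds (when the two terminal vertices lie in the same $F$-orbit, each adding a generator to a vertex group and decreasing by $1$ the rank of the fundamental group of the underlying graph of $G$) and $m-1$ ``Type B'' folds (when the terminal vertices lie in distinct orbits, merging two vertex orbits while preserving $\pi_1$-rank). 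Since this $\pi_1$-rank starts at most $\rank F$ and remains nonnegative, we obtain $k - m \le \rank F$. The dual bound $m \le \rank F$ comes from a parallel analysis at the vertex $w \in T$: the $m$ distinct $F$-orbits of the $v_i$'s correspond to distinct $F_w$-orbits in $f^{-1}(w)$ whose associated stabilizers form a pairwise-disjoint subsystem of $F_w$, and a Bass-Serre rank count for the induced structure gives $m \le \rank F_w \le \rank F$. Combining yields $k \le 2\rank F$.

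The main obstacle I anticipate is justifying the dual bound $m \le \rank F$ cleanly: the direct fold-rank argument applies at $\bar v$ in $G$ but not obviously to the preimage structure over $\bar w$, and the relevant edges at the distinct $v_i$ sit at different vertices rather than at a common vertex. The most likely resolution is either a parallel fold argument centered at $w$ or invocation of Bestvina-Feighn's bounds on the complexity of simplicial $F$-tree actions. A fallback strategy would be to pass to a properly discontinuous expansion $T' \expandsto T$, where every vertex has valence at most $2\rank F$, and to control how the gate at $v$ embeds into a gate for a suitable lift $\tilde f \colon S \to T'$; however, obstructions from nontrivial vertex stabilizers in $T$ make a direct equivariant simplicial lift delicate and would require careful subdivision.
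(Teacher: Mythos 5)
Your opening reduction is sound and coincides with the paper's: the edgelets of a single gate lie in pairwise distinct $F$-orbits (a nontrivial element carrying one to another would stabilize their common image edgelet in $T$), so the problem becomes bounding the valence of $\bar v$ in the quotient graph of groups $S/F$; and your count of $k-m$ rank-consuming folds, giving $k-m \le \rank(F)$, is correct. The gap is the ``dual bound'' $m \le \rank(F)$: it is false, and the justification you sketch collapses exactly in the generic case where the stabilizers $F_{v_i}$ are trivial, since then the ``pairwise-disjoint subsystem of $F_w$'' consists of trivial groups and the Kurosh/Bass--Serre count yields nothing (indeed $F_w$ itself may be trivial). Concretely, let $F=F_2$ and let $G$ be the graph with vertices $\bar v,\bar w_1,\bar w_2,\bar w_3$ and edges $\bar v\bar w_1$, $\bar v\bar w_2$, $\bar v\bar w_3$, $\bar w_1\bar w_2$, $\bar w_2\bar w_3$, so that $b_1(G)=2$ and every vertex has valence $\ge 2$; let $S$ be its universal cover with the free deck action of $F_2$. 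Map $G$ to a theta graph $\theta$ (vertices $x,y$, edges $\alpha,\beta,\gamma$ from $x$ to $y$) by $\bar v\mapsto x$, $\bar w_i\mapsto y$, each edge $\bar v\bar w_i\mapsto\alpha$, and the once-subdivided edges $\bar w_1\bar w_2$, $\bar w_2\bar w_3$ to the edge paths $\beta^{-1}\alpha$, $\gamma^{-1}\alpha$. This is a homotopy equivalence, so it lifts to an equivariant simplicial map $f\from S\to T$ to the universal cover of $\theta$, nonconstant on edgelets; since $T\to\theta$ is a covering, the three edgelets at a lift $v$ of $\bar v$ have the same image edgelet and hence form one gate with $k=3$, while their terminal vertices project to the three distinct vertices $\bar w_1,\bar w_2,\bar w_3$, so $m=3>2=\rank(F)$.

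The paper avoids bounding $m$ altogether. It first blows up the vertex groups at all orbits other than that of $v$ and then collapses a maximal tree of natural edges in $S/F$; neither operation decreases the valence at $\bar v$ (collapsing an edge at $\bar v$ absorbs its other endpoint, of valence $\ge 2$, into $\bar v$). The result is a rose with one vertex carrying the group $A=\Stab_S(v)$ and $r$ loops, and Bass--Serre theory gives $F\cong A*F_r$, hence $r\le\rank(F)$ and the valence is $2r\le 2\rank(F)$. In other words, the correct count is of edge orbits at $\bar v$ after all neighboring vertex orbits have been merged into $\bar v$ --- each surviving edge then contributes two directions --- rather than of the vertex orbits of the terminal endpoints. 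Your $m-1$ folds of the second type are precisely the steps that cost nothing in rank, and they must be discharged by this maximal-tree collapse, not by a second rank count.
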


\begin{proof}
Let $e_1,\ldots,e_M \subset S$ be oriented edgelets with initial vertex $v$ representing a gate of $D_v S$. These oriented edgelets are all in distinct orbits under the action of $F$, for otherwise their common image in $T$ would have a nontrivial stabilizer. It follows that in the quotient graph of groups $S/F$, the quotients of $e_1,\ldots,e_M$ represent $M$ distinct directions at the quotient of $v$. It therefore suffices to bound the valence of each vertex in the quotient graph of groups of a free splitting. Without decreasing the valence at the quotient of $v$, one can blow up all other vertex orbits so that the only vertex orbit with nontrivial stabilizers is the orbit of $v$. Then, still without decreasing quotient valence, one can inductively collapse natural edges whose endpoints are in different vertex orbits. When this process stops, the quotient graph of groups is a rose with one natural vertex (possibly having nontrivial vertex group) and with $\le \rank(F)$ edges, whose natural vertex has valence $\le 2 \rank(F)$.
\end{proof}

\begin{definition}[Foldable maps and edgelets]
\label{DefinitionFoldableMaps}
A map $f \from S \to T$ is \emph{foldable}\index{map!foldable}\index{foldable map} if it satisfies either of the following two equivalent statements:
\begin{description}
\item[Natural edge definition of foldable:] $f$ is injective on each natural edge of $S$ and $f$ has $\ge 3$ gates at each natural vertex of $S$.
\item[Edgelet definition of foldable:] $f$ is injective on every edgelet, $f$ has $\ge 2$ gates at every vertex, and $f$ has $\ge 3$ gates at every natural vertex. 
\end{description}
We will without warning switch between these two definitions whenever it is convenient. Notice that the restrictions on the number of gates are significant only at vertices of finite valence, because every gate is a finite set; for example, if every natural vertex of $S$ has nontrivial stabilizer then every map defined on $S$ which is injective on natural edges is foldable. Notice also that foldability of $f$ depends only on the natural cell structures on $S$ and $T$, not on the given simplicial structures; to put it more formally, foldability is an invariant of $f$ in the category of equivariant continuous functions between free splittings of $F$. 

Given free splittings $F \act S,T$, a foldable map $f \from S \to T$, and an edgelet $e \subset T$, an \emph{$e$-edgelet of $f$}\index{edgelet!of a foldable map} is an edgelet of $S$ that is mapped to $e$ by~$f$.

In Lemma~\ref{LemmaFoldableExistence} below we shall prove the existence of foldable maps in the appropriate context.
\end{definition}

\subparagraph{Remark.} In other treatments of Stallings folds we have not seen any analogue of our gate~$\ge 3$ condition on natural vertices. This condition is crucial to the diameter bound obtained in Lemma~\ref{LemmaBROneB}, as well as in the heart of the proof of the Main Theorem, particularly in the proof of Proposition~\ref{PropPushdownInToto}, Step 3.

\subparagraph{The First Derivative Test.} The first derivative test of calculus implies that if the derivative of a function has no zeroes then local extreme values occur only at endpoints of the domain.

\begin{lemma}[The First Derivative Test]\label{LemmaFDT}
Suppose that $f \from S \to T$ is a foldable map of free splittings. Given a connected subgraph $W \subset S$ and a vertex $v \in W$, if $f(v)$ has valence~1 in the subgraph $f(W) \subset T$ then $v$ is a frontier point of W.
\end{lemma}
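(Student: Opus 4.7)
The plan is to prove the contrapositive: assuming $v$ is not a frontier point of $W$, I will show that $f(v)$ has valence $\ge 2$ in $f(W)$. The key observation is that the ``gate $\ge 2$ at every vertex'' clause in the edgelet definition of foldability is a discrete analogue of ``the derivative has no zeroes'', and this is precisely what forces two distinct outgoing directions in the image.

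First I would unpack what it means for $v$ to not be a frontier point of $W$: since $W$ is a subgraph of $S$ and $v \in W$, this is equivalent to saying that every edgelet of $S$ incident to $v$ is contained in $W$. Equivalently, every direction in $D_v S$ is represented by an oriented edgelet lying in $W$. Next, since $f$ is foldable, the edgelet definition (Definition~\ref{DefinitionFoldableMaps}) guarantees that $d_v f$ is defined on all of $D_v S$ and that $D_v S$ partitions into at least two gates. Thus the image $d_v f(D_v S) \subset D_{f(v)} T$ contains at least two distinct directions, represented by two distinct oriented edgelets $\eta_1, \eta_2 \subset T$ with initial vertex $f(v)$.

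Finally, I would note that each $\eta_i$ is of the form $f(e_i)$ for some edgelet $e_i \subset S$ with initial vertex $v$; since every such $e_i$ lies in $W$ by the previous paragraph, each $\eta_i = f(e_i)$ lies in $f(W)$. Therefore $f(v)$ is incident to at least two distinct edgelets of $f(W)$, so its valence in $f(W)$ is $\ge 2$, contradicting the hypothesis. There is no real obstacle here; the only thing to be a little careful about is that foldability is being applied through its edgelet formulation (which gives the gate bound at every vertex, not just natural vertices), since $v$ is an arbitrary vertex of $S$.
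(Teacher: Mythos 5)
Your proof is correct and is essentially the same argument as the paper's: both prove the contrapositive by observing that an interior vertex $v$ of $W$ has $D_v W = D_v S$, and then use the $\ge 2$ gates condition from foldability to produce two distinct directions of $f(W)$ at $f(v)$. Your extra care in noting that the gate bound holds at all vertices (not just natural ones) via the edgelet definition is exactly the right point to check.
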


\begin{proof} If $v$ is an interior point of $W$ then $D_v W = D_v S$, and since $f$ has $\ge 2$ gates at~$v$ it follows that $d_v f(D_v W)$ has cardinality $\ge 2$, implying that $f(v)$ has valence~$\ge 2$ in~$f(W)$.
\end{proof}

\subsection{Construction of foldable maps} 
Given free splittings $F \act S,T$, a fold path from $S$ to $T$ will be defined by factoring a foldable map $S \mapsto T$. Although a foldable map does not always exist, one will exist after moving $S$ a distance at most~2 in $\FS'(F)$.

\begin{lemma}
\label{LemmaFoldableExistence}
For any free splittings $F \act S, T$ there exist free splittings $S',S''$ and a foldable map $S'' \mapsto T$ such that $S \expandsto S' \collapsesto S''$.
\end{lemma}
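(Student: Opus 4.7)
My plan is to expand $S$ to a properly discontinuous free splitting, construct an $F$-equivariant simplicial map into $T$ using the freedom afforded by trivial vertex stabilizers, and then collapse the subgraph that gets crushed to vertices.

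\textbf{Step 1 (expansion to proper discontinuity).} Using the stated fact from \cite{HandelMosher:distortion}, produce an expansion $S \expand S'$ with $\F(S') = \emptyset$, so every vertex of $S'$ has trivial stabilizer and finite valence. This gives the first move in $\FS'(F)$.

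\textbf{Step 2 (equivariant simplicial map).} Build $g \from S' \to T$ by choosing representatives of the finitely many vertex and natural edge orbits of $S'$. For each vertex orbit representative $v_i$, freely choose $g(v_i)$ to be a natural vertex of $T$ --- this is unconstrained because vertex stabilizers of $S'$ are trivial --- and extend equivariantly. For each natural edge orbit representative $\eta$ with endpoints mapped to $g(u), g(v)$, define $g(\eta)$ to be the unique geodesic edge path from $g(u)$ to $g(v)$ in the natural cell structure of $T$ (possibly a constant path if $g(u) = g(v)$), and extend equivariantly. After subdividing $S'$ so that each edge contains as many edgelets as its image has in $T$, the map $g$ is $F$-equivariant and simplicial; this subdivision changes only the simplicial structure of $S'$, not its conjugacy class.

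\textbf{Step 3 (collapse the degenerate part).} Let $\sigma \subset S'$ be the $F$-equivariant subgraph of edgelets on which $g$ is constant, equivalently, the union of natural edges of $S'$ whose two endpoints share an image under $g$. By arranging vertex-orbit image choices in Step~2 so that $g$ is nonconstant, $\sigma$ is a proper subgraph. Form $S' \collapse S''$ by collapsing $\sigma$; by the discussion following Lemma~\ref{LemmaCollapseProps}, $S''$ is a free splitting and $g$ descends to a well-defined simplicial map $f \from S'' \to T$ that is bijective on edgelets by construction.

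\textbf{Main obstacle and resolution.} To complete foldability of $f$ I need injectivity on each natural edge of $S''$ together with $\ge 2$ gates at every vertex of $S''$ and $\ge 3$ gates at every natural vertex. Injectivity on natural edges reduces at each interior valence-$2$ vertex to the $\ge 2$ gates condition: the two incident edgelets must map to distinct edgelets of $T$. Both gate conditions impose constraints on the directions in $T$ taken by the image paths of edges incident to each vertex of $S''$, including at vertices arising as large collapsed components of $\sigma$. This is the principal technical difficulty. I would resolve it by making the vertex-orbit image assignment of Step~2 inductively, exploiting that each natural vertex of $T$ has valence $\ge 3$ and each direction in $T$ contains infinitely many natural vertices (by minimality and nontriviality of the $F$-action), so that any finite set of required directional placements can be met. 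If constraints happen to conflict at some vertex, one passes to a further equivariant expansion of $S'$ to decouple them --- this is available via repeated application of the proper-discontinuous-expansion fact combined with equivariant subdivision --- and redoes the assignment.
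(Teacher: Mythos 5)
Your Steps 1--3 match the paper's setup: expand to a properly discontinuous $S'$, choose an equivariant function to $T$ that is injective or constant on natural edges, and collapse the constant part to get $S''$. But the part you label ``Main obstacle and resolution'' is where the entire content of the lemma lives, and what you offer there is not an argument. The gate conditions are constraints on the \emph{derivative} of the map at each vertex of $S''$: the (possibly many) directions at a vertex --- including vertices arising from large collapsed components of $\sigma$ --- must hit at least $2$, respectively $3$, distinct directions of $T$. These constraints are coupled across vertex orbits: moving the image of one vertex orbit representative changes the initial directions of the image paths of every incident edge, and hence changes the gate count at the \emph{other} endpoints of those edges. Your claim that ``any finite set of required directional placements can be met'' by an inductive assignment is unsupported, and the fallback of ``a further equivariant expansion to decouple'' the constraints is not explained and is not obviously available (an expansion changes the tree, the orbit structure, and the collapsed subgraph $\sigma$ all at once).

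The paper resolves this with two successive optimization arguments rather than a direct assignment. First, among all candidate maps one minimizes the number of orbits of pullback edgelets; a vertex with only one gate permits an equivariant homotopy pushing its image across an edge of $T$, which either strictly decreases this count or strictly increases the number of collapsed natural edge orbits, contradicting the extremal choices --- this yields $\ge 2$ gates everywhere. Second, subject to that minimum, one maximizes the sum of squares of the edgelet counts per natural edge (an ``energy''); convexity forces a vertex with exactly two gates into a rigid degenerate configuration (one natural vertex orbit, two equal-size gates, every natural edge joining the two gate orbits), and that residual case is repaired by an explicit move: send the vertex to the nearest valence-$\ge 3$ vertex of the image of the union of rays emanating from it. None of these mechanisms is present in your sketch, so as written the proof has a genuine gap at exactly the step you flagged as the principal difficulty.
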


\begin{proof} Fix the free splitting $F \act T$. Given a free splitting $F \act R$, let $\M(R,T)$ denote the set of all equivariant continuous functions $f \from R \to T$ that take each natural vertex of $R$ to a vertex of $T$ and whose restriction to each natural edge of $R$ is either injective or constant. It follows that $f$ is a map with respect to the \emph{pullback} simplicial structure on $R$ whose vertex set consists of all points that map to vertices of $T$ and that are not in the interior of a natural edge of $R$ that is collapsed by $f$. The edges of this simplicial structure on $R$ will be referred to as \emph{pullback edgelets of $f$}.

Choose any expansion $S \expandsto S'$ so that $F \act S'$ is properly discontinuous, which implies that the set $\M(S',T)$ is nonempty. Amongst all elements of $\M(S',T)$ choose $f \from S' \to T$ to maximize the number of orbits of natural edges of $S'$ on which $f$ is constant. By collapsing each such natural edge we define a collapse map $S' \mapsto S''$ and an induced function which is an element of the set $\M(S'',T)$. By maximality of $f$ it follows that \emph{any} element of $\M(S'',T)$ is injective on each natural edge of $S''$, for otherwise by composing the collapse map $S' \mapsto S''$ with an element of $\M(S'',T)$ that collapses some natural edge of $S''$ we obtain an element of $\M(S',T)$ that collapses a larger number of natural edge orbits than $f$ does, a contradiction.

We find a foldable element of $\M(S'',T)$ by solving optimization problems. First we prove that if $g \in \M(S'',T)$ minimizes the number of orbits of pullback edgelets then $g$ has $\ge 2$ gates at each vertex of $S''$. Suppose there is a vertex $v \in S''$ at which $g$ has only~$1$ gate. Let $K$ be the valence of $v$; note that $K \ge 3$ because $g$ is injective on natural edges. Let $\eta_1,\ldots,\eta_K$ be the oriented natural edges of $S''$ with initial vertex~$v$. Let $e_1,\ldots,e_K$ be the initial pullback edgelets of $\eta_1,\ldots,\eta_K$, and let $w_1,\ldots,w_K$ be the terminal endpoints of $e_1,\ldots,e_K$, respectively. We have $f(e_1)=\cdots=f(e_K)=e$ for some oriented edge $e \subset T$ with initial vertex $f(v)$ and opposite vertex $w=f(w_1)=\ldots=f(w_K)$. Consider first the case that $e_i \ne \eta_i$ for each $i$, and so we can isotope each restricted map $g \restrict \eta_i$ by pushing $g(v)$ across $e$ to $w$ by an isotopy supported in a neighborhood of $e_i$, and we can extend these isotopies to an equivariant homotopy of $g$, to produce an element of $\M(S'',T)$ that has $K$ fewer orbits of pullback edgelets than $g$ has, a contradiction. Consider next the case that $e_i=\eta_i$ for certain values of $i=1,\ldots,K$. If $v,w_i$ are in distinct $F$-orbits for each such $i$ then we can equivariantly homotope $g$, pushing $g(v)$ across $e$ to $w$, so as to collapse each $e_i$ for which $e_i=\eta_i$, to produce an element of $\M(S'',T)$ that collapses each of the natural edges $\eta_i$ such that $e_i=\eta_i$, a contradiction. In the remaining case there exists some $i=1,\ldots,K$ such that $e_i=\eta_i$ and $w_i=\gamma(v)$ for some $\gamma \in F$, and it follows that $w=\gamma(v)$. The edges $e_i \subset S''$ and $e \subset T$ are therefore fundamental domains for the actions of $\gamma$ on its axes in $S''$, $T$, respectively. It follows that the direction of $\gamma^\inv(e_i)$ at $v$ maps to the direction of $\gamma^\inv(\eta)$ at $g(v)$ which is \emph{not equal to} the direction of $\eta$ at $g(v)$, contradicting that $g$ has a single gate at $v$.

Next we prove that among all $g \in \M(S'',T)$ that minimize the number of orbits of pullback edges, there is at least one which is foldable, having $\ge 3$ gates at each natural vertex. This is achieved, mostly, by solving another optimization problem. Define the \emph{edgelet vector} of $g$ to be the vector of positive integers $L_g$ indexed by the natural edge orbits of $S$, whose entry $L_g(e)$ corresponding to a natural edge $e \subset S$ is the number of pullback edgelets in $e$. Define $\Length(L_g)$ to be the sum of its entries, which equals the number of pullback edgelet orbits of $g$, a number which has already been minimized so as to guarantee~$\ge 2$ gates at each vertex. Define $\Energy(L_g)$ to be the sum of the squares of its entries. We have the inequality $\Energy(L_g) \le (\Length(L_g))^2$. Amongst all $g \in \M(S'',T)$ with minimal value of $\Length(L_g)$, choose $g$ so as to maximize $\Energy(L_g)$.

We claim that with energy maximized as above, one of the following holds:
\begin{enumerate}
\item \label{ItemNotHoop}
$g$ has $\ge 3$ gates at each natural vertex, and so $g$ is foldable.
\item \label{ItemHoop}
$S''$ has exactly one natural vertex orbit, $g$ has two gates at every natural vertex, and each natural edge of $S''$ has its two directions lying in distinct gate orbits. 
\end{enumerate}
To prove this dichotomy, suppose that $g$ has exactly two gates at some natural vertex~$v$. The gates must have the same cardinality: otherwise, by doing a valence~2 homotopy, pushing $g(v)$ across one edge of $T$ in the image direction of the larger of the two gates at~$v$,  one reduces the total number of pullback edgelets. Now consider $g_1,g_2 \in \M(S'',T)$ defined by the two possible valence~2 homotopies at $v$, pushing $g(v)$ across the two edges of $T$ in the two image directions of the two gates at $v$. Note that the average of the two vectors $L_{g_1}, L_{g_2}$ is the vector $L_g$. It follows that $L_g = L_{g_1} = L_{g_2}$, for otherwise, by convexity of energy, one of $\Energy(L_{g_1})$ or $\Energy(L_{g_2})$ would be larger than $\Energy(g)$. It also follows that $S''$ has exactly one natural vertex orbit, for otherwise $v$ would be connected across a natural edge $e$ to some natural vertex in a different orbit, implying that one of $L_{g_1}(e)$, $L_{g_2}(e)$ equals $L_g(e)+1$ and the other equals $L_g(e)-1$. It also follows that each natural edge $e$ has one end in the orbit of one gate at $v$ and opposite end in the orbit of the other gate at $v$, for otherwise one of $L_{g_1}(e)$, $L_{g_2}(e)$ would equal $L_g(e)+2$ and the other equals $L_g(e)-2$. This shows that $g$ satisfies item~\pref{ItemHoop}.

\medskip

To finish up we show that if $g$ satisfies \pref{ItemHoop} then there exists $g' \in \M(S'',T)$ which satisfies~\pref{ItemNotHoop}. Item~\pref{ItemHoop} implies that there is an orientation of the natural edges of $S''$ such that at each natural vertex $v \in S''$, the directions with initial vertex $v$ form one gate of $g'$ at $v$ denoted $D^+_v$, and the directions with terminal vertex $v$ form the other gate denoted~$D^-_v$. 

Pick a natural vertex $v \in S''$. Let $\tau$ be the subtree of $S''$ consisting of the union of all oriented rays in $S''$ with initial vertex $v$. The restriction of $g$ to each such ray is injective and proper, and their initial directions all map to the same direction in $T$, so it follows that the subtree $g(\tau) \subset T$ has a valence~$1$ vertex at $g(v)$ and no other valence~$1$ vertex. Also, if we orient each edge of $g(\tau)$ to point away from the vertex $g(v)$ then the map $g \from \tau \to g(\tau)$ preserves orientation. Furthermore $g(\tau)$ is not itself just a ray, for if it were then $T$ would be just a line, an impossibility for a free splitting of a free group of rank~$\ge 2$. Let $w \in g(\tau)$ be the vertex of $g(\tau)$ of valence~$\ge 3$ which is closest to $g(v)$. Define $g' \from S'' \to T$ by mapping $v$ to $w$, extending equivariantly to the orbit of $v$, and extending equivariantly to an embedding on each edge of~$S''$. 

We claim that $g'$ has one gate at $v$ corresponding to each direction of $g(\tau)$ at $w$, which implies that $g'$ is foldable. To see why, first note that the set $D^-_v$ is mapped by $d_v g'$ to the unique direction of the segment $[w,g(v)]$ at $w$. Next note that each direction in the set $D^+_v$ is mapped by $d_v g'$ to one of the directions of $T$ at $w$ distinct from the direction of $[w,g(v)]$; furthermore each such direction is in the image of $d_v g'$ because $g'$ maps $\tau$ onto $f(\tau) \setminus [w,g(v)]$ by an orientation preserving map. 

This completes the proof of Lemma~\ref{LemmaFoldableExistence}.
\end{proof}

\subsection{Folds}
\label{SectionFoldFactorizations}
Given free splittings $F \act S,T$ and a foldable map $f \from S \to T$, we say that $f$ is a \emph{fold}\index{map!fold}\index{fold}\index{fold map} if there exist oriented natural edges $\eta,\eta' \subset S$ with the same initial vertex $v$, and there exist nondegenerate initial segments $e \subset \eta$, $e' \subset \eta'$ which are subcomplexes of~$S$ with the same positive number of edgelets, such that if we let $\phi \from e \to e'$ denote the unique orientation preserving simplicial isomorphism, then for all $x \ne x' \in S$ we have $f(x)=f(x')$ if and only if there exists $\gamma \in F$ such that (up to interchanging $x,x'$) $\gamma\cdot x \in e$ and $\phi(\gamma \cdot x)=\gamma \cdot x' \in e'$. We also say that the map $f$ \emph{folds the segments $e$ and~$e'$}. 

The pair of segments $e,e'$ determines the free splitting $F \act T$ up to simplicial conjugacy, namely $F \act T$ is conjugate to the equivariant quotient complex of $S$ obtained by equivariantly identifying $e$ and $e'$ via $\phi \from e \to e'$. In this context we shall say that the free splitting $T$ is determined by \emph{folding the segments $e,e'$}. 
Letting $d,d' \in D_v S$ denote the initial directions of $e,e'$ respectively, we also say that $f$ \emph{folds the directions $d,d'$}, although $d,d'$ do not determine the segments $e,e'$ and they need not determine $T$ up to conjugacy. Notice that $d,d'$ are in different orbits under the action $\Stab_S(v) \act D_v S$ (equivalently under the action $F \act D S$), for otherwise the segment $f(e)=f(e') \subset T$ would have nontrivial stabilizer.
Folds are classified according to the properness of the inclusions $e \subset \eta$, $e' \subset \eta'$, as follows. If $e,e'$ are both proper initial segments of $\eta,\eta'$ then we say that $f$ is a \emph{partial} fold; otherwise $f$ is a \emph{full fold}.\index{fold!full} If $f$ is a full fold and exactly one of $e,e'$ is proper then we say that $f$ is a \emph{proper} full fold;\index{fold!full!proper} otherwise, when $e=\eta$ and $e'=\eta'$, we say that $f$ is an \emph{improper} full fold.\index{fold!full!improper} For later purposes we note that if $f$ is a full fold then every natural vertex of $T$ is the image of a natural vertex of $S$; and even when $f$ is a partial fold, every natural vertex of $T$ which is not in the orbit of the image of the terminal endpoints of the folded edges $e,e'$ is the image of a natural vertex of $S$.

In the terminology of \cite{BestvinaFeighn:bounding}, folds between free splittings can also be classified into two types as follows. If the opposite vertices $w,w'$ of $e,e'$ are in different $F$-orbits one gets a type IA fold;\index{fold!type IA} in this case the stabilizer of the vertex $W=f(w)=f(w')$ is the subgroup generated by the stabilizers of $w,w'$, which (if nontrivial) is a free factor whose rank is the sum of the ranks of the stabilizers of $w$ and $w'$. If $w,w'$ are in the same $F$-orbit then one gets a type IIIA fold,\index{fold!type IIIA} and the stabilizer of the vertex $W$ is the subgroup generated by the stabilizer of $w$ and an element $\gamma \in F$ such that $\gamma(w)=w'$, which is a free factor whose rank is $1$ plus the rank of the stabilizer of $w$. Notice that a type IIIA fold is only possible if $f$ is a partial fold or an improper full fold, because a natural and an unnatural vertex can never be in the same orbit. We refer to \cite{BestvinaFeighn:bounding} for an understanding of the map on quotient graphs of groups $S/F \to T/F$ which is induced by a fold $f \from S \to T$.

The following lemma and its proof are well known in the narrower context of the first barycentric subdivision of the spine of outer space. 

\begin{lemma} 
\label{LemmaFoldDistance}
For any fold $f \from S \to T$, the distance in $\FS'(F)$ from $S$ to $T$ equals $1$ or~$2$.
\end{lemma}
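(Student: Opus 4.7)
The plan is to establish $d(S, T) \le 2$ by dividing into cases based on whether $f$ is a partial or a full fold, using the classification set up in Section~\ref{SectionFoldFactorizations}. Throughout I use $w, w'$ for the opposite vertices of $e, e'$ respectively.

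In the partial fold case ($e \subsetneq \eta$ and $e' \subsetneq \eta'$), I would prove $d(S,T) = 1$ by showing directly that $T \collapsesto S$ is a proper collapse. The image $\zeta = f(e) = f(e')$ is a natural edge of $T$ because its terminal vertex $W = f(w) = f(w')$ receives at least three natural directions in $T$, namely one from $\zeta$ and one from each of $f(\eta \setminus e)$, $f(\eta' \setminus e')$. Let $\sigma \subset T$ be the $F$-orbit of $\zeta$, an $F$-equivariant proper natural subgraph. Collapsing $\sigma$ in $T$ merges each translate of $W$ with the corresponding translate of $f(v)$; the edges $f(\eta \setminus e)$ and $f(\eta' \setminus e')$ then become natural edges at the merged vertex, reproducing the natural cell structure of $S$. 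A direct comparison of natural cell structures, $F$-actions, and vertex stabilizers confirms that the resulting free splitting is $F$-equivariantly conjugate to $S$, and the collapse is proper because the natural edge count strictly drops by one.

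For the full fold case, I would construct a common expansion $R$ such that $R \collapsesto S$ and $R \collapsesto T$ are both proper, yielding $d(S,T) \le 2$. The construction blows up the natural vertex $W = f(w) = f(w') \in T$ using the free product decomposition of $\Stab_T(W)$ provided by the fold type: in type IA one has $\Stab_T(W) = \Stab_S(w) * \Stab_S(w')$ (where $\Stab_S(w')$ may be trivial for a proper full fold), while in type IIIA one has $\Stab_T(W) = \Stab_S(w) * \langle \gamma \rangle$ for some $\gamma \in F$ with $\gamma \cdot w = w'$. The decomposition lets one realize $\Stab_T(W)$ as $\pi_1$ of a minimal graph-of-groups blowup $Y_W$ of $W$ with trivial edge stabilizers. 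I would attach the edges of $T$ incident to $W$ at the appropriate vertices of $Y_W$ — in particular $\zeta$ on the $\Stab_S(w)$-side — and form $R$ by equivariantly replacing $W$ in $T$ by $Y_W$. By construction, collapsing the $F$-orbit of the interior blowup edges of $Y_W$ in $R$ reassembles $W$ with its full stabilizer and recovers $T$; while collapsing the $F$-orbit of $\zeta$ in $R$ merges $f(v)$ with the $\Stab_S(w)$-vertex of $Y_W$, so that the remaining edges of $R$ reassemble to match the natural cell structure of $S$. Both collapses are proper since each strictly decreases the natural edge count.

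The main obstacle is the type IIIA case, especially for an improper full fold of this type, where the new generator $\gamma$ of $\Stab_T(W)$ corresponds to no vertex stabilizer of $S$. The blowup $Y_W$ must realize the $\langle \gamma \rangle$-factor by a loop-carrying structure in its quotient graph of groups, and the attachment of the $T$-edges at the vertices of $Y_W$ must be calibrated so that collapsing $\zeta$ in $R$ exactly reproduces the conjugacy class of $S$. In particular, in the improper type IIIA case, where $S/F$ carries two parallel edges $\bar\eta, \bar\eta'$ both running between $\bar v$ and the common orbit-class $\bar w = \bar{w'}$, the blowup must be arranged so that the $\gamma$-loop in $Y_W/F$ together with $\zeta$ re-creates this multi-edge structure after the collapse. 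Verifying this match-up reduces to a finite graph-of-groups check once $Y_W$ and the attachments are explicitly pinned down, but it is the most delicate part of the construction.
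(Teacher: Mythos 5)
Your partial-fold case is correct and even sharper than needed: $\zeta=f(e)=f(e')$ is a natural edge of $T$, collapsing its orbit yields $S/(F\cdot(e\union e'))$, and since $e\union e'$ has empty natural core this is conjugate to $S$, so $d(S,T)=1$. The full-fold case, however, has two genuine gaps. First, your prescription to attach $\zeta$ ``on the $\Stab_S(w)$-side'' of the blowup $Y_W$ is wrong precisely in the improper full fold case that you flag as delicate but do not resolve: there $w$ is a natural vertex of $S$, and collapsing $F\cdot\zeta$ in your $R$ merges $f(v)$ with the vertex carrying $\Stab_S(w)$ and the directions formerly at $w$, producing a vertex with stabilizer $\langle\Stab_S(v),\Stab_S(w)\rangle$ and the wrong natural cell structure --- not $S$, where $v$ and $w$ are distinct natural vertices joined by $\eta$. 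The correct calibration is to insert a \emph{new trivially-stabilized valence-$3$ vertex} $u$ into $Y_W$ and attach $\zeta$ there, so that $Y_W$ together with $\zeta$ forms the orbit of a tripod $f(v)$--$u$, $u$--$w$, $u$--$w'$; collapsing $\zeta$ then merges $f(v)$ only with the disposable vertex $u$. This is exactly what the paper's construction produces automatically: it factors the fold as $S\xrightarrow{g}U\xrightarrow{h}T$ where $g$ folds only the initial edgelets $e_0,e_0'$, and the resulting $U$ \emph{is} your $R$ with the correct attachment ($U\xrightarrow{[F\cdot\hat e]}S$ and $U\xrightarrow{[F\cdot(g(e_1)\union g(e_1'))]}T$), uniformly in all cases and with no graph-of-groups bookkeeping.

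Second, you never rule out $d(S,T)=0$ in the full-fold case. Your properness claims show $R\not\cong S$ and $R\not\cong T$, but not $S\not\cong T$. For an improper full fold this does follow from counting natural edge orbits, but for a \emph{proper} full fold the counts agree ($\eta,\eta'$ are replaced by $\zeta, f(\eta\setminus e)$), so counting proves nothing; the paper handles this by exhibiting two distinct natural collapse graphs $\sigma_0\neq\hat\sigma_1$ in $U$ and invoking Lemma~\ref{LemmaFSSimplices}~\pref{ItemSimplexExists}. Some argument of this kind (or via the $\C(\eta)$ invariants of Section~\ref{SectionFSSimplicesProof}) must be supplied.
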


\begin{proof} Let $f$ fold oriented segments $e,e'$ with common initial endpoint $v$ and opposite endpoints $w,w'$. After possibly subdividing $S$ and $T$ so that $e,e'$ each contain $\ge 2$ edgelets, the map $f$ can be factored into two maps as $S \xrightarrow{g} U \xrightarrow{h} T$, where $g$ folds the initial edgelets $e_0 \subset e$, $e'_0 \subset e'$, and $h$ folds the $g$-images of the terminal segments $e_1 = e \setminus e_0$, $e'_1 = e' \setminus e'_0$. Letting $\hat e = g(e_0)=g(e'_0) \subset U$ and $\sigma_0 = F \cdot \hat e \subset U$, resubdividing $S$ there is an expansion $S \expands U$ defined by a collapse map $U \xrightarrow{[\sigma_0]} S$. Also, letting $\sigma_1 = F \cdot (g(e_1) \union g(e'_1)) \subset U$, after resubdividing $T$ there is a collapse $U \collapsesto T$ defined by a collapse map $U \xrightarrow{[\sigma_1]} T$. It follows that $d(S,T) \le 2$ in $\FS'(F)$.

It remains to show that $d(S,T) \ne 0$, that is, $S,T$ are not conjugate free splittings. Since each fold map is foldable, the natural vertex $v$ has~$\ge 3$ gates with respect to $f$. It therefore has $\ge 3$ gates with respect to $g$, and so $g(v) \in U$ is natural. It follows that $\hat e$ is a natural edge of $U$, having one endpoint at $g(v)$ and opposite endpoint of valence~$3$ in $U$. The subgraph $\sigma_0 \subset U$ is therefore natural, and it follows from Lemma~\ref{LemmaFSSimplices} that $S$ is not conjugate to $U$. The free splittings $U,T$ may or may not be conjugate, depending on whether at least one of $g(e_1), g(e_2) \subset U$ is a natural edge. If neither of $g(e_1)$, $g(e_2)$ is natural then $T$ is conjugate to $U$, and so $T$ is not conjugate to $S$. If one or both of $g(e_1)$, $g(e_2)$ is natural then (after resubdividing $T$) the collapse $U \collapsesto T$ may also defined by collapsing the natural subgraph $\hat\sigma_1 \subset U$ which is the union of the $F$ orbits of whichever of $g(e_1)$, $g(e_2)$ is natural; but $\sigma_0 \ne \hat\sigma_1$ and so by Lemma~\ref{LemmaFSSimplices} we conclude that $S,T$ are not conjugate.
\end{proof}

\subsection{Fold sequences and fold paths} 
Consider free splittings $F \act S,T,U$ and a sequence of maps of the form $S \xrightarrow{h} U \xrightarrow{g} T$. Letting $f = g \composed h \from S \to T$, we say that $h$ is a \emph{maximal fold factor of $f$} if the following hold: $h$ is a fold map that folds oriented initial segments $e,e' \subset S$ of oriented natural edges $\eta,\eta' \subset S$, respectively, and $e,e'$ are the maximal initial subsegments of $\eta,\eta'$ such that in $T$ we have $f(e)=f(e')$. Recall from the definition of a fold that $e,e'$ are edgelet paths with the same number of edgelets.

\subparagraph{Fold sequences.} Consider a sequence of free splittings and maps of the form $S_0 \xrightarrow{f_1} S_1 \xrightarrow{f_2}\cdots\xrightarrow{f_K} S_K$, $K \ge 0$. In this context we will always denote 
$$f^i_j = f_j \composed \cdots \composed f_{i+1} \from S_i \to S_j, \quad\text{for}\quad 0 \le i < j \le K.
$$
We say that this is a \emph{fold sequence}\index{fold sequence} if the following holds:
\begin{enumerate}
\item \label{ItemOuterFoldable}
$f^0_K \from S_0 \to S_K$ is a foldable map.
\item \label{ItemConjOrMaxFold} 
Each map $f_{i+1} \from S_{i} \to S_{i+1}$ is a maximal fold factor of the map $f^i_K \from S_{i} \to S_K$, for $0 \le i < K$.
\end{enumerate}

It follows from \pref{ItemOuterFoldable} and \pref{ItemConjOrMaxFold} that
\begin{enumeratecontinue}
\item \label{ItemEachFoldable}
$f^i_j \from S_i \to S_j$ is a foldable map for each $0 \le i < j \le K$.
\end{enumeratecontinue}
To prove \pref{ItemEachFoldable}, starting from the base assumption \pref{ItemOuterFoldable}, and assuming by induction that $f^{i-1}_K = f^i_K \composed f_i$ is foldable, we prove that $f^i_K$ is foldable. By \pref{ItemConjOrMaxFold} the map $f_i$ is a maximal fold factor of $f^{i-1}_K$. The map $f^{i-1}_K$ is injective on each edgelet of $S_{i-1}$, and each edgelet of $S_i$ is the $f_i$ image of some edgelet of $S_{i-1}$, so $f^i_K$ is injective on each edgelet. Consider a vertex $v \in S_i$ and a vertex $u \in S_{i-1}$ for which $f_i(u)=v$. The number of $f^i_K$-gates at $v$ is greater than or equal to the number of $f^{i-1}_K$ gates at $u$ which is $\ge 2$, and furthermore if $u$ is natural then this number is $\ge 3$. This covers all cases except for when $v$ is natural and each such $u$ has valence~$2$. Since $f_i$ is a maximal fold factor of $f^{i-1}_K$, this is only possible if $f$ is a partial fold that folds segments $e,e' \subset S_{i-1}$ such that if $w,w'$ denote the terminal endpoints of $e,e'$ then $v=f_i(w)=f_i(w')$. If $f_i$ is a type IA fold, that is if $w,w'$ are in different orbits, then $v$ has valence~$3$, and by maximality of the fold $f_i$ it follows that the three directions at $v$ are all in different gates with respect to $f^i_K$. If $f_i$ is a type IIIA fold, that is if $w,w'$ are in the same orbit, say $\gamma \cdot w = w'$ for a nontrivial $\gamma \in F$, then $\Stab_{S_i}(v)$ contains $\gamma$ and so is nontrivial, and hence $v$ has infinitely many gates with respect to~$f^i_K$. This proves by induction that each $f^i_K$ is foldable. Next, to prove that $f^i_j$ is foldable, given a vertex $v \in S_i$ we simply note that the decomposition of $D_v S_i$ into $f^i_j$-gates is a refinement of the decomposition into $f^i_K$ gates, of which there are $\ge 2$, and $\ge 3$ if $v$ is natural. This completes the proof that \pref{ItemOuterFoldable} and \pref{ItemConjOrMaxFold} imply~\pref{ItemEachFoldable}.

In this proof we have shown the following fact which will be useful in Lemma~\ref{LemmaFoldSequenceConstruction} below when we construct fold sequences:

\begin{lemma}\label{LemmaMaxFactor} For any foldable map $S \xrightarrow{f} T$ and any factorization of $f$ into two maps of the form $S \xrightarrow{k} U \xrightarrow{g} T$, if $k$ is a maximal fold factor of~$f$ then the map $g \from U \to T$ is also foldable. 
\qed\end{lemma}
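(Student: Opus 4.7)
The plan is to verify the three conditions of the edgelet definition of foldable for $g$: injectivity on each edgelet of $U$, at least two $g$-gates at each vertex of $U$, and at least three $g$-gates at each natural vertex of $U$. Edgelet injectivity is immediate: every edgelet of $U$ is the $k$-image of some edgelet $e \subset S$, and since both $k$ and $f = g \composed k$ are injective on $e$, so is $g$ on $k(e)$.

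For the gate conditions, the key observation is that whenever $u \in S$ maps to $v := k(u) \in U$, one has $\#(g\text{-gates at }v) \ge \#(f\text{-gates at }u)$. This holds because the $f$-gate partition of $D_u S$ refines the $k$-gate partition, and so $d_u k$ descends to an injection from the set of $f$-gates at $u$ into the set of $g$-gates at $v$. Whenever $v \in U$ is not the identified terminal endpoint of the folded segments, one may choose $u \in k^\inv(v)$ at which $k$ is a local isomorphism and such that $u$ is natural if and only if $v$ is natural; foldability of $f$ then furnishes the required bounds at $v$.

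The substantive case is the vertex $W = k(w) = k(w')$ where $w, w'$ are the terminal endpoints of the folded segments $e \subset \eta$, $e' \subset \eta'$. In a full fold, whether proper or improper, at least one of $w, w'$ is natural and the preceding inequality applied at that vertex yields $\ge 3$ $g$-gates at $W$. In a partial fold of type IIIA one has $w' = \gamma \cdot w$ for some nontrivial $\gamma \in F$, so $\gamma$ stabilizes both $W$ and $g(W)$; since edge stabilizers of $T$ are trivial, $\gamma$ acts freely on $D_{g(W)} T$, the nonempty $\gamma$-invariant image $d_W g(D_W U)$ is a union of infinite $\gamma$-orbits, and $g$ has infinitely many gates at $W$.

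The remaining case, a type IA partial fold with both $w, w'$ non-natural, is the one that genuinely requires the maximality hypothesis and is the main obstacle. Here $W$ is natural of valence $3$ in $U$ with three directions: $d_0$ back along the image $k(e) = k(e')$, and $d_1 = d_W k(e_1)$, $d_1' = d_W k(e_1')$ along the $k$-images of the immediate continuation edgelets $e_1 \subset \eta$, $e_1' \subset \eta'$. Maximality of $k$ as a fold factor of $f$ forces $f(e_1) \ne f(e_1')$, whence $d_W g(d_1) \ne d_W g(d_1')$. Foldability of $f$, in the form of injectivity of $f$ on each of $\eta$ and $\eta'$, gives that $d_W g(d_0)$ differs from both $d_W g(d_1)$ and $d_W g(d_1')$. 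Thus $g$ distinguishes the three directions at $W$, producing three $g$-gates there and completing the verification.
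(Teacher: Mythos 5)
Your proof is correct and follows essentially the same route as the paper's (where this lemma is established inline, in the verification that a maximal fold factor of a foldable map leaves a foldable quotient map): edgelet injectivity by lifting edgelets of $U$ to $S$, the inequality $\#\{g\text{-gates at } k(u)\} \ge \#\{f\text{-gates at } u\}$, reduction to the image of the terminal endpoints of a partial fold, and the type IA / type IIIA dichotomy there, with your IA analysis spelling out the maximality argument in slightly more detail than the paper does. Two phrasings deserve repair, though neither is a real gap: it is the $k$-gate partition of $D_u S$ that refines the $f$-gate partition (not the reverse), the descended injection on gates coming directly from $d_u f = d_{k(u)} g \composed d_u k$; and at the image of the \emph{common initial vertex} of the folded edges, $k$ is not a local isomorphism and the valence drops by one (so a valence-$3$ natural vertex of $S$ can map to a non-natural vertex of $U$), which contradicts your claim that away from the terminal endpoint one can always choose $u$ with $k$ a local isomorphism and $u$ natural iff $v$ natural --- but your gate inequality still delivers the required bound at that vertex, so the argument stands.
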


The implication of this lemma is \emph{false} if one allows $k$ to be a partial fold which is not a maximal fold factor of $f$, for in that case the map $g \from U \to T$ will have only 2~gates at the valence~$3$ vertex which is the $k$-image of the terminal endpoints of oriented segments $e,e'$ that are folded by $k$.

\subparagraph{Fold paths.} A \emph{fold path}\index{fold path} in $\FS'(F)$ is any sequence of vertices represented by free splittings $F \act S_0,S_1,\ldots,S_K$ for which there exists a fold sequence $S_0 \mapsto S_1 \mapsto\cdots\mapsto S_K$; we also say that this fold path has \emph{$K$-steps}.

Strictly speaking a fold path need not be the sequence of vertices along an actual edge path in the simplicial complex $\FS'(F)$, because the size of the step from $S_{i-1}$ to $S_i$ is either $1$ or~$2$; see Lemma~\ref{LemmaFoldDistance}. If one so desires one can easily interpolate the gap between $S_{i-1}$ and $S_i$ by an edge path of length~$1$ or $2$, to get an actual edge path from $S_0$ to $S_K$. 

We define two fold sequences to be \emph{equivalent}\index{fold sequence!equivalence} if they have the same length and there is a commutative diagram of the form
$$\xymatrix{
S_0 \ar[r] \ar[d] & S_1 \ar[r] \ar[d] & \cdots \ar[r] & S_{K-1} \ar[r] \ar[d] & S_K \ar[d] \\
S'_0 \ar[r]          & S'_1 \ar[r]         & \cdots \ar[r] & S_{K'-1} \ar[r]          & S'_K
}$$
where the top and bottom rows are the two given fold sequences and each vertical arrow is a conjugacy. Note that the vertical arrows are \emph{not} required to be ``maps'' as we have defined them, in that they need not be simplicial. For example, if the bottom row is obtained by taking the $400^{\text{th}}$ barycentric subdivision of each 1-simplex in the top row then the two fold sequences are equivalent.

Equivalent fold sequences determine the same fold path, but the converse is false. A counterexample consisting of a $1$-step fold path is given at the end of this section.

\subparagraph{Construction of fold factorizations.} Having constructed many foldable maps in Lemma~\ref{LemmaFoldableExistence}, to construct many fold paths it suffices to factor each foldable map as a fold sequence.

Given free splittings $F \act S,T$ and a foldable map $S \xrightarrow{f} T$, a \emph{fold factorization}\index{fold factorization} of $f$ is any fold sequence $S_0 \mapsto S_1 \mapsto\cdots\mapsto S_K$ which factors $f$ as shown in the following commutative diagram:
$$\xymatrix{
S \ar@{=}[r] \ar@/^2pc/[rrrrr]^{f} & S_0 \ar[r]^{f_1} & S_1 \ar[r]^{f_2} & \cdots \ar[r]^{f_K} & S_K \ar@{=}[r] & T
}$$
A fold factorization of any foldable map can be constructed by an inductive process described in \cite{BestvinaFeighn:bounding}, with considerable simplification arising from the fact that all edgelet stabilizers are trivial in $T$. We give this simplified argument here. 

\begin{lemma}
\label{LemmaFoldSequenceConstruction}
For any free splittings $F \act S,T$, every foldable map $f \from S \mapsto T$ has a fold factorization.
\end{lemma}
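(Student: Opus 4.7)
I proceed by induction on $N(f)$, the number of $F$-orbits of edgelets in the domain $S$, peeling off a maximal fold factor at each step. The base case is when $f$ is a (simplicial) conjugacy, in which case the length-zero fold sequence $S_0 = S = T$ serves as a fold factorization. To detect this case, observe that foldability already gives injectivity on edgelets; if in addition $f$ is injective on directions at every vertex then $f$ is locally injective, hence a conjugacy by the observation in Section~\ref{SectionBasic}. Consequently, if $f$ is not a conjugacy, there exist distinct directions $d \ne d' \in D_v S$ at some vertex $v$ with $df(d) = df(d')$, i.e., lying in a common gate.

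\textbf{The inductive step.} Let $\eta, \eta' \subset S$ denote the oriented natural edges with initial directions $d, d'$, and let $e \subset \eta$, $e' \subset \eta'$ be the maximal initial edgelet-segments (of common positive length $k$) along which $f$ agrees via the orientation-preserving isomorphism $\phi \from e \to e'$. The technical heart of the argument, and where I expect the main effort to lie, is verifying that the equivariant identification determined by $\phi$ defines a valid fold $h \from S \to U$. For this I would first check that $\eta$ and $\eta'$ lie in distinct $F$-orbits: if $\eta' = \gamma \cdot \eta$ then $\gamma \in \Stab_S(v) \setminus \{1\}$ with $\gamma \cdot d = d'$, so equivariance of $df$ combined with $df(d) = df(d')$ would force $\gamma$ to stabilize the edgelet representing $df(d)$, contradicting triviality of edgelet stabilizers. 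This in turn implies $e, e'$ lie in distinct $F$-orbits, so the $\phi$-identification extends equivariantly without ambiguity and defines the fold $h$. By construction (maximality of $e, e'$), $h$ is a maximal fold factor of $f$, and writing $f = g \circ h$, Lemma~\ref{LemmaMaxFactor} yields that $g \from U \to T$ is foldable.

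\textbf{Strict decrease and assembly.} To close the induction I verify $N(g) < N(f)$. Within any natural edge, distinct edgelets lie in distinct $F$-orbits, since any $\gamma \in F$ identifying two such edgelets would stabilize the natural edge as a set (possibly up to orientation), and then triviality of edgelet stabilizers together with torsion-freeness of $F$ forces $\gamma = 1$. Combined with $\eta, \eta'$ lying in distinct $F$-orbits, the $2k$ edgelets inside $e \cup e'$ represent $2k$ distinct edgelet orbits of $S$, which are merged by $h$ into $k$ orbits of $U$, giving $N(g) = N(f) - k < N(f)$. By induction $g$ has a fold factorization $U = U_0 \to \cdots \to U_{K-1} = T$; prepending $h$ yields the desired fold factorization of $f$, with the maximal-fold-factor condition at each subsequent step inherited since $f^i_K$ in the new sequence coincides with the corresponding $g^{i-1}_{K-1}$ from the inductive factorization. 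The edgelet count argument is bookkeeping once foldability of $g$ is secured; the delicate point throughout is the well-definedness of $h$, handled by the triviality-of-edge-stabilizers arguments above.
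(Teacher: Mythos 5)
Your proof follows essentially the same route as the paper's: detect non-injectivity of the derivative at some vertex, split off a maximal fold factor, invoke Lemma~\ref{LemmaMaxFactor} for foldability of the remaining map, and induct on the number of edgelet orbits. The only real difference is in the strict-decrease step, where the paper simply observes that the initial edgelets of $e$ and $e'$ have the same image edgelet in $T$, whose stabilizer is trivial (an argument that also covers the orientation-reversing identification of $\eta$ with $\eta'$ that your direction-based argument leaves implicit), rather than establishing your finer count $N(g)=N(f)-k$.
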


\begin{proof} If $f$ is a simplicial isomorphism then we are done, with a fold factorization of length~$K=0$. Otherwise, we use the following obvious but key fact:
\begin{description}
\item[Local to global principle:] Any simplicial map between trees which is locally injective is globally injective. If furthermore it is surjective then it is a simplicial isomorphism.
\end{description}
\noindent
For the inductive step we show that every foldable map $S \xrightarrow{f} T$ which is not a homeomorphism factors into maps as $S \xrightarrow{k} U \xrightarrow{g} T$ where $k$ is a maximal fold factor of~$f$. By the \emph{Local to global principle}, plus the fact that $F \act T$ is minimal, it follows that $f$ is surjective and so $f$ is not locally injective. We may therefore find a vertex $v \in S$ and two directions $d,d' \in D_v S$ such that $d_v f(d)=d_v f(d')$. Let $\eta,\eta'$ be the oriented natural edges with initial vertex~$v$ and initial directions $d,d'$. Let $e \subset \eta$, $e' \subset \eta'$ be the maximal initial segments such that $f(e)=f(e')$. Noting that $e,e'$ are subcomplexes with the same number of edgelets, let $h \from e \to e'$ be the unique orientation preserving simplicial homeomorphism. Define $k \from S \to U$ to be the quotient map obtained by equivariantly identifying $e$ and $e'$, and let $g \from U \to T$ be the induced map. As indicated in \cite{BestvinaFeighn:bounding}, $U$ is a tree and the induced action $F \act U$ is minimal. The map $k$ is simplicial by construction, from which it follows that $g$ is simplicial as well. The stabilizer of each edgelet of $U$ is trivial because it is contained in the stabilizer of its image in $T$ under $g$ which is trivial, and so $F \act U$ is a free splitting. By construction the map $k \from S \to U$ is a maximal fold factor of the foldable map $f$. 

To support the inductive step we must prove that $U$ has fewer edgelet orbits than~$S$, which follows from the fact that the initial edgelets of $e$ and $e'$ are in different orbits of the action $F \act S$, because they have the same image edgelet in $T$ and its stabilizer is trivial. 

The fold factorization of $f=f^0_T \from S=S_0 \to T$ may now be constructed as follows. Assuming $f^0_T$ is not locally injective, factor $f^0_T$ into maps as $S_0 \xrightarrow{f_1} S_1 \xrightarrow{f^1_T} T$ where $f_1$ is a maximal fold factor of $f^0_T$. The induced map $f^1_T$ is foldable by Lemma~\ref{LemmaMaxFactor}, and the number of edgelet orbits of $S_1$ is smaller than the number of edgelet orbits of $S_0$. The process therefore continues by induction on the number of edgelet orbits, stopping at $S=S_0 \xrightarrow{f_1} S_1 \xrightarrow{f_2}\cdots\xrightarrow{f_K} S_K \xrightarrow{f^K_T} T$ when $f^K_T$ is locally injective and therefore a simplicial conjugacy, and we identify $S_K=T$.
\end{proof}

\paragraph{Remark.} The \emph{Local to global principle} may be used to construct fold factorizations with various special properties. In particular, if $\beta \subset S$ is a subtree on which $f$ is not locally injective then we may choose the folded edges $\eta,\eta'$ to lie in $\beta$. This is used in the proof of Lemma~\ref{LemmaBRNatural}.

\paragraph{Counterexample: inequivalent folds.} \qquad\qquad  We describe two inequivalent folds $\ti f, \ti f' \from S_0 \to S_1$ that determine the same $1$~step fold path $S_0,S_1$ in $\FS'(F)$. Both of the actions $F \act S_0,S_1$ are properly discontinuous. We first describe the quotient marked graphs $G_0 = S_0/F$, $G_1=S_1/F$ and the induced homotopy equivalences $f, f' \from G_0 \to G_1$. The marked graph $G_0$ has a valence~$4$ vertex $v$ with the following incident directions: directed natural edges $a,b$ with initial vertex $v$, and a directed natural edge $c$ with initial and terminal vertex $v$; subject to this description, $G_0$ is then filled out to be a marked graph in an arbitrary manner. The marked graph $G_1$ is defined to have the same underlying unmarked graph as $G_0$. The homotopy equivalences $f,f' \from G_0 \to G_1$ are defined so that $f(a)=ca$, $f'(b) = c^\inv b$, and $f,f'$ are the identity elsewhere. Clearly $f,f'$ are homotopic, by a homotopy which spins the $c$ loop once around itself and is stationary on $G_0 \setminus (a \union b \union c)$. The marking on $G_1$ is defined by pushing forward the marking on $G_0$ via either of $f,f'$, and so each of $f,f'$ preserves marking.  Consider the universal covering maps $S_i \mapsto G_i$, $i=0,1$. We may choose $F$-equivariant lifts $\ti f, \ti f' \from S_0 \to S_1$ which are the two fold maps at issue. If they were equivalent then, since any self-conjugacy of $S_0$ or of $S_1$ fixes each vertex and each oriented natural edge (see the \emph{Remark} at the end of Section~\ref{SectionFreeSplittingComplex}), each direction in $D S_0$ would have the same image in $D S_1$ under $d \ti f$ and $d \ti f'$. However, fixing a lift $\ti v$ and lifts $\ti a, \ti b, \ti c$ of $a,b,c$ with initial vertex $\ti v$ and a lift $\ti c'$ of $c$ with terminal vertex $\ti v$, we have $d \ti f(\ti a) = d \ti f(\ti c)$ but $d \ti f'(\ti a) \ne d \ti f'(\ti c)$.

\section{The Masur-Minsky axioms}
\label{SectionMasurMinsky}
Our proof that $\FS(F)$ is hyperbolic uses the axioms introduced by Masur and Minsky \cite{MasurMinsky:complex1} for their proof that the curve complex of a finite type surface is hyperbolic. The axioms require existence of a family of paths which satisfy a strong projection property. For this purpose we shall use fold paths: Proposition~\ref{PropFoldContractions} stated at the end of this section says, roughly speaking, that fold paths in $\FS'(F)$ satisfy the Masur-Minsky axioms. 

First we give an intuitive explanation of the content of Proposition~\ref{PropFoldContractions} by giving an outline of the Masur-Minsky axioms as they would apply to fold paths. The axioms require that a map be defined which is a kind of projection from $\FS'(F)$ to each fold path $S_0, S_1, \ldots,  S_K$. To make things work the range of the projection is taken to be the parameter interval $[0,K]$ of the fold path, giving the projection map the form $\pi \from \FS'(F) \to [0,K]$. When one projects two vertices of $\FS'(F)$ to two parameters $l \le k \in [0,K]$, one is interested in the ``diameter (of the subpath) between these two parameters'', which means the diameter of the set $\{S_l, S_{l+1}, \ldots, S_k\}$ in $\FS'(F)$. There are three axioms. The \emph{Coarse Retraction} bounds the diameter between each $k \in [0,K]$ and its projection~$\pi(S_k) \in [0,K]$. The \emph{Coarse Lipschitz} axiom bounds the diameter between the projections $\pi(T),\pi(T') \in [0,K]$ of two nearby vertices $T,T' \in \FS'(F)$. The \emph{Strong Contraction} axiom says roughly that, for each metric ball in $\FS'(F)$ that stays a bounded distance away from the fold path, if one takes the sub-ball having a certain proportion of the total radius, the diameter between the projections of any two vertices in the subball is bounded. All the bounds occurring in this discussion must be uniform, depending only on the rank of~$F$.

In fact rather than using fold paths themselves, we use fold sequences. As we have seen in the counterexample at the end of Section~\ref{SectionFoldPaths}, the same fold path $S_0,\ldots,S_K$ can be represented by inequivalent fold sequences, and the projection maps $\FS'(F) \to [0,K]$ of these two fold sequences may be different. This kind of situation is handled formally be expressing the Masur-Minsky axioms in terms of ``families'' of paths which allow a path to occur repeatedly in the family.

\bigskip

Given integers $i, j$ we adopt interval notation $[i,j]$ for the set of all integers between $i$ and $j$ inclusive, regardless of the order of $i,j$. 

Consider a connected simplicial complex $X$ with the simplicial metric.  A \emph{path} in $X$ is just a finite sequence of $0$-simplices $\gamma(0),\gamma(1),\ldots,\gamma(K)$, which we write in function notation as $\gamma \from [0,K] \to X$. A \emph{family of paths} in $X$ is an indexed collection $\{\gamma_i\}_{i \in \I}$ of paths in $X$; we allow repetition in the family. A family of paths in $X$ is said to be \emph{almost transitive} if there exists a constant $A$ such that for any $0$-simplices $v,w$ there is a path $\gamma \from [0,K] \to X$ in the family such that all of the distances $d(v,\gamma(0))$, $d(\gamma(0),\gamma(1))$, \ldots, $d(\gamma(K-1),\gamma(K))$, $d(\gamma(K),w)$ are $\le A$. 

Given a path $\gamma \from [0,K] \to X$ and a function $\pi \from X \to [0,K]$, called the ``projection map'' to the path $\gamma$, and given constants $a,b,c > 0$, consider the following three axioms:
\begin{description}
\item[Coarse retraction:] For all $k \in [0,K]$ the set $\gamma[k,\pi(\gamma(k))]$ has diameter $\le c$.
\item[Coarse Lipschitz:] For all vertices $v,w \in X$, if $d(v,w) \le 1$ then the set $\gamma[\pi(v),\pi(w)]$ has diameter~$\le c$.
\item[Strong contraction:] For all vertices $v,w \in X$, if $d(v,\gamma[0,K]) \ge a$, and if $d(w,v) \le b \cdot d(v,\gamma[0,K])$, then the set $\gamma[\pi(v),\pi(w)]$ has diameter $\le c$.
\end{description}

\begin{theorem}[\cite{MasurMinsky:complex1}, Theorem 2.3]
Given a connected simplicial complex $X$, if there exists an almost transitive family of paths $\{\gamma_i\}_{i \in I}$ in $X$ and for each $i \in I$ a projection map $\pi_i \from X \to [0,K]$ to the path $\gamma_i \from [0,K] \to X$ such that the \emph{Coarse retraction}, the \emph{Coarse Lipschitz}, and the \emph{Strong contraction} axioms all hold with uniform constants $a,b,c>0$ for all $i \in I$, then $X$ is hyperbolic.
\end{theorem}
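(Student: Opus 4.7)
The plan is to follow Masur and Minsky's original argument, deriving Gromov hyperbolicity in three stages: first promote the family paths to uniform quasi-geodesics, then use strong contraction to establish a Morse-type stability property, and finally deduce slim triangles.

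\textbf{Step 1 (Quasi-geodesity of family paths).} Fix $\gamma = \gamma_i \from [0,K]\to X$ with projection $\pi$. For any $k,l \in [0,K]$, apply \emph{Coarse Lipschitz} inductively along a geodesic from $\gamma(k)$ to $\gamma(l)$, obtaining a bound of the form $|\pi(\gamma(k)) - \pi(\gamma(l))| \le c\cdot d(\gamma(k),\gamma(l)) + c$. Combined with \emph{Coarse Retraction}, which gives $|k - \pi(\gamma(k))|\le c$, this yields $|k - l| \le c\cdot d(\gamma(k),\gamma(l)) + 3c$. Since $\gamma$ is automatically $1$-Lipschitz on vertices, we conclude that $\gamma$ is a uniform $(\lambda,\lambda)$-quasi-geodesic for a constant $\lambda = \lambda(a,b,c)$ independent of $i\in \I$.

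\textbf{Step 2 (Stability / Morse property).} The core technical claim is that there is a constant $R = R(a,b,c)$ such that for every family path $\gamma\from[0,K]\to X$ and every path $\alpha\from[0,N]\to X$ with $\alpha(0)=\gamma(0)$ and $\alpha(N)=\gamma(K)$, one has $\alpha[0,N]\subset N_R(\gamma[0,K])$. The idea is to bound excursions: if $\alpha$ has a maximal subsegment $\alpha[m,n]$ lying at distance $\ge a$ from $\gamma$, partition it by dyadic scales according to $d(\alpha(j),\gamma)$. On each scale where $d(\alpha(j),\gamma)\in [r,2r]$ with $r\ge a$, the \emph{Strong Contraction} axiom applied to pairs of consecutive vertices (whose distance $1 \le b \cdot r$ is automatic once $r\ge 1/b$) bounds $|\pi(\alpha(j))-\pi(\alpha(j+1))|$ by $c$, and a further summing argument bounds the total change $|\pi(\alpha(m))-\pi(\alpha(n))|$ by $O(c)$ per scale regardless of the length of the subsegment at that scale. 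Summing geometrically across scales gives an absolute bound. On the other hand, by \emph{Coarse Retraction} the endpoints of $\gamma$ project to parameters near $0$ and $K$; combined with \emph{Coarse Lipschitz} and the endpoint conditions on $\alpha$, this forces the total projection change along $\alpha$ to be at least $K - O(c)$, bounding $K$ and hence the possible depth of any excursion.

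\textbf{Step 3 (Thin triangles).} Given a geodesic triangle with vertices $u,v,w$, use \emph{almost transitivity} to obtain family paths $\gamma_{uv},\gamma_{vw},\gamma_{uw}$ with endpoints within $A$ of the designated vertices. Apply Step~2 to $\gamma_{uw}$ relative to the concatenation of $\gamma_{uv}$ with $\gamma_{vw}$ (after small adjustments at the endpoints absorbed into constants) to conclude that $\gamma_{uw}$ lies in a uniform neighborhood of $\gamma_{uv}\cup\gamma_{vw}$; by symmetry every side of the triangle lies in a uniform neighborhood of the other two. Since Step~2 also applied to an actual geodesic $\alpha$ from $u$ to $w$ shows $\alpha$ uniformly fellow-travels $\gamma_{uw}$, the original geodesic triangle is uniformly slim. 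A standard equivalent characterization of Gromov hyperbolicity then gives the conclusion.

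The main obstacle is Step~2. \emph{Strong Contraction} is only available in the annular regime $d(v,\gamma)\ge a$, $d(v,w)\le b\cdot d(v,\gamma)$, so a single edge of $\alpha$ close to $\gamma$ contributes nothing exploitable, while edges in the far regime must be aggregated carefully. The trick is the dyadic scale decomposition: bound projection movement on each scale by a constant, then sum a geometric series. Getting the constants right, and in particular controlling the transition between the near-$\gamma$ and far-$\gamma$ regimes, is where all of the delicacy lives.
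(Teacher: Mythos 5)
First, a point of orientation: the paper does not prove this statement at all --- it is quoted verbatim from Masur--Minsky and used as a black box --- so there is no internal proof to compare against. Judged on its own terms, your sketch follows the right general strategy (efficiency of family paths, a contraction/filling lemma, thin triangles), but Step~2 contains a genuine error of direction. You assert that every path $\alpha$ joining the endpoints of $\gamma$ satisfies $\alpha[0,N]\subset N_R(\gamma[0,K])$. That is false: an arbitrary path can make arbitrarily deep excursions away from $\gamma$ in any unbounded connected complex, and nothing in the axioms forbids this. What your excursion/projection argument actually proves is the reverse containment $\gamma[0,K]\subset N_R(\alpha[0,N])$: since $\pi\composed\alpha$ is coarsely continuous and coarsely onto $[0,K]$, while the projection barely moves during any excursion of $\alpha$ outside $N_a(\gamma)$, every parameter $k$ is coarsely realized by a point of $\alpha$ lying within distance $a$ of $\gamma$ near $\gamma(k)$. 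Correspondingly, your closing claim in Step~2 that the scale-by-scale bound on projection movement "bounds the possible depth of any excursion" is a non sequitur --- small projection displacement over an excursion says nothing about how deep the excursion goes. Step~3 then quietly uses the correct direction ($\gamma_{uw}$ inside a neighborhood of $\gamma_{uv}\union\gamma_{vw}$), contradicting Step~2 as stated; but its final move, that a geodesic from $u$ to $w$ "uniformly fellow-travels $\gamma_{uw}$," needs the genuinely different Morse-type statement $\text{(geodesic)}\subset N(\gamma_{uw})$, which is only true because the competitor is a geodesic: the proof there combines the projection bound on an excursion with a lower bound on projection displacement coming from the length of the geodesic subsegment, and this length-counting is absent from your sketch. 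You therefore need two distinct lemmas where you have written one, and the harder of the two is missing.

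A secondary issue in Step~1: the axioms control the \emph{diameter} of the subpath $\gamma[\pi(v),\pi(w)]$, not the parameter difference $\abs{\pi(v)-\pi(w)}$, and a family path may be far from injective (it could even be constant), so the inequality $\abs{k-l}\le c\, d(\gamma(k),\gamma(l))+3c$ does not follow and is not what is needed. The correct "efficiency" statement is $\diam\gamma[k,l]\le c\, d(\gamma(k),\gamma(l))+2c$, i.e.\ the paths are quasigeodesic only up to reparameterization --- a distinction this very paper flags in its remark following the theorem.
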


\subparagraph{Remarks.} Our notion of ``almost transitivity'' is not quite the same as ``coarse transitivity'' used in \cite{MasurMinsky:complex1}, which requires that the paths in the set be continuous and that there is a constant $D$ such that any two points at distance $\ge D$ are connected by a path in the set. However, the proof of equivalence of the two forms of the axioms, one with ``almost transitive'' and the other with ``coarse transitive'', is very simple, and is left to the reader. The set of fold paths in $\FS'(F)$ is almost transitive with constant $A = 2$: for any free splittings $S,T$, by moving $S$ a distance~$\le 2$ one obtains a naturally foldable morphism to $T$ (Lemma~\ref{LemmaFoldableExistence}), which has a fold factorization (Section~\ref{SectionFoldFactorizations}), and consecutive free splittings in such a factorization have distance~$\le 2$ (Lemma~\ref{LemmaFoldDistance}). 

The concept of a ``family of paths'' is left undefined in \cite{MasurMinsky:complex1} but the proof of the above theorem and the application to curve complexes given in \cite{MasurMinsky:complex1} clearly indicate that an indexed family with repetition is allowed. On top of that, given any indexed family satisfying the hypothesis of the theorem, if we removed repetition by kicking out all but one copy of each path then the resulting family would still satisfy the hypotheses of the theorem. In our situation, although we use fold paths in our application of the above theorem, we shall index them by (equivalence classes of) fold sequences; thus, we allow for the possibility that two inequivalent fold sequences representing the same fold path might have somewhat different projection maps.

\bigskip

Notice that the \emph{Strong contraction} axiom, unlike the \emph{Coarse Lipschitz} axiom, is not symmetric in the variables $v,w$. For our proof we shall need to extend the applicability of the \emph{Strong contraction} axiom by further desymmetrizing it:
\begin{description}
\item[Desymmetrized strong contraction:] For all vertices $v,w \in X$, if $\pi(w) \le \pi(v)$ in the interval $[0,K]$, if $d(v,\gamma[0,K]) \ge a$, and if $d(w,v) \le b \cdot d(v,\gamma[0,K])$, then the set $\gamma[\pi(v),\pi(w)]$ has diameter $\le c$.
\end{description}

\begin{lemma}\label{LemmaDesymmetrization}
For all constants $a,b,c > 0$ there exist constants $A,B > 0$ such that the \emph{desymmetrized strong contraction} axiom with constants $a$, $b$, and $c$ implies the \emph{strong contraction} axiom with constants $A$, $B$, and $C=c$.
\end{lemma}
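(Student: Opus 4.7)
The plan is to reduce the strong contraction hypothesis on the pair $(v,w)$ to the desymmetrized hypothesis on one of the two ordered pairs $(v,w)$ or $(w,v)$, choosing whichever puts the projections in the correct order. The easy case is when $\pi(w) \le \pi(v)$: the hypotheses of desymmetrized strong contraction with constants $a,b,c$ are implied directly by those of strong contraction with constants $A,B$, provided $A \ge a$ and $B \le b$, and the conclusion $\diam \gamma[\pi(v),\pi(w)] \le c$ is exactly what we want with $C=c$.

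The interesting case is when $\pi(v) < \pi(w)$. Here I would interchange the roles of $v$ and $w$ and aim to apply desymmetrized strong contraction to the pair $(w,v)$, since now the projection of the ``second argument'' $v$ satisfies $\pi(v) < \pi(w)$ as required. Note that $\gamma[\pi(v),\pi(w)]=\gamma[\pi(w),\pi(v)]$ as sets, so the conclusion is symmetric and is still what we need. What must be verified, however, is now a condition phrased in terms of $d(w,\gamma[0,K])$ rather than $d(v,\gamma[0,K])$: we need $d(w,\gamma[0,K]) \ge a$ and $d(v,w)\le b\cdot d(w,\gamma[0,K])$.

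Both required inequalities follow by triangle inequality from the hypothesized bounds $d(v,\gamma[0,K])\ge A$ and $d(v,w)\le B\cdot d(v,\gamma[0,K])$. Indeed, for any $x\in\gamma[0,K]$, $d(w,x)\ge d(v,x)-d(v,w)$, and taking infimum gives
\[
d(w,\gamma[0,K]) \;\ge\; d(v,\gamma[0,K]) - d(v,w) \;\ge\; (1-B)\,d(v,\gamma[0,K]).
\]
So to guarantee $d(w,\gamma[0,K])\ge a$ it suffices that $(1-B)A \ge a$, and to guarantee $d(v,w)\le b\cdot d(w,\gamma[0,K])$ it suffices that $B\cdot d(v,\gamma[0,K])\le b(1-B)\,d(v,\gamma[0,K])$, that is, $B\le b/(1+b)$.

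Putting the constraints together, I would set $B := b/(1+b)$ (so in particular $B\le b$, handling the easy case as well) and $A := a(1+b)$ (so $A \ge a$ and $(1-B)A = a$). Both cases then yield $\diam\gamma[\pi(v),\pi(w)] \le c$, completing the proof with $C=c$. There is no real obstacle here; the one subtlety worth being careful about is verifying that these choices simultaneously cover the $\pi(w)\le\pi(v)$ case (where only $A\ge a$ and $B\le b$ are needed) and the $\pi(v)<\pi(w)$ case (where the sharper conditions $(1-B)A\ge a$ and $B\le b/(1+b)$ are needed), which they do by design.
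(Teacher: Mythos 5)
Your proposal is correct and follows essentially the same route as the paper: in the case $\pi(v)<\pi(w)$ one swaps the roles of $v$ and $w$ and uses the triangle inequality to verify the hypotheses of the desymmetrized axiom for the pair $(w,v)$. The only difference is the choice of constants (the paper takes $A=4a$, $B=\min\{1/4,3b/4\}$, whereas you take the sharper $A=a(1+b)$, $B=b/(1+b)$), and both choices work.
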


\begin{proof} Set $A=4a$ and $B=\min\{1/4,3b/4\}$. We need only show that if $\pi(w) > \pi(v)$ in $[0,K]$, if $d(v,\gamma[0,K]) \ge A$ and if $d(w,v) \le B \cdot d(v,\gamma[0,K])$, then $d(w,\gamma[0,K]) \ge a$ and $d(v,w) \le b \cdot d(w,\gamma[0,K])$. We have
\begin{align*}
d(w,\gamma[0,K]) &\ge d(v,\gamma[0,K]) - d(w,v) \\
                             &\ge d(v,\gamma[0,K]) - \frac{1}{4} \cdot d(v,\gamma[0,K]) \\
                             &\ge \frac{3}{4} \cdot d(v,\gamma[0,K]) \ge 3a \ge a \\
\intertext{and}
d(v,w)                   &\le \frac{3}{4} \cdot b \cdot d(v,\gamma[0,K]) \\
                             &\le \frac{3}{4} \cdot b \cdot \frac{4}{3} d(w,\gamma[0,K]) = b \cdot d(w,\gamma[0,K])
\end{align*}
\end{proof}

We now define the path family $\{\gamma_i\}_{i \in \I}$ in $\FS'(F)$ that we use to prove the Main Theorem. Having associated to each fold sequence a fold path, which clearly depends only on the equivalence class of that fold sequence, the index set is defined to be the set of equivalence classes of fold sequences. 

To prove the Main Theorem, by combining the Masur--Minsky theorem, almost transitivity of fold paths, and Lemma~\ref{LemmaDesymmetrization}, it therefore suffices to prove:

\begin{proposition}\label{PropFoldContractions}
Associated to each fold sequence $S_0 \mapsto\cdots\mapsto S_K$ in $\FS'(F)$ there is a projection map $\pi \from \FS'(F) \to [0,K]$, depending only on the equivalence class of the fold sequence, such that the \emph{Coarse retraction}, the \emph{Coarse Lipschitz}, and the \emph{Desymmetrized strong contraction} axioms all hold, with constants $a,b,c$ depending only on $\rank(F)$.
\end{proposition}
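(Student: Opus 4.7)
The plan is to define the projection map $\pi$ via iterated combing. Given a fold sequence $\sigma = (S_0 \mapsto \cdots \mapsto S_K)$ and any free splitting $T$, I would first choose any edge path in $\FS'(F)$ from $S_K$ to $T$ — such a path exists since $\FS'(F)$ is connected — and then apply the combing construction of Section~\ref{SectionCombing} one edge at a time, feeding the output of each combing rectangle into the next. The result is a tower of combing rectangles whose rightmost column terminates at or uniformly near $T$. The projection $\pi(T) \in [0,K]$ is then defined to be the largest index at which this tower still tracks the original fold sequence, as measured in the free splitting units of Section~\ref{SectionFSU}. Well-definedness up to bounded ambiguity, depending only on the equivalence class of $\sigma$ and not on the auxiliary edge-path choice, should follow from the naturality and uniqueness properties of combing rectangles established in Section~\ref{SectionCombingConstructions}, and is essentially what Proposition~\ref{PropProjToFoldPath} is asserting.

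Once $\pi$ is defined, the free splitting unit parameterization is what converts combinatorial combing data into honest simplicial-metric estimates. Consecutive unit-parameter points lie at uniformly bounded simplicial distance by Proposition~\ref{LemmaUnitsLipschitz}, while the number of units along a subpath bounds the simplicial distance between its endpoints from below by Proposition~\ref{PropFoldPathQuasis}; the diameter bounds of Section~\ref{SectionDiamBounds} make these estimates uniform in $\rank(F)$. All three axioms will be verified in unit coordinates and then translated to simplicial distance by this dictionary.

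Coarse Retraction should then be quick: for $T = S_k$ the tower is essentially trivial, so $\pi(S_k)$ differs from $k$ by at most a single combing rectangle's worth of units, which converts via the unit bound to a bounded simplicial diameter. Coarse Lipschitz is similarly short: if $d(T,T') \le 1$, then the edge paths underlying the two towers differ by a single edge, hence their towers differ by one combing rectangle; this shifts the stopping index by a bounded number of units, which converts to a uniform simplicial diameter bound on $\gamma[\pi(T),\pi(T')]$.

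The hard step is the Desymmetrized Strong Contraction axiom, which is what the bulk of Section~\ref{SectionMainProof} (and the peak-pushing technology there) is designed to handle. The content is that once $T$ is at distance $\ge a$ from $\gamma$, a perturbation $T \to T'$ of relative size $b$ cannot shift the projection by more than a bounded amount, provided $\pi(T') \le \pi(T)$. In tower language this says that when the $T$-tower has already spent substantial combing effort moving away from $\gamma$, the extra combings needed to pass from the $T$-tower to the $T'$-tower are absorbed in the divergent upper portion of the tower and do not alter the stopping index on $\gamma$. The one-sided hypothesis $\pi(T') \le \pi(T)$ matches the one-sided propagation of combing along the fold sequence, which is exactly what the desymmetrization in Lemma~\ref{LemmaDesymmetrization} is engineered to accommodate. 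The main obstacle I foresee is verifying that the divergent portion of the tower really does absorb perturbations; this appears to require the diameter bounds on fold subsegments together with the peak-pushing analysis, and is where the technical heart of the proof must reside.
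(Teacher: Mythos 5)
There is a genuine gap, and it sits at the very first step: the definition of the projection map. The paper does not define $\pi(T)$ by combing along a chosen edge path from $S_K$ to $T$; it defines $\pi(T)$ as the \emph{maximal depth} of a projection diagram (Figure~\ref{FigureProjDiagram}) from any conjugate of $T$ to any equivalent fold sequence --- a static configuration of two stacked combing rectangles, involving no auxiliary path and hence no well-definedness issue. Your path-dependent ``tower'' definition has two problems. First, it is underspecified: every row of the tower is a foldable sequence over the full index range $[0,K]$, so ``the largest index at which the tower still tracks the original fold sequence'' has no given meaning, and free splitting units measure progress along a single fold sequence, not agreement between two sequences. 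Second, there are no ``naturality and uniqueness properties of combing rectangles'' in Section~\ref{SectionCombingConstructions} that would make the construction independent of the chosen path: combing by expansion in particular is a genuinely intricate construction, and controlling how the tower changes when the path from $S_K$ to $T$ is altered by a single edge is not an input to the argument --- it \emph{is} the Coarse Lipschitz/Strong Contraction content. So the well-definedness step you defer to ``Proposition~\ref{PropProjToFoldPath}'' is circular: that proposition presupposes the paper's maximality definition.

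Relatedly, you underestimate Coarse Lipschitz. In the paper only Coarse Retraction is quick (Proposition~\ref{PropCoarseRetract}, using the fact that a depth-$i$ projection diagram for $S_i$ trivially exists, so $\pi(S_i)\ge i$ by maximality); Coarse Lipschitz and Desymmetrized Strong Contraction are proved \emph{jointly} via Proposition~\ref{PropFSUContraction}, whose statement already covers $d(T,R)\le 1$, and whose proof requires the full augmented-projection-diagram and peak-pushing machinery of Sections~\ref{SectionPushingDownPeaks}--\ref{SectionProofFSUContraction}. A single additional combing rectangle can a priori shift the projection index by an uncontrolled amount; bounding that shift by a fixed number of free splitting units is exactly what the pushdown argument delivers. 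Finally, note that Proposition~\ref{PropFoldPathQuasis} (the lower quasigeodesic bound) is a \emph{consequence} of Proposition~\ref{PropFSUContraction}, not an available ingredient, so invoking it as part of the ``dictionary'' used to verify the axioms introduces a second circularity.
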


The next step in the proof of the Main Theorem will be taken with the formulation of Proposition~\ref{PropProjToFoldPath}, where the projection maps are defined.

\subparagraph{Remark.} Theorem 2.3 of \cite{MasurMinsky:complex1} contains an additional conclusion, which in our context says that fold paths may be reparameterized to become uniform quasigeodesics in $\FS'(F_n)$, although the reparameterization does not fall out explicitly from their proof. Our method of proof will actually yield an explicit quasigeodesic reparameterization of fold paths, in terms of the ``free splitting units'' introduced in Section~\ref{SectionFSU}. See Proposition~\ref{PropFoldPathQuasis} for the statement and proof regarding this reparameterization.

\section{Combing}
\label{SectionCombing}
In this section we define a combing method for fold sequences. Roughly speaking, given a fold sequence $S_0 \mapsto \cdots \mapsto S_K$ and a free splitting $T'$ which differs from $S_K$ by a single edge in $\FS'(F)$, we want a construction which combs backwards to produce a fold sequence $T_0 \mapsto\cdots\mapsto T_K=T'$ in which each $T_k$ differs from the corresponding $S_k$ by at most a single edge in $\FS'(K)$. We would like to give this construction in two cases, depending on whether the oriented edge from $S_K$ to $T'$ is a collapse $S_K \collapse T'$ or an expansion $S_K \expand T'$. In the case of a collapse $S_K \collapse T'$ there is indeed a process of ``combing by collapse'' which produces a fold sequence as stated; see Proposition~\ref{PropCBC}. In the case of an expansion $S_K \expand T'$, although there is a process of ``combing by expansion'', the sequence $T_0 \mapsto\cdots\mapsto T_K=T'$ produced need not be a fold sequence, instead it belongs to a broader class of map sequences that we refer to as ``foldable sequences''; see Proposition~\ref{PropCBE}. It is an important part of our theory that both combing processes are closed on the collection of foldable sequences; combing by collapse is closed as well on the smaller collection of fold sequences.

In Section~\ref{SectionCombingRectangles} we define the collection of foldable sequences on which combing will be defined, and we define \emph{combing rectangles} which are the commutative diagrams of foldable sequences and collapse maps that are used to describe combing; see Figure~\ref{FigureCombingRectangle}. We also prove Lemma~\ref{LemmaCombingProperties} which says that combing by collapse is closed on foldable sequences. 

The two main combing processes --- combing by collapse, and combing by expansion --- are described in Section~\ref{SectionCombingConstructions}. In Section~\ref{SectionCombRectOps} we will also give some methods for constructing new combing rectangles by composing or decomposing old ones.

\bigskip

Also in Section~\ref{SectionCombingRectangles}, combing rectangles will be used to define the projection map from $\FS'(F)$ to each fold path $S_0 \mapsto\cdots\mapsto S_K$, and we will state Proposition~\ref{PropProjToFoldPath} which says that these projection maps satisfy the requirements of the Masur-Minsky axioms. 

Combing rectangles will be important structures for the rest of the paper. Much of the geometric intuition behind our methods involves visualizing combing rectangles and other, more complicated diagrams of free splittings and maps as objects sitting in the complex $\FS'(F)$, and visualizing various methods for geometrically manipulating these objects. The technical details of the proof of the Main Theorem will involve a calculus of combing rectangles, which is based on the constructions of combing rectangles given in Sections~\ref{SectionCombingConstructions} and~\ref{SectionCombRectOps}.

\subsection{Combing rectangles and the projection onto fold paths}
\label{SectionCombingRectangles}

\paragraph{Foldable sequences.} Consider a sequence of free splittings and maps over $F$ of the form $S_0 \xrightarrow{f_1} S_1 \xrightarrow{f_2}\cdots\xrightarrow{f_K} S_K$, and recall the notation $f^i_j = f_{i+1}\composed\cdots\composed f_j \from S_i \to S_j$ for each $0 \le i < j \le K$. This sequence is said to be a \emph{foldable sequence}\index{foldable sequence} over $F$ if for each $i=0,\ldots,K$ the map $f^i_K \from S_i \to S_K$ is a foldable map. It follows that each of the maps $f^i_j \from S_i \to S_j$ is a foldable map, $0 \le i < j \le K$, because for each vertex $v \in S_i$, the $f^i_j$-gate decomposition of $D_v S_i$ is a refinement of the $f^i_K$-gate decomposition.

\paragraph{Combing rectangles.} A \emph{combing rectangle}\index{combing rectangle} over $F$ is a commutative diagram of maps over $F$ having the form depicted in Figure~\ref{FigureCombingRectangle}, such that:
\begin{enumerate}
\item The top horizontal row is a foldable sequence.
\item Each vertical arrow $\pi_i \from S_i \to T_i$ is a collapse map with collapsed subgraph $\sigma_i \subset S_i$ indicated in the notation.
\item For all $i=1,\ldots,K$ we have $\sigma_{i-1} = f^\inv_i(\sigma_i)$. Equivalently, for all $0 \le i < j \le K$ we have $\sigma_i = (f^i_j)^\inv(\sigma_j)$.
\end{enumerate}
\begin{figure}[h]
$$\xymatrix{
S_0 \ar[r]^{f_1} \ar[d]_{[\sigma_0]}^{\pi_0} 
 & \cdots \ar[r]^{f_{i-1}} 
 & S_{i-1} \ar[d]_{[\sigma_{i-1}]}^{\pi_{i-1}} \ar[r]^{f_i} 
 & S_i \ar[d]_{[\sigma_i]}^{\pi_i} \ar[r]^{f_{i+1}}
 & \cdots \ar[r]^{f_K}  
 & S_K \ar[d]_{[\sigma_K]}^{\pi_K}  \\
T_0 \ar[r]^{g_1}                                                
 & \cdots \ar[r]^{g_{i-1}} & T_{i-1}                                   \ar[r]^{g_i} 
 & T_i                                 \ar[r]^{g_{i+1}}
 & \cdots \ar[r]^{g_K} 
 & T_K
}
$$
\caption{A combing rectangle. Horizontal sequences are foldable, the top by definition and the bottom by Lemma \ref{LemmaCombingProperties}. Vertical arrows are collapses and $\sigma_{i-1}=f_i^\inv(\sigma_i)$.}
\label{FigureCombingRectangle}
\end{figure}

As mentioned earlier, combing is not closed on the set of fold sequences. We will eventually prove that combing is closed on the set of all foldable sequences; the following proves this in part, by showing closure under ``combing by collapse''.

\begin{lemma} \label{LemmaCombingProperties} For any combing rectangle notated as in Figure~\ref{FigureCombingRectangle}, the bottom row is a foldable sequence.
\end{lemma}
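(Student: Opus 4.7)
The plan is to prove the slightly more general statement from which the lemma follows at once: for any foldable map $h\colon S\to S'$ of free splittings and any proper $F$-equivariant subgraph $\sigma'\subset S'$, setting $\sigma:=h^{-1}(\sigma')$ and writing $T=S/\sigma$, $T'=S'/\sigma'$, the induced map $\bar h\colon T\to T'$ is foldable. Once this is in hand, the lemma follows by applying it with $h=f^i_K$ and $\sigma'=\sigma_K$ for each $i=0,\ldots,K-1$; the hypothesis $\sigma_{j-1}=f_j^{-1}(\sigma_j)$ iterates to $\sigma_i=(f^i_K)^{-1}(\sigma_K)$, so each $g^i_K\colon T_i\to T_K$ is foldable, which is precisely the condition that the bottom row be a foldable sequence.

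The edgelet-injectivity half of foldability of $\bar h$ is immediate from the defining relation: any edgelet $\bar e\subset T$ lifts uniquely to an edgelet $e\subset S\setminus\sigma$, and since $e\notin\sigma=h^{-1}(\sigma')$ the image $h(e)$ lies in $S'\setminus\sigma'$, so $\bar h(\bar e)$ is a genuine edgelet of $T'$. To handle the gate counts I would fix $\bar v\in T$ and let $C\subset S$ be its preimage under the collapse. When $C=\{v\}$ with $v\notin\sigma$, the directions at $\bar v$ correspond bijectively to the directions at $v$, both valence and naturality transfer verbatim, and foldability of $h$ at $v$ immediately gives foldability of $\bar h$ at $\bar v$. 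The substantive case is when $C$ is a nondegenerate subtree component of $\sigma$.

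The main tool I would use in this subtree case is the First Derivative Test (Lemma~\ref{LemmaFDT}) applied to $h$ with $W=C$: every valence-one vertex of $h(C)$ is the $h$-image of some point on the frontier $\partial C$ of $C$ in $S$. The $\bar h$-gates at $\bar v$ are in bijection with the distinct images $h(e)\in S'\setminus\sigma'$ as $e$ ranges over external edgelets of $C$, so if $\bar h$ had only one gate at $\bar v$ all these images would coincide with a single edgelet $e'=[v'_1,w']$, forcing $h(\partial C)=\{v'_1\}$ and hence $v'_1$ to be the only possible valence-one vertex of $h(C)$. However $h(C)$ is a nondegenerate subtree of $S'$ and every leaf of $C$ must lie in $\partial C$ (otherwise it would be a valence-one vertex of $S$, violating minimality), and tree combinatorics on $h(C)$ (using cocompactness under the stabilizer of $C$ to handle the infinite case) produces a second valence-one vertex $v'_*\ne v'_1$ of $h(C)$. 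Any preimage $w\in C$ of $v'_*$ must then lie in the interior of $C$ (because $v'_*\notin h(\partial C)$), has valence $\ge 2$ in $C$, and has all of its $C$-neighbors mapping to the unique $h(C)$-neighbor of the leaf $v'_*$; hence $h$ has only one gate at $w$, contradicting foldability of $h$. The hard part will be the refinement to $\ge 3$ gates at a natural $\bar v$: if $\bar h$ had only two gates, the external images concentrate on at most two endpoints $v'_1,v'_2\in C'$, and applying the Lemma~\ref{LemmaFDT} argument with each of $v'_1,v'_2$ as reference forces $h(C)$ to be a path from $v'_1$ to $v'_2$. A local gate-count at preimages of interior-of-path points then shows such preimages have valence $2$ in $C$, while a gate-count at preimages of the endpoints $v'_1,v'_2$ (using that each has only one internal direction in $h(C)$ and one external gate) forces each such preimage to be a leaf of $C$ with a single external edgelet. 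Consequently $C$ itself has no branching and is a path with just two leaves, so $\bar v$ has valence $2$ in $T$, contradicting its naturality.
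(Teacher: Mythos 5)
Your route is essentially the paper's: you reduce to showing that each $g^i_K$ is foldable, identify the gates of the induced map at a collapsed vertex $\bar v$ with the distinct images of the external edgelets of the preimage subtree $C=\pi^{-1}(\bar v)$, and then use the First Derivative Test (Lemma~\ref{LemmaFDT}) to locate the valence-one vertices of $h(C)$ and run a case analysis. Your contrapositive in the natural-vertex case (two gates force $h(C)$ to be a segment, force every point of $C$ over its interior to have valence $2$ and every point over its endpoints to be a leaf with one external edgelet, hence $C$ is an arc and $\bar v$ has valence $2$) is a correct variant of the paper's $P=2$ subcase, which instead extracts a third direction from the $\ge 3$ gates of $h$ at a natural vertex inside $C$; both hinge on the same direction-set bijection and on Lemma~\ref{LemmaFDT}.

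The one step that would fail as written is your treatment of the case where $C$ has infinite diameter. You assert that ``tree combinatorics on $h(C)$ (using cocompactness under the stabilizer of $C$\,)'' produces a second valence-one vertex of $h(C)$, and later that $h(C)$ is a path from $v'_1$ to $v'_2$; but an infinite subtree need not have a second leaf (it can be a ray from $v'_1$, or have no leaves at all), and cocompactness of the $\Stab(C)$-action does not manufacture one. The correct disposal of this case is different in kind: by Lemma~\ref{LemmaCollapseProps}\,\pref{ItemCollapseFrontierHull}, $C$ is the convex hull of its frontier, so infinite diameter forces infinitely many frontier points, hence infinite valence of $\bar v$; since $\bar h$ is injective on edgelets, Lemma~\ref{LemmaGateBound} bounds each gate by $2\rank(F)$, so $\bar v$ has infinitely many gates and there is nothing to prove. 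Equivalently, once $\bar v$ has finite valence you may assume $C$ (hence $h(C)$) is a finite tree, and the rest of your leaf-counting argument then goes through.
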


We put off the proof of Lemma~\ref{LemmaCombingProperties} until after the definition of the projection map.

\paragraph{Projecting onto fold paths.} Given a free splitting~$F \act T$, a fold sequence $S_0 \mapsto\cdots\mapsto S_K$, and an integer $k \in [0,K]$, a \emph{projection diagram from $T$ to $S_0 \mapsto\cdots\cdots S_K$ of depth $k$}\index{projection diagram} is a commutative diagram of free splittings and maps over $F$ of the form depicted in Figure~\ref{FigureProjDiagram}, such that each horizontal row is a foldable sequence, and the two rectangles shown are combing rectangles. 
\begin{figure}[h]
$$\xymatrix{
T_0 \ar[r] \ar[d] & \cdots \ar[r] & T_{k} \ar[r] \ar[d] &  T \\
S'_0 \ar[r]          & \cdots \ar[r] & S'_{k} \\
S_0 \ar[r] \ar[u] & \cdots \ar[r] & S_{k} \ar[r] \ar[u]  & \cdots \ar[r] & S_K \\
}$$
\caption{A projection diagram of depth $k$ from $T$ to $S_0\mapsto\cdots\mapsto S_K$.}
\label{FigureProjDiagram}
\end{figure}

The \emph{projection} $\pi(T) \in [0,\ldots,K]$ of $T$ to $S_0\mapsto\cdots\mapsto S_K$ is defined to be the maximum depth of any projection diagram from a free splitting conjugate to $T$ to a fold sequence equivalent to $S_0 \mapsto\cdots\mapsto S_K$, if such a diagram exists, and $\pi(T)=0$ otherwise. It is clear that this gives a well-defined function $\pi \from \FS'(F) \to [0,\ldots,K]$ that depends only on the equivalence class of the fold sequence $S_0 \mapsto\cdots\mapsto S_K$.

One way to understand this definition is to think of $\FS'(F)$ as being Gromov hyperbolic and to think of fold paths as being quasigeodesic, all of which are true a posteriori assuming that Proposition~3.2 is true. That being so, given a fold path $S_0 \mapsto\cdots\mapsto S_K$ and $T$ projecting to $\pi(T) \in [0,\ldots,K]$, by moving to some point $S'_0$ nearby $S_0$ we should obtain a fold path from $S'_0$ to $T$ having an initial segment that fellow travels the given fold path from $S_0$ to $S_{\pi(T)}$ and no farther. In defining the projection map as above, the intuition is that combing rectangless provide an adequate definition of fellow traveling. The technical details of the definition were crafted to what would work in our proofs, but also received some original motivation from results of \cite{MasurMinsky:complex1} which amount to a proof that for any finite type oriented surface $S$, splitting sequences of train tracks on $S$ define quasigeodesics in the curve complex of~$S$. In particular, Lemma~4.4 of that paper --- which can be regarded as a verification of the \emph{Coarse Lipschitz} axiom --- contains the statement ``$\beta \in PE(\sigma)$'', and if one works out the train track diagram for that statement one obtains a rather strong analogue of our projection diagram above.

The rest of the paper is devoted to the proof of the following, which immediately implies Proposition~\ref{PropFoldContractions} and therefore implies the Main Theorem:

\begin{proposition}\label{PropProjToFoldPath}
There exist $a,b,c>0$ depending only on $\rank(F)$ such that for any fold sequence $S_0 \mapsto\cdots\mapsto S_K$ in $\FS'(F)$, the projection map $\pi \from \FS'(F) \to [0,\ldots,K]$ defined above satisfies the \emph{Coarse retraction}, \emph{Coarse Lipschitz}, and \emph{Desymmetrized strong contraction} axioms with constants $a,b,c$.
\end{proposition}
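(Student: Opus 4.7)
The plan is to establish the three axioms using two technical devices developed in Sections~\ref{SectionCombing}--\ref{SectionMainProof}: first, the combing processes of Propositions~\ref{PropCBC} and~\ref{PropCBE}, which allow us to perturb a projection diagram by one edge of $\FS'(F)$ while keeping it a projection diagram of comparable depth; second, the reparameterization of fold paths by free splitting units, which couples a uniform diameter bound per unit (Proposition~\ref{LemmaUnitsLipschitz}) with a quasigeodesic lower bound on progress (Proposition~\ref{PropFoldPathQuasis}). The guiding principle is to bound fluctuations of $\pi$ not on the raw index scale $[0,K]$, where they can be as large as the number of fold steps inside a single unit, but on the coarser free splitting unit scale, and then translate the resulting bound into a diameter bound via Proposition~\ref{LemmaUnitsLipschitz}.

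For the \emph{Coarse retraction} axiom, I would exhibit a tautological projection diagram from $S_k$ of depth $k$ by taking both combing rectangles to be trivial (top row equal to bottom row, with identity collapses), showing $\pi(S_k) \ge k$. Since projection diagrams cap off at depth $K$, the interval $\gamma[k,\pi(S_k)]$ spans only a bounded number of free splitting units; the diameter bound from Section~\ref{SectionDiamBounds} then gives the required constant $c$. This is essentially the content of Proposition~\ref{PropCoarseRetract}.

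For the \emph{Coarse Lipschitz} axiom, take adjacent vertices $T,T'$ connected by a single edge of $\FS'(F)$, either a collapse $T \collapse T'$ or an expansion $T \expand T'$. Starting from an optimal depth-$k$ projection diagram realizing $\pi(T)$, I would attach the extra edge at the top and apply the appropriate combing process --- Proposition~\ref{PropCBC} for collapses, Proposition~\ref{PropCBE} for expansions --- to push the diagram sideways and produce a projection diagram from $T'$. The depth lost in this process is controlled on the unit scale, and by symmetry so is the reverse inequality, giving a uniform bound on $|\pi(T)-\pi(T')|$ in units; applying Proposition~\ref{LemmaUnitsLipschitz} converts this into the required diameter bound.

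The \emph{Desymmetrized strong contraction} axiom is where the essential difficulty lies, and it is the step I expect to be the main obstacle. The hypotheses $d(v,\gamma[0,K]) \ge a$ and $d(v,w) \le b \cdot d(v,\gamma[0,K])$ mean that any geodesic from $v$ to $w$ stays far from the fold path. A naive iteration of the Coarse Lipschitz bound along such a geodesic would allow $|\pi(v)-\pi(w)|$ to grow linearly with $d(v,w)$, which is exactly what the axiom forbids. The key additional input that must be exploited is a rigidity phenomenon for projection diagrams whose bases lie far from the fold path: if the depth were to change by more than a bounded amount across the ball, then a peak in a combined projection diagram could be pushed down through a subsequent combing rectangle, producing either a shorter projection diagram that violates maximality of the depth, or a contradiction with the distance lower bound $d(v,\gamma[0,K]) \ge a$. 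Implementing this push-down argument through a calculus of combing rectangles is the work of Sections~\ref{SectionPushingDownPeaks} and~\ref{SectionProofFSUContraction}; its output is the uniform unit-scale bound on the variation of $\pi$ across the ball, from which the asserted diameter bound on $\gamma[\pi(v),\pi(w)]$ follows by one last application of Proposition~\ref{LemmaUnitsLipschitz}.
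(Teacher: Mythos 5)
Your overall architecture does match the paper's: the Coarse Retraction axiom is Proposition~\ref{PropCoarseRetract}, and the other two axioms are ultimately extracted from the pushing-down-peaks calculus of Sections~\ref{SectionPushingDownPeaks} and~\ref{SectionProofFSUContraction} together with the unit-scale diameter bound of Lemma~\ref{LemmaUnitsLipschitz}. However, two of your intermediate justifications do not go through as stated. The more serious one is the Coarse Lipschitz step. After attaching the extra edge to a maximal-depth projection diagram and combing via Proposition~\ref{PropCBC} or~\ref{PropCBE}, each tree $T_i$ ($i\le k$) in the top row carries \emph{two} collapse maps --- one to $S'_i$ from the original diagram and one coming from the new combing rectangle --- and to reassemble a projection diagram from $T'$ you need a common further collapse. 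That exists only when the union of the two collapsed subgraphs is a proper subgraph of $T_i$ (the simplistic pushdown of Figure~\ref{FigureSimplistic}); when the union is all of $T_i$, which cannot be ruled out, one must back up along the fold path by $4\rank(F)-3$ free splitting units and apply Proposition~\ref{PropPushdownInToto}. So the assertion that ``the depth lost in this process is controlled on the unit scale'' is precisely the content of Proposition~\ref{PropFSUContraction}, not a consequence of combing; the paper accordingly obtains Coarse Lipschitz as the $d(T,R)\le 1$ instance of that proposition rather than by a separate, easier argument.

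Second, your reason for the Coarse Retraction bound --- that projection diagrams cap off at depth $K$ --- is not the operative one: bounding the depth by $K$ says nothing about the number of free splitting units between $k$ and $\pi(S_k)$. The actual mechanism in Proposition~\ref{PropCoarseRetract} is to collapse a natural edge of $S'_k$ in a maximal-depth diagram, producing a one-edge splitting $R$ with $d(T_k,R)\le 1$ and $d(S_k,R)\le 1$, and then to invoke Lemma~\ref{LemmaBROneB}: the top row from $T_k$ to $S_k$ has an almost invariant edge, hence diameter $\le 2$, hence $\diam\{S_k,\ldots,S_{\pi(S_k)}\}\le 6$. A smaller circularity to avoid: Proposition~\ref{PropFoldPathQuasis} cannot serve as an input here, since its proof uses the very contraction statement you are trying to establish; only the Lipschitz direction, Lemma~\ref{LemmaUnitsLipschitz}, is available at this stage. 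Your account of the strong contraction axiom correctly locates the work in the pushdown calculus, though note that the paper's argument is a direct construction --- comb the augmented projection diagram across the entire geodesic zig-zag and then remove the corrugation peaks one at a time, each removal funded by $4\rank(F)-3$ free splitting units of the fold path $T_{k_T}\mapsto\cdots\mapsto T$ --- rather than a contradiction with maximality of the depth.
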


The \emph{Coarse Retraction} axiom is proved in Proposition~\ref{PropCoarseRetract} and the other two axioms are proved in Section~\ref{SectionMainProof}.

\bigskip

We now turn to:

\begin{proof}[Proof of Lemma \ref{LemmaCombingProperties}.] 
Following the notation of Figure~\ref{FigureCombingRectangle}, we must show that each map $g^i_K \from T_i \to T_K$ is foldable. First note that $g^i_K$ is injective on each edgelet $e \subset T_i$, because $e = \pi_i(\ti e)$ for some edgelet $\ti e \subset S_i \setminus \sigma_i$, so $f^i_K(\ti e) \subset S_K \setminus \sigma_K$, so $\pi_K(f^i_K(\ti e)) = g^i_K(\pi_i(\ti e)) = g^i_K(e)$ is an edgelet of $T_K$.

Given a vertex $w \in T_i$, we must show that $g^i_K$ has $\ge 2$ gates at $w$, and that if $w$ is natural then $g^i_K$ has $\ge 3$ gates. Let $w' = g^i_K(w) \in T_K$. We have a subgraph $W' = \pi_K^\inv(w') \subset S_K$, and a subgraph $W = \pi_i^\inv(w) \subset S_i$ such that $f^i_K(W) \subset W'$. Note that each direction in $D_W S_i$ is based at a frontier vertex of $W$ and is represented by an edgelet of $S_i \setminus \sigma_i$, and similarly for $D_{W'} S_K$, and so these direction sets are in the domain of definition of the derivative maps $d \pi_i$, $d \pi_K$, respectively. We have a commutative diagram of derivatives
$$\xymatrix{
D_W S_i \ar[r]^{d f^i_K} \ar[d]_{d\pi_i} & D_{W'} S_K \ar[d]^{d\pi_K} \\
D_w  T_i \ar[r]_{d_w g^i_K}                       & D_{w'} T_K
}$$
in which the vertical maps are bijections and so $d \pi_i$ induces a bijection between gates of $d_w g^i_K$ and point pre-images of the map in the top row. The valence of $w$ therefore equals the cardinality of the set $D_W S_i$, and the number of gates of $g^i_K$ at $w$ equals the cardinality of the image of the map in the top row. If $w$ has valence $\ge 2$ (resp.\ $\ge 3$) then we must prove that the image of the map in the top row has cardinality $\ge 2$ (resp.~$\ge 3$).

Suppose that $w$ is a valence~$2$ vertex contained in the interior of a natural edge $\eta \subset T_i$. The subgraph $W$ is either a point or a segment contained in the interior of a natural edge $\ti\eta \subset S_i$ such that $\pi_i(\ti\eta)=\eta$. Let $e_1,e_2 \subset \eta$ be the two oriented edgelets incident to $w$, representing the two directions of the set $D_w T_i$. Let $\ti e_1, \ti e_2 \subset \ti\eta \setminus W$ be the two oriented edgelets incident to the endpoints of $W$ representing the two elements of the set $D_W S_i$, indexed so that $\pi_i(\ti e_j)=e_j$, $j=1,2$. Since $f^i_K$ is injective on $\ti\eta$ it follows that $f^i_K(\ti e_1)$, $f^i_K(\ti e_2)$ are distinct edgelets of~$S_K$. Noting that $g^i_K(e_j) = g^i_K(\pi_i(\ti e_j)) = \pi_K(f^i_K(\ti e_i))$ for $j=1,2$, it follows that these are two distinct edgelets of $T_K$, and so $g^i_K$ has 2 gates at~$w$.

Suppose now that $w$ is a vertex of valence~$\ge 3$, so the set $D_W S_i$ has cardinality~$\ge 3$. If $W$ is a point then it has valence~$\ge 3$ and, since $f^i_K$ is foldable, there are $\ge 3$ gates of $f^i_K$ in $D_W S_i$; it follows that there are $\ge 3$ gates of $g^i_K$ in $D_w S_i$. If $W$ has infinite diameter then $D_W S_i$ is infinite and so $w$ has infinite valence, implying that $g^i_K$ has infinitely many gates at $w$. If $W$ does not contain a natural vertex of $S_i$ then it is a segment in the interior of a natural edge of $S_i$ implying that $w$ has valence~$2$, a contradiction.

We have reduced to the case that the graph $W$ has finite diameter, is not a point, and contains a natural vertex of $S_i$. The graph $f^i_K(W)$ also has finite diameter and is not a point, and so has $P \ge 2$ vertices of valence~$1$ (the cardinality $P$ may be countably infinite). Let $X \subset W$ be a set consisting of one vertex of $W$ in the preimage of each valence~$1$ vertex of $f^i_K(W)$. By the First Derivative Test, each $x \in X$ is a frontier vertex of $W$. Choosing a direction $\delta_x \in D_W S_i$ based at each $x \in X$, it follows that the directions $d f^i_K(\delta_x)$ are based at $P$ distinct points of $S_K$ and are therefore $P$ distinct directions in the set $D_{W'} S_K$. If $P \ge 3$ then we are done.

We have reduced further to the subcase that $P=2$, and so $f^i_K(W)$ is a segment with endpoints $u_1,u_2$. We have $X = \{x_1,x_2\}$ with $f^i_K(x_j)=u_j$. Consider a natural vertex $v \in S_i$ such that $v \in W$, and its image $v' = f^i_K(v) \in f^i_K(W)$. Since $f^i_K$ is foldable, there are $\ge 3$ gates at $v$ with respect to $f^i_K$. If $v' = u_j$ then at least one of the gates at $v$ maps to a direction at $u_j$ which is distinct from the direction $d f^i_K (\delta_{x_j})$ and from the unique direction of the segment $f^i_K(W)$ at $u_j$, and so we have found a third direction in the set $D_{W'} S_K$. If $v'$ is an interior point of the segment $f^i_K(W)$ then at least one of the gates at $v$ maps to a direction at $v'$ distinct from the two directions of the segment $f^i_K(W)$ at $v'$ and again we have found a third direction in $D_{W'} S_K$.
\end{proof}

\subsection{Combing by collapse and combing by expansion}
\label{SectionCombingConstructions}
In approaching the proof of Proposition~\ref{PropProjToFoldPath}, one immediately confronts the need for constructions of combing rectangles, in order to construct the projection diagrams needed to compute projection maps. This section and the next contain the constructions of combing rectangles that we use for this purpose. 

Our first construction of combing rectangles shows how to comb a foldable sequence followed by a collapse map.

\begin{proposition}[Combing by collapse]\label{PropCBC}
For each foldable sequence $S_0 \xrightarrow{f_1} S_1 \xrightarrow{f_2} \cdots \xrightarrow{f_K} S_K$, and for each collapse $S_K \xrightarrow{[\sigma_K]} T'$ there exists a combing rectangle of the form shown in Figure~\ref{FigureCombingRectangle} such that $T_K=T$.
\end{proposition}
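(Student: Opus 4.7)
The plan is to build the combing rectangle by pulling back the collapsed subgraph $\sigma_K$ along the foldable maps. For each $i \in \{0,1,\ldots,K-1\}$, I would define $\sigma_i := (f^i_K)^{-1}(\sigma_K) \subset S_i$. This is an $F$-equivariant subgraph since $f^i_K$ is equivariant and simplicial. The required compatibility $\sigma_{i-1} = f_i^{-1}(\sigma_i)$ is then immediate from the factorization $f^{i-1}_K = f^i_K \composed f_i$, which yields $(f^{i-1}_K)^{-1}(\sigma_K) = f_i^{-1}((f^i_K)^{-1}(\sigma_K))$.

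The next step is to verify that each $\sigma_i$ is a \emph{proper} subgraph of $S_i$, so that collapsing gives a legitimate free splitting. Here I would use that any foldable map is surjective: its image is an $F$-invariant subtree of the target, which by minimality of the target action must be everything. Choosing any point $y \in S_K \setminus \sigma_K$ (which exists because $\sigma_K$ is proper) and any preimage $x \in (f^i_K)^{-1}(y)$ shows $x \notin \sigma_i$. I can then define $T_i$ to be the quotient of $S_i$ obtained by collapsing each component of $\sigma_i$ to a point, with the induced quotient map $\pi_i \from S_i \to T_i$; by the construction discussed just after Lemma~\ref{LemmaCollapseProps}, $F \act T_i$ is a free splitting and $\pi_i$ is a collapse map with collapsed subgraph $\sigma_i$. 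At $i=K$ this reproduces the given $S_K \xrightarrow{[\sigma_K]} T_K$.

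It remains to construct the horizontal bottom-row maps $g_i \from T_{i-1} \to T_i$ and show the diagram commutes. I would define $g_i$ to be the unique function satisfying $g_i \composed \pi_{i-1} = \pi_i \composed f_i$. Well-definedness uses the identity $\sigma_{i-1} = f_i^{-1}(\sigma_i)$: if $\pi_{i-1}(x) = \pi_{i-1}(y)$ with $x \ne y$, then $x$ and $y$ lie in a common component $C$ of $\sigma_{i-1}$, and the connected set $f_i(C) \subset \sigma_i$ lies inside a single component of $\sigma_i$, so $\pi_i(f_i(x)) = \pi_i(f_i(y))$. The map $g_i$ is $F$-equivariant by construction, and it is simplicial because each edgelet $e \subset T_{i-1}$ lifts to an edgelet $\ti e \subset S_{i-1} \setminus \sigma_{i-1}$, and $f_i(\ti e)$ is an edgelet (foldable maps are edgelet-injective) not contained in $\sigma_i$ (since $\ti e \not\subset f_i^\inv(\sigma_i) = \sigma_{i-1}$), so $g_i(e) = \pi_i(f_i(\ti e))$ is an edgelet of $T_i$. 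Commutativity of the full diagram follows.

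With the combing rectangle's three defining properties (foldable top row, vertical collapses, $\sigma_{i-1} = f_i^\inv(\sigma_i)$) now in place, I would invoke Lemma~\ref{LemmaCombingProperties} to conclude that the bottom row is itself a foldable sequence, completing the proof. I do not expect any genuine obstacle: the whole argument is essentially a pullback construction, with the only nonformal input being surjectivity of foldable maps (to guarantee properness of each $\sigma_i$) and edgelet-injectivity of foldable maps (to ensure the induced horizontal maps are simplicial rather than partly collapsing).
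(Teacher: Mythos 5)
Your proposal is correct and follows essentially the same route as the paper: pull back $\sigma_K$ along the foldable maps, check properness and build the collapsed trees $T_i$ with induced maps $g_i$, and defer foldability of the bottom row to Lemma~\ref{LemmaCombingProperties}. The paper's proof is terser (it asserts properness by induction and calls the commutativity "evident"), but the details you supply — surjectivity of foldable maps for properness, edgelet-injectivity for simpliciality of the $g_i$ — are exactly the right ones.
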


\begin{proof} Define an equivariant subgraph $\sigma_i \subset S_i$ using the definition of a combing rectangle: starting with $\sigma_K \subset S_K$, by induction define $\sigma_i = f^\inv_{i+1}(\sigma_{i+1})$. Since $\sigma_K \subset S_K$ is a proper equivariant subgraph it follows by induction that each $\sigma_i \subset S_i$ is a proper equivariant subgraph, and so free splittings $F \act T_i$ with collapse maps $S_i \xrightarrow{[\sigma_i]} T_i$ and induced maps $g_{i} \from T_{i-1} \to T_i$ are all defined, and the squares are all evidently commutative. 
\end{proof}

We remark that the cheapness of the above proof is slightly offset by the modest expense of proving that the $T_i$ sequence is foldable, which was done back in Lemma~\ref{LemmaCombingProperties}.

\bigskip

Next we explain how to comb a foldable sequence followed by an expansion. In sharp contrast to the case of combing by collapse, both the construction of the combing rectangle and the proof that the resulting map sequence is foldable are very intricate in the case of combing by expansion.

\begin{proposition}[Combing by expansion]\label{PropCBE}
For each foldable sequence $S_0 \xrightarrow{f_1} S_1 \xrightarrow{f_2} \cdots \xrightarrow{f_K} S_K$, each expansion $S_K \expandsto T'$, and each collapse map $\pi' \from T' \to S_K$, there exists a combing rectangle of the form
$$\xymatrix{
 S_0 \ar[r]^{f_1} 
 & \cdots \ar[r]^{f_{i-1}} 
 & S_{i-1} \ar[r]^{f_i} 
 & S_i  \ar[r]^{f_{i+1}}
 & \cdots \ar[r]^{f_K}  
 & S_K  \\
 T_0 \ar[r]^{g_1} \ar[u]^{[\sigma_0]}_{\pi_0}                                          
 & \cdots \ar[r]^{g_{i-1}} 
 & T_{i-1}  \ar[u]^{[\sigma_{i-1}]}_{\pi_{i-1}}  \ar[r]^{g_i} 
 & T_i  \ar[u]^{[\sigma_i]}_{\pi_i}   \ar[r]^{g_{i+1}}
 & \cdots \ar[r]^{g_K} 
 & T_K \ar[u]^{[\sigma_K]}_{\pi_K=\pi'} \ar@{=}[r] 
 & T'
}
$$
\end{proposition}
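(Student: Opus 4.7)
The proof proceeds by backward induction on $k$, starting from the given data $T_K = T'$, $\pi_K = \pi'$, and at each step constructing $T_{k-1}$, $\pi_{k-1}$, and $g_k$. The core construction is a pullback-style blow-up: for each vertex $u \in S_{k-1}$ with $v := f_k(u) \in S_k$, equivariantly blow up $u$ to the convex hull inside $\pi_k^\inv(v) \subset T_k$ of the attaching set $b(df_k(D_u S_{k-1}))$, where $b \from D_v S_k \to \{\text{frontier vertices of } \pi_k^\inv(v)\}$ is the canonical bijection (direction $\leftrightarrow$ external edge attachment) furnished by Lemma~\ref{LemmaCollapseProps}\pref{ItemCollapseFrontierHull}, which asserts that $\pi_k^\inv(v)$ equals the convex hull of its frontier. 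The lift of each edge $e \in S_{k-1}$ with initial direction $d$ at $u$ is then attached at the vertex $b(df_k(d))$ of the blow-up. The resulting $T_{k-1}$ is a free splitting of $F$ (edge stabilizers inject into trivial edge stabilizers of $S_{k-1}$ or $T_k$; minimality comes from the convex-hull construction), equipped with the evident collapse $\pi_{k-1} \from T_{k-1} \to S_{k-1}$ (killing all blow-up edges) and the simplicial map $g_k \from T_{k-1} \to T_k$ (embedding each blow-up subtree into $\sigma_k$ and sending each lifted $S_{k-1}$-edge to the corresponding unique lift in $T_k \setminus \sigma_k$). Commutativity $\pi_k \composed g_k = f_k \composed \pi_{k-1}$ and the pullback relation $\sigma_{k-1} = g_k^\inv(\sigma_k)$ are then immediate.

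The substantive content is to verify that each composite $g^i_K \from T_i \to T_K$ is foldable, so that the $T$-row is a foldable sequence. Injectivity on edgelets follows from the analogous property of $f^i_K$: each edgelet of $T_i$ is either a lift (mapped bijectively to a lift in $T_K$) or a blow-up edgelet (embedded into $\sigma_K \subset T_K$). For gate counts at a vertex $p \in T_i$ with $w := \pi_i(p) \in S_i$, the directions $D_p T_i$ partition into \emph{internal} directions (edges of the blow-up $\hat w$ incident to $p$) and \emph{external} directions (lifts of $S_i$-edges at $w$ whose attaching point is $p$). From the commutative derivative square
\[
\xymatrix{
D_p T_i \ar[r]^{dg^i_K} \ar[d]_{d_p \pi_i} & D_{g^i_K(p)} T_K \ar[d]^{d \pi_K} \\
D_w S_i \ar[r]_{df^i_K} & D_{f^i_K(w)} S_K
}
\]
whose vertical collapse-derivatives restrict to bijections on the external direction sets, one shows: distinct internal directions lie in distinct $g^i_K$-gates at $p$ (since $g^i_K$ embeds $\hat w$ into $\sigma_K$); internal and external directions at $p$ lie in different gates (their images lie in $\sigma_K$ and $T_K \setminus \sigma_K$ respectively); and external directions at $p$ partition into $g^i_K$-gates exactly according to which $f^i_K$-gates at $w$ have their image-direction $b$-attaching at $p$.

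The main obstacle is verifying the $\ge 2$-gate condition at every vertex and the $\ge 3$-gate condition at every natural vertex of $T_i$. At a vertex $p$ with trivial blow-up ($\hat w = \{p\}$), the $g^i_K$-gates at $p$ correspond bijectively to the $f^i_K$-gates at $w$, so foldability of $f^i_K$ transfers directly. At a vertex $p$ interior to a nontrivial $\hat w$ with only internal directions, the First Derivative Test (Lemma~\ref{LemmaFDT}) applied to the embedding $\hat w \hookrightarrow T_K$ combined with the convex-hull description of $\hat w$ rules out valence-$1$ points, while any valence-$2$ interior point is automatically non-natural in $T_i$, so gate counts are forced. The delicate case is an attaching vertex $p$ of $\hat w$ at which several directions in $D_w S_i$ share an $f^i_K$-gate and thus collapse to a single external $g^i_K$-gate at $p$: there, achieving $\ge 3$ total gates for natural $p$ requires $\ge 2$ internal gates, i.e., valence of $\hat w$ at $p$ at least $2$. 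Controlling the internal valence at such attaching points is where the convex-hull blow-up construction (and hence Lemma~\ref{LemmaCollapseProps}\pref{ItemCollapseFrontierHull}) enters most essentially, and it is what distinguishes this proof from the near-trivial combing-by-collapse argument in Proposition~\ref{PropCBC}.
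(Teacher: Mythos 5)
Your construction is, up to notation, Step~1 of the paper's proof: blowing up each vertex $u\in S_{k-1}$ into the convex hull of the attaching points $b(df_k(D_uS_{k-1}))$ inside $\pi_k^\inv(f_k(u))$ is exactly the minimal subtree of the pushout $\wedge(S_{k-1},T_k)$, and the paper's Lemma~\ref{LemmaCollapseProps}~\pref{ItemCollapseFrontierHull} plays the same role in both arguments. The gap is in your final paragraph. You correctly isolate the delicate case --- an attaching vertex $p$ of the blow-up tree $\hat w$ at which all external directions fall into a single $f^i_K$-gate of cardinality $\ge 2$, so that $\ge 3$ total gates would require $\hat w$ to have valence $\ge 2$ at $p$ --- but you then assert that the convex-hull construction controls this internal valence. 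It does not. The frontier points of the convex hull are typically its leaves: if $p=\xi_{i,v}(\text{one gate})$ is a valence-$1$ vertex of $\hat w$ and its unique external gate has cardinality $\ge 2$, then $p$ is a natural vertex of the pullback with exactly $2$ gates, and the sequence fails to be foldable. The paper shows these ``bad natural vertices'' genuinely occur and devotes its entire Step~2 to repairing them: one collapses the equivariant union $Z_i$ of bad natural edges, redefines $T_i$ as the collapsed tree, and constructs a ``partial multifold'' $\mu_i\from T_i\to U_i$ that splits each bad edge into proper initial segments of the adjacent good natural edges; the collapse map of the conclusion is then $\pi_i=\pi'_i\composed\mu_i$, and the maps $g_i$ must be built by hand from the characterization of $\mu_i$-images of natural edges. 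None of this is present in your proposal, and without it the tree you construct is not the $T_i$ of the statement and the $\ge 3$-gate condition fails at every bad vertex.

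A secondary consequence: because the repaired $T_{k}$ differs from the raw pullback $U_k$, your backward induction would have to decide at each stage whether to pull back against the repaired tree or the raw one; the paper sidesteps this by building every $U_i$ directly as the minimal subtree of $\wedge(S_i,T')$ and only then performing the repair uniformly in $i$, which is also what makes the commutativity of the $g_i$ with the $h_i$ verifiable.
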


\textbf{Remark.} Implicit in the conclusion via the definition of combing rectangle is that the sequence $T_0\xrightarrow{g_1}\cdots\xrightarrow{g_K} T_K$ is foldable.

\begin{proof} We will construct this combing rectangle in two steps. In Step~1 we produce a commutative diagram of free splittings and maps of the form  
$$\xymatrix{
 S_0 \ar[r]^{f_1} 
 & \cdots \ar[r]^{f_{i-1}} 
 & S_{i-1} \ar[r]^{f_i} 
 & S_i  \ar[r]^{f_{i+1}}
 & \cdots \ar[r]^{f_K}  
 & S_K  \\
 U_0 \ar[r]^{h_1} \ar[u]_{[\sigma'_0]}^{\pi'_0}                                          
 & \cdots \ar[r]^{h_{i-1}} 
 & U_{i-1}  \ar[u]_{[\sigma'_{i-1}]}^{\pi'_{i-1}}  \ar[r]^{h_i} 
 & U_i  \ar[u]_{[\sigma'_i]}^{\pi'_i}   \ar[r]^{h_{i+1}}
 & \cdots \ar[r]^{h_K} 
 & U_K \ar[u]_{[\sigma'_K=\sigma']}^{\pi'_K} \ar@{=}[r] 
 & T'
}
$$
in which each $\pi'_i$ is a collapse and $h_i^\inv(\sigma'_i)=\sigma'_{i-1}$, but the $U$ row slightly fails to be foldable in that certain explicitly described natural vertices of $U_i$ are ``bad'' by fault of having only $2$~gates with respect to~$h^{i}_K \from U_i \to U_K$. One of these gates will always be a singleton, and so each ``bad natural vertex'' will be incident to a ``bad natural edge''. In Step~2 we will repair this problem by splitting each bad natural edge, to produce a commutative diagram of the form
$$\xymatrix{
 U_0 \ar[r]^{h_1} 
 & \cdots \ar[r]^{h_{i-1}} 
 & U_{i-1} \ar[r]^{h_i} 
 & U_i  \ar[r]^{h_{i+1}}
 & \cdots \ar[r]^{h_K}  
 & U_K  \ar@{=}[r] 
 & T' \\
 T_0 \ar[r]^{g_1} \ar[u]^{\mu_0}                                          
 & \cdots \ar[r]^{g_{i-1}} 
 & T_{i-1}  \ar[u]^{\mu_{i-1}}  \ar[r]^{g_i} 
 & T_i  \ar[u]^{\mu_i}   \ar[r]^{g_{i+1}}
 & \cdots \ar[r]^{g_K} 
 & T_K \ar@{=}[u]^{\mu_K} \ar@{=}[r] 
 & T'
}
$$
The $T$ row will be a foldable sequence. The $\mu_i$ maps are not collapses but instead are ``multifolds'' that invert the splitting process. The desired combing rectangle will be obtained by concatenating these two rectangles: the composition $\pi_i \from T_i \xrightarrow{\mu_i} U_i \xrightarrow{\pi'_i} S_i$ will indeed be a collapse map, which collapses the subgraph $\sigma_i = \mu_i^\inv(\sigma'_i) \subset T_i$.  

\subparagraph{Step 1.} The free splitting $F \act U_i$ is defined to be the minimal subtree of the pushout of $S_i$ and $T'$. Here are more details. As a set, the pushout of $S_i$ and $T'$ is 
$$\wedge(S_i,T') = \{(x,y) \in S_i \cross T' \suchthat f^i_K(x)=\pi'(y) \}
$$
The action $F \act \wedge(S_i,T')$ is obtained by restricting the diagonal action $F \act S_i \cross T'$. The restrictions of the two projection maps are denoted 
$$\pi'_i \from \wedge(S_i,T') \to S_i \quad\text{and}\quad h^i_{T'} \from \wedge(S_i,T') \to T'
$$
Both are clearly $F$-equivariant and we have $f^i_K \composed \pi'_i = \pi' \composed h^i_{T'} \from \wedge(S_i,T') \to S_K$. As~a graph, the vertices and edgelets of the pushout are as follows. A vertex is a pair $(v,w) \in \wedge(S_i,T')$ such that $v$ is a vertex of $S_i$ and $w$ is a vertex of $T'$. Edgelets are of two types. First, a \emph{collapsed edgelet} is one of the form $v \cross e'$ where $v \in S_i$ is a vertex and $e' \subset \sigma' \subset T'$ is an edgelet such that $\pi'(e') = f^i_K(v)$; the barycentric coordinates on $e'$ induce those on $v \cross e'$ via the projection $h^i_{T'}$. Second, to each edgelet $e \subset S_i$ there corresponds a unique edgelet $e' \subset T'$ with the property that $f^i_K(e)=\pi'(e')$ (uniqueness follows since $\pi'$ is a collapse map), and there corresponds in turn an \emph{uncollapsed} edgelet $\ti e = \wedge(e,e') = \{(x,y) \in \wedge(S_i,T') \suchthat x \in e, y \in e'\}$ of $\wedge(S_i,T')$ with barycentric coordinates induced via the map $f^i_k \composed \pi'_i = \pi' \composed h^i_{T'}$ which takes $\ti e$ bijectively to the edgelet $f^i_K(e)=\pi'(e')$ of~$S_K$. The action of $F$ on $\wedge(S_i,T')$ and the projection maps $\pi'_i$, $h^i_{T'}$ are each simplicial. The simplicial complex $\wedge(S_i,T')$ is 1-dimensional by construction. It is furthermore a tree, in that removal of the interior of each edgelet separates, because the simplicial map $\pi'_i \from \wedge(S_i,T') \to S_i$ is injective over the interior of each edgelet of $S_i$, and for each vertex $x \in S_i$ the subcomplex $(\pi'_i)^\inv(x)$ is a tree (mapped by a simplicial isomorphism to the tree $(\pi')^\inv(f^i_K(x)) \subset T'$). The action $F \act S_i$ has no point fixed by each element of $F$, and so neither does the action $F \act \wedge(S_i,T')$; it follows that the $F$-tree $\wedge(S_i,T')$ contains a unique minimal $F$-invariant subtree which, by definition, is $U_i$. For each edgelet $e \subset \wedge(S_i,T')$, its stabilizer is contained in $\Stab_{S_i}(\pi'_i(e))$ if $e$ is uncollapsed and in $\Stab_{T_i}(h^i_{T'}(e))$ if $e$ is collapsed, and in either case is trivial. This proves that $F \act U_i$ is a free splitting. 

Here are some structural facts about the tree $U_i$. For each edgelet $e \subset S_i$, the edgelet $\ti e \subset \wedge(S_i,T')$ is the unique one mapped to $e$ via $\pi'_i$, and since $F \act S_i$ is minimal the map $\pi'_i \from U_i \to S_i$ is surjective which forces $\ti e$ to be contained in $U_i$. This also shows that $\pi'_i$ is a collapse map. The union of the collapsed edgelets of the pushout $\wedge(S_i,T')$ forms a subgraph $\Sigma_i \subset \wedge(S_i,T')$ with one component $\Sigma_{i,v} = (\pi'_i)^\inv(v)$ for each vertex $v \in S_i$ such that $(\pi')^\inv(f^i_K(v))$ is a component of~$\sigma'$; the map $h^i_{T'}$ restricts to a simplicial isomorphism between these components. The subgraph of $\sigma'_i \subset U_i$ that is collapsed by $\pi'_i \from U_i \to S_i$ is the union of those components of $\Sigma_i \intersect U_i$ that contain at least one edge. Each of these components has the form $\sigma'_{i,v} = \Sigma_{i,v} \intersect U_i$ when this intersection contains at least one edge; by convention we set $\sigma'_{i,v} = \emptyset$ otherwise. See below for a more detailed description of various features of $\sigma'_{i,v}$.

There is an induced map $h_i \from \wedge(S_{i-1},T') \to \wedge(S_i,T')$ which is defined by the formula $h_i(x,y)=(f_i(x),y)$, which makes sense because for each $(x,y) \in \wedge(S_{i-1},T')$ we have $f^i_K(f_{i-1}(x)) = f^{i-1}_K(x)=\pi'(y)$. The commutativity equation $\pi'_i \composed h_i = f_i \composed \pi'_{i-1}$ is immediate. Since $U_i$ is the minimal subtree of $\wedge(S_i,T')$ it follows that $h_i(U_{i-1}) \supset U_i$, but we are not yet in a position to prove the opposite inclusion, not until we have established that the map $h^i_{T'} \from U_i \to T'$ has $\ge 2$ gates at each vertex.

\subparagraph{Preparation for Step 2.} Here are some structural facts about the components of $\sigma'_i$. Consider a vertex $v \in S_i$ for which $\sigma'_{i,v} \ne \emptyset$ and so is a component of $\sigma'_i$. Given an oriented edgelet $e \subset S_i$ we abuse notation by writing $e \in D_v S_i$ to mean that $v$ is the initial vertex of~$e$. There is a function $\xi_{i,v} \from D_v S_i \to U_i$ where for each $e \in D_v S_i$ we define $\xi_{i,v}(e) \in \sigma'_{i,v}$ to be the initial vertex of the corresponding oriented edgelet $\ti e \subset U_i$. Note that the set $\image(\xi_{i,v})$ is the topological frontier of the subtree $\sigma'_{i,v}$ in the tree $U_i$. By Lemma~\ref{LemmaCollapseProps}~\pref{ItemCollapseFrontierHull} it follows that $\sigma'_{i,v}$ is the convex hull of the set $\image(\xi_{i,v})$ in $U_i$. Notice also that the function $\xi_{i,v}$ is constant on each gate of $D_v S_i$ with respect to the map $f^i_K$, for if $e_1,e_2 \in D_v S_i$ are in the same gate then $f^i_K(e_1)=f^i_K(e_2)$ is a single edgelet in $S_K$ which lifts to a unique edgelet $e' \subset T'$ and we have 
$$h^i_K(\ti e_1) = \wedge(f^i_K(e_1),e') = \wedge(f^i_K(e_2),e') = h^i_K(\ti e_2)
$$
and so the initial endpoints of $\ti e_1$ and $\ti e_2$ have the same image under $h^i_K$. But these initial endpoints are in the graph $\sigma'_{i,v}$ on which $h^i_K$ is injective, so these initial endpoints are equal. Letting $\Gamma_v S_i$ denote the set of gates of $f^i_K$ in $D_v S_i$, the map $\xi_{i,v}$ induces a map which we also denote $\xi_{i,v} \from \Gamma_v S_i \to \sigma'_{i,v}$ whose image is also the frontier of $\sigma'_{i,v}$.

We now study the extent to which the maps $h^i_K \from U_i \to U_K$ are foldable. Note first that we may identify $T'$ with the pushout $\wedge(S_K,T')$ and so we may identify $U_K = T'$ and $\sigma'_K = \sigma'$ up to simplicial conjugacy and we may identify $h^i_K=h^i_{T'}$, in particular the gates of $h^i_K$ and of $h^i_{T'}$ are therefore identical. We will show that $h^i_{T'}$ has $\ge 2$ gates at each vertex of $U_i$, so a vertex is either \emph{good} meaning it has valence~$\ge 3$ and~$\ge 3$ gates or valence~$2$ and~$2$ gates, or \emph{bad} meaning it has valence~$\ge 3$ but only~$2$ gates. We shall do this through a case analysis, going through various cases of good vertices and narrowing down to one remaining case which is bad. This will yield an explicit description of the bad vertices which will be used in describing the free splitting $F \act T_i$.

Fix a vertex $u = (v,w) \in U_i$, so if $\sigma'_{i,v} \ne \emptyset$ then $u \in \sigma'_{i,v}$. Denote $x = f^i_K(v) = \pi'(w)$.

Consider first the case that $\sigma'_{i,v} = \emptyset$; we shall show that $u$ is good. We have a commutative diagram of derivative maps
$$\xymatrix{
D_v S_i \ar[r]^{d_v f^i_K} & D_x S_K \\
D_u U_i \ar[u]^{d_u \pi'_k} \ar[r]^{d_u h^i_{T'}} & D_w T' \ar[u]_{d_w \pi'}
}$$
where the left arrow is a bijection, i.e. the valences of $u$ and $v$ are equal. Also, the set $\image(d_u h^i_{T'})$ is in the domain of definition of the right arrow and the right arrow is an injection on its domain of definition. The number of gates at $u,v$ are therefore equal. Since $f^i_K$ is foldable it follows that $u$ is good.

Consider now the case that $\sigma'_{i,v} \ne \emptyset$. To simplify notation we denote $W=\sigma'_i$ and $W_v = \sigma'_{i,v}$. Each gate of $h^i_{T'}$ in $DU_i$ is contained either in $DW$ or its complement $D(U_i \setminus W) = D U_i - D W$, because $W=\sigma'_i = (h^i_{T'})^\inv(\sigma')$ implying that $h^i_{T'}$ never maps a direction of $W$ and a direction of $U_i \setminus W$ to the same direction of $T'$. Since $h^i_{T'}$ is injective on $W_v$, each direction in the set $D_u W_v$ constitutes an entire gate of $D_u U_i$. Gates at $u$ in the complement $D_u U_i - D_u W_v$ exist if and only if $u$ is a frontier vertex of $W_v$, if and only if $u$ is in the image of $\xi_{i,v} \from D_v S_i \to W_v$.

Consider the subcase that $v$ has valence~$2$ in $S_i$. The graph $W_v$ is then a segment contained in the interior of a natural edge of $U_i$. The vertex $u$ therefore has valence~$2$ in~$U_i$, with either $2$ directions in $W_v$ or one each in $W_v$ and in $U_i \setminus W_v$, and in either case these 2 directions are mapped by $h^i_{T'}$ to two different directions in $T'$ and so $u$ is good.

Consider the subcase that $v$ has valence~$\ge 3$ in $S_i$. If the valence of $u$ in $W_v$ plus the number of gates at $u$ in the complement of $W_v$ is $\ge 3$ then $h^i_{T'}$ has $\ge 3$ gates at $u$, so $u$ is good. If $u$ is an interior vertex of $W_v$ then $u$ has valence~$\ge 2$ in $W_v$ by minimality of $F \act U_i$; furthermore, the valences of $u$ in $W_v$ and in $U_i$ are equal and the number of gates of $h^i_{T'}$ at $u$ equals the valence, so $u$ is good. If $u$ is a frontier vertex of valence~$\ge 2$ in $W_v$ then $u$ has $\ge 1$ gate in the complement of $W_v$ and we considered this case already and showed that $u$ is good. If $u$ is a frontier vertex of valence~$1$ in $W_v$ and if $u$ has $\ge 2$ gates in the complement of $W_v$ then we have also considered this case already and showed that $u$ is good. If $u$ is a frontier vertex of valence~$1$ in $W_v$ and $u$ has exactly 1 direction in the complement of $W_v$ then $u$ has valence~$2$ in $U_i$ and $2$ gates, so $u$ is good.

The only case that remains, and the case that characterizes when $u$ is bad, is when $v$ has valence~$\ge 3$ in $S_i$, $u$ is a frontier vertex of $W_v$, $u$ has valence~$1$ in $W_v$, $u$ has exactly one gate in the complement of $W_v$, and that gate has cardinality~$\xi_u \ge 2$ called the \emph{external valence} of $u$. When in this case, let $\zeta_u$ be the unique natural edge of $U_i$ with endpoint $u$ and with direction at $u$ equal to the unique direction of $W_v$ at $u$. We call $\zeta_u$ the \emph{bad natural edge} incident to $u$. Let $z_u$ be the natural endpoint of $\zeta_u$ opposite~$u$.

We claim that for each bad natural vertex $u \in U_i$ we have $\zeta_u \subset W_v$; the only way this could fail is if $W_v$ is an edgelet path whose vertices apart from $u$ all have valence~$2$ in $U_i$, implying that $f^i_K$ has 2 gates at the natural vertex $v$, contradicting that $f^i_K$ is foldable. We claim also that $z_u$ is good; otherwise it would follow that $W_v = \zeta_u = \zeta_{z_u}$ which again would imply the contradiction that $f^i_K$ has 2 gates at~$v$.

The union of the bad natural edges of $U_i$ forms an equivariant natural subgraph denoted $Z_i = \union\zeta_u \subset U_i$. The natural edges of its complement $U_i \setminus Z_i$ are the \emph{good natural edges} of $U_i$, some of which may be contained in $W$, some in $U_i \setminus W$, and some in neither. The endpoints of a good natural edge need not be good. From the description of bad natural edges it follows that each component of $Z_i$ contains a unique good vertex $z$ and is the union of some number $m \ge 1$ of bad natural edges with endpoint $z$, forming a star graph with $m$ valence~$1$ vertices apart from $z$.

\subparagraph{Step 2.} Ignoring the simplicial structure for the moment, define the free splitting $F \act T_i$ to be the one obtained from $F \act U_i$ by collapsing the bad subgraph $Z_i \subset U_i$. Let $\rho_i \from U_i \xrightarrow{[Z_i]} T_i$ be the collapse map. Note that $\rho_i$ restricts to an equivariant bijection from the good natural vertices of $U_i$ to the natural vertices of $T_i$, because $Z_i$ is a natural subgraph each of whose components contains exactly one good natural vertex. Also, $\rho_i$ induces a bijection from the good natural edges of $U_i$ --- those in $U_i \setminus Z_i$ --- to the natural edges of~$T_i$: denote this correspondence by $\ti\eta \leftrightarrow \eta$ for each good natural edge $\eta \subset T_i$, and note that $\rho_i$ maps $\ti\eta$ homeomorphically to $\eta$.

Define the map $\mu_i \from T_i \to U_i$ as follows. The restriction of $\mu_i$ to the natural vertices of $T_i$ is the equivariant bijection onto the good natural vertices of $U_i$ that is obtained by inverting~$\rho_i$. The endpoints of each natural edge of $T_i$ map to distinct points of $U_i$, and so $\mu_i$ may be extended equivariantly and continuously to be an injection on each natural edge of $T_i$. 

Define the simplicial structure on $T_i$ to be the unique one with respect to which $\mu_i$ is a simplicial map: its vertices are the inverse image under $\mu_i$ of the vertices of $U_i$; each of its edgelets maps via $\mu_i$ by simplicial isomorphism to an edgelet of~$U_i$.

Define the subgraph $\sigma_i \subset T_i$ to be $\mu_i^\inv(\sigma'_i)$; we will see below that $\pi'_i \composed \mu_i \from T_i \to S_i$ is a collapse map which collapses the subgraph $\sigma_i$. 

Knowing that $\mu_i$ is injective on each natural edge of $T_i$, we describe the image of each natural edge as follows. The notation $u \mapsto z_u$, which so far defines an equivariant function from the bad natural vertices of $U_i$ to the good natural vertices of $U_i$, extends to all natural vertices of $U_i$ by defining $z_u=u$ when $u$ is good. For each natural vertex $u \in U_i$ we have $\mu_i(\rho_i(u)) = z_u$: if $u=z_u$ is good this is because $\mu_i$ and $\rho_i$ are inverse bijections between good natural vertices of~$U_i$ and all natural vertices of $T_i$; if $u$ is bad then $u$ and $z_u$ are contained in the same component of $Z_i$ so $\rho_i(u)=\rho_i(z_u)$ and hence $\mu_i(\rho_i(u))=\mu_i(\rho_i(z_u))=z_u$. Given a natural edge $\eta \subset T_i$ with corresponding good natural edge $\ti\eta \subset U_i$, letting $u_1,u_2 \in U_i$ be the endpoints of $\ti\eta$ and letting $z_i=z_{u_i} \in U_i$, it follows that $\mu_i(\eta) = \mu_i(\rho_i(\ti\eta))$ is the arc in $U_i$ connecting $z_1$ to $z_2$, which is just the union of $\ti\eta$ together with the bad natural edges incident to whichever of $u_1,u_2$ are bad. 

From this description of $\mu_i$ we derive a few more properties of $\mu_i$, giving details about its behavior over good and bad natural edges of $U_i$, and its behavior on natural edges and natural vertices of $T_i$. 
\begin{description}
\item[(a) $\mu_i$ over good natural edges of $U_i$:] the map $\mu_i$ is injective over the interior of each good natural edge $\ti\eta \subset U_i$, the closure of $\mu_i^\inv(\interior(\eta))$ is an edgelet path contained in $\eta$, and the restriction of $\mu_i$ to this edgelet path is a simplicial isomorphism onto $\ti\eta$.
\item[(b) $\mu_i$ over bad natural edges of $U_i$:] for each bad natural edge $\zeta_u \subset U_i$ oriented to have terminal point $u$ and initial point $z_u$, letting $\chi_u$ be the external valence of $u$, letting $\ti\eta_\ell \subset U_i$ ($\ell=1,\ldots,\chi_u$) be the oriented good natural edges with common initial point $u$, and letting $\eta_\ell = \rho_i(\ti\eta_\ell) \subset T_i$ be the corresponding oriented natural edges with common initial point $w=\rho_i(u)$, there exist initial segments $[w,w_\ell] \subset \eta_\ell$, $\ell = 1,\ldots,\chi_u$, such that $\mu_i$ maps each $[w,w_\ell]$ to $\zeta_u$ by a simplicial isomorphism and such that $\mu_i^\inv(\zeta_u) = \union_{\ell=1}^{\chi_u} [w,w_\ell] \subset \sigma_i$. Furthermore each $w_\ell$ is a valence~$1$ vertex of $\sigma_i$.
\end{description}
Intuitively (a) and (b) together say that $\mu_i$ is a ``partial multifold'', which for each of its gates identifies proper initial segments of the oriented natural edges representing that gate. Perhaps the only nonobvious part of (a) and (b) is the last sentence of (b). For each bad natural vertex $u \in U_i$,  from (a) and the previous sentences of (b) it follows that $\mu_i^\inv(u) = \{w_1,\ldots,w_{\chi_u}\}$, and that for each $\ell=1,\ldots,\chi_u$ the vertex $w_\ell$ is contained in the interior of the natural edge $\eta_\ell$, one direction being in the segment $[w_u,w_\ell] \subset \sigma_i$ and the other direction being in the closure of $\mu_i^\inv(\interior(\eta_\ell))$ which is in $T_i \setminus \sigma_i$, and so $w_\ell$ has valence~$1$ in $\sigma_i$.

\begin{description}
\item[(c) $\mu_i$ on natural edges of $T_i$:] The restriction of $\mu_i$ to each good natural edge of $T_i$ is injective. Furthermore, an embedded edgelet path $\alpha \subset U_i$ is the $\mu_i$-image of some good natural edge of $T_i$ if and only if the endpoints of $\alpha$ are good natural vertices of $U_i$, no interior point of $\alpha$ is a good natural vertex, and $h^i_K \restrict \alpha$ is injective. 
\end{description}
Only the ``if'' part of~(c) is not obvious. Let $\alpha\subset U_i$ be an embedded edgelet path whose only good natural vertices are its endpoints, and suppose that $h^i_K \restrict \alpha$ is injective. If $\alpha$ contains no bad natural vertex then $\alpha = \ti\eta$ is a good natural edge with associated natural edge $\eta \subset T_i$ and $\alpha=\mu_i(\eta)$. If $u \in \alpha$ is a bad natural vertex then $u \in \interior(\alpha)$, and since $h^i_K \restrict \alpha$ is injective it follows that one direction of $\alpha$ at $u$ is the direction of the bad natural arc~$\zeta_u$, whose opposite good natural endpoint $z_u$ must be an endpoint of $\alpha$; the edgelet path $\alpha$ is therefore the concatenation of some natural edge $\ti\eta \subset U_i \setminus Z_i$ with any bad natural edges incident to the endpoints of $\ti\eta$, and it follows that $\alpha = \mu_i(\eta)$.

\begin{description}
\item[(d) $d\mu_i$ at natural vertices of $T_i$:] \qquad For each natural vertex $v \in T_i$, the map \break $d_v \mu_i \from D_v T_i \to D_{\mu_i(v)} U_i$ is surjective.
\end{description}
To justify~(d), the vertex $\mu_i(v)$ is a good natural vertex of $U_i$. Consider a direction $d \in D_{\mu_i(v)} U_i$. If $d$ is the initial direction of some oriented good natural edge $\ti\eta \subset U_i$ corresponding to an oriented natural edge $\eta \subset T_i$, it follows that the initial vertex of $\eta$ equals $v$ and the initial direction of $\eta$ maps to $d$. If $d$ is the initial direction of some bad oriented natural edge $\zeta_u \in U_i$ with opposite bad natural vertex $u$, let $\ti\eta$ be any of the good natural edges incident to $u$ oriented with initial vertex $u$, and let $\eta \subset T_i$ be the corresponding oriented natural edge, and it follows that the initial vertex of $\eta$ again equals $v$ and that the initial direction of $\eta$ maps to $d$. 

We now prove that we have a collapse map $\pi_i = \pi'_i \composed \mu_i \from T_i \xrightarrow{\sigma_i = (\mu_i)^\inv(\sigma'_i)} S_i$. Clearly an edgelet of $T_i$ is in $\sigma_i$ if and only its image under $\mu_i$ is in $\sigma'_i$ if and only if its image under $\pi_i=\pi'_i\composed\mu_i$ is a point. Given an edgelet $e \subset S_i$, the collapse map $\pi'_i$ is injective over the interior of $e$, so there is a unique edgelet $e' \subset U_i$ mapped to $e$ by $\pi'_i$, and $e' \not\subset \sigma'_i$; it follows that $e' \not\subset Z_i$ and so by item (a) above the map $\mu_i$ is injective over the interior of~$e'$; therefore $\pi_i$ is injective over the interior of~$e$.

Putting off for the moment the issue of defining the maps $g_i \from T_{i-1} \to T_i$, we \emph{define} the maps $g^i_K \from T_i \to T_K$ as follows. First note that the map $\mu_K \from T_K \to U_K$ is evidently a simplicial isomorphism, and so we may identify $T_K$ with $U_K$ and with $T'$. We now define $g^i_K$ to be the composition $T_i \xrightarrow{\mu_i} U_i \xrightarrow{h^i_K} U_K \xrightarrow{(\mu_K)^\inv} T_K$. The map $g^i_K$ is foldable, equivalently $h^i_K \composed \mu_i \from T_i \to U_K$ is foldable, for the following reasons: by (c) the map $h^i_K$ is injective on natural edges of $T_i$; for each natural vertex $v \in T_i$, its image $\mu_i(v) \in U_i$ is a good natural vertex and so has $\ge 3$ gates with respect to $h^i_K$, and by (d) the derivative map $d_v \mu_i \from D_v T_i \to D_{\mu_i(v)} U_i$ is surjective, which implies that $h^i_K \composed \mu_i$ has $\ge 3$ gates at $v$.

All that remains is to define a map $g_i \from T_{i-1} \to T_i$ so that the commutativity equation $h_i \composed \mu_{i-1} = \mu_i \composed g_i$ holds, for by combining this with the equation $h^{i-1}_K = h_K \composed\cdots\composed h_i$ it immediately follows that $g^{i-1}_K = g_K \composed\cdots\composed g_{i}$ and so the map sequence $T_0 \xrightarrow{g_1}\cdots\xrightarrow{g_K} T_K$ is defined and is foldable.

Consider a natural vertex $v \in T_{i-1}$. Its image $\mu_{i-1}(v) \in U_{i-1}$ is a good natural vertex and so has $\ge 3$ gates with respect to $h^{i-1}_K$, implying that $h_i(\mu_{i-1}(v)) \in U_i$ has $\ge 3$ gates with respect to $h^i_K$ and so is a good natural vertex, and hence there is a unique natural vertex in $T_i$ that maps to $h_i(\mu_{i-1}(v))$ which we take to be $g_i(v)$. We have thus defined $g_i$ so as to satisfy the commutativity equation on each natural vertex $v \in T_{i-1}$. 

Consider a natural edge $\eta \subset T_{i-1}$ with natural endpoints $v_0 \ne v_1$. Its image $\mu_{i-1}(\eta) \subset U_{i-1}$ is the arc with good natural endpoints $\mu_{i-1}(v_0) \ne \mu_{i-1}(v_1)$. By (c) above the map $h^{i-1}_K = h^i_K \composed h_i$ is injective on the arc $\mu_{i-1}(\eta)$, implying that $h_i$ is injective on $\mu_{i-1}(\eta)$ and that $h^i_K$ is injective on the arc $h_i(\mu_{i-1}(\eta)) \subset U_i$, the latter of which has good natural endpoints $h_i(\mu_{i-1}(v_0)) \ne h_i(\mu_{i-1}(v_1))$. Subdividing the arc $h_i(\mu_{i-1}(\eta))$ at all interior good natural vertices of $U_i$ we write it as a concatenation:
$$h_i(\mu_{i-1}(\eta)) = \alpha_1 * \cdots * \alpha_M
$$
Each of the arcs $\alpha_m$, $m=1,\ldots,M$ has good natural endpoints, no good natural interior points, and the map $h^i_K$ is injective on $\alpha_m$, and so by~(c) there is a unique natural edge $\hat\alpha_m \subset T_i$ mapped by $\mu_i$ to $\alpha_m$ by a simplicial isomorphism. Since every good natural vertex in $U_i$ has a unique natural pre-image in $T_i$, it follows that we may concatenate to obtain an arc $\hat\alpha_1 * \cdots * \hat\alpha_m \subset T_i$, and furthermore the restriction $\mu_i \restrict \hat\alpha_1 * \cdots * \hat\alpha_m$ is a simplicial isomorphism onto $h_i(\mu_{i-1}(\eta))$. Inverting this restriction we may then define 
$$g_i \restrict \eta = (\mu_i \restrict \hat\alpha_1 * \cdots * \hat\alpha_m)^\inv \composed (h_i \composed \mu_{i-1}) \restrict \eta
$$
which is a simplicial isomorphism with image $\hat\alpha_1 * \cdots * \hat\alpha_m$. We have thus defined $g_i$ so as to satisfy the commutativity equation on each natural edge $\eta \subset T_{i-1}$.

This completes the proof of Proposition~\ref{PropCBE}.
\end{proof}

\subsection{Composition and decomposition of combing rectangles.}
\label{SectionCombRectOps}

\begin{lemma}[Composition of combing rectangles]
\label{LemmaCombingComp}
 \qquad \\
Given two combing rectangles of the form 
$$\xymatrix{
    S_0 \ar[r]^{f_1}                \ar[d]^{\pi_0} 
 & \cdots \ar[r]^{f_i}           
 & S_i \ar[r]^{f_{i+1}}           \ar[d]^{\pi_i}
 & \cdots \ar[r]^{f_K}  
 & S_K                                \ar[d]^{\pi_K} 
\\
    T_0 \ar[r]^{g_1}               \ar[d]^{\rho_0}                                   
 & \cdots \ar[r]^{g_i}          
 & T_i      \ar[r]^{g_{i+1}}     \ar[d]^{\rho_i}
 & \cdots \ar[r]^{g_K} 
 & T_K                                 \ar[d]^{\rho_K}
\\ 
    U_0 \ar[r]^{h_1}                                                  
 & \cdots \ar[r]^{h_i} 
 & U_i      \ar[r]^{h_{i+1}}
 & \cdots \ar[r]^{h_K} 
 & U_K
}$$
their composition, which is the commutative diagram
$$\xymatrix{
    S_0 \ar[r]^{f_1}                \ar[d]^{\rho_0 \composed \pi_0} 
 & \cdots \ar[r]^{f_i}             
 & S_i \ar[r]^{f_{i+1}}           \ar[d]^{\rho_i \composed \pi_i}
 & \cdots \ar[r]^{f_K}  
 & S_K                                \ar[d]^{\rho_K \composed \pi_K} 
\\
    U_0 \ar[r]^{h_1}                                                  
 & \cdots \ar[r]^{h_i} 
 & U_i      \ar[r]^{h_{i+1}}
 & \cdots \ar[r]^{h_K} 
 & U_K
}$$
is a combing rectangle. The collapsed subgraph of $\rho_i \composed \pi_i$ is the union of the collapsed subgraph of $\pi_i$ with the inverse image under $\pi_i$ of the collapsed subgraph of $\rho_i$.
\end{lemma}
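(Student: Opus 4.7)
The plan is to verify the three defining conditions of a combing rectangle for the composed diagram, then read off the explicit description of the collapsed subgraphs. Condition (1), that the top row $S_0 \xrightarrow{f_1}\cdots\xrightarrow{f_K} S_K$ is a foldable sequence, is immediate since it is unchanged from the upper input rectangle. The work is in conditions (2) and (3): checking that each composed vertical arrow $\rho_i \composed \pi_i$ is a collapse map, and checking the pullback relation $\Sigma_{i-1} = f_i^\inv(\Sigma_i)$ for the new collapsed subgraphs $\Sigma_i \subset S_i$.

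First I would identify $\Sigma_i$ as a set. Let $\sigma_i \subset S_i$ and $\tau_i \subset T_i$ denote the collapsed subgraphs of $\pi_i$ and $\rho_i$ respectively. An edgelet $e \subset S_i$ is collapsed by $\rho_i \composed \pi_i$ if and only if either $\pi_i(e)$ is a point (i.e.\ $e \subset \sigma_i$) or $\pi_i$ maps $e$ to an edgelet of $T_i$ which is in turn collapsed by $\rho_i$ (i.e.\ $e \subset \pi_i^\inv(\tau_i)$). Thus $\Sigma_i = \sigma_i \union \pi_i^\inv(\tau_i)$, which is the claimed description. To check that $\rho_i \composed \pi_i$ is a collapse map, note that for any edgelet $e'' \subset U_i$, there is a unique edgelet $e' \subset T_i \setminus \tau_i$ with $\rho_i(e')=e''$ on which $\rho_i$ is a simplicial isomorphism, and a unique edgelet $e \subset S_i \setminus \sigma_i$ with $\pi_i(e)=e'$ on which $\pi_i$ is a simplicial isomorphism. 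Since $e' \not\subset \tau_i$, we also have $e \not\subset \pi_i^\inv(\tau_i)$, so $e \not\subset \Sigma_i$ and $\rho_i \composed \pi_i$ restricts to a simplicial isomorphism from $e$ to $e''$, establishing injectivity over the interior of $e''$.

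Next I would verify condition (3), the pullback relation, purely by commutative-diagram manipulation. Using the defining pullback relations $\sigma_{i-1} = f_i^\inv(\sigma_i)$ and $\tau_{i-1} = g_i^\inv(\tau_i)$ of the two input rectangles, together with the commutativity equation $\pi_i \composed f_i = g_i \composed \pi_{i-1}$, I compute
\begin{align*}
f_i^\inv(\Sigma_i) &= f_i^\inv(\sigma_i) \union f_i^\inv(\pi_i^\inv(\tau_i)) \\
&= \sigma_{i-1} \union (\pi_i \composed f_i)^\inv(\tau_i) \\
&= \sigma_{i-1} \union (g_i \composed \pi_{i-1})^\inv(\tau_i) \\
&= \sigma_{i-1} \union \pi_{i-1}^\inv(g_i^\inv(\tau_i)) \\
&= \sigma_{i-1} \union \pi_{i-1}^\inv(\tau_{i-1}) = \Sigma_{i-1}.
\end{align*}

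There is no real obstacle here: every step is a formal consequence of the definitions plus commutativity of the two input rectangles. The only point requiring a little care is the verification that $\rho_i \composed \pi_i$ is genuinely a collapse map (i.e.\ the edgelet-injectivity condition), which reduces to tracing an edgelet of $U_i$ back through the two given collapse maps and using the fact that each is injective over edgelet interiors in its complement-of-collapsed-subgraph. Once conditions (1)--(3) are established with $\Sigma_i$ as above, the composed diagram satisfies the definition of a combing rectangle, and as a bonus Lemma~\ref{LemmaCombingProperties} then automatically guarantees that the bottom row $U_0 \mapsto\cdots\mapsto U_K$ is a foldable sequence (a fact already known here since it is the bottom row of the second input rectangle).
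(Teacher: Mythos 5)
Your proposal is correct and follows essentially the same route as the paper: the collapse-map condition is checked by pulling an edgelet of $U_i$ back through $\rho_i$ and then $\pi_i$ exactly as in the paper's proof, and the formula $\Sigma_i=\sigma_i\union\pi_i^\inv(\tau_i)$ matches the paper's identification of the collapsed subgraph. Your explicit verification of the pullback relation $f_i^\inv(\Sigma_i)=\Sigma_{i-1}$ via commutativity is a step the paper leaves implicit, so your write-up is if anything slightly more complete.
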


\begin{proof}
For each edgelet $e \subset U_i$, the map $\rho_{i}$ is injective over the interior of $e$, and so there is a unique edgelet $e' \subset T_i$ such that $\rho_{i}(e')=e$. The map $\pi_i$ is injective over the interior of $e'$, and it follows that $\rho_i \composed \pi_i$ is injective over the interior of $e$. This proves that $\rho_i \composed \pi_i$ is a collapse map and that the second diagram in the statement above is a combing rectangle.

Given an edgelet of $S_i$, clearly its image under $\rho_i \composed \pi_i$ is a vertex of $U_i$ if and only if its image under $\pi_i$ is a vertex of $T_i$ or an edgelet of $T_i$ whose image under $\rho_i$ is a vertex of $U_i$.
\end{proof}

\begin{lemma}[Decomposition of combing rectangles]
\label{LemmaCombingDecomp}
Given a combing rectangle of the form 
$$\xymatrix{
    S_0 \ar[r]^{f_1}                \ar[d]^{\upsilon_0}_{[\sigma_0]}
 & \cdots \ar[r]^{f_i}            
 & S_i \ar[r]^{f_{i+1}}           \ar[d]^{\upsilon_i}_{[\sigma_i]}
 & \cdots \ar[r]^{f_K}  
 & S_K                                \ar[d]^{\upsilon_K}_{[\sigma_K]}
\\
    U_0 \ar[r]^{h_1}                                                  
 & \cdots \ar[r]^{h_i} 
 & U_i      \ar[r]^{h_{i+1}}
 & \cdots \ar[r]^{h_K} 
 & U_K
}$$
and given equivariant subgraphs $\sigma'_i \subset \sigma_i$ ($i=0,\ldots,K$) having the property that $f_{i}^\inv(\sigma'_{i}) = \sigma'_{i-1}$ for each $i=1,\ldots,K$, there exist two combing rectangles of the form 
$$\xymatrix{
    S_0 \ar[r]^{f_1}                \ar[d]^{\pi_0}_{[\sigma'_0]} 
 & \cdots \ar[r]^{f_i}           
 & S_i \ar[r]^{f_{i+1}}           \ar[d]^{\pi_i}_{[\sigma'_i]}
 & \cdots \ar[r]^{f_K}  
 & S_K                                \ar[d]^{\pi_K}_{[\sigma'_K]}
\\
    T_0 \ar[r]^{g_1}               \ar[d]^{\rho_0}                                   
 & \cdots \ar[r]^{g_i}          
 & T_i      \ar[r]^{g_{i+1}}     \ar[d]^{\rho_i}
 & \cdots \ar[r]^{g_K} 
 & T_K                                 \ar[d]^{\rho_K}
\\ 
    U_0 \ar[r]^{h_1}                                                  
 & \cdots \ar[r]^{h_i} 
 & U_i      \ar[r]^{h_{i+1}}
 & \cdots \ar[r]^{h_K} 
 & U_K
}$$
whose composition (as in Lemma~\ref{LemmaCombingComp}) is the given combing rectangle.
\end{lemma}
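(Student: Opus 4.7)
The plan is to define the intermediate free splittings $T_i$ as the quotients of $S_i$ obtained by collapsing the smaller subgraphs $\sigma'_i$, and then show that both the top half (involving $\pi_i$) and the bottom half (involving the induced $\rho_i$) are combing rectangles whose composition recovers the original.

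First I would construct the middle row. For each $i$, let $\pi_i \from S_i \xrightarrow{[\sigma'_i]} T_i$ be the collapse map obtained by collapsing each component of the $F$-equivariant proper subgraph $\sigma'_i \subset S_i$. Given the hypothesis $f_i^\inv(\sigma'_i) = \sigma'_{i-1}$, each component of $\sigma'_{i-1}$ is connected and maps into $\sigma'_i$, hence into a single component of $\sigma'_i$; so $\pi_i \composed f_i \from S_{i-1} \to T_i$ is constant on components of $\sigma'_{i-1}$, and therefore descends through $\pi_{i-1}$ to a unique simplicial map $g_i \from T_{i-1} \to T_i$ satisfying $g_i \composed \pi_{i-1} = \pi_i \composed f_i$. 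The top rectangle is now a combing rectangle by definition: the $S_i$-row is foldable by the hypothesis on the given combing rectangle, the $\pi_i$ are collapses by construction, commutativity is the equation just derived, and the equality $f_i^\inv(\sigma'_i) = \sigma'_{i-1}$ is the hypothesis.

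Next I would build the bottom half. Since $\sigma'_i \subset \sigma_i$, each component of $\sigma'_i$ sits inside a component of $\sigma_i$ and is therefore sent by $\upsilon_i$ to a single vertex of $U_i$; hence $\upsilon_i$ descends through $\pi_i$ to a simplicial map $\rho_i \from T_i \to U_i$ with $\rho_i \composed \pi_i = \upsilon_i$. I claim $\rho_i$ is a collapse map with collapsed subgraph $\tau_i := \pi_i(\sigma_i)$. Indeed, $\pi_i$ restricts to a bijection of edgelets between $S_i \setminus \sigma'_i$ and $T_i \setminus \pi_i(\sigma'_i)$, so if $e \subset T_i$ is an edgelet outside $\tau_i$ then its unique $\pi_i$-preimage $\ti e \subset S_i$ lies outside $\sigma_i$, and $\upsilon_i = \rho_i \composed \pi_i$ is injective on $\ti e$, forcing $\rho_i$ to be injective on $e$; while any edgelet of $\tau_i$ is the $\pi_i$-image of an edgelet of $\sigma_i$, which is collapsed to a vertex by $\upsilon_i$ and therefore by $\rho_i$. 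Commutativity of the bottom squares is obtained by a simple chase: $\rho_i g_i \pi_{i-1} = \rho_i \pi_i f_i = \upsilon_i f_i = h_i \upsilon_{i-1} = h_i \rho_{i-1} \pi_{i-1}$, and surjectivity of $\pi_{i-1}$ yields $\rho_i \composed g_i = h_i \composed \rho_{i-1}$.

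The main obstacle will be verifying the combing rectangle condition $g_i^\inv(\tau_i) = \tau_{i-1}$ for the bottom rectangle. The key observation is that $\pi_i^\inv(\pi_i(\sigma_i)) = \sigma_i$: a vertex of $S_i$ that maps into $\pi_i(\sigma_i)$ either already lies in $\sigma_i$ or else lies in the $\pi_i$-fiber over a point $\pi_i(v)$ with $v \in \sigma_i$, and such a fiber is a component of $\sigma'_i \subset \sigma_i$. Using this, together with the upstairs relation $f_i^\inv(\sigma_i) = \sigma_{i-1}$ and the commutativity $g_i \composed \pi_{i-1} = \pi_i \composed f_i$, I get
\[
\pi_{i-1}^\inv\bigl(g_i^\inv(\tau_i)\bigr) = f_i^\inv\bigl(\pi_i^\inv(\pi_i(\sigma_i))\bigr) = f_i^\inv(\sigma_i) = \sigma_{i-1},
\]
so applying $\pi_{i-1}$ and using its surjectivity gives $g_i^\inv(\tau_i) = \pi_{i-1}(\sigma_{i-1}) = \tau_{i-1}$. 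Foldability of the $T_i$-row then comes free from Lemma~\ref{LemmaCombingProperties} applied to the top rectangle, so the bottom rectangle is genuinely a combing rectangle. Finally, the composition of the two rectangles (in the sense of Lemma~\ref{LemmaCombingComp}) has vertical collapsed subgraphs equal to $\sigma'_i \union \pi_i^\inv(\tau_i) = \sigma'_i \union \sigma_i = \sigma_i$, so it coincides with the given rectangle, completing the decomposition.
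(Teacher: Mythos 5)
Your proposal is correct and follows essentially the same route as the paper: collapse $\sigma'_i$ to get the middle row, invoke Lemma~\ref{LemmaCombingProperties} for foldability of the $T$ row, factor $\upsilon_i$ through $\pi_i$ to get $\rho_i$ with collapsed graph $\tau_i=\pi_i(\sigma_i)$, and verify $g_i^\inv(\tau_i)=\tau_{i-1}$ by the same diagram chase using surjectivity of $\pi_{i-1}$ and injectivity of $\pi_i$ over edgelet interiors. Your explicit verification that $\pi_i^\inv(\pi_i(\sigma_i))=\sigma_i$ just spells out a step the paper leaves to the reader.
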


\begin{proof} Define the collapse map $\pi_i \from S_i \xrightarrow{[\sigma'_i]} T_i$ to be the quotient map obtained by collapsing each component of $\sigma'_i$ to a point. Since $f_i^\inv(\sigma'_i)=\sigma'_{i-1}$, there exists a map $g_i \from T_{i-1} \to T_i$ induced from $f_i \from S_{i-1} \to S_i$ under the quotient, which makes the top rectangle with the $S$ row and the $T$ row commutative, and this rectangle is therefore a combing rectangle. By Lemma~\ref{LemmaCombingProperties}, the $T$ sequence is foldable. Define a subgraph $\tau_i = \pi_i(\sigma_i) \subset T_i$. We have $g_i^\inv(\tau_i) = g_i^\inv(\pi_i(\sigma_i)) = \pi_{i-1}(f_i^\inv(\sigma_i)) = \pi_{i-1}(\sigma_{i-1}) = \tau_{i-1}$, where the second equation is verified by a diagram chase using that the map $\pi_{i-1}$ is surjective, and that $\pi_i$ is injective over the interior of each edgelet of $T_i$. Clearly the collapse map $\upsilon_i \from S_i \xrightarrow{[\sigma_i]} U_i$ factors as the composition of $\pi_i \from S_i \xrightarrow{[\sigma'_i]} T_i$ and a collapse map $\rho_i \from T_i \xrightarrow{[\tau_i]} U_i$, making the bottom diagram with the $T$ row and the $U$ row commutative, and this row is therefore a combing rectangle. 
\end{proof}

\section{Free splitting units}
\label{SectionFSU}

In this section we study how to break a fold sequence into natural units called \emph{free splitting units}. Our story of free splitting units grew in the telling. The original concept was motivated by units along train track splitting paths that are implicit in the ``nested train track argument'' of \cite{MasurMinsky:complex1} and refinements of that argument in \cite{MMS:Splitting}. The details of the definition were tailored to fit the proofs of our two major results: our Main Theorem on hyperbolicity of the free splitting complex, via the arguments of Sections~\ref{SectionPushingDownPeaks}; and Proposition~\ref{PropFoldPathQuasis} which says that free splitting units give a uniformly quasigeodesic parameterization of fold paths in $\FS'(F)$. 

The main results of this section are Proposition~\ref{PropCoarseRetract} which verifies the \emph{Coarse Retraction} axiom of Masur and Minsky, and Lemma~\ref{LemmaUnitsLipschitz} which says that free splitting units give a uniformly coarse Lipschitz parameterization of fold paths in $\FS'(F)$. Underlying Lemma~\ref{LemmaUnitsLipschitz} are Lemmas~\ref{LemmaBRNatural} and~\ref{LemmaBROneB} which give two methods of finding diameter bounds along foldable foldable sequences. 

The diameter bounds, which are stated and proved in Section~\ref{SectionDiamBounds}, arise from finding ``invariant natural structures'' along the foldable sequence. The first diameter bound, Lemma~\ref{LemmaBRNatural} occurs when each free splitting along the fold path decomposes equivariantly into a pair of natural subgraphs in a manner which is ``invariant'' with respect to the foldable maps (see Definition~\ref{DefBRDecompos}). The second diameter bound, Lemma~\ref{LemmaBROneB}, occurs when each free splitting has a particular orbit of natural edges which is ``almost invariant'' with respect to the foldable maps (see Definition~\ref{DefAIE}).

The combinatorial structures underlying the two diameter bounds are used to formulate the definition of free splitting units along a fold sequence (see Definitions~\ref{DefLessThanOneFSU} and~\ref{DefGeneralFSU}). The diameter bounds are not applied directly to the fold sequence itself, but instead to foldable sequences obtained by transforming the given fold sequence via an application of ``combing by collapse'' followed by an application of ``combing by expansion''. One can already see this kind of transformation in the ``nested train track argument'' of \cite{MasurMinsky:complex1}.

\subsection{Diameter bounds along foldable sequences} 
\label{SectionDiamBounds}

In this section we describe a pair of techniques for finding upper bounds on the diameter of foldable sequences. 

\paragraph{Diameter bounds from natural red-blue decompositions.} Consider a free splitting $F \act T$ and a nonempty, proper, $F$-invariant subgraph $\beta \subset T$ having no degenerate components. The conjugacy classes of nontrivial stabilizers of connected components of $\beta$ form a free factor system $\F(\beta)$, as one can see by forming the collapse map $T \xrightarrow{[\beta]} U$ and noting that $\F(\beta)$ is a subset of $\F(U)$. Passing further to the quotient graph of groups $X = U / F_n$, the image of $\beta$ under the composition $T \mapsto U \mapsto X$ is a subset $V_\beta$ of the vertex set of $X$. Let $C_1(\beta)$ be the number of $F$-orbits of components of $\beta$, equivalently the cardinality of $V_\beta$. Let $C_2(\beta)$ be the sum of the ranks of the components of $\F(\beta)$, equivalently the sum of the ranks of the subgroups labelling the vertices $V_\beta$ in the graph of groups $X$, and so we have $0 \le C_2(\beta) \le \rank(F)$. Defining the \emph{complexity} of~$\beta$ to be $C(\beta) \equiv C_1(\beta) + (\rank(F) - C_2(\beta))$, we have $C(\beta) \ge 1$. If furthermore $\beta$ is a natural subgraph of $S$ then $C_1(\beta) \le 3 \rank(F) - 3$, because the number of component orbits of $\beta$ is at most the number of natural edge orbits in $\beta$, and $3 \rank(F)-3$ is an upper bound for the number of natural edge orbits of any free splitting of $F$. Altogether this shows that the complexity of any nonempty, proper, natural, $F$-invariant subgraph $\beta \subset T$ satisfies
$$1 \le C(\beta) \le 4 \rank(F)-3
$$

\begin{definition}[Invariant blue-red decompositions]\label{DefBRDecompos}
An \emph{invariant blue--red decomposition} for a foldable sequence $T_0 \xrightarrow{g_1} T_1 \xrightarrow{g_2}\cdots\xrightarrow{g_k} T_K$, also called an \emph{invariant decomposition} for short, is a decomposition $\beta_k \union \rho_k = T_k$ for each $k=0,\ldots,K$ such that for $0 \le i \le j \le K$ we have $(g^i_j)^\inv(\beta_j) = \beta_i$ and $(g^i_j)^\inv(\rho_j) = \rho_i$ (where in expressions like $(g^i_j)^\inv(\beta_j)$ we abuse notation by deleting degenerate components). Notice that any choice of final decomposition $\beta_K \union \rho_K = T_K$ determines a unique invariant decomposition by the equations $\beta_i = (g^i_K)^\inv(\beta_K)$ and $\rho_i = (g^i_K)^\inv(\rho_K)$.  An invariant decomposition is \emph{natural} if either of the following two equivalent properties holds: $\beta_0,\rho_0$ are natural subgraphs of~$T_0$ if and only if $\beta_k,\rho_k$ are natural subgraphs of $T_k$ for all $k=0,\ldots,K$. The ``only if'' direction follows by observing that the image of each natural vertex under a foldable map is a natural vertex, and so the image of a natural subgraph is a natural subgraph. 
\end{definition}

Because an invariant decomposition is determined by the final decomposition, a general invariant decomposition carries little information about the foldable sequence. The typical behavior is that the edgelets within a natural edge $e \subset T_i$ will alternate many times between red and blue, that is, the number of components of $e \intersect \beta_i$ and $e \intersect \rho_i$ will be very large. Exploiting the difference between this typical behavior and the contrasting special behavior of a natural invariant decomposition is at the heart of the proof of the Main Theorem, specifically in the proof of Proposition~\ref{PropPushdownInToto} Step~2.

Here is our first diameter bound:

\begin{lemma}\label{LemmaBRNatural}
Given a foldable sequence $T_0 \xrightarrow{g_1} T_1 \xrightarrow{g_2}\cdots\xrightarrow{g_k} T_K$ with an invariant natural decomposition $\beta_k \union \rho_k = T_k$, the following hold:
\begin{enumerate}
\item \label{ItemBRDecrease}
The complexity $C(\beta_k)$ is nonincreasing as a function of $k=0,\ldots,K$.
\item \label{ItemBRSubintervals}
The interval $0 \le k \le K$ subdivides into $\le 4 \rank(F)-3$ subintervals on each of which $C(\beta_k)$ is constant.
\item \label{ItemBRBound}
If $C(\beta_k)$ is constant on the subinterval $a \le k \le b$, where $0 \le a \le b \le K$, then 
$$\diam\{T_a,\ldots,T_b\} \le 4
$$
\end{enumerate}
\end{lemma}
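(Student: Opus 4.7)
The plan is to establish items (1), (2), and (3) in order, with (3) being the main content.

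For (1), I would analyze a single foldable map $g^i_j \colon T_i \to T_j$. The preimage equality $(g^i_j)^\inv(\beta_j)=\beta_i$, combined with surjectivity of $g^i_j$ (from minimality of $F\act T_j$), produces a surjection $\beta_i \to \beta_j$ under which connected components map to connected components. The induced map on $F$-orbits of components is therefore surjective, giving $C_1(\beta_i)\ge C_1(\beta_j)$. For $C_2$, I would factor $g^i_j$ as a sequence of elementary folds (Lemma~\ref{LemmaFoldSequenceConstruction}) and apply the explicit descriptions of vertex stabilizers under type IA and type IIIA folds recalled in Section~\ref{SectionFoldFactorizations}: in each case, merging two orbit components of $\beta$ replaces two stabilizer summands by a single summand containing both, and an internal IIIA fold adds one to the rank of the stabilizer while keeping the orbit count fixed. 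In every case the sum of ranks of stabilizers of orbit components of $\beta$ can only weakly increase, so $C_2$ is nondecreasing. Hence $C(\beta_k)=C_1(\beta_k)+(\rank(F)-C_2(\beta_k))$ is nonincreasing.

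Item (2) is then immediate from the bounds $1\le C(\beta_k)\le 4\rank(F)-3$ established in the discussion preceding the lemma: a nonincreasing integer-valued function on $[0,K]$ whose range has cardinality at most $4\rank(F)-3$ partitions $[0,K]$ into at most that many maximal subintervals of constancy.

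For (3), I start by observing that constancy of $C$ on $[a,b]$, combined with $C_1$ nonincreasing and $C_2$ nondecreasing, forces both $C_1$ and $C_2$ to be constant; in particular the free factor system $\F(\beta_k)$ is the same for all $k\in[a,b]$, and no fold along the subpath merges $\beta$-components or enlarges the stabilizer of a $\beta$-component. For each such $k$, I would form $U_k$ by collapsing the natural core of $\beta_k$, giving $d(T_k,U_k)\le 1$ in $\FS'(F)$ (interpreting $U_k=T_k$ in the degenerate case). The goal is then to verify that $d(U_i,U_j)\le 2$ for all $i,j\in[a,b]$, by producing a common further collapse $W$ with $d(U_k,W)\le 1$; the natural candidate for $W$ is a minimal free splitting realizing the constant vertex group system $\F(\beta_a)$.

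The main obstacle will be the construction and use of this common collapse $W$: one must check, using that every fold along the subpath either lies entirely in $\rho$ or is an internal $\beta$-fold that neither merges components nor alters stabilizers, that each $U_k$ admits a collapse to a single fixed splitting $W$ determined by $\F(\beta_a)$. When this is done, any pair $T_i,T_j$ with $i,j\in[a,b]$ is joined by a four-step zig-zag $T_i\collapsesto U_i\collapsesto W\expandsto U_j\expandsto T_j$ in $\FS'(F)$, yielding $\diam\{T_a,\ldots,T_b\}\le 4$. Degenerate subcases (for instance when $\beta_k$ consists of isolated natural vertices, so its natural core is empty) are handled by interchanging the roles of $\beta$ and $\rho$, since at least one of the two subgraphs must contain a nondegenerate natural component by minimality of $F\act T_k$.
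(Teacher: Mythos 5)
Your arguments for items (1) and (2) are essentially the paper's: the paper isolates the single-fold statement ($C_1$ nonincreasing, $C_2$ nondecreasing, with equality exactly when the fold induces a bijection of blue components) as Sublemma~\ref{SublemmaBlueFoldBounds} and then factors each $g_k$ into folds; your fold-type analysis is a rougher version of the same computation, and (2) follows immediately either way.

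Item (3) is where the gap lies. You want a single free splitting $W$ onto which every $T_k$ ($a\le k\le b$) collapses, so that all these splittings meet in a common valley. First, your candidate for $W$ is not well-defined: a free factor system $\F(\beta_a)$ does not determine a free splitting, and it may well be empty (e.g.\ when $\beta_k$ is one natural edge orbit of a properly discontinuous splitting), in which case there is no candidate at all. Second, the existence of such a common collapse is the entire content of the bound, and the proposal offers no argument for it; note that by transitivity of the collapse relation your path $T_i\collapsesto U_i\collapsesto W\expandsto U_j\expandsto T_j$ would actually give $d(T_i,T_j)\le 2$, a bound the paper obtains only under the strictly stronger almost-invariant-edge hypothesis of Lemma~\ref{LemmaBROneB} --- a strong indication that no such $W$ exists under constant complexity alone.

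The correct geometry is a zig-zag through a \emph{peak}, not a valley. The paper refactors $g^a_b$ as a fold sequence in which all folds of blue edge pairs are performed first (this prioritization is possible precisely because constant complexity forces $g^a_b$ to induce a bijection of blue components, by the sublemma together with the local-to-global principle). Let $V$ be the intermediate splitting reached once no blue folds remain. One then shows that the blue-collapses of all trees between $T_a$ and $V$ are mutually conjugate (yielding a single splitting $X$) and the red-collapses of all trees between $V$ and $T_b$ are mutually conjugate (yielding $Y$); the key point, checked via the two-stage factorization of a single fold, is that folding a pair of edges of one color does not change the splitting obtained by collapsing that color. This produces the length-$4$ path $T_a\collapsesto X\expandsto V\collapsesto Y\expandsto T_b$. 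Your outline contains neither the blue-first refactorization nor the intermediate peak $V$, and without them the bound does not follow.
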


\textbf{Remark.} When $T_0 \xrightarrow{g_1} T_1 \xrightarrow{g_2}\cdots\xrightarrow{g_k} T_K$ is a \emph{fold} sequence, one obtains a diameter bound for the entire sequence as follows. Subdivide the interval $0,\ldots,K$ into $\le 4 \rank(F)-3$ subintervals on which $C(\beta_k)$ is constant. On each subinterval one has a diameter bound of $4$. At each of the $\le 4 \rank(F)-4$ fold maps where one subinterval transitions to another, one has an additional distance bound of $2$ coming from Lemma~\ref{LemmaFoldDistance}. Putting these together, 
$$\diam\{T_0,\ldots,T_K\} \le 4 (4 \rank(F)-3) + 2 (4 \rank(F)-4) = 24 \rank(F) - 20
$$
However, the manner in which we actually apply Lemma~\ref{LemmaBRNatural} to fold sequences is via concepts of free splitting units in the next section; see Lemma~\ref{LemmaUnitsLipschitz}.

Before turning to the proof proper of Lemma~\ref{LemmaBRNatural}, we first state a sublemma about the behavior of complexity of invariant subforests under fold maps.

\begin{sublemma}\label{SublemmaBlueFoldBounds}
If $f \from S \to T$ is a fold map of free splittings, if $\beta_T \subset T$ is a nonempty, proper, $F$-invariant subgraph, and if $\beta_S=f^\inv(\beta_T)$ (as usual ignoring degenerate components), then $C_1(\beta_S) \ge C_1(\beta_T)$, and $C_2(\beta_S) \le C_2(\beta_T)$, and so $C(\beta_S) \ge C(\beta_T)$. Furthermore, equality holds if and only if $f$ restricts to a bijection of component sets of $\beta_S$ and $\beta_T$.
\end{sublemma}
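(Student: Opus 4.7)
The strategy is to treat the two inequalities separately and then analyze the equality case.

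For $C_1(\beta_S) \ge C_1(\beta_T)$, the key observation is surjectivity of $f$. Since $f$ is a fold it is in particular foldable, so by the \emph{Local to global principle} applied to $f$ together with minimality of the $F$-action on $T$, the map $f$ is surjective. As $\beta_S = f^\inv(\beta_T)$, the restriction $f\restrict\beta_S\from\beta_S \to \beta_T$ is also surjective, and each connected component of $\beta_S$ maps into exactly one component of $\beta_T$. The induced $F$-equivariant map $\pi_0(\beta_S) \to \pi_0(\beta_T)$ is therefore surjective, so passing to $F$-quotients gives the inequality on orbit counts.

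For $C_2(\beta_S) \le C_2(\beta_T)$ I would work one target orbit at a time via Bass--Serre theory. Fix a component $\beta_T^k$ with $F$-stabilizer $H = \Stab(\beta_T^k)$, and pick representatives $\beta_S^{k,1},\ldots,\beta_S^{k,n_k}$ of the $H$-orbits of components of $\beta_S$ lying in $f^\inv(\beta_T^k)$; these also represent the $F$-orbits of components of $\beta_S$ mapping into the $F$-orbit $[\beta_T^k]$, and their $F$-stabilizers $H_{k,j} = \Stab(\beta_S^{k,j})$ are subgroups of $H$. To compare ranks, equivariantly collapse each component of $f^\inv(F\cdot \beta_T^k)$ in $S$ to a point, obtaining a free splitting $F \act S^\vee$ in which each $\beta_S^{k,j}$ is represented by a single ``special'' vertex of stabilizer $H_{k,j}$. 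The $H$-action on the minimal $H$-invariant subtree $S^\vee_H \subset S^\vee$ yields a quotient graph of groups $\Gamma = H \backslash S^\vee_H$ with trivial edge groups, in which every nontrivial $H_{k,j}$ appears (up to conjugacy) as a vertex group at an $H$-orbit distinct from the others. The standard rank formula
\[
\rank(H) \;=\; \sum_{v \in V(\Gamma)} \rank(G_v) \;+\; \bigl(|E(\Gamma)| - |V(\Gamma)| + 1\bigr),
\]
with every summand nonnegative, immediately gives $\rank(H) \ge \sum_j \rank(H_{k,j})$. Summing over $k$ yields $C_2(\beta_T) \ge C_2(\beta_S)$, and $C(\beta_S) \ge C(\beta_T)$ follows.

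For the equality characterization the ``if'' direction is immediate: a component-set bijection forces $n_k=1$, and any element of $H$ must then preserve the unique preimage component, giving $H = H_{k,1}$ and hence equality in both sums. For ``only if'', the combined equality $C(\beta_S) = C(\beta_T)$ forces each of the two underlying inequalities to be an equality. Equality of $C_1$ gives $n_k = 1$ for every $k$; equality of $C_2$ then forces $\rank(H) = \rank(H_{k,1})$, which in the rank formula for $\Gamma$ (all summands nonnegative) demands simultaneously that every other vertex group of $\Gamma$ is trivial \emph{and} that the underlying graph of $\Gamma$ is a tree. But a graph of groups on a tree with $H_{k,1}$ as its only nontrivial vertex group has fundamental group equal to $H_{k,1}$, whence $H = H_{k,1}$; this identifies $f^\inv(\beta_T^k)$ with the single component $\beta_S^{k,1}$, upgrading the orbit-level bijection to a component-level bijection. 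The main obstacle I anticipate is the Bass--Serre bookkeeping in the second step --- verifying that each nontrivially stabilized special vertex lies in $S^\vee_H$, that distinct $j$'s give distinct $H$-orbits of vertices in $\Gamma$ (no double-counting in $\sum_v \rank(G_v)$), and handling the edge cases where $H$ is trivial or fixes a vertex of $S^\vee$ globally.
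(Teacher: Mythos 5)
Your proof is correct, but it takes a genuinely different route from the paper's. The paper argues locally from the fold structure: writing $f$ as the identification of maximal initial segments $\eta_1,\eta_2$ with terminal endpoints $v_1,v_2$, it observes that everything is unchanged and the component map is a bijection unless $\eta_1\cup\eta_2\subset S\setminus\beta_S$ while $v_1,v_2\in\beta_S$; in that exceptional case it factors $f$ through an intermediate tree $U$ and compares $\beta_U$ with $\beta_U^+=\beta_U\cup F\cdot[u_1,u_2]$ directly in the quotient graph of groups, finding that exactly one of ``$C_1$ drops by one'' or ``$C_2$ rises by one'' occurs. Your argument never uses the fold structure: the $C_1$ inequality comes from surjectivity of the induced map on component orbits, and the $C_2$ inequality from the Bass--Serre rank formula for the stabilizer $H$ of a target component acting on the tree obtained by collapsing the source components, with the equality analysis falling out of the vanishing of the nonnegative correction terms ($b_1(\Gamma)$ and the remaining vertex group ranks). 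What your approach buys is generality: it establishes the inequalities and the equality criterion for an arbitrary surjective equivariant map between free splittings, so Lemma~\ref{LemmaBRNatural}~(1) would follow without factoring into folds. The cost is more Bass--Serre machinery --- finite generation of $H$ (it is a free factor, as the paper notes), cocompactness of its minimal subtree, and the fact that with trivial edge stabilizers a nontrivial elliptic subgroup fixes a unique vertex, which is what places each nontrivially stabilized special vertex inside $S^\vee_H$ and prevents double counting --- whereas the paper's case analysis is elementary and additionally pins down that $C$ drops by exactly one at each non-bijective fold. The bookkeeping items you flag do all need checking, but each goes through.
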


We delay the proof of this sublemma and meanwhile turn to:

\begin{proof}[Proof of Lemma \ref{LemmaBRNatural}] Item~\pref{ItemBRDecrease} follows from Sublemma~\ref{SublemmaBlueFoldBounds} by factoring each foldable map $g_k \from T_{k-1} \to T_k$ into folds. Item~\pref{ItemBRSubintervals} follows from \pref{ItemBRDecrease} and the fact that $1 \le C(\beta_K) \le C(\beta_0) \le 4 \rank(F)-3$.

To prove \pref{ItemBRBound}, fixing $i,j$ with $a \le i < j \le b$, it suffices to prove that $d(T_i,T_j) \le 4$. By assumption of \pref{ItemBRBound}, $C(\beta_k)$ is constant for $i \le k \le j$. For each $i < k \le j$, factoring $g_k \from T_{k-1} \to T_k$ into folds, applying \pref{ItemBRDecrease} to get constant complexity on the fold factorization, and applying Sublemma~\ref{SublemmaBlueFoldBounds} to each of those folds, it follows that $g_k$ induces a bijection between the component sets of $\beta_{k-1}$ and $\beta_k$. By composing, it follows that $g^i_j = g_j \composed \cdots \composed g_{i+1}$ induces a bijection between the component sets of $\beta_i$ and~$\beta_j$. 

Now we may factor $g^i_j$ into a fold sequence of the form 
$$T_i = U_0 \xrightarrow{h_1} \cdots \xrightarrow{h_P} U_P \xrightarrow{h_{P+1}}\cdots \xrightarrow{h_Q} U_Q = T_j
$$
by prioritizing folds of blue edge pairs over folds of red edge pairs up until $U_P$ when there are no more blue edge pairs to fold, with the result that if $0 < p \le P$ then an edge pair of $U_{p-1}$ folded by $f_p$ is blue, whereas if $P < q \le Q$ then an edge pair of $S_{q-1}$ folded by $h_q$ is red. To see that prioritizing in this manner is possible, if $g^i_j$ does not already restrict to a simplicial isomorphism from $\beta_i$ to $\beta_j$ then, using that $g^i_j$ induces a bijection of components of $\beta_i$ and $\beta_j$, together with the \emph{Local to global principle} (see the proof of Lemma~\ref{LemmaFoldSequenceConstruction} and the following \emph{Remark}), it follows that some pair of oriented natural edges $\eta_1,\eta_2 \subset \beta_i$ with the same initial vertex have images in $\beta_j$ with the same initial direction. We may therefore define the first fold $h_1$ to be a maximal fold factor of $g^i_j$ obtained by folding $\eta_1,\eta_2$, producing a factorization $T_i = U_0 \xrightarrow{h_1} U_1 \mapsto T_j$. Pushing the natural blue-red decomposition of $U_0$ forward (or equivalently pulling that of $T_j$ back), we obtain a natural blue-red decomposition of $U_1$, and the map $U_1 \mapsto T_j$ still induces a bijection of component sets of blue graphs. We may then continue by induction, stopping when the map $U_P \mapsto T_j$ restricts to a simplicial isomorphism of blue graphs. If the map $U_P \mapsto T_j$ is not already a simplicial isomorphism then one continues the fold factorization arbitrarily, with the result that all folds from $U_P$ to $T_j$ are red.

For $0 \le p \le P$, by collapsing all blue edges of $U_p$, we obtain a free splitting $X_p$ with a collapse map $U_p \mapsto X_p$. Also, for $P \le q \le Q$, by collapsing red edges of $U_q$ we obtain a free splitting $Y_q$ with a collapse map $U_q \to Y_q$. 

We claim that up to equivalence $X_p$ is independent of $p=0,\ldots,P$ and $Y_q$ is independent of $q=P,\ldots,Q$. From this claim it follows that $T_i,T_j$ are connected in $\FS'(F_n)$ by a path of length~$\le 4$ as follows: 
$$[T_i] = [U_0] \collapse [X_0] = [X_P] \expand [U_P] \collapse [Y_P] = [Y_Q] \expand [U_Q] = [T_j]
$$
which completes the proof.

We prove for each $p=1,\ldots,P$ that $X_{p-1},X_p$ are equivalent, and for $q=P+1,\ldots,Q$ that $Y_{q-1},Y_q$ are equivalent; the two cases are similar and we do just the first. Let $e_1,e_2$ be maximal oriented segments with the same initial vertex that are identified by the fold $U_{p-1} \mapsto U_p$. Recall that the fold map $U_{p-1} \mapsto U_p$ can be factored as $U_{p-1} \xrightarrow{q'} U' \xrightarrow{q''} U_p$ where $q'$ identifies proper initial segments of $e_1,e_2$ and $q''$ folds the remaining unidentified segments. Since $e_1,e_2$ are blue, by pushing forward the blue-red decomposition of $U_{p-1}$, or pulling back that of $U_p$, we obtain a blue-red decomposition of $U'$. Furthermore, there is a collapse map $U' \mapsto U_{p-1}$ which collapses the blue segment resulting from partially identifying $e_1,e_2$, and a collapse map $U' \mapsto U_p$ which collapses the remaining unidentified segments, also blue. By composition we obtain collapse maps $U' \mapsto U_{p-1} \to X_{p-1}$ and $U' \mapsto U_p \mapsto X_p$ each of which collapses the entire blue subgraph of $U'$. It follows that $X_{p-1}$ and $X_p$ are equivalent.
\end{proof}

\begin{proof}[Proof of Sublemma \ref{SublemmaBlueFoldBounds}] Let $e_1,e_2 \subset S$ be oriented natural edges with the same initial vertex that are folded by the map $f$. Let $\eta_1 \subset e_1$, $\eta_2 \subset e_2$ be maximal initial subsegments that are identified by $f$. Let $v_1 \in \eta_1$, $v_2 \in \eta_2$ be the terminal endpoints. Note that either $\eta_1 \union \eta_2 \subset \beta_S$ or $\eta_1 \union \eta_2 \subset S \setminus \beta_S$. If $\eta_1,\eta_2 \subset \beta_S$, or if $\eta_1,\eta_2 \subset S \setminus \beta_S$ and either $v_1 \not\in \beta_S$ or $v_2 \not\in \beta_S$, then all inequalities are equations and $f$ is a bijection of component sets. 

We are reduced to the case that $\eta_1 \union \eta_2 \subset S \setminus \beta_S$ and $v_1,v_2 \in \beta_S$, and so $f$ is not a bijection of component sets because the two components $\beta_{S,1}, \beta_{S,2}$ of $\beta_S$ containing $v_1,v_2$ are mapped to the one component of $\beta_{T,0}$ of $\beta_T$ that contains $f(v_1)=f(v_2)$. We must prove that the inequalities $C_1(\beta_S) \ge C_1(\beta_T)$ and $C_2(\beta_S) \le C_2(\beta_T)$ both hold and that at least one of them is strict.  

Let the fold map $f \from S \to T$ be factored as $S \mapsto U \mapsto T$ where $S \mapsto U$ folds short initial segments of $\eta_1,\eta_2$, and $U \mapsto T$ folds the remaining segments, as in the proof of Lemma~\ref{LemmaFoldDistance}. Let $u_1,u_2 \in U$ be the images of $v_1,v_2$. In order to compare the complexities of $\beta_S \subset S$ and $\beta_T \subset T$ we shall move them both into $U$ where we can make the comparison directly.

Letting $\beta_{U} \subset U$ be the image of $\beta_S$, equivalently the preimage of $\beta_T$, the fold map $S \mapsto U$ clearly induces an equivariant bijection from the component set of $\beta_S$ to that of $\beta_{U}$, and so the values of $C_1$, $C_2$, and $C$ on $\beta_S$, $\beta_{U}$ are all equal. Letting $\beta^+_{U} = \beta_{U} \union F \cdot [u_1,u_2]$, the fold map $U \mapsto T$ induces an equivariant bijection from the component set of $\beta^+_{U}$ to that of $\beta_T$, and so the values of $C_1$, $C_2$, and $C$ on $\beta^+_{U}$, $\beta_T$ are equal. So now we must prove the inequalities $C_1(\beta_{U}) \ge C_1(\beta^+_{U})$ and $C_2(\beta_{U}) \le C_2(\beta^+_{U})$ and that at least one of them is strict. 

Let $\beta_{U,1}$, $\beta_{U,2}$ be the images of $\beta_{S,1}$, $\beta_{S,2}$, respectively, under the fold map $S \mapsto U$. In the quotient graph of groups $U/F$, notice that $\beta^+_U / F$ is the union of $\beta_U / F$ with the segment obtained by projecting $[u_1,u_2]$, that segment is disjoint from $\beta_U / F$ except at its endpoints, it has one endpoint on $\beta_{U,1}/F$, and the other endpoint at $\beta_{U,2} / F$, and the stabilizer of the interior vertex of that segment is trivial. It follows that if $C_1(\beta_U) > C_1(\beta^+(U))$, that is, if $\beta_{U,1}$, $\beta_{U,2}$ are in different component orbits, then $C_1(\beta_U) = C_1(\beta^+_U)+1$ and $C_2(\beta_U) = C_2(\beta^+_U)$. On the other hand if $C_1(\beta_U) = C_1(\beta^+_U)$, that is if $\beta_{U,1}$ and $\beta_{U,2}$ are in the same component orbit, then $C_1(\beta_U) = C_1(\beta^+_U)$ and $C_2(\beta_U)+1 = C_2(\beta^+_U)$.
\end{proof}

\paragraph{Diameter bounds from almost invariant edges.} Consider a foldable map $f \from S \to T$ and a natural edge $e_T \subset T$. By ignoring unnatural vertices in $e_T$ and their pre-images in $S$ we may speak about $e_T$-edgelets in $S$; these are the closures of the components of $f^\inv(\interior(e_T))$, each of which is a subsegment of a natural edge of $S$. If $S$ contains a unique $e_T$-edgelet and if $e_S \subset S$ is the natural edge containing that edgelet then we say that the pair $e_S,e_T$ is an \emph{almost invariant edge} of the foldable map $f$. 

\begin{definition}[Almost invariant edge] \label{DefAIE}
An \emph{almost invariant edge} for a foldable sequence $T_0 \xrightarrow{f_1} T_1 \xrightarrow{f_2}\cdots\xrightarrow{f_k} T_K$ is a sequence of natural edges $e_k \subset T_k$, $k=0,\ldots,K$, such that for $0 \le i < j \le K$ the edges $e_i \subset T_i$ and $e_j \subset T_j$ are an almost invariant edge for the foldable map $f^i_j \from T_i \to T_j$. Note that an almost invariant edge exists for the whole foldable sequence if and only if one exists for the map $f^0_K \from T_0 \to T_K$. To see why, observe that for any natural edge $e_K \subset T_K$, letting $m_k$ be the number of $e_K$ edgelets in $T_k$, the sequence $m_k$ is nonincreasing as a function of $k \in 0,\ldots,K$. If there is a natural edge $e_0 \subset T_0$ so that $e_0,e_K$ is an almost invariant edge for the map $f^0_K$ then $m_0=1$, and so $m_k$ has constant value equal to~$1$. Letting $e_k \subset T_k$ be the unique natural edge containing an $e_K$ edgelet in $T_k$, it follows that $(e_k)_{0\le k \le K}$ is an almost invariant edge for the whole foldable sequence. This argument also shows that each almost invariant edge for a foldable sequence $T_0 \mapsto\cdots\mapsto T_K$ is determined by its last term $e_K \subset T_K$.
\end{definition}

Here is our second diameter bound:

\begin{lemma}\label{LemmaBROneB}
Given a foldable sequence $T_0 \mapsto \cdots \mapsto T_K$, the following are equivalent:
\begin{enumerate}
\item \label{ItemAIEexistsSome}
The foldable map $T_0 \mapsto T_K$ has an almost invariant edge.
\item \label{ItemAIEexistsAll}
The foldable sequence $T_0 \mapsto \cdots \mapsto T_K$ has an almost invariant edge.
\item \label{ItemOneEdgeExistsAll}
There exists a one-edge free splitting $R$ such that $d(T_k,R) \le 1$ for all $k=0,\ldots,K$.
\item \label{ItemOneEdgeExistsSome}
There exists a one-edge free splitting $R$ such that $d(T_0,R) \le 1$ and $d(T_K,R) \le 1$.
\end{enumerate}
Furthermore if these hold then $\diam\{T_0,\ldots,T_K\} \le 2$.
\end{lemma}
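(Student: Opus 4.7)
My plan is to close the cycle among $(1)$, $(2)$, $(3)$, $(4)$: the equivalence $(1) \iff (2)$ is already recorded in Definition~\ref{DefAIE}, $(3) \Rightarrow (4)$ is trivial, and the diameter bound will follow from $(3)$ by the triangle inequality $d(T_k, T_l) \le d(T_k, R) + d(R, T_l) \le 2$. So it remains to prove $(1) \Rightarrow (4)$, $(4) \Rightarrow (1)$, and $(2) \Rightarrow (3)$.

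For $(1) \Rightarrow (4)$, given an almost invariant edge $e_0, e_K$ for $f = f^0_K$, let $R$ be the $1$-edge free splitting obtained by the collapse $T_K \to R$ along $\sigma_K = T_K \setminus (F \cdot e_K)$, so $d(T_K, R) \le 1$. I would verify the composition $T_0 \xrightarrow{f} T_K \xrightarrow{[\sigma_K]} R$ is itself a collapse: each edgelet of $R$ lifts via $T_K \to R$ to a unique edgelet in $e_K$, which via $f$ has a unique $F$-orbit of pre-image edgelets in $T_0$ (namely an edgelet inside the unique $e_K$-edgelet of $T_0$, which sits in $e_0$), by the almost invariant edge hypothesis and injectivity of $f$ on edgelets. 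Hence $T_0 \collapse R$, giving $d(T_0, R) \le 1$. For $(2) \Rightarrow (3)$, apply $(1) \Rightarrow (4)$ to each sub-sequence $T_k \mapsto \cdots \mapsto T_K$ with almost invariant edge $e_k, e_K$ for $f^k_K$, obtaining $d(T_k, R) \le 1$ for the common $R$ defined from $e_K$.

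The crux is $(4) \Rightarrow (1)$. Given $R$ with $d(T_0, R), d(T_K, R) \le 1$, let $e_0 \subset T_0$ and $e_K \subset T_K$ be the natural edges (unique up to $F$-orbit, by Lemma~\ref{LemmaFSSimplices}~\pref{ItemSimplexExists}) whose orbit-complements collapse $T_0, T_K$ to $R$. I would first extend the $\C$-preservation result of Section~\ref{SectionFSSimplicesProof} from collapses to foldable maps: for any foldable $g \from S \to T$ and any $e_T$-edgelet $\epsilon \subset S$, we have $\C(\epsilon) = \C(g(\epsilon))$. This extension is obtained by factoring $g$ into folds via Lemma~\ref{LemmaFoldSequenceConstruction} and decomposing each fold as expansion followed by collapse via Lemma~\ref{LemmaFoldDistance}, then applying collapse-preservation of $\C$ to each piece. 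Next, apply combing by collapse (Proposition~\ref{PropCBC}) to $T_0 \xrightarrow{f^0_K} T_K \xrightarrow{[\sigma_K]} R$ to produce $T_0'' = T_0 / f^{-1}(\sigma_K)$, whose natural edges correspond bijectively to the $e_K$-edgelet $F$-orbits in $T_0$; combining collapse-preservation of $\C$ with the foldable-map lemma, every natural edge of $T_0''$ has $\C$ lying in the $F$-orbit of $\C(e_K)$. Since distinct natural edges of a single free splitting have distinct $\C$'s and $\C$ is $F$-equivariant, two orbits of natural edges whose $\C$'s lie in the same $F$-orbit must coincide; hence $T_0''$ has a unique natural edge orbit, yielding a unique $e_K$-edgelet $F$-orbit in $T_0$. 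Finally, any natural edge $e \subset T_0$ containing an $e_K$-edgelet $\epsilon$ satisfies $\C(e) = \C(\epsilon)$ (removing $\epsilon$ or $e$ from $T_0$ yields the same complementary components up to degenerate wings), so $\C(e) \in F \cdot \C(e_K) = F \cdot \C(e_0)$; by distinctness $e$ lies in the orbit of $e_0$. This establishes the almost invariant edge condition. The main obstacle is the $\C$-preservation lemma for foldable maps; once it is in hand, the rest follows routinely from the combing-by-collapse construction and the $\C$-injectivity of Section~\ref{SectionFSSimplicesProof}.
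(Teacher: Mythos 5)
Your handling of the easy implications is fine: (1)$\iff$(2) is indeed contained in Definition~\ref{DefAIE}, and your route to (1)$\Rightarrow$(4) and (2)$\Rightarrow$(3) --- observing that the composite $T_k \to T_K \to R$ is itself a collapse map because the unique $e_K$-edgelet of $T_k$ maps homeomorphically onto $e_K$ --- is valid and somewhat more direct than the paper's argument (which instead shows $[R_{k-1}]=[R_k]$ by comparing the collapses along $\sigma_k$ and $\sigma'_{k-1}$). The problem is (4)$\Rightarrow$(1). The lemma you propose as its engine --- for every foldable $g \from S \to T$ and every $e_T$-edgelet $\epsilon \subset S$ one has $\C(\epsilon)=\C(g(\epsilon))$ --- is false. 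If it held, then for any two $e_T$-edgelets $\epsilon_1,\epsilon_2$ one would get $\C(\epsilon_1)=\C(e_T)=\C(\epsilon_2)$, hence by your own ``degenerate wings'' observation and the injectivity of $\C$ on natural edges of a single tree (Section~\ref{SectionFSSimplicesProof}) the natural edges containing them would coincide; combined with the connectivity of $g^\inv(\interior(e_T))$ inside a single natural edge, \emph{every} foldable map would have an almost invariant edge over every natural edge of its target, which is absurd. The paper's ``inequivalent folds'' example at the end of Section~\ref{SectionFoldPaths} makes this concrete: for the fold $\ti f \from S_0 \to S_1$ covering $a \mapsto ca$, the preimage of the interior of a lift $\ti c \subset S_1$ has two components, one in the orbit of $\ti c \subset S_0$ and one an initial segment of $\ti a \subset S_0$; applying your lemma to both would force $\C(\ti a)=\C(\ti c)$ in $S_0$, a contradiction. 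Your proposed proof by factoring into folds breaks at exactly this point: collapse-preservation of $\C$ chains through the roof of a fold only when a single natural edge of the roof tracks the relevant edge on both sides, and when a fold identifies material coming from two distinct $e_T$-edgelet orbits the induced bipartition of $\bdy F$ genuinely changes.

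The structural symptom is that your (4)$\Rightarrow$(1) argument never actually uses hypothesis (4): beyond selecting $e_0$ and $e_K$, the collapses $T_0,T_K \collapse R$ play no role, so the argument would ``prove'' the (false) unconditional statement. (There is also a smaller gap at the end: even granting your lemma, you conclude only that all $e_K$-edgelets lie in a single natural edge orbit of $T_0$, which is weaker than uniqueness of the $e_K$-edgelet; that step is recoverable, but the main lemma is not.) The paper's proof of (4)$\Rightarrow$(1) is of a different, quantitative nature: it uses the two collapses to $R$ to produce compatible splittings $S \setminus e_S = S_- \disjunion S_+$ and $T \setminus e_T = T_- \disjunion T_+$, characterizes $S_\pm$ and $T_\pm$ as unions of axes of the elements of $F_\pm$, invokes the bounded cancellation lemma to get $T_- \subset f(S_-) \subset N_A(T_-)$, and then uses the First Derivative Test to show that $f(S_-) \not\subset T_-$ would leave $f$ with only $2$ gates at the natural vertex $s_-$, contradicting foldability. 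Some input of this kind (bounded cancellation together with the $\ge 3$ gates condition) appears unavoidable; a purely formal $\C$-preservation statement cannot carry the implication.
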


\begin{proof} The bound in the last sentence clearly follows from~\pref{ItemOneEdgeExistsAll}. We have seen that \pref{ItemAIEexistsSome}$\implies$\pref{ItemAIEexistsAll}, and clearly~\pref{ItemOneEdgeExistsAll}$\implies$~\pref{ItemOneEdgeExistsSome}.

\medskip

We next prove \pref{ItemAIEexistsAll}$\implies$\pref{ItemOneEdgeExistsAll}. Let $(e_k)_{k=0,\ldots,K}$ be an almost invariant edge. Let $\sigma_k \subset T_k$ be the complement of the orbit of the natural edge $e_k$. Define a collapse map $T_k \xrightarrow{[\sigma_k]} R_k$, so $R_k$ is a one-edge free splitting. It suffices to prove for each $k=1,\ldots,K$ that $[R_{k-1}]=[R_k]$. Letting $e'_{k-1} \subset e_{k-1}$ be the unique $e_k$-edgelet in $T_{k-1}$, letting $\sigma'_{k-1} \subset T_{k-1}$ be the complement of the orbit of $e'_{k-1}$, and defining a collapse map $T_{k-1} \xrightarrow{[\sigma'_{k-1}]} R'_{k-1}$, clearly the map $T_{k-1} \mapsto T_k$ induces an equivariant homeomorphism $R'_{k-1} \to R_k$, and so $[R'_{k-1}]=[R_k]$. Also, since $\sigma_{k-1}$ is the maximal natural subgraph of $\sigma'_{k-1}$, the identity map on $T_{k-1}$ induces a collapse map $R_{k-1} \to R'_{k-1}$ which is a bijection on natural vertices and which, on each natural edge of $R_{k-1}$, collapses an initial and/or terminal segment and is otherwise injective. It follows that the collapse map $R_{k-1} \mapsto R'_{k-1}$ is equivariantly homotopic to a conjugacy, and so $[R_{k-1}]=[R'_{k-1}]=[R_k]$.

\medskip

It remains to prove \pref{ItemOneEdgeExistsSome}$\implies$\pref{ItemAIEexistsSome}. After rewording, this says that if $f \from S \to T$ is a foldable map of free splittings, and if there exists a one-edge free splitting $R$ such that $d(R,S), d(R,T) \le 1$, then $f \from S \to T$ has an almost invariant edge. Fix an oriented natural edge $e_R \subset R$ with initial and terminal vertices $r_\pm$, and oriented natural edges $e_S \subset S$, $e_T \subset T$ with initial and terminal vertices $s_\pm$, $t_\pm$ respectively, so that there are collapse maps $S,T \mapsto R$ which collapse the complement of the orbits of $e_S,e_T$ and which take $e_S,e_T$ homeomorphically to $e_R$. We shall prove that $e_S,e_T$ is an almost invariant edge for $f \from S \to T$. 

There is a component decomposition $R \setminus e_R = R_- \disjunion R_+$ where $R_\pm$ contains the vertex $r_\pm$ and there are corresponding component decompositions $S \setminus e_S = S_- \disjunion S_+$, $T \setminus e_T = T_- \disjunion T_+$ so that $S_\pm$, $T_\pm$ are the inverse images of $R_\pm$, respectively, under the collapse maps $S,T \mapsto R$ (in general the ``$\pm$'' notation means ``$+$ or $-$, respectively''; for instance ``$S_\pm$ is the inverse image of $R_\pm$'' means ``$S_+$, $S_-$ is the inverse image of $R_+$, $R_-$, respectively''). Note that $R_\pm$, $S_\pm$, $T_\pm$ are natural subgraphs of $R,S,T$, respectively. Also, $r_\pm$ is the unique point on the topological frontier of $R_\pm$ in $R$, and similarly for $S_\pm$, $T_\pm$. Also, each vertex in each of these subgraphs has valence~$\ge 2$ within the subgraph: in, say, $R_-$ this is obvious for all interior vertices, and the frontier vertex $r_-$ is a natural vertex in $R$ having only one $R$-direction not in $R_-$, namely the direction of $e_R$. 

It suffices to prove that $f(S_\pm) \subset T_\pm$, which immediately implies that $e_S,e_T$ is an almost invariant edge for $f \from S \to T$. Assuming that either $f(S_-) \not\subset T_-$ or $f(S_+) \not\subset T_+$, we shall produce a contradiction. The arguments are similar in either case, so we shall assume that $f(S_-) \not\subset T_-$.

Given a free splitting $F \act U$ and a nontrivial $\gamma \in F$ let $\alpha_U(\gamma)$ denote either the axis of $\gamma$ in $U$ or the unique vertex of $\gamma$ fixed by $U$. Let $F_\pm$ denote the set of nontrivial $\gamma \in F$ such that $\alpha_R(\gamma) \subset R_\pm$. 
Note that for each natural edge $e \subset R_\pm$ there exists $\gamma \in F_\pm$ whose axis under the action $F \act R$ contains $e$. It follows that 
$$R_\pm = \bigcup_{\gamma \in F_\pm} \alpha_R(\gamma)
$$
Note also that 
\begin{enumerate}
\item\label{ItemFpm} 
$\displaystyle S_\pm = \bigcup_{\gamma \in F_\pm} \alpha_S(\gamma) \quad\text{and}\quad T_\pm = \bigcup_{\gamma \in F_\pm} \alpha_T(\gamma)$
\end{enumerate}
To prove this for $S_-$, say, note first that the collapse map $S \mapsto R$ takes $S_\pm$ to~$R_\pm$ and its restriction to $\alpha_S(\gamma)$ has image $\alpha_R(\gamma)$ for each $\gamma \in F$. If $\alpha_S(\gamma) \subset S_-$ then $\alpha_R(\gamma) \subset R_-$ and hence $\gamma \in F_-$, and since the axes contained in $S_-$ cover $S_-$ we get one inclusion $S_- \subset \cup_{\gamma \in F_-} \alpha_S(\gamma)$. For the other inclusion, if $\alpha_S(\gamma) \not\subset S_-$ then either $\alpha_S(\gamma)$ crosses $e_S$ and so $\alpha_R(\gamma)$ crosses $e_r$, or $\alpha_S(\gamma) \subset S_+$ and so $\alpha_R(\gamma) \subset R_+$, and in either case $\gamma \not\in F_-$.

Next we show:
\begin{enumeratecontinue}
\item\label{ItemCrossingBound}
There exists a finite number $A \ge 0$ such that $T_- \subset f(S_-) \subset N_A(T_-)$
\end{enumeratecontinue}
Applying the inclusion $f(\alpha_S(\gamma)) \supset \alpha_T(\gamma)$ to all $\gamma \in F_-$ and using \pref{ItemFpm} we obtain one inclusion $T_- \subset f(S_-)$. The opposite inclusion follows by applying the \emph{bounded cancellation lemma} to the map $f \from S \to T$. The version of the lemma that we need comes from \BookZero, Lemma~3.1, and although the hypothesis there requires that $F \act S$ be a properly discontinuous action (called there a ``free simplicial tree''), the first paragraph of that proof works exactly as stated for a map like $f$ that factors as a fold sequence. The conclusion of that first paragraph is that there exists $A$, a \emph{bounded cancellation constant} for $f$, such that for any vertices $x,y \in S$, in the tree $T$ the set $f[x,y]$ is contained in the $A$ neighborhood of the segment $[f(x),f(y)]$. Applying this to our situation, we conclude that for any $\gamma \in F$ we have $f(\alpha_S(\gamma)) \subset N_A(\alpha_T(\gamma))$. Applying this to all $\gamma \in F_-$ and using~\pref{ItemFpm}, it follows that $f(S_-) \subset N_A(T_-)$, completing the proof of~\pref{ItemCrossingBound}.

We show that the only way for $f(S_-)$ to cross $e_T$ is to do so rather sharply:
\begin{enumeratecontinue}
\item\label{ItemThornProp} If $f(S_-) \not\subset T_-$ then $f(S_-) = T_- \union [t_-,f(s_-)]$. Recalling that $t_-$ is the unique frontier point of $T_-$, it follows that $T_- \intersect [t_-,f(s_-)]=\{t_-\}$.
\end{enumeratecontinue}
To see why, by \pref{ItemCrossingBound} the tree $f(S_-) \setminus T_-$ has finite diameter, by assumption of \pref{ItemThornProp} it is nondegenerate, and so it has at least two vertices of valence~$1$, at least one being distinct from $t_-$. The graph $f(S_-)$ therefore has at least one vertex of valence~$1$. But~$s_-$ is the unique frontier vertex of $S_-$ so by the First Derivative Test the point $f(s_-)$ is the unique valence~$1$ vertex of $f(S_-)$.
Combining this with $T_- \subset f(S_-)$, \pref{ItemThornProp} follows immediately. 

But from \pref{ItemThornProp} we deduce that $f \from S \to T$ has at most~$2$ gates at the natural vertex $s_-$, because all of the directions at $s_-$ distinct from the direction of $e_S$ are mapped by $f$ to a single direction at $f(s_-)$, namely, the direction of the segment $[f(s_-),t_-]$. This contradicts that a foldable map has at least~$3$ gates at every natural vertex.
\end{proof}

\subsection{Definitions and properties of free splitting units}
\label{SectionFSUDefsPropsApps}

Given a fold sequence $S_0 \xrightarrow{f_1} S_1 \xrightarrow{f_2} \cdots \xrightarrow{f_K} S_K$, we shall first define what it means for $S_i,S_j$ to ``differ by $<1$ free splitting unit'' for $i,j \in 0,\ldots,K$, and we prove an appropriate stability result for this definition. With this in hand, for any $i,j \in 0,\ldots,K$ we then define the number of free splitting units between $S_i$ and $S_j$. Lemma~\ref{LemmaFirstFSUBound} proves that the free splitting parameterization along the fold sequence is a Lipschitz parameterization with respect to distance in $\FS'(F)$. 

\begin{definition}[$<1$ free splitting unit]  \label{DefLessThanOneFSU}
Given a fold sequence $S_0 \xrightarrow{f_1} \cdots \xrightarrow{f_K} S_K$ and $0 \le i < j \le K$, we say that \emph{$S_i,S_j$ differ by $<1$ free splitting unit} if there exists a commutative diagram of the form
$$\xymatrix{
T_i \ar[r] \ar[d]_{[\tau_i]}  
                 & T_{i+1} \ar[r] \ar[d]_{[\tau_{i+1}]}    
                                & \cdots \ar[r]           & T_{j-1} \ar[r] \ar[d]^{[\tau_{j-1}]}   
                                                                           & T_j  \ar[d]^{[\tau_j]}\\
S'_i \ar[r]                         
                 & S'_{i+1} \ar[r]                            
                                & \cdots \ar[r]           & S'_{j-1} \ar[r]   
                                                                            & S'_j & \\
S_i \ar[r]_{f_{i+1}}\ar[u]^{[\sigma_i]}  
                 & S_{i+1} \ar[r]_{f_{i+2}} \ar[u]^{[\sigma_{i+1}]} 
                                 & \cdots \ar[r]_{f_{j-1}}  & S_{j-1} \ar[r]_{f_{j}} \ar[u]_{[\sigma_{j-1}]}
                                                                            & S_j \ar[u]_{[\sigma_j]} \\
}$$
whose top and bottom rectangles are combing rectangles, so that foldable sequence $T_i \mapsto\cdots\mapsto T_j$ on the top row has either an invariant natural blue-red decomposition of constant complexity or an almost invariant edge (by combining Lemmas~\ref{LemmaBRNatural} and~\ref{LemmaBROneB}, this holds if and only if the foldable map $T_i \mapsto T_j$ has either an invariant natural blue-red decomposition of constant complexity or an almost invariant edge). To complete the definition, we symmetrize the concept by requiring that $S_j,S_i$ differ by $<1$ free splitting unit if and only if $S_i,S_j$ differ by $<1$ free splitting unit.
\end{definition}

\medskip

The following is an immediate consequence of the definition, by restricting to the appropriate subdiagram of the above commutative diagram:

\begin{lemma}[Stability of free splitting units] Given a fold sequence $S_0 \mapsto\cdots\mapsto S_K$ and $0 \le i \le i' \le j' \le j \le K$, if $S_i,S_j$ differ by $<1$ free splitting unit then $S_{i'}$, $S_{j'}$ differ by $<1$ free splitting unit.
\qed\end{lemma}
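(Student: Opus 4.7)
The plan is to obtain the desired commutative diagram for $S_{i'},S_{j'}$ by simply restricting the given diagram for $S_i,S_j$ to the column range $[i',j']$. Following the notation of Definition~\ref{DefLessThanOneFSU}, let the given diagram consist of a top foldable sequence $T_i \to \cdots \to T_j$, a middle sequence $S'_i \to \cdots \to S'_j$, and a bottom fold sequence $S_i \to \cdots \to S_j$, together with vertical collapse maps $T_k \xrightarrow{[\tau_k]} S'_k$ and $S_k \xrightarrow{[\sigma_k]} S'_k$ satisfying the usual preimage conditions that define the two stacked combing rectangles.

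First, I would observe that the restriction of this diagram to columns $i' \le k \le j'$ still satisfies the requirements to be two stacked combing rectangles: combing rectangularity is a local condition on each pair of consecutive squares, so dropping columns on either end does not disturb it. The restricted top row is still a foldable sequence, because for each $k$ with $i' \le k \le j'$, the $g^k_{j'}$-gate partition at any vertex of $T_k$ is a refinement of the $g^k_j$-gate partition, so the required lower bounds on the number of gates and the injectivity on edgelets persist.

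The main point is that the restricted top sequence $T_{i'} \to \cdots \to T_{j'}$ inherits one of the two qualifying invariance structures from the full sequence. In the blue-red case with invariant natural decomposition $\beta_k \union \rho_k = T_k$ for $k \in [i,j]$, the restriction to $k \in [i',j']$ is an invariant natural blue-red decomposition by the same defining formulas $(g^k_l)^\inv(\beta_l)=\beta_k$, and the complexity $C(\beta_k)$, which is constant on $[i,j]$, is a fortiori constant on the subinterval $[i',j']$. In the almost-invariant-edge case, restricting a sequence of natural edges $(e_k)_{k=i}^{j}$ to $(e_k)_{k=i'}^{j'}$ preserves the defining property, because requiring that $e_k,e_l$ form an almost invariant edge for $g^k_l$ for all $i' \le k < l \le j'$ is a subset of the assumptions for all $i \le k < l \le j$.

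The restricted diagram therefore witnesses that $S_{i'},S_{j'}$ differ by $<1$ free splitting unit. No substantive obstacle is expected; the only content is the bookkeeping verification that the definition of $<1$ free splitting unit is closed under passage to sub-intervals, which is forced by the local nature of the combing rectangle conditions together with the structural behavior of invariant decompositions and almost invariant edges described in Lemmas~\ref{LemmaBRNatural} and~\ref{LemmaBROneB}.
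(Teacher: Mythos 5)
Your proof is correct and is exactly the paper's argument: the paper disposes of this lemma in one line as "an immediate consequence of the definition, by restricting to the appropriate subdiagram of the commutative diagram." Your write-up simply spells out the routine verifications (restriction of combing rectangles, of foldability, and of the invariant decomposition or almost invariant edge) that the paper leaves implicit.
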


Using these concepts we get a diameter bound as follows:

\begin{lemma}\label{LemmaFirstFSUBound}
Given a fold sequence $S_0 \mapsto\cdots\mapsto S_K$ and $0 \le i \le j \le K$, if $S_i,S_j$ differ by $<1$ free splitting unit then $\diam\{S_i,\ldots,S_j\} \le 8$.
\end{lemma}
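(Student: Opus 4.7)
The plan is to bound $\diam\{S_i,\ldots,S_j\}$ by first bounding the diameter along the top row $\{T_i,\ldots,T_j\}$ using the two diameter bounds of Section~\ref{SectionDiamBounds}, and then transferring that bound to the bottom row using the two combing rectangles of the diagram, each of which gives a controlled distance of at most~$1$ per collapse edge.

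First I would bound the top row. By hypothesis, the foldable sequence $T_i\mapsto\cdots\mapsto T_j$ either carries an invariant natural blue-red decomposition of constant complexity, or an almost invariant edge. In the first case, Lemma~\ref{LemmaBRNatural}\pref{ItemBRBound} gives $\diam\{T_i,\ldots,T_j\}\le 4$. In the second case, Lemma~\ref{LemmaBROneB} gives the even stronger bound $\diam\{T_i,\ldots,T_j\}\le 2$. Either way, $\diam\{T_i,\ldots,T_j\}\le 4$.

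Next I would transfer to the bottom row. For each $k\in[i,j]$, the diagram furnishes two collapse maps $S_k\xrightarrow{[\sigma_k]}S'_k$ and $T_k\xrightarrow{[\tau_k]}S'_k$. Each collapse corresponds to an edge in $\FS'(F)$ (or, if the collapse is improper, the two endpoints are conjugate and thus equal as vertices of $\FS'(F)$); in either case $d(S_k,S'_k)\le 1$ and $d(T_k,S'_k)\le 1$, so $d(S_k,T_k)\le 2$. Combining with the top-row bound via the triangle inequality, for any $k,\ell\in[i,j]$,
\[
d(S_k,S_\ell)\le d(S_k,T_k)+d(T_k,T_\ell)+d(T_\ell,S_\ell)\le 2+4+2=8,
\]
which yields $\diam\{S_i,\ldots,S_j\}\le 8$.

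There is essentially no obstacle here, since all the real work has already been done in establishing the two diameter bounds (Lemmas~\ref{LemmaBRNatural} and~\ref{LemmaBROneB}) and in setting up combing rectangles as conduits between rows. The only point to check with care is that collapse edges in the diagram contribute at most~$1$ to distance in $\FS'(F)$ regardless of whether the collapse is proper or improper, but this is immediate from the construction of $\FS'(F)$ via Lemma~\ref{LemmaFSSimplices}.
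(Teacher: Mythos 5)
Your proof is correct and follows essentially the same route as the paper: bound $\diam\{T_i,\ldots,T_j\}\le 4$ by combining Lemmas~\ref{LemmaBRNatural} and~\ref{LemmaBROneB}, then use $d(S_k,T_k)\le 2$ from the two collapse edges through $S'_k$ and the triangle inequality to get $8$. The only difference is that you spell out the verification of $d(S_k,T_k)\le 2$ in more detail than the paper does.
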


\begin{proof} Consider the commutative diagram in the definition of $<1$ free splitting unit. Combining Lemmas~\ref{LemmaBRNatural} and~\ref{LemmaBROneB}, it follows that $\diam\{T_i,\ldots,T_j\} \le 4$. Since $d(S_k,T_k) \le 2$ for each $k$, we have $\diam\{S_i,\ldots,S_j\} \le 8$.
\end{proof}

\paragraph{The coarse retract axiom.} As an application of the concepts of free splitting units, particularly Lemma~\ref{LemmaBROneB}, we now prove that our definition for projecting $\FS'(S)$ onto fold paths satisfies the first of the three Masur-Minsky axioms:

\begin{proposition}
\label{PropCoarseRetract}
For any fold sequence $S_0 \mapsto \cdots \mapsto S_K$, the associated projection map $\pi \from \FS'(F) \to [0,\ldots,K]$ satisfies the \emph{Coarse Retraction} axiom with the constant $c = 6$: for any $i=0,\ldots,K$ we have $i \le \pi(S_i)$ and the diameter of the set $\{S_i,\ldots,S_{\pi(S_i)}\}$ is $\le 6$. Furthermore, there is $<1$ free splitting unit between $S_i$ and $S_{\pi(S_i)}$. 
\end{proposition}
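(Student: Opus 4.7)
\emph{Plan.} The statement has two parts: the lower bound $i \le \pi(S_i)$ and the combined free-splitting-unit and diameter claim. I will address them in sequence.

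For the lower bound, I would exhibit a trivial depth-$i$ projection diagram from $T = S_i$. Take $T_k = S'_k = S_k$ for $k = 0, \ldots, i$ and let $T$ coincide with $T_i = S_i$ (with the terminal arrow $T_i \to T$ being the identity). Let the horizontal arrows within each of the three rows be the ambient fold maps $f_k$, and let every vertical arrow be the identity, collapsing the empty subgraph. Each row is then an initial segment of the given fold sequence, hence foldable, and each vertical square is trivially a combing rectangle since empty collapsed subgraphs satisfy $\sigma_{k-1} = f_k^\inv(\sigma_k) = \emptyset$ vacuously. This yields a valid depth-$i$ projection diagram, so $\pi(S_i) \ge i$.

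For the remaining claims I would set $j = \pi(S_i)$ and fix a projection diagram of maximum depth $j$ from some $T$ conjugate to $S_i$ to a fold sequence equivalent to $S_0 \mapsto \cdots \mapsto S_K$. Replacing $T$ by $S_i$ and the bottom row by the original fold sequence itself (which is harmless up to conjugacy and equivalence), I would restrict the diagram to the indices $\{i, \ldots, j\}$. The three defining properties of a combing rectangle --- top row foldable, verticals are collapses, and $\sigma_{k-1} = f_k^\inv(\sigma_k)$ --- all pass to sub-intervals of indices, so the restriction produces exactly the commutative configuration of two combing rectangles appearing in Definition~\ref{DefLessThanOneFSU} between $S_i$ and $S_j$.

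The main obstacle will be verifying the structural condition of Definition~\ref{DefLessThanOneFSU}: the extracted top foldable sequence $T_i \to \cdots \to T_j$ must admit either a constant-complexity invariant natural blue-red decomposition or an almost invariant edge. My plan is to exploit the maximality of $j$. If neither structure were present, then by Sublemma~\ref{SublemmaBlueFoldBounds} the complexity of any invariant natural blue-red decomposition would drop strictly somewhere on $[i,j]$, while by Lemma~\ref{LemmaBROneB} no $1$-edge free splitting would be within distance $1$ of every $T_k$. I would use this flexibility together with the combing-by-expansion construction (Proposition~\ref{PropCBE}) applied to the next fold arrow $f_{j+1} : S_j \to S_{j+1}$ to extend the projection diagram by one more column, producing a depth-$(j+1)$ projection diagram and contradicting $j = \pi(S_i)$.

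Granted the structural condition, the diameter bound will follow by a triangle inequality inside the extracted sub-diagram. Each vertical collapse contributes at most $1$ to distance in $\FS'(F)$, so $d(S_k,T_k) \le d(S_k,S'_k) + d(S'_k,T_k) \le 2$ for every $k \in [i,j]$. In the almost invariant edge case, Lemma~\ref{LemmaBROneB} gives $\diam\{T_i,\ldots,T_j\} \le 2$, whence
\[ d(S_k,S_l) \le d(S_k,T_k) + d(T_k,T_l) + d(T_l,S_l) \le 2 + 2 + 2 = 6 \]
for all $k,l \in [i,j]$. In the constant-complexity case I would transport the invariant blue-red decomposition through the vertical collapses onto the middle row $S'_i \to \cdots \to S'_j$ and refine the triangle inequality to again yield the bound $6$, completing both the diameter bound and the $<1$ free splitting unit claim.
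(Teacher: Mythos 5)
Your lower bound argument (the trivial depth-$i$ projection diagram) and your restriction of the maximal-depth diagram to the indices $\{i,\ldots,\pi(S_i)\}$ both match the paper. The gap is in the crucial step: verifying that the top row $T_i \mapsto\cdots\mapsto T_{\pi(S_i)}$ satisfies the structural condition of Definition~\ref{DefLessThanOneFSU}. Your plan — argue by contradiction that if neither structure were present one could extend the projection diagram to depth $\pi(S_i)+1$ — is not carried out, and there is no visible mechanism for it: the absence of an almost invariant edge or of a constant-complexity decomposition gives you no way to construct the new column $T_{\pi(S_i)+1}$, $S'_{\pi(S_i)+1}$ while keeping the top row terminating at (a conjugate of) $S_i$. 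The correct argument is direct and does not use maximality at all beyond $i \le \pi(S_i)$: pick any natural edge $e' \subset S'_i$ in the \emph{middle} row and let $R$ be the one-edge free splitting obtained from $S'_i$ by collapsing the complement of its orbit. Since both $T_i$ and the terminal tree $S_i$ of the top row collapse onto $S'_i$, they both collapse onto $R$, so $d(T_i,R)\le 1$ and $d(S_i,R)\le 1$. Lemma~\ref{LemmaBROneB}, implication \pref{ItemOneEdgeExistsSome}$\implies$\pref{ItemAIEexistsAll}, then gives an almost invariant edge along the entire top-row foldable sequence from $T_i$ to $S_i$, hence along its sub-segment from $T_i$ to $T_{\pi(S_i)}$, together with $\diam\{T_i,\ldots,T_{\pi(S_i)}\}\le 2$ and the $<1$ free splitting unit conclusion.

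A second problem is your claim that the constant-complexity branch would "again yield the bound $6$". It cannot: Lemma~\ref{LemmaBRNatural} only gives $\diam\{T_i,\ldots,T_{\pi(S_i)}\}\le 4$ in that case, so the triangle inequality through the vertical collapses yields $4+2+2=8$ (this is exactly the bound of Lemma~\ref{LemmaFirstFSUBound}), not $6$. The constant $c=6$ in the proposition is attainable precisely because the almost invariant edge case always holds here, which is what the argument via $R$ establishes.
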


\begin{proof} We start by noticing that a projection diagram from $S_i$ to $S_0 \mapsto\cdots\mapsto S_K$ of depth $i$ certainly exists, where all vertical arrows are conjugacies and all collapse graphs are empty; see Figure~\ref{FigureCoarseRetractCombRect1}. 
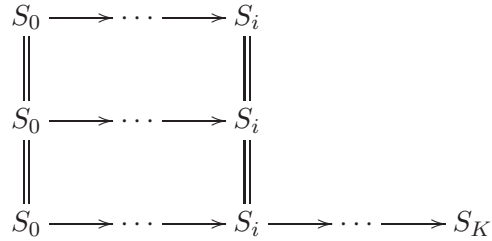
\begin{figure}[h]
\centerline{\xymatrix{
S_0 \ar[r] \ar@{=}[d] & \cdots \ar[r] & S_i \ar@{=}[d] \\
S_0 \ar[r]                  & \cdots \ar[r] & S_i                   \\
S_0 \ar[r] \ar@{=}[u] & \cdots \ar[r] & S_i \ar[r] \ar@{=}[u] & \cdots \ar[r] & S_K \\
}}
\caption{A projection diagram from $S_i$ to $S_0 \mapsto\cdots\mapsto S_K$ of depth $i$.}
\label{FigureCoarseRetractCombRect1}
\end{figure}%
By definition, $\pi(S_i)$ is the largest integer in the set $[0,\ldots,K]$ such that (after rechoosing the free splitting $F \act S_i$ in its conjugacy class, and after rechoosing the fold sequence $S_0 \mapsto\cdots\mapsto S_K$ in its conjugacy class) a projection diagram from $S_i$ to $S_0 \mapsto\cdots\mapsto S_K$ of depth $\pi(S_i)$ exists. This largest integer therefore satisfies $i \le \pi(S_i)$ and yields a projection diagram as in Figure~\ref{FigureCoarseRetractCombRect2}.
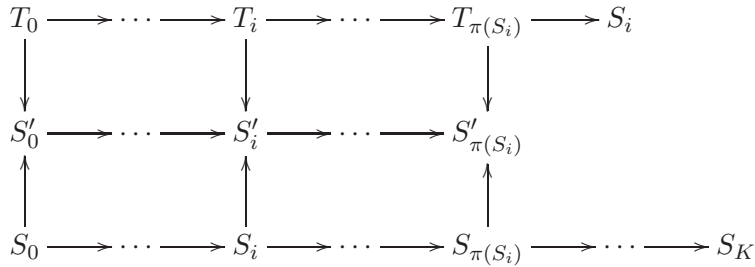
\begin{figure}\centerline{\xymatrix{
T_0 \ar[r] \ar[d] & \cdots \ar[r] & T_i \ar[r] \ar[d] & \cdots \ar[r] & T_{\pi(S_i)} \ar[r] \ar[d] & S_i \\
S'_0 \ar[r]          & \cdots \ar[r] & S'_i \ar[r]        & \cdots \ar[r] & S'_{\pi(S_i)} \\
S_0 \ar[r] \ar[u] & \cdots \ar[r] & S_i \ar[r] \ar[u] & \cdots \ar[r] & S_{\pi(S_i)} \ar[r] \ar[u]  & \cdots \ar[r] & S_K \\
}}
\caption{A maximal depth projection diagram from $S_i$ to $S_0 \mapsto\cdots\mapsto S_K$.}
\label{FigureCoarseRetractCombRect2}
\end{figure}
Let $e' \subset S'_i$ be any natural edge, and let $R$ be the one-edge free splitting obtained from $S'_i$ by collapsing the complement of the orbit of $e'$. Then we have collapse maps $T_i \mapsto S'_i \mapsto R$ and $S_i \mapsto S'_i \mapsto R$, proving that $d(T_i,R) \le 1$ and $d(S_i,R) \le 1$. Applying Lemma~\ref{LemmaBROneB}, the foldable sequence on the top row from $T_i$ to $S_i$ has an almost invariant edge, and by restriction there is an almost invariant edge from $T_i$ to $T_{\pi(S_i)}$. Also by Lemma~\ref{LemmaBROneB}, the set $\{T_i,\ldots,T_{\pi(S_i)}\}$ has diameter $\le 2$, and since $d(S_k,T_k) \le 2$ for each $k$ it follows that $\diam\{S_i,\ldots,S_{\pi(S_i)}\} \le 6$. And by Definition~\ref{DefLessThanOneFSU}, it follows that there is $<1$ free splitting unit between $S_i$ and $S_{\pi(S_i)}$.
\end{proof}

\begin{definition}[General count of free splitting units]
\label{DefGeneralFSU}
\index{free splitting unit}%
\hfill\break
Given a fold sequence $S_0 \mapsto \cdots \mapsto S_K$, for $0 \le i,j \le K$ we say that $S_i,S_j$ \emph{differ by $\ge 1$ free splitting unit} if they do not differ by $<1$ free splitting unit. Then, for $0 \le I \le J \le K$, the \emph{number of free splitting units between $S_I$ and $S_J$} is defined to be the maximum integer $\Upsilon \ge 0$ for which there exists a sequence of integers $I \le i(0) < \cdots < i(\Upsilon) \le J$ of length $\Upsilon+1$, parameterized by integers $0 \le u \le \Upsilon$, such that if $1 \le u \le \Upsilon$ then $S_{i(u-1)}, S_{i(u)}$ differ by $\ge 1$ free splitting unit. Notice that our definitions are consistent in that $\Upsilon=0$ if and only if, following the earlier definition, there is $<1$ free splitting unit between $S_I$ and $S_J$. Also, we symmetrize the definition by saying that the number of free splitting units between $S_J$ and $S_I$ equals the number between $S_I$ and $S_J$.
\end{definition}

\subparagraph{Remark.} In counting the number of free splitting units between $S_i$ and $S_j$, although this number depends on the fold sequence $S_i \mapsto\cdots\mapsto S_j$ that connects $S_i$ to $S_j$, that fold sequence will always be clear by context and we suppress this dependence in our terminology. Notice that this number does \emph{not} depend on any other details of an ambient fold sequence of which $S_i \mapsto\cdots\mapsto S_j$ might be a subinterval. In particular, the number of free splitting units between $S_i$ and $S_j$ is unaffected if the ambient fold sequence is truncated by deleting an initial segment before $S_i$ and/or a terminal segment after $S_j$.

\medskip

Notice that with the notation as above, if $0 \le u \le v \le \Upsilon$ then the number of free splitting units between $S_{i(u)}$ and $S_{i(v)}$ equals $v-u$. To see why, first note that this number is $\ge v-u$ by construction. If it were $\ge v-u+1$ then one could alter the sequence $i(0) < \cdots < i(\Upsilon)$ by removing the entries $i(u), \ldots, i(v)$ and inserting an increasing sequence of $\ge v-u+2$ entries in the interval $[i(u),i(v)]$ which amongst themselves have $\ge 1$ free splitting unit between any consecutive two. By \emph{Stability of Free Splitting Units} the new entries would have $\ge 1$ free splitting units with the remaining entries outside of the interval $[i(u),i(v)]$. The new sequence would therefore still have $\ge 1$ free splitting units between consecutive terms, but would have length $\ge \Upsilon+2$, contradicting the maximality of $\Upsilon$. 

One can count free splitting units between $S_I$ and $S_J$ in several ways. For example, define the \emph{front greedy subsequence}\index{front greedy subsequence} from $I$ to $J$ to be the sequence $I=j(0) < j(1) < \cdots < j(\Upsilon') \le J$ obtained by induction as follows: assuming $j(u)$ is defined, and assuming $S_{j(u)}$ and $S_J$ differ by $\ge 1$ free splitting unit, let $j(u+1)$ be the least integer $>j(u)$ such that $S_{j(u)}$ and $S_{j(u+1)}$ differ by $\ge 1$ free splitting unit; the sequence stops when $S_{j(\Upsilon')}, S_J$ differ by $<1$ free splitting unit. We claim that $\Upsilon'$, the length of the front greedy subsequence, is equal to the number of free splitting units between $S_I$ and $S_J$. When $S_I,S_J$ differ by $<1$ free splitting unit the claim is immediate. In the case where $S_I,S_J$ differ by $\ge 1$ free splitting unit, clearly $\Upsilon' \ge 1$; then, noting by stability that $S_{j(u)}$, $S_{j(v)}$ differ by $\ge 1$ free splitting unit for $1 \le u < v \le \Upsilon'$, and using maximality of $\Upsilon$, it follows that $\Upsilon \ge \Upsilon'$. For the opposite inequality we argue by contradiction assuming that $\Upsilon \ge \Upsilon'+1$. Consider any subsequence $I \le i(0) < i(1) < \cdots < i(\Upsilon) \le J$ such that $S_{i(u-1)}, S_{i(u)}$ differ by $\ge 1$ free splitting unit for each $u=1,\ldots,\Upsilon$. By maximality of $\Upsilon$ it follows that between each of the pairs $S_I, S_{i(0)}$ and $S_{i(\Upsilon)}, S_J$ there is $<1$ free splitting unit. By stability it follows that between $S_I$ and $S_{i(1)}$ there is $\ge 1$ free splitting unit. By definition of $j(1)$ we have $j(1) \le i(1)$. By stability it follows that $S_{j(1)}$ and $S_{i(2)}$ differ by $\ge 1$ free splitting unit from which it follows that $j(2) \le i(2)$. Continuing by induction we see that $j(u) \le i(u)$ for $u=1,\ldots,\Upsilon'$. But since $j(\Upsilon') \le i(\Upsilon') < i(\Upsilon'+1) \le i(\Upsilon) \le J$ and since $S_{i(\Upsilon')}, S_{i(\Upsilon'+1)}$ differ by $\ge 1$ free splitting unit, it follows by stability that $S_{j(\Upsilon')}, S_J$ differ by $\ge 1$ free splitting unit, which contradicts the definition of $\Upsilon'$.

In a similar fashion one proves that the number of free splitting units is equal to the length of the \emph{back greedy subsequence}\index{back greedy subsequence} $I \le \ell(\Upsilon'') < \ell(\Upsilon''-1) < \cdots < \ell(1) < \ell(0) = J$, defined as follows: assuming by induction that $\ell(u)$ is defined and that $S_I$ and $S_{\ell(u)}$ differ by $\ge 1$ free splitting unit, $\ell(u+1)$ is the greatest integer $<\ell(u)$ such that $S_{\ell(u+1)}$ and $S_{\ell(u)}$ differ by $\ge 1$ free splitting unit; the sequence stops when $S_I$, $S_{\ell(\Upsilon'')}$ differ by $<1$ free splitting unit.

The following result says that a fold path which is parameterized by free splitting units is a coarse Lipschitz path in $\FS(F)$:

\begin{lemma}\label{LemmaUnitsLipschitz} For any fold path $S_0 \mapsto\cdots\mapsto S_K$ and any $0 \le I \le J \le K$, if the number of free splitting units between $S_I$ and $S_J$ equals $\Upsilon$ then the diameter in $\FS'(F)$ of the set $\{S_I,\ldots,S_J\}$ is $\le 10 \Upsilon + 8$.
\end{lemma}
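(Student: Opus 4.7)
The plan is to reduce the diameter bound to an endpoint distance bound, then prove the endpoint bound by running the front greedy subsequence described in the discussion immediately preceding the statement.

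The reduction step is a monotonicity observation: for any $I \le i \le i' \le J$, every increasing subsequence in $[i,i']$ with consecutive terms differing by $\ge 1$ free splitting unit is also such a subsequence in $[I,J]$, so the number of free splitting units between $S_i$ and $S_{i'}$ is at most $\Upsilon$. It therefore suffices to prove the endpoint estimate $d(S_a,S_b) \le 10\,\Upsilon(a,b) + 8$ for all $I \le a \le b \le J$, where $\Upsilon(a,b)$ denotes the number of free splitting units between $S_a$ and $S_b$ along the given fold sequence; applying this to each pair $i,i' \in \{I,\ldots,J\}$ and invoking monotonicity then yields the diameter bound $10\,\Upsilon+8$.

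For the endpoint estimate, I will use the front greedy subsequence $a = j(0) < j(1) < \cdots < j(\Upsilon') \le b$ where $\Upsilon' = \Upsilon(a,b)$. By the very definition of $j(u+1)$ as the \emph{least} integer larger than $j(u)$ for which $S_{j(u)}$ and $S_{j(u+1)}$ differ by $\ge 1$ free splitting unit, the pair $S_{j(u)}, S_{j(u+1)-1}$ differs by $<1$ free splitting unit. Lemma~\ref{LemmaFirstFSUBound} gives $d(S_{j(u)},S_{j(u+1)-1}) \le 8$, and Lemma~\ref{LemmaFoldDistance} contributes a single fold step of length $\le 2$, yielding $d(S_{j(u)}, S_{j(u+1)}) \le 10$. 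Summing over $u = 0,\ldots,\Upsilon'-1$ gives $d(S_a, S_{j(\Upsilon')}) \le 10\,\Upsilon'$. The terminating condition of the front greedy construction ensures $S_{j(\Upsilon')}$ and $S_b$ differ by $<1$ free splitting unit, so Lemma~\ref{LemmaFirstFSUBound} applies again to give $d(S_{j(\Upsilon')},S_b) \le 8$. The triangle inequality then produces $d(S_a,S_b) \le 10\,\Upsilon' + 8 \le 10\,\Upsilon + 8$, as required. The degenerate case $\Upsilon(a,b) = 0$ is immediate: $S_a, S_b$ differ by $<1$ free splitting unit and Lemma~\ref{LemmaFirstFSUBound} gives diameter $\le 8 = 10 \cdot 0 + 8$.

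There is no substantive obstacle: the entire argument is an assembly of Lemma~\ref{LemmaFirstFSUBound}, Lemma~\ref{LemmaFoldDistance}, and the properties of the front greedy subsequence already established in the text. The only point to double-check is the monotonicity of $\Upsilon$ used for the reduction to an endpoint bound, which follows immediately from the definition via maximality.
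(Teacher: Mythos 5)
Your proof is correct and follows essentially the same route as the paper's: the front greedy subsequence, Lemma~\ref{LemmaFirstFSUBound} for the diameter-$8$ blocks with $<1$ free splitting unit, and Lemma~\ref{LemmaFoldDistance} for the length-$\le 2$ fold step, assembled into the bound $10\Upsilon+8$. The only cosmetic difference is that you first reduce the diameter bound to a pairwise endpoint estimate via monotonicity of $\Upsilon$ under passing to subintervals, whereas the paper directly concatenates the diameter bounds of the consecutive blocks; both reductions are valid and amount to the same triangle-inequality bookkeeping.
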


\begin{proof} If $\Upsilon=0$, that is if $S_I,S_J$ differ by $<1$ free splitting unit, then by Lemma~\ref{LemmaFirstFSUBound} we have $\diam\{S_I,\ldots,S_J\} \le 8$. 

If $\Upsilon \ge 1$, from $S_I$ to $S_J$ let $I=i(0) <  \cdots < i(\Upsilon) \le J$ be the front greedy sequence. For $u=1,\ldots,\Upsilon$, the free splittings $S_{i(u-1)}$ and $S_{i(u)-1}$ differ by $<1$ free splitting unit, and so $\diam\{S_{i(u-1)},\ldots,S_{i(u)-1}\}\le 8$. By Lemma~\ref{LemmaFoldDistance} we have $d(S_{i(u)-1},S_{i(u)}) \le 2$ and so $\diam\{S_{i(u-1)}, \ldots, S_{i(u)}\} \le 10$. It follows in turn that $\diam\{S_I=S_{i(0)},\ldots,S_{i(\Upsilon)}\} \le 10 \Upsilon$. Since $S_{i(\Upsilon)},S_J$ differ by $<1$ free splitting unit we have $\diam\{S_{i(\Upsilon)},\ldots,S_J\} \le 8$, and putting it all together, $\diam \{S_I,\ldots,S_J\} \le 10\Upsilon + 8$.
\end{proof}

We also need the following lemma which gives a coarse triangle inequality for free splitting units within a fold path:

\begin{lemma}\label{LemmaFSUTriangleInequality}
Given a fold path $S_0 \mapsto\cdots\mapsto S_K$ and $i,j,k \in \{0,\ldots,K\}$, if $\Upsilon_1$ is the number of free splitting units between $S_i$ and $S_j$ and $\Upsilon_2$ is the number between $S_j$ and $S_k$ then the number $\Upsilon$ between $S_i$ and $S_k$ satisfies $\Upsilon \le \Upsilon_1 + \Upsilon_2 + 1$.
\end{lemma}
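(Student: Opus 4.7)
The plan is to obtain $\Upsilon \le \Upsilon_1 + \Upsilon_2 + 1$ by splitting a witnessing subsequence at $j$, after first reducing to the case $i \le j \le k$ by monotonicity. By symmetry of the free splitting unit count I may assume $i \le k$. If $j \notin [i,k]$, then the fold subpath from $S_i$ to $S_k$ is strictly contained in one of the two fold subpaths $S_i \mapsto\cdots\mapsto S_j$ or $S_j \mapsto\cdots\mapsto S_k$. Since the relation ``differs by $\ge 1$ free splitting unit'' is a property of the intervening fold subpath only (as emphasized in the remark following Definition~\ref{DefGeneralFSU}, and visible directly from Definition~\ref{DefLessThanOneFSU} whose defining diagram only involves the maps $f_{i+1},\ldots,f_j$), any sequence witnessing $\Upsilon$ on $[i,k]$ is also a witnessing sequence on the larger subpath. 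This immediately yields either $\Upsilon \le \Upsilon_1$ or $\Upsilon \le \Upsilon_2$, both of which are stronger than the desired bound.

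So suppose $i \le j \le k$. Fix a maximal witnessing sequence $i \le i(0) < i(1) < \cdots < i(\Upsilon) \le k$, so that each consecutive pair $S_{i(u-1)}, S_{i(u)}$ differs by $\ge 1$ free splitting unit. If every $i(u) \le j$ then the whole sequence already lies in $[i,j]$ and gives $\Upsilon_1 \ge \Upsilon$; symmetrically if every $i(u) > j$ then $\Upsilon_2 \ge \Upsilon$. Otherwise let $u^*$ be the largest index with $i(u^*) \le j$, so $0 \le u^* < \Upsilon$ and $i(u^*+1) > j$. The initial segment $i(0) < \cdots < i(u^*)$ lies in $[i,j]$, has $u^*+1$ terms whose consecutive pairs still differ by $\ge 1$ free splitting unit (this property being intrinsic to each pair), so $\Upsilon_1 \ge u^*$. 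The terminal segment $i(u^*+1) < \cdots < i(\Upsilon)$ lies in $[j,k]$, has $\Upsilon - u^*$ terms with the same property, so $\Upsilon_2 \ge \Upsilon - u^* - 1$. Adding these inequalities gives $\Upsilon_1 + \Upsilon_2 \ge \Upsilon - 1$, which is the desired bound.

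The $+1$ slack in the conclusion accounts precisely for the single transition $S_{i(u^*)} \mapsto S_{i(u^*+1)}$ that straddles $j$ and cannot be charged to either subpath. I do not anticipate any substantive obstacle: the argument is purely combinatorial and bookkeeping-oriented. The only item requiring care is the intrinsicness of the $\ge 1$ free splitting unit relation under restriction of the ambient fold sequence, which is supplied by Definition~\ref{DefLessThanOneFSU} together with the stability lemma, and which justifies both the $j \notin [i,k]$ reduction and the transfer of sub-witnessing sequences from $[i,k]$ into $[i,j]$ and $[j,k]$.
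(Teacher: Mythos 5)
Your proof is correct, and the underlying decomposition is the same as the paper's: reduce to $i \le j \le k$, split a witnessing sequence at $j$, and let the $+1$ absorb the one transition that straddles $j$. The difference is in execution. The paper works with the front (and, for the case $j < \min\{i,k\}$, back) greedy subsequences and leans on the fact, established in the discussion preceding the lemma, that the greedy subsequence from $S_i$ to $S_j$ is an initial segment of the one from $S_i$ to $S_k$ and that its length equals the free splitting unit count; this is what locates the split point at index $\Upsilon_1$ and handles the cases $j \notin [i,k]$. You instead take an arbitrary maximal witnessing sequence and argue directly from the definition of the count as a maximum: the initial segment up to $u^*$ certifies $\Upsilon_1 \ge u^*$ and the terminal segment certifies $\Upsilon_2 \ge \Upsilon - u^* - 1$, with no need for the greedy machinery or the equality of greedy length with the count. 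Your treatment of $j \notin [i,k]$ (a witnessing sequence on the smaller interval is a witnessing sequence on the larger one) is likewise more elementary than the paper's greedy-initial-segment argument. What your route buys is self-containedness — only Definition~\ref{DefGeneralFSU}, the stability lemma, and the intrinsicness remark are needed; what the paper's route buys is consistency with the greedy-subsequence formalism it has already set up and reuses elsewhere. Both are complete; your identification of the intrinsicness of the ``$\ge 1$ free splitting unit'' relation as the one point needing care is exactly right.
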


\begin{proof} In the case where $j$ is between $i$ and $k$, using \emph{symmetry of free splitting units} we may assume that $i \le j \le k$. Let $i=i(0)<\cdots<i(\Upsilon)\le k$ be the front greedy sequence from $S_i$ to $S_k$. Clearly the front greedy sequence from $S_i$ to $S_j$ is an initial segment, implying that $i(\Upsilon_1) \le j$ and $i(\Upsilon_1+1) > j$, and so we have a subsequence $S_{i(\Upsilon_1+1)},\ldots,S_{i(\Upsilon)}$ of $S_j,\ldots,S_k$ with the property that between any two adjacent elements of this subsequence there is $\ge 1$ free splitting unit. 
By Definition~\ref{DefGeneralFSU} and the hypothesis on $\Upsilon_2$, the length of this subsequence is therefore $\le \Upsilon_2+1$, giving us $\Upsilon - \Upsilon_1 \le \Upsilon_2+1$. 

In the case where $j > \max\{i,k\}$, again using symmetry we may assume $i \le k < j$. Let $i=i(0) < \cdots < i(\Upsilon_1) \le j$ be the front greedy subsequence between $S_i$ and $S_j$. Again the front greedy subsequence between $S_i$ and $S_k$ is an initial subsegment and so $\Upsilon \le \Upsilon_1 \le \Upsilon_1 + \Upsilon_2 + 1$.

In the case where $j < \min\{i,k\}$, using symmetry we assume $j < k \le i$, and we proceed similarly using the back greedy subsequence between $S_j$ and $S_i$.
\end{proof}

\section{Proof of the Main Theorem}
\label{SectionMainProof}
We begin with a quick sketch of the proof.

Consider a free splitting $T$, a fold sequence $S_0 \mapsto\cdots\mapsto S_K$, and a maximal depth projection diagram which defines the projection ${k_T} \in \{0,\ldots,K\}$ from $T$ to this fold sequence. The form of this projection diagram can be viewed in Section~\ref{SectionCombingRectangles}, Figure~\ref{FigureProjDiagram}, the top row of which is a foldable sequence $T_0 \mapsto\cdots\mapsto T_{k_T} \mapsto T$. We then apply Lemma~\ref{LemmaFoldSequenceConstruction} to factor the final foldable map $T_{k_T} \mapsto T$ as a fold sequence of the form $T_{k_T} \mapsto\cdots\mapsto T_L=T$, which we then paste into the foldable sequence on the top row of the projection diagram to get an ``augmented'' projection diagram. Figure~\ref{FigureMaxProjDiagram} shows the original, unaugmented projection diagram and the augmented version in the same picture. Note that the top row of the augmented projection diagram is the foldable sequence $T_0 \mapsto\cdots\mapsto T_{k_T} \mapsto\cdots\mapsto T_L=T$. See Section~\ref{SectionStatmentFSUContraction} for more details on augmented projection diagrams.

Consider also a geodesic in the \nb{1}skeleton of $\FS'(F)$ starting with $T$ and ending with some free splitting~$R$. This geodesic is a zig-zag path; suppose for concreteness that it starts with a collapse and ends with an expand, $T = T_L^0 \collapses T_L^1 \expands T_L^2 \collapses T_L^3 \expands T_L^4 \collapses \cdots \expands T_L^{D}=R$, and so $D = d(T,R)=d(T^0_L,T^D_L)$ is even. By combing the foldable sequence $T_0 \mapsto\cdots\mapsto T_{k_T} \mapsto\cdots\mapsto T_L=T$ across each collapse and expansion in this zig-zag path one at a time, we obtain ``The Big Diagram, Step 0'' depicted in Section~\ref{SectionProofFSUContraction}, Figure~\ref{FigureBigDiagram0}, which is built out of the projection diagram and an $L \cross D$ rectangle composed of $D$ combing rectangles. Note that the interior even terms along the zig-zag path, the free splittings $T_L^2, T_L^4, \ldots, T_L^{D-2}$, are ``peaks'' of the zig-zag. The big $L \cross D$ rectangle has the form of a corrugated aluminum roof in which the interior even horizontal rows are peaks of the corrugations. 

Our technique can be described as ``pushing down the peaks''. In brief, we prove that if one backs up from $T_L$ to some earlier term in the fold path $T_{k_T}\mapsto\cdots\mapsto T_L$, moving back a certain fixed number of free splitting units, then the big diagram can be simplified by pushing the first corrugation peak down, reducing the number of corrugation peaks by~$1$, as shown in ``The Big Diagram, Step 1''. These ``back up --- pushdown'' arguments are found in Section~\ref{SectionPushingDownPeaks}. Therefore, if the number of free splitting units between $T_{k_T}$ and $T_L$ is greater than a certain multiple of the number of peaks in the zig-zag path from $T_L$ to $T^D_L$ then the number of corrugation peaks in the Big Diagram can be reduced to zero. With one final ``back up --- push down'' step that uses up some of the original projection diagram for $T_L$, one obtains a projection diagram from $R$ to $S_0 \mapsto\cdots\mapsto S_K$, from which one concludes that the projection of $R$ to $S_0\mapsto\cdots\mapsto S_K$ is not much further back (measured in free splitting units) than $S_{k_T}$ which is the projection of $T$. 

The exact statement proved by these arguments is contained in Proposition~\ref{PropFSUContraction} which can be regarded as a reformulation of the \emph{Coarse Lipchitz} and \emph{Desymmetrized strong contraction} axioms in terms of free splitting units, and which quickly implies those axioms and the main theorem as shown in Section~\ref{SectionStatmentFSUContraction}. The proof of Proposition~\ref{PropFSUContraction} itself is carried out in Sections~\ref{SectionPushingDownPeaks} and~\ref{SectionProofFSUContraction}.

\subsection{Desymmetrized strong contraction reformulated and applied}
\label{SectionStatmentFSUContraction}

In Proposition~\ref{PropFSUContraction} we reformulate the \emph{Coarse Lipschitz} and \emph{Desymmetrized strong contraction} axioms as a joint statement expressed in terms of free splitting units. The proposition will be proved in later subsections of Section~\ref{SectionMainProof}.

After stating the proposition, we use it to finish off the proof of the main theorem. We also use it to prove Proposition~\ref{PropFoldPathQuasis} which describes precisely how to reparameterize fold paths in terms of free splitting units so as to obtain uniform quasigeodesics in~$\FS'(F)$.  

To set up Proposition~\ref{PropFSUContraction}, consider any fold path $S_0 \mapsto \cdots \mapsto S_K$, any free splitting $F \act T$ and any projection diagram of maximal depth $\pi(T)=k_T \in [0,\ldots,K]$ as depicted in Figure~\ref{FigureMaxProjDiagram}. Applying Proposition~\ref{LemmaFoldSequenceConstruction}, we may factor the foldable map $f \from T_{k_T} \to T$ as a fold sequence, and then replace $f$ with this factorization in the top line of the projection diagram, to obtain a sequence of maps
$$T_0 \xrightarrow{f_1} \cdots\xrightarrow{f_{k_T}} T_{k_T} \xrightarrow{f_{k_T+1}} \cdots \xrightarrow{f_L} T_L=T
$$
This sequence of maps is still foldable --- if $0 \le k \le k_T$ then $f^k_L$ is foldable by virtue of being a map in the original foldable sequence on the top line of the unaugmented projection diagram; and if $k_T < k \le L$ then $f^k_L$ is foldable by virtue of being a map in the newly inserted fold sequence (note that if one replaces any but the last map in a foldable sequence with a fold factorization, this trick does not work --- the resulting sequence need not be foldable). We therefore obtain the \emph{augmented projection diagram}\index{projection diagram!augmented} from $T$ to $S_0\mapsto\cdots\mapsto S_K$ of maximal depth, as depicted also in the Figure~\ref{FigureMaxProjDiagram}.  
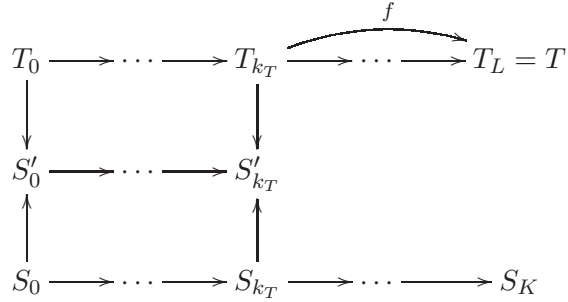
\begin{figure}[h]
$$\xymatrix{
T_0 \ar[r] \ar[d] & \cdots \ar[r] & T_{k_T} \ar[r] \ar[d] \ar@/^1pc/[rr]^f  & \cdots \ar[r] & T_L = T \\
S'_0 \ar[r]          & \cdots \ar[r] & S'_{k_T} \\
S_0 \ar[r] \ar[u] & \cdots \ar[r] & S_{k_T} \ar[r] \ar[u]  & \cdots \ar[r] & S_K \\
}$$
\caption{An \emph{augmented} projection diagram from $T$ to $S_0\mapsto\cdots\mapsto S_K$ of maximal depth~$k_T$ (with the straight arrows from $T_{k_T}$ to $T$) is obtained from a maximal depth projection diagram (with the curved arrow from $T_{k_T}$ to $T$ labelled $f$) by inserting a fold sequence factorization of the foldable map $f \from T_{k_T} \to T$. After this insertion the whole sequence $T_0 \mapsto\cdots\mapsto T_{k_T} \mapsto\cdots\mapsto T_L=T$ in the top row is still a foldable sequence.}
\label{FigureMaxProjDiagram}
\end{figure}

\begin{proposition}[Strong contraction in terms of free splitting units] 
\label{PropFSUContraction} 
\quad \\
Letting $b_1 = 4 \rank(F)-3$, the following holds. Consider a fold path $S_0 \mapsto \cdots \mapsto S_K$, a free splitting $F \act T$ with projection $\pi(T)=k_T \in [0,\ldots,K]$, and an augmented projection diagram of maximal depth $k_T$ as notated in Figure~\ref{FigureMaxProjDiagram}. Let $\Upsilon$ be the number of free splitting units between $T_{k_T}$ and $T_L=T$. If $F \act R$ is a free splitting such that $d(T,R) \le \max\left\{2 \lfloor \Upsilon / b_1 \rfloor,1\right\}$, and if the number of free splitting units between $S_0$ and $S_{k_T}$ is $\ge b_1$, then there exists $l \in [0,\pi(R)]$ such that the number of free splitting units between $S_l$ and $S_{k_T}$ is~$\le b_1$.
\end{proposition}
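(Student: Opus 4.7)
My plan is to follow the outline sketched at the beginning of Section~\ref{SectionMainProof}. First I will use the given augmented projection diagram of maximal depth $k_T$, whose top row is a foldable sequence $T_0 \mapsto \cdots \mapsto T_{k_T} \mapsto \cdots \mapsto T_L = T$, and I will take a geodesic edge path in the \nb{1}skeleton of $\FS'(F)$ from $T$ to $R$, written as a zig-zag of length $D=d(T,R)$ alternating between collapses and expansions. By applying Proposition~\ref{PropCBC} at each collapse step and Proposition~\ref{PropCBE} at each expansion step, I will inductively comb the top foldable sequence across each successive edge of the zig-zag to produce the $L \times D$ ``Big Diagram, Step~0'' in which each horizontal row $d \in [0,D]$ is itself a foldable sequence $T^d_0 \mapsto \cdots \mapsto T^d_L$, and adjacent rows are joined by combing rectangles whose vertical arrows alternate in direction, so that the interior even-indexed rows appear as corrugation ``peaks''.

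Next I will invoke the ``peak pushdown'' machinery of Section~\ref{SectionPushingDownPeaks} as a black box. Its essential content is that each single peak in the Big Diagram can be eliminated at the cost of shortening the top foldable sequence by at most $b_1 = 4 \rank(F)-3$ free splitting units, this constant arising from the complexity bound on invariant natural blue--red decompositions in Lemma~\ref{LemmaBRNatural}. Iterating this construction eliminates all interior peaks in $\lfloor D/2 \rfloor$ steps and uses up at most $\lfloor D/2 \rfloor \cdot b_1$ free splitting units from the right end of the top sequence. The hypothesis $D \le 2 \lfloor \Upsilon / b_1 \rfloor$ then gives $\lfloor D/2 \rfloor \cdot b_1 \le \Upsilon$, ensuring that the shortened top foldable sequence still reaches at least as far back as $T_{k_T}$ in the original foldable sequence. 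The degenerate cases $D \le 1$ are handled separately: if $D=0$ take $l=k_T$, and if $D=1$ a single application of combing across the one collapse or expansion already produces the desired projection diagram for $R$.

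Once every peak has been pushed down, the resulting diagram exhibits a foldable sequence ending at a conjugate of $R$ together with the bottom collapse maps inherited from the original projection diagram, i.e.\ a projection diagram for $R$. One final ``back up and push down'' step, which consumes the $\ge b_1$ free splitting units of room between $S_0$ and $S_{k_T}$ guaranteed by the hypothesis, produces a projection diagram for $R$ of some depth $l$ for which the number of free splitting units between $S_l$ and $S_{k_T}$ is $\le b_1$; the maximality of $\pi(R)$ then yields $l \le \pi(R)$, which is the conclusion. The main obstacle will be the pushdown construction itself: proving that a single peak can be eliminated by backing up only $b_1$ free splitting units requires a delicate interplay between combing by collapse, combing by expansion, and the invariant natural blue--red decompositions of Section~\ref{SectionDiamBounds}, and is precisely the content of Sections~\ref{SectionPushingDownPeaks} and~\ref{SectionProofFSUContraction}.
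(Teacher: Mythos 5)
Your proposal follows exactly the paper's strategy (it is essentially the outline given at the start of Section~\ref{SectionMainProof}), but as written it is an outline rather than a proof: the entire mathematical content is concentrated in the step you invoke ``as a black box,'' namely the claim that a single corrugation peak can be eliminated at the cost of backing up $b_1=4\rank(F)-3$ free splitting units along the top foldable sequence. That claim is precisely Proposition~\ref{PropPushdownInToto}, whose proof (normalization of the W~zig-zags, the pullback argument showing that the blue--red decomposition at column $I$ cannot be natural when $\ge b_1$ units separate $T^0_I$ from $T^0_J$, the construction of pushdown subgraphs, and the parallel baseball diagrams) is where the constant $b_1$ actually comes from via Lemma~\ref{LemmaBRNatural}. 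You correctly identify this as ``the main obstacle,'' but offering no argument for it means the proposal does not establish the proposition. Even granting Proposition~\ref{PropPushdownInToto}, the remaining induction is not routine: each application of the pushdown produces only \emph{nonsimplicial} conjugacies $\xi_j \from T^{h1}_j \to T^{h3}_j$, and one must perform the subdivision/re-assignment of barycentric coordinates before the two rows can be identified and the induction continued; one must also re-verify at each stage that the peaks of the surviving right-hand column are still unions of their two collapse graphs (this uses the specific way the new combing rectangles arise from \emph{Composition of combing rectangles}). None of this appears in the proposal.

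Two further points are inaccurate as stated. First, after all interior peaks are removed the diagram is \emph{not} yet a projection diagram for $R$: there remains a peak along the $T^0$ row (collapsing both up toward $R$ and down toward the $S'$ row), and the epilogue's case analysis --- including the dichotomy between $T^0_{k_T}\ne b_{k_T}\union r_{k_T}$ (simplistic pushdown) and $T^0_{k_T}=b_{k_T}\union r_{k_T}$ (one more application of Proposition~\ref{PropPushdownInToto}, consuming the hypothesis that $S_0,S_{k_T}$ are separated by $\ge b_1$ units) --- is what finally yields a projection diagram of the required depth $l$. Second, the case $d(T,R)=1$ does not ``already produce the desired projection diagram'': an expansion $T\expandsto R$ must be prefixed by an improper collapse, and in either subcase one still lands in the Case~1/Case~2 configurations of the epilogue and must run the final pushdown there.
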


\subparagraph{Remark.} To put it more plainly, Proposition~\ref{PropFSUContraction} says that the projection of $R$ to the fold path $S_0 \mapsto\cdots\mapsto S_K$ is no farther to the left of the projection of $T$ than a bounded number of free splitting units, as long as $d(T,R)$ is at most some bounded proportion of the number~$\Upsilon$. One can think of the number $\Upsilon$ as being a stand-in for the distance from $T$ to the fold path $S_0 \mapsto\cdots\mapsto S_K$ (a posterior one sees that $\Upsilon$ is indeed quasicomparable to that distance). Notice that the proposition does not apply if no projection diagram exists for $T$, nor if the number of free splitting units between $S_0$ and $S_{k_T}$ is too small; in either of these cases the projection of $T$ is close to $S_0$ in $\FS'(F)$. These special situations are handled in Case~1 of the proof of the Main Theorem. 

Note that Proposition~\ref{PropFSUContraction} is trivially true when $\pi(R) \ge k_T$, by taking $l=k_T$. The real meat of the proposition is when $\pi(R) < k_T$.

\bigskip

Proposition~\ref{PropFSUContraction} is proved in Sections~\ref{SectionPushingDownPeaks} and~\ref{SectionProofFSUContraction}. For the rest of Section~\ref{SectionStatmentFSUContraction} we shall apply Proposition~\ref{PropFSUContraction} to prove first the Main Theorem and then Proposition~\ref{PropFoldPathQuasis} regarding quasigeodesics in $\FS'(F)$.

\begin{proof}[Proof of the Main Theorem] \qquad As we showed earlier, Proposition~\ref{PropProjToFoldPath} implies Proposition~\ref{PropFoldContractions} which implies the Main Theorem. To prove Proposition~\ref{PropProjToFoldPath} we must prove that the projections to fold paths in $\FS'(F)$ satisfy the \emph{Coarse retraction}, \emph{Coarse Lipschitz}, and \emph{Desymmetrized strong contraction} axioms given in Section~\ref{SectionMasurMinsky}, with uniform constants depending only on $\rank(F)$. In Proposition~\ref{PropCoarseRetract} we already did this for the \emph{Coarse retraction} axiom. We turn to the other two axioms. 

Fix the fold path $S_0 \mapsto\cdots\mapsto S_K$ and free splittings $F\act T,R$ with projections $\pi(T),\pi(R) \in [0,\ldots,K]$. For verifying both the \emph{Coarse Lipschitz} and \emph{Desymmetrized strong contraction} axioms we may assume that $\pi(R) \le \pi(T)$. We seek to bound the diameter in $\FS'(F)$ of the set $\{S_{\pi(R)},\ldots,S_{\pi(T)}\}$. If $\pi(T)=0$ then $\pi(R)=0$ and we are done. Otherwise, after rechoosing $T$ in its conjugacy class and rechoosing $S_0 \mapsto\cdots\mapsto S_K$ in its equivalence class, we may choose an augmented maximal depth projection diagram for $T$ and $S_0 \mapsto\cdots\mapsto S_K$ as notated in Figure~\ref{FigureMaxProjDiagram}. Let $\Upsilon$ be the number of free splitting units between $T_{k_L}$ and $T_L = T$.

Throughout the proof we denote the constants from Lemma~\ref{LemmaUnitsLipschitz} as
$$L=10, \quad C=8
$$
It follows that along any fold path, for any two terms of that path between which the number of free splitting units is at most
$$b_1 = 4 \rank(F)-3
$$ 
the diameter in $\FS'(F)$ of the segment between those two terms is at most
$$c = L b_1 + C = 40 \rank(F) - 22
$$
This is the value of $c$ that will be used in verifying the two axioms.

\subparagraph{Case 1:} Suppose that the number of free splitting between $S_0$ and $S_{\pi(T)}$ is $<b_1$. Applying the inequality $0 \le \pi(R) \le \pi(T)$ together with \emph{Stability of free splitting units}, it follows that the number of free splitting units between $S_{\pi(R)}$ and $S_{\pi(T)}$ is~$< b_1$. By Lemma~\ref{LemmaUnitsLipschitz} the diameter of the set $\{S_{\pi(R)},\ldots,S_{\pi(T)}\}$ is~$\le c$, which is the common conclusion of the \emph{Coarse Lipschitz} and \emph{Desymmetrized strong contraction} axioms. In this case, those axioms are verified using any values of $a,b$.

\subparagraph{Case 2:} Suppose that the number of free splitting units between $S_0$ and $S_{\pi(T)}$ is $\ge b_1 > 0$. 

We claim that the following statement holds:
\begin{itemize}
\item[$(*)$] If $d(T,R) \le \max\left\{2 \lfloor \Upsilon / b_1 \rfloor,1\right\}$ then the number of free splitting units between $S_{\pi(R)}$ and $S_{\pi(T)}$ is $\le b_1$, and so the diameter in $\FS'(F)$ of the set $\{S_{\pi(R)},\ldots,S_{\pi(T)}\}$ is~$\le c$.
\end{itemize}
To prove $(*)$, assume that  $d(T,R) \le \max\left\{2 \lfloor \Upsilon / b_1 \rfloor,1\right\}$. Using the hypothesis of Case~2 we may apply Proposition~\ref{PropFSUContraction}, concluding that for some $l \in [0,\pi(R)]$ the number of free splitting units between $S_l$ and $S_{k_T}$ is $\le b_1$. Using \emph{Stability of free splitting units} it follows that the number of free splitting units between $S_{\pi(R)}$ and $S_{k_T}$ is $\le b_1$. Applying Lemma~\ref{LemmaUnitsLipschitz} we have $\diam\{S_{\pi(R)},\ldots,S_{\pi(T)}\} \le c$.

Since $(*)$ applies whenever $d(T,R) \le 1$, the \emph{Coarse Lipschitz} axiom follows immediately.

To prove \emph{Desymmetrized strong contraction} we shall produce constants $a,b > 0$ so that if $a \le d(T,\{S_0,\ldots,S_K\})$ and $d(T,R) \le b \,\cdot\, d(T,\{S_0,\ldots, S_K\})$ then $d(T,R) \le 2 \lfloor \Upsilon / b_1 \rfloor$, for then $(*)$ applies and so $\diam\{S_{\pi(R)},\ldots,S_{\pi(T)}\} \le c$.

Consider first the case that $\Upsilon < 2b_1$. By Lemma~\ref{LemmaUnitsLipschitz} we have $d(T_{k_T},T) < 2 b_1 L   + C$ and so $d(T,S_0\mapsto\cdots\mapsto S_K) < 2 b_1 L + C +2$. By taking $a= 2 b_1 L + C + 2 = 80 \rank(F) - 52$ we may dispense with this case.

Consider next the case that $\Upsilon \ge 2 b_1$. It follows that $\Upsilon \ge 1$. We have $\Upsilon/b_1 \le 2 (\Upsilon / b_1 - 1)$ from which it follows that 
$$\Upsilon / b_1 \le 2 \lfloor \Upsilon / b_1 \rfloor
$$
The number of free splitting units between $T_{k_T}$ and $T_L=T$ equals $\Upsilon$ and so by Lemma~\ref{LemmaUnitsLipschitz} we have $d(T,T_{k_T}) \le L \Upsilon + C$. It follows that $d(T,S_{k_T}) \le L \Upsilon+C+2$, which implies that $d(T,S_0\mapsto\cdots\mapsto S_K) \le L \Upsilon+C+2$. Let
\begin{align*}
b &= \frac{1}{80 \rank(F) - 60} = \frac{1}{b_1(L + C + 2)} \\
   &\le \frac{1}{b_1(L + \frac{C+2}{\Upsilon})} = \frac{\Upsilon}{b_1(L \Upsilon+C+2)}
\end{align*}
where the inequality follows from $\Upsilon \ge 1$. We then have
$$b(L \Upsilon + C + 2) \le \Upsilon / b_1 
$$
It follows that if $d(T,R) \le b \cdot d(T,S_0\mapsto\cdots\mapsto S_K)$ then $d(T,R) \le \Upsilon  / b_1 \le 2 \lfloor \Upsilon / b_1 \rfloor$ and we are done, subject to proving Proposition~\ref{PropFSUContraction}.
\end{proof}

\paragraph{Quasigeodesic reparameterization of fold paths.} We can also use these arguments to show how fold paths can be reparameterized, using free splitting units, to give a system of uniform quasigeodesics in $\FS'(F)$. Recall that each fold sequence $S_0 \mapsto\cdots\mapsto S_M$ can be interpolated by a continuous edge path in $\FS'(F)$: for each fold $S_{m-1} \mapsto S_m$, the vertices $S_{m-1},S_m$ are connected in $\FS'(F)$ by an edge path of length $2$, $1$, or $0$, by Lemma~\ref{LemmaFoldDistance}. Let $\Upsilon$ be the number of free splitting units from $S_0$ to~$S_M$. Choose any sequence $0 \le m_0 < m_1 < \cdots < m_\Upsilon \le M$ such that for $u=1,\ldots,\Upsilon$ there is $\ge 1$ free splitting unit between $S_{m_{u-1}}$ and~$S_{m_u}$. Notice that by \emph{Stability of Free Splitting Units}, the number of free splitting units between $S_0$ and $S_{m_1}$, and between $S_{m_{\Upsilon-1}}$ and $S_M$ is $\ge 1$, and so we may rechoose the first and last terms of the sequence so that $0=m_0 < m_1 < \cdots < m_\Upsilon=M$. Choose a continuous parameterization of the interpolating edge path of the form $\gamma \from [0,\Upsilon] \to \FS'(F)$ such that $S_{m_u} = \gamma(u)$. We call this a \emph{free splitting parameterization} of the fold sequence $S_0 \mapsto\cdots\mapsto S_M$. 

We use Proposition~\ref{PropFSUContraction}, in particular some details of the preceding proof, in order to prove the following result:

\begin{proposition}\label{PropFoldPathQuasis}
There exist constants $k,c$ depending only on $\rank(F)$ such that any free splitting parameterization $\gamma \from [0,\Upsilon] \to \FS'(F)$ of any fold path $S_0 \mapsto\cdots\mapsto S_M$ is a $k,c$ quasigeodesic in $\FS'(F)$, that is,
$$ \frac{1}{k} \abs{s-t} - c \le d(\gamma(s),\gamma(t)) \le k \abs{s-t}+c \quad\text{for all $s,t \in [0,\Upsilon]$.}
$$
\end{proposition}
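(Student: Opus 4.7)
The proof naturally splits into the two quasigeodesic inequalities. The upper bound is essentially immediate from Lemma~\ref{LemmaUnitsLipschitz}. For integer parameters $0 \le u \le v \le \Upsilon$, the number of free splitting units between $\gamma(u) = S_{m_u}$ and $\gamma(v) = S_{m_v}$ is exactly $v - u$ (as shown in the discussion following Definition~\ref{DefGeneralFSU}), so $d(\gamma(u), \gamma(v)) \le 10(v-u) + 8$. For real $s \le t$, the image $\gamma([n, n+1])$ over each unit interval is an edge path of diameter at most $18$ (apply Lemma~\ref{LemmaUnitsLipschitz} with $\Upsilon = 1$); the upper bound then extends to arbitrary $s,t$ at the cost of enlarging the additive constant.

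For the lower bound, fix integers $0 \le u < v \le \Upsilon$, let $D = d(\gamma(u), \gamma(v))$, and choose a geodesic $\gamma(u) = x_0, x_1, \ldots, x_D = \gamma(v)$ in the $1$-skeleton of $\FS'(F)$. Let $p_i = \pi(x_i) \in [0, M]$, where $\pi$ is the projection onto the fold path constructed in Section~\ref{SectionCombingRectangles}. By Proposition~\ref{PropCoarseRetract}, $p_0$ is within $1$ free splitting unit of $m_u$, and $p_D$ is within $1$ free splitting unit of $m_v$. The goal is to bound $v - u$ in terms of $D$ via iteration.

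The key claim is that for each $i$, the number of free splitting units between $S_{p_{i-1}}$ and $S_{p_i}$ is at most $2b_1 + 1$. To prove this, apply Proposition~\ref{PropFSUContraction} twice: once with $T = x_{i-1}$, $R = x_i$, and once with the roles swapped. Since $d(T, R) = 1$, the hypothesis $d(T,R) \le \max\{2 \lfloor \Upsilon_T / b_1\rfloor, 1\}$ is automatic regardless of $\Upsilon_T$. When the auxiliary hypothesis that the number of free splitting units between $S_0$ and $S_{k_T}$ is $\ge b_1$ holds for both applications, the first application controls leftward shift (handling the case $p_i < p_{i-1}$) and the swapped application controls rightward shift (handling $p_{i-1} < p_i$), yielding $n(p_{i-1}, p_i) \le b_1$ in all cases. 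In the edge case where one auxiliary hypothesis fails, the corresponding projection lies within $b_1$ free splitting units of $S_0$; if both fail the conclusion $n(p_{i-1}, p_i) \le 2b_1 + 1$ follows from Lemma~\ref{LemmaFSUTriangleInequality}, and if only one fails the other application still yields $n(p_{i-1}, p_i) \le b_1$.

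Iterating Lemma~\ref{LemmaFSUTriangleInequality} over the $D$ steps of the geodesic gives $n(p_0, p_D) \le D(2b_1 + 1) + (D-1)$, and another application combined with the Coarse Retraction bounds at each endpoint gives
\[
v - u = n(m_u, m_v) \le n(p_0, p_D) + 4 \le D \cdot (2b_1 + 2) + c_1
\]
for a uniform constant $c_1$. This is the lower bound for integers, and extension to real $s, t$ uses again that $\gamma$ varies by at most $18$ on each unit interval. The main obstacle is the two-directional bookkeeping of Proposition~\ref{PropFSUContraction}: a single application only controls one direction of projection shift, so pairing the two applications at each geodesic edge, together with the case analysis near $S_0$, is what secures a symmetric bound on $n(p_{i-1}, p_i)$ and thereby on the total free splitting unit displacement.
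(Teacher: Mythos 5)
Your proposal is correct and follows essentially the same route as the paper's proof: Lemma~\ref{LemmaUnitsLipschitz} plus the unit-interval diameter bound for the upper inequality, and for the lower inequality projecting a geodesic to the fold path, bounding the free-splitting-unit displacement per geodesic edge via Proposition~\ref{PropFSUContraction} (your two-sided application with the near-$S_0$ edge cases is exactly the content of the paper's statement $(*)$ together with its Case~1/Case~2 dichotomy), iterating with Lemma~\ref{LemmaFSUTriangleInequality}, and closing up with Proposition~\ref{PropCoarseRetract} at the endpoints. The only differences are in bookkeeping constants (e.g.\ $2b_1+1$ versus $b_1$ per edge), which do not affect the quasigeodesic conclusion.
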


\begin{proof} We continue with the constants $L=10$, $C=8$, $b_1=4 \rank(F)-3$ from the previous proof. 

As shown back in the definition of free splitting units, for each integer $u=1,\ldots,\Upsilon$ there is exactly~$1$ free splitting unit between $S_{m_{u-1}}$ and $S_{m_u}$. Applying Lemma~\ref{LemmaUnitsLipschitz} it follows that for each $u=1,\ldots,\Upsilon$ the set $\{S_{m_{u-1}},\ldots,S_{m_u}\}$ has diameter $\le L+C$. Combining this with the fact that the edge path interpolating each fold has length~$\le 2$ it follows that 
$$(**) \qquad\qquad \diam(\gamma[u-1,u]) \le L+C+1 \quad\text{for each $u = 1,\ldots,\Upsilon$}
$$ 
Given $s,t \in [0,\Upsilon]$, if there is no integer in the interval $[s,t]$ then $d(\gamma(s),\gamma(t)) \le L+C+1$. Otherwise we take $u,v \in [s,t]$ to be the smallest integer $\ge s$ and the largest integer $\le t$, respectively, and we have
\begin{align*}
d(\gamma(s),\gamma(t)) &\le d(\gamma(u),\gamma(v)) + d(\gamma(s),\gamma(u)) + d(\gamma(t),\gamma(v)) \\ & \le (L+C+1) \abs{v-u} + 2(L+C+1) \\ &\le k \abs{s-t} + c
\end{align*}
using any $k \ge L+C+1=19$ and any $c \ge 2(L+C+1)=38$ (and we note that this inequality also holds in the previous case where there is no integer in $[s,t]$). This proves the second inequality of the proposition.

To prove the first inequality, we first prove it for integer values $u \le v \in [0,\ldots,\Upsilon]$. Fix a geodesic edge path $\rho$ of length $D=d(\gamma(u),\gamma(v))$ connecting $\gamma(u)$ to $\gamma(v)$ in $\FS'(F)$. Project $\rho$ to the fold path $S_0\mapsto\cdots\mapsto S_M$. By the statement $(*)$ above, within this fold path there are $\le b_1$ free splitting units between the projections of any two consecutive vertices of $\rho$. By applying Lemma~\ref{LemmaFSUTriangleInequality}, the coarse triangle inequality for free splitting units, it follows that there are $\le D (b_1+1)$ free splitting units between $S_{\pi(\gamma(u))}$ and $S_{\pi(\gamma(v))}$, the projections of $\gamma(u)$ and~$\gamma(v)$, respectively. By Proposition~\ref{PropCoarseRetract}, where the \emph{Coarse retract} axiom was proved, the number of free splitting units between $S_{m_u}=\gamma(u)$ and $S_{\gamma(u)}$, and between $S_{m_v} = \gamma(v)$ and $S_{\gamma(v)}$, are both~$<1$. By applying Lemma~\ref{LemmaFSUTriangleInequality} again, the number of free splitting units between $S_{m_u}$ and $S_{m_v}$ is $\le D(b_1+1)+2$, that is, $\abs{u-v} \le D(b_1+1)+2$.

For arbitrary $s<t \in [0,\ldots,\Upsilon]$, letting $u \in [0,\Upsilon]$ be the largest integer $\le s$ and $v \in [0,\Upsilon]$ be the smallest integer $\ge t$, we have $\gamma(s) \in \gamma[u,u+1]$ and $\gamma(t) \in \gamma[v-1,v]$. By $(**)$ we therefore have $d(S_{\gamma(s)},S_{\gamma(u)})$, $d(S_{\gamma(t)},S_{\gamma(v)}) \le L+C+1=19$. It follows that:
\begin{align*}
\abs{s-t} &\le \abs{u-v} \\
              &\le (b_1+1) d(S_{\gamma(v)}, S_{\gamma(v)}) + 2 \\
\frac{1}{b_1+1} \abs{s-t} - \frac{2}{b_1+1} & \le d(S_{\gamma(v)}, S_{\gamma(v)})  \\     
              &\le d(S_{\gamma(v)}, S_{\gamma(v)}) + (19 - d(S_{\gamma(u)},S_{\gamma(s)})) \\
              &\qquad\qquad  + (19 - d(S_{\gamma(v)},S_{\gamma(t)})) \\
\frac{1}{b_1+1} \abs{s-t} - \left( \frac{2}{b_1+1} + 38 \right) &\le d(S_{\gamma(s)},S_{\gamma(t)})
\end{align*}
This proves that the first inequality is true for any $\displaystyle k \ge b_1+1=4 \rank(F)-2$ and any \break $\displaystyle c \ge \frac{2}{b_1+1} + 38 = \frac{1}{2\rank(F)-1} + 38$.

Proposition~\ref{PropFoldPathQuasis} is therefore proved for $k = \max\{19,4\rank(F)-2\}$ and $c=39$.
\end{proof}

\subsection{Pushing down peaks}
\label{SectionPushingDownPeaks}
Recall that every geodesic in $\FS'(F)$ is a zig-zag edge path. On a zig-zag subpath of the form $T^{i-1} \expandsto T^i \collapsesto T^{i+1}$, where $T^i$ is the domain of two incident collapse maps $T^i \mapsto T^{i-1}$ and $T^i \mapsto T^{i+1}$, we say that $T^i$ is a \emph{peak}. If on the other hand $T^{i-1} \collapsesto T^i \expandsto T^{i+1}$ then $T^i$ is a \emph{valley}.

We start with a simplistic technique that can be used to shortcut a zig-zag path, and we work up to a technique, described in Proposition~\ref{PropPushdownInToto}, that will be central to the proof of the Main Theorem. In each case the intuition is to ``push down the peak'', thereby reducing length.

\paragraph{The peak of a W diagram.} A \emph{W diagram}\index{W diagram} or a \emph{W zig-zag} is a length~$4$ zig-zag path with a peak in the middle, sometimes depicted as in Figure~\ref{FigureWDiagram}. 
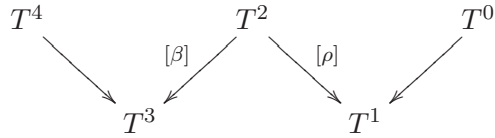
\begin{figure}[h]
$$\xymatrix{
T^4 \ar[dr] && T^2 \ar[dl]_{[\beta]} \ar[dr]^{[\rho]} && T^0 \ar[dl] \\
        & T^3               &&       T^1           
}$$
\caption{A W diagram}
\label{FigureWDiagram}
\end{figure}
We think of $\beta,\rho$ as the ``blue'' and ``red'' subgraphs of $T^2$. In this generality, an edgelet of $T^2$ may be in either, or both, or neither of $\beta,\rho$. The subgraphs $\beta,\rho$ therefore do \emph{not} necessarily form a blue--red decomposition of $T^2$ as in Definition~\ref{DefBRDecompos}, which requires that $\beta,\rho$ have no edgelets in common and and their union is all of $T^2$; furthermore, even if $\beta,\rho$ did form a blue--red decomposition, they need not be a \emph{natural} one, which requires in addition that they both be natural subgraphs of $T^2$. Soon, though, we shall narrow down to a key special case where $\beta,\rho$ is indeed a natural blue--red decomposition.

Pushing down the peak is easy when $\beta\union\rho$ is a proper subgraph of $T^2$, for in that case the given W diagram extends to a commutative diagram of collapse maps as shown in the diagram in Figure~\ref{FigureSimplistic}.
\begin{figure}
$$\xymatrix{
T^4 \ar[dr] && T^2 \ar[dl]_{[\beta]} \ar[dr]^{[\rho]} \ar[dd]|{[\beta \union \rho]} && T^0 \ar[dl] \\
        & T^3 \ar[dr]_{[\rho\setminus(\beta\intersect\rho)]}               &&       T^1 \ar[dl]^{[\beta\setminus(\beta\intersect\rho)]}    \\
        && T^h       
}$$
\caption{A simplistic pushdown works if $\beta\union\rho \subset T^2$ is a proper subgraph.}
\label{FigureSimplistic}
\end{figure}
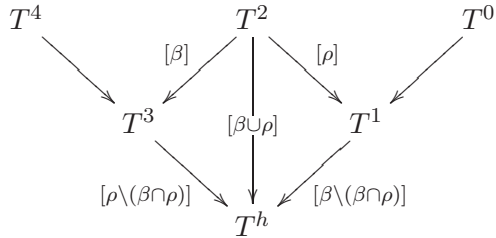
In that diagram, collapse of $\beta\union\rho \subset T^2$ produces $T^h$. The collapse map $T^2 \xrightarrow{[\rho]} T^1$ takes the edgelets of the subgraph $\beta\setminus(\beta\intersect\rho)\subset T^2$ bijectively to the edgelets of a subgraph of $T^1$ which by convention is also denoted $\beta\setminus(\beta\intersect\rho)$; collapse of this subgraph also produces~$T^h$. Similarly, collapse of $\rho\setminus(\beta\intersect\rho) \subset T^3$ produces $T^h$. Compositions of collapse maps being collapse maps, we obtain a length~2 zig-zag path $T^0 \rightarrow T^h \leftarrow T^4$ that cuts short the original length~4 zig-zag path --- we have successfully ``pushed down the peak''. 

The same argument works on a length~$3$ zig-zag path --- which can be visualized by cutting off one of the terminal edges of a W zig-zag --- with the result that if the union of the two collapse graphs at the peak of the zig-zag forms a proper subgraph then there is a length~2 path with the same endpoints. We summarize as follows:

\begin{lemma}\label{LemmaRedBlueUnion}
Given a W zig-zag as notated in Figure~\ref{FigureWDiagram} or a length~3 zig-zag obtained from Figure~\ref{FigureWDiagram} by cutting off one of the terminal edges, if the path is geodesic then $T^2 = \beta \union \rho$. 
\qed\end{lemma}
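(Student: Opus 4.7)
I will argue by contrapositive/contradiction, using precisely the simplistic pushdown picture that the discussion preceding the lemma has already laid out. Suppose $\beta \cup \rho \subsetneq T^2$. Since $\beta$ and $\rho$ are $F$-equivariant subgraphs (being the collapsed subgraphs of the two collapse maps at the peak of the $W$), their union is also $F$-equivariant, and by hypothesis it is a proper subgraph of~$T^2$. From Section~\ref{SectionCollapseMaps} there is then a collapse map $T^2 \xrightarrow{[\beta \cup \rho]} T^h$ to some free splitting $F \act T^h$.

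The next step is to verify the commutativity of Figure~\ref{FigureSimplistic}, that is, to produce collapse maps $T^3 \xrightarrow{[\overline{\rho}]} T^h$ and $T^1 \xrightarrow{[\overline{\beta}]} T^h$ whose pre-compositions with $T^2 \xrightarrow{[\beta]} T^3$ and $T^2 \xrightarrow{[\rho]} T^1$ both equal $T^2 \xrightarrow{[\beta \cup \rho]} T^h$. For the first, the collapse $T^2 \to T^3$ kills exactly the components of $\beta$, and the further quotient of $T^3$ by the image of $\rho$ (a subgraph whose pre-image in $T^2$ is $\rho \cup (\beta \cap \rho)=\rho$ modulo already-collapsed pieces, yielding total collapse of $\beta \cup \rho$) produces $T^h$; symmetrically for the second. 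Now applying transitivity of the collapse relation, the compositions $T^4 \collapsesto T^3 \collapsesto T^h$ and $T^0 \collapsesto T^1 \collapsesto T^h$ yield $T^4 \collapsesto T^h$ and $T^0 \collapsesto T^h$. Therefore the path $T^4 \to T^h \leftarrow T^0$ realizes $d(T^4,T^0) \le 2$, contradicting geodesicity of the given length-$4$ $W$ zig-zag.

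For the length-$3$ variant, suppose one has cut off the edge $T^4 \to T^3$, leaving $T^0 \to T^1 \to T^2 \to T^3$ (the other truncation is symmetric). The same construction produces collapses $T^0 \collapsesto T^h$ and $T^3 \collapsesto T^h$ (the latter directly from the diagram), so $d(T^0,T^3) \le 2 < 3$, again contradicting the geodesic hypothesis.

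The only point requiring any care is the verification that the two compositions $T^2 \to T^3 \to T^h$ and $T^2 \to T^1 \to T^h$ both agree with the direct collapse $T^2 \xrightarrow{[\beta \cup \rho]} T^h$ up to conjugacy; this is an unwinding of the definition of collapse as a quotient by components of the collapsed subgraph. There is no genuine obstacle: once $T^h$ is built, the lemma drops out of transitivity of $\collapsesto$ and the fact that an edge of $\FS'(F)$ contributes at most $1$ to the simplicial distance.
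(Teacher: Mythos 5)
Your proposal is correct and follows essentially the same route as the paper: the paper's proof is exactly the preceding discussion of the ``simplistic pushdown'' of Figure~\ref{FigureSimplistic}, which builds $T^h$ by collapsing the proper subgraph $\beta\union\rho$ and uses transitivity of the collapse relation to shortcut the zig-zag to length~$2$, contradicting geodesicity. Your handling of the length-$3$ truncation also matches the paper's remark that the same argument applies after cutting off a terminal edge.
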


\paragraph{Normalizing a W diagram.} We shall also need to push down the peak of certain W diagrams in the situation where $T^2 = \beta \union \rho$. In this situation it is convenient to first alter the W diagram to ensure that $\beta \intersect \rho$ contains no edgelet of $T^2$, equivalently $\beta,\rho$ is a blue--red decomposition of~$T^2$ as in Definition~\ref{DefBRDecompos}. If $\beta\intersect\rho$ does contain an edgelet of $T^2$ then, since $\beta,\rho$ are proper subgraphs, the given W diagram is contained in a commutative diagram of collapse maps as shown in the diagram in Figure~\ref{FigureNormalization}, called a \emph{normalization diagram}.\index{normalization diagram}
\begin{figure}[h]
$$\xymatrix{
T^4 \ar[dddrr] &&&& T^2 \ar[dddll]_{[\beta]}  \ar[dd]|{[\beta\intersect\rho]}  \ar[dddrr]^{[\rho]} &&&& T^0 \ar[dddll] \\
        &&&&   &&&&  \\
        &&               && T'{}^2 \ar[dll]|{[\beta\setminus(\beta\intersect\rho)]} \ar[drr]|{[\rho\setminus(\beta\intersect\rho]} &&       \\
        && T^{3} &&&& T^{1}
}$$
\caption{A normalization diagram. The W zig-zag on the top of the diagram has the property that $T^2=\beta\union\rho$. The W zig-zag on the bottom of the diagram is normalized.}
\label{FigureNormalization}
\end{figure}
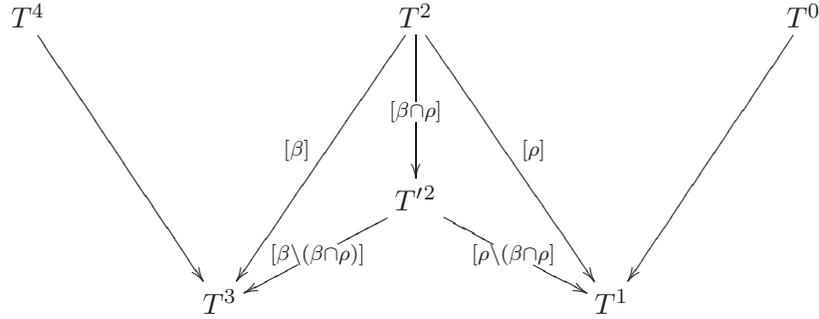
In this diagram, subgraphs of $T'{}^2$ are labelled by the same convention as described above. 
Since $T^2 = \beta \union \rho$ it follows that the two subgraphs $\beta \setminus (\beta\intersect \rho)$ and $\rho \setminus (\beta \intersect \rho)$ of $T'{}^2$ partition the edgelets of $T'{}^2$.

Motivated by this observation, we say that a zig-zag path in $\FS'(F)$ is \emph{normalized} if at every free splitting $F \act T$ along the path that forms a peak, the two subgraphs of $T$ whose collapses define the vertices of the path incident to $T$ form a blue--red decomposition of $T$. 
The argument we have just given shows that every geodesic zig-zag path in $\FS'(F)$ may be replaced by a normalized zig-zag path of the same length and with the same set of valleys.

\paragraph{Pushdown subgraphs and baseball diagrams.} We now turn to a more sophisticated technique for pushing down the peak of a W diagram. Consider a W diagram as notated in Figure~\ref{FigureWDiagram} and suppose that $\beta \union \rho = T^2$ is a blue--red decomposition. Consider also a subgraph $\kappa \subset T^2$ that satisfies the following:
\begin{description}
\item[$\kappa$ is a pushdown subgraph:] $\kappa$ is a proper, equivariant subgraph, and each natural edge of $T^2$ not contained in $\kappa$ contains at least one red and one blue edgelet of $T^2$ that are not contained in $\kappa$.
\end{description}
No requirement is imposed that a pushdown subgraph be a natural subgraph; the proof of Proposition~\ref{PropPushdownInToto} produces pushdown subgraphs which are not natural. Note that a pushdown subgraph can \emph{only} exist if $\beta \union \rho = T^2$ is not a natural blue--red decomposition.

Given a normalized W diagram and a pushdown subgraph $\kappa \subset T^2$, we may extend the W diagram to a larger commutative diagram of collapse maps called a \emph{baseball diagram},\index{baseball diagram} as shown in Figure~\ref{FigureBaseball}. Certain superscripts in this diagram represent various positions on a baseball diamond: $T^1$, $T^2$, $T^3$ represent $1^{\text{st}}$, $2^{\text{nd}}$ and $3^{\text{rd}}$ bases, $T^p$ the pitcher's mound, $T^{h1}$ and $T^{h3}$ the points halfway from home plate to $1^{\text{st}}$ and $3^{\text{rd}}$ bases. 
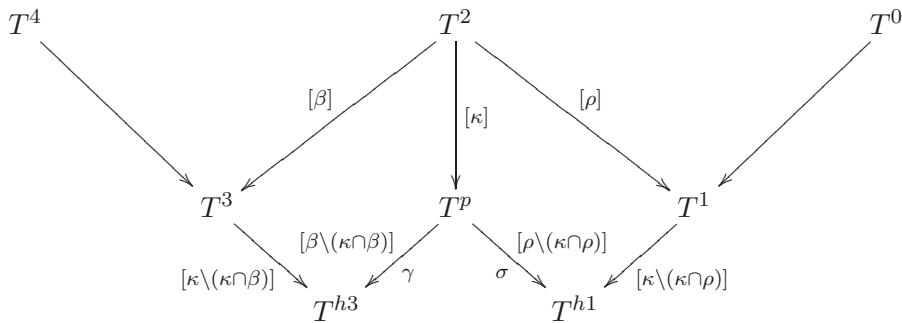
\begin{figure}
$$\xymatrix{
T^4 \ar[ddrr] &&&& T^2 \ar[ddll]_{[\beta]}  \ar[dd]^{[\kappa]}  \ar[ddrr]^{[\rho]} &&&& T^0 \ar[ddll] \\
        &&&&   &&&&  \\
        && T^3 \ar[dr] _{[\kappa \setminus (\kappa\intersect\beta)]}              && T^p \ar[dl]_{[\beta\setminus(\kappa\intersect\beta)]}^{\gamma} \ar[dr]^{[\rho\setminus(\kappa\intersect\rho)]}_{\sigma} &&       T^1  \ar[dl]^{[\kappa\setminus(\kappa\intersect\rho)]}  \\
        &&& T^{h3} && T^{h1}
}$$
\caption{A baseball diagram}
\label{FigureBaseball}
\end{figure}
Collapsed subgraphs of the trees $T^1,T^p,T^3$ in this diagram are named following a convention similar to that used earlier. Because $\kappa$ is a pushdown subgraph, neither of the two subgraphs $\rho \setminus (\kappa \intersect \rho)$, $\beta \setminus (\kappa \intersect \beta) \subset T^p$ contains a natural edge of $T^p$. It follows that neither of the two collapse maps $\sigma \from T^p \to T^{h1}$, $\gamma \from T^p \to T^{h3}$ collapses an entire natural edge of $T^p$. Each of the maps $\sigma,\gamma$ therefore induces by restriction a bijection of natural vertex sets, takes each natural edge onto a natural edge inducing a bijection of natural edge sets, and is homotopic to a conjugacy relative to natural vertex sets. By restricting to natural vertex sets we therefore obtain a well-defined bijection $\gamma \composed \sigma^\inv$ from the natural vertex set of $T^{h1}$ to the natural vertex set of $T^{h3}$ which extends to a conjugacy $\xi \from T^{h1} \mapsto T^{h3}$. Since collapses are transitive, we have again successfully ``pushed down the peak'', without even bothering to involve home plate as in the earlier scenario:
$$\xymatrix{
T^4 \ar[dr] &&&& T^0 \ar[dl] \\
    & T^{h3} && T^{h1} \ar[ll]^{\xi}_{\approx}
}$$
We record this as:

\begin{lemma}[Pushing down peaks]\label{LemmaPushingDown}
Given a normalized W diagram notated as in Figure~\ref{FigureWDiagram}, and given a pushdown subgraph $\kappa \subset T^2$, there exists a baseball diagram notated as in Figure~\ref{FigureBaseball}, in which each map $\gamma \from T^p \to T^{h3}$ and $\sigma \from T^p \to T^{h1}$ induces by restriction a bijection of natural vertex sets and a bijection of natural edge sets, and is homotopic rel natural vertices to a conjugacy. By composition we therefore obtain a bijection $\gamma\sigma^\inv$ from the natural vertex set of $T^{h1}$ to the natural vertex set of $T^{h3}$ that extends to a conjugacy $\xi \from T^{h1} \to T^{h3}$.
\qed\end{lemma}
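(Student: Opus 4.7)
The plan is to first construct the baseball diagram and verify its commutativity, then prove the key consequence of the pushdown hypothesis that neither downward collapse map from $T^p$ collapses a full natural edge of $T^p$, from which the conjugacy $\xi$ follows by standard arguments. First I would construct $F \act T^p$ as the quotient of $T^2$ by collapsing each component of $\kappa$ to a point; since $\kappa$ is a proper, $F$-equivariant subgraph by hypothesis, this yields a well-defined collapse map $T^2 \xrightarrow{[\kappa]} T^p$ making $T^p$ a free splitting. Under this collapse, edgelets of $T^p$ are precisely the images of edgelets of $T^2 \setminus \kappa$, so the subgraphs $\beta \setminus (\kappa \intersect \beta)$ and $\rho \setminus (\kappa \intersect \rho)$ descend to the blue and red edgelet subgraphs of $T^p$ respectively. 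Collapsing these gives $T^{h3}$ and $T^{h1}$ and defines the collapse maps $\gamma$ and $\sigma$. Commutativity of the full baseball diagram then follows from transitivity of collapses by observing that both routes from $T^2$ to $T^{h3}$ collapse exactly $\beta \union \kappa$, and similarly both routes to $T^{h1}$ collapse $\rho \union \kappa$.

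The heart of the proof, and the main obstacle, is to establish the key consequence of the pushdown hypothesis: each natural edge of $T^p$ contains at least one blue and one red edgelet. I would argue by contradiction, assuming some natural edge $\eta^p$ of $T^p$ consists entirely of blue edgelets. Any edgelet of $\eta^p$ lifts to a blue edgelet of some natural edge $\eta$ of $T^2$ with $\eta \not\subset \kappa$; by pushdown $\eta$ has a red edgelet $r \not\in \kappa$ with image $r^p \not\in \eta^p$. The image $\eta'$ of $\eta$ in $T^p$ is an arc meeting $\eta^p$ in a sub-arc, and the transition from the blue ``inside'' portion to the red ``outside'' portion of $\eta'$ must happen at a natural endpoint $w^p$ of $\eta^p$. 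Since $w^p$ is natural of valence $\ge 3$ in $T^p$ and the transition takes place inside the interior of $\eta$ rather than at a natural vertex of $T^2$, the preimage of $w^p$ must be a $\kappa$-component $C$ spanning $\eta$ and some other natural edge $\eta^\dagger$ of $T^2$ (otherwise $w^p$ would have valence $2$ in $T^p$). The plan is then to apply pushdown to $\eta^\dagger$ and iterate the analysis along a chain of natural edges of $T^2$ linked by such $\kappa$-components at the two natural endpoints of $\eta^p$; since there are only finitely many natural edges of $T^2$ in the preimage of $\eta^p$, and each application of pushdown introduces a new red edgelet whose image must be controlled, the iteration terminates in a configuration that directly violates the pushdown hypothesis.

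Once the key claim is established, the remainder is routine. A collapse map that does not collapse any full natural edge can only partially collapse proper initial or terminal sub-segments of natural edges, so $\gamma$ restricts to an equivariant bijection on natural vertex sets, induces a bijection on natural edge sets sending each natural edge onto a natural edge, and is equivariantly homotopic rel natural vertices to a conjugacy; the same conclusions hold for $\sigma$. Defining $\xi$ on the natural vertices of $T^{h1}$ by $\gamma \composed \sigma^\inv$ then gives an equivariant bijection onto the natural vertices of $T^{h3}$; since the natural edge bijections induced by $\gamma$ and $\sigma$ through $T^p$ are compatible with $\xi$ on endpoints, $\xi$ extends equivariantly over each natural edge of $T^{h1}$ as a homeomorphism onto the corresponding natural edge of $T^{h3}$, producing the desired conjugacy $\xi \from T^{h1} \to T^{h3}$.
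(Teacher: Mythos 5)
Your construction of the baseball diagram (including commutativity via transitivity of collapses) and your reduction to the claim that every natural edge of $T^p$ contains both a blue and a red edgelet are correct, and this is the same route as the paper's (very terse) argument; your final paragraph deducing $\xi$ is also fine. The gap is in the middle step, which you rightly identify as the heart of the matter but do not complete. Having located the transition at a natural endpoint $w^p$ of $\eta^p$ whose preimage is a component $C$ of $\kappa$ meeting $\eta$ only in interior points of $\eta$, you assert that $C$ must ``span $\eta$ and some other natural edge $\eta^\dagger$'' and then propose an iteration whose termination you merely assert. That closing sentence is not an argument, and worse, the configuration you propose to iterate over cannot occur: $C$ is a connected subtree, $C \intersect \eta$ is a connected subarc of $\interior(\eta)$, and any point of $C$ outside $\eta$ would be joined within $C$ to $C \intersect \eta$ by a geodesic that can only enter $\eta$ through one of its endpoints (interior points of a natural edge have valence $2$, with both directions inside $\eta$); this would force an endpoint of $\eta$ into $C \intersect \eta$, a contradiction. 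Hence $C \subset \interior(\eta)$.

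But that is precisely the contradiction that finishes the proof, with no iteration at all: a component of $\kappa$ contained in $\interior(\eta)$ has frontier consisting of two points of $\interior(\eta)$, each contributing a single outward direction, so $D_C T^2$ has exactly two elements and $w^p = \pi(C)$ has valence $2$ in $T^p$, contradicting that $w^p$ is a natural vertex of $T^p$ (an endpoint of the natural edge $\eta^p$). The correct upshot is that for every natural edge $\eta$ of $T^2$ contributing an edgelet to $\interior(\eta^p)$, \emph{all} non-$\kappa$ edgelets of $\eta$ map into $\eta^p$, and the pushdown property then places both a red and a blue edgelet in $\eta^p$. You should replace the iteration sketch with this short observation. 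A very minor further omission: to know the collapses defining $T^{h1}$ and $T^{h3}$ exist you should note that $\kappa \union \beta$ and $\kappa \union \rho$ are proper subgraphs of $T^2$, which again follows from the pushdown property together with normalization.
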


We emphasize that the conjugacy in the conclusion of this lemma need not be a \emph{map}, i.e.\ it need not be simplicial. Nonsimplicial conjugacies resulting from Lemma~\ref{LemmaPushingDown} will proliferate into the proof of Proposition~\ref{PropFSUContraction} given in Section~\ref{SectionProofFSUContraction}, and that proof will have a certain step dedicated to patching up this problem.

\paragraph{Pushing down corrugation peaks.} One key strategy occuring in the proof of Proposition~\ref{PropFSUContraction} is to set up applications of Lemma~\ref{LemmaPushingDown} by finding pushdown subgraphs in peaks of normalized W diagrams. Of course this is impossible if the W diagram is geodesic. Nevertheless in Proposition~\ref{PropPushdownInToto} we will show that when combing a fold path across an arbitrary W diagram, even one which is geodesic, one can always locate enough pushdown subgraphs to carry out the pushdown process in a useful fashion, as long as the fold path is sufficiently long when measured in free splitting units.

Consider a fold sequence $T^0_0 \mapsto \cdots \mapsto T^0_J$. Consider also a zig-zag path $T^0_J \xrightarrow{} T^1_J \xleftarrow{[\rho_J]} T^2_J \xrightarrow{[\beta_J]} T^3_J \xleftarrow{} T^4_J$ in $\FS'(F)$, which may be regarded as a W diagram. We do not assume that this W diagram is a geodesic, nor even that it is normalized, but we do assume that $T^2_J = \beta_J \union \rho_J$. Consider finally a stack of four combing rectangles combined into one commutative diagram as shown in Figure~\ref{FigurePullBack1}, where the given fold sequence occurs as the $T^0$ row along the bottom of the diagram, and the W zig-zag occurs as the $T_J$ column along the right side (in such diagrams, in general we refer to rows by dropping subscripts, and to columns by dropping superscripts).
\begin{figure}[h]
$$\xymatrix{
T^4_0 \ar[r] \ar[d] 
               & \cdots \ar[r] & T^4_{I} \ar[r] \ar[d]                  & \cdots \ar[r]  
               & T^4_J \ar[d] \\
T^3_0 \ar[r]                                                                     
               & \cdots \ar[r] & T^3_{I}  \ar[r]                             & \cdots \ar[r]
               & T^3_J           \\
T^2_0  \ar[r] \ar[u]^{[\beta_0]} \ar[d]_{[\rho_0]}  
               & \cdots \ar[r] & T^2_{I}  \ar[r] \ar[u]^{[\beta_{I}]} \ar[d]_{[\rho_{I}]}   & \cdots \ar[r]
               & T^2_J  \ar[u]^{[\beta_J]} \ar[d]_{[\rho_J]} \\
T^1_0  \ar[r]                                  
               & \cdots \ar[r] & T^1_I  \ar[r]  & \cdots \ar[r]
               & T^1_J \\
T^0_0 \ar[r] \ar[u]
               & \cdots \ar[r] & T^0_I \ar[r] \ar[u]  & \cdots \ar[r]
               & T^0_J \ar[u]
}$$
\caption{A diagram of four combing rectangles over $F$. The $T^0$ row along the bottom is assumed to be a fold sequence. In the $T_J$ column we assume that $T^2_J = \rho_J \union \beta_J$.}
\label{FigurePullBack1}
\end{figure}
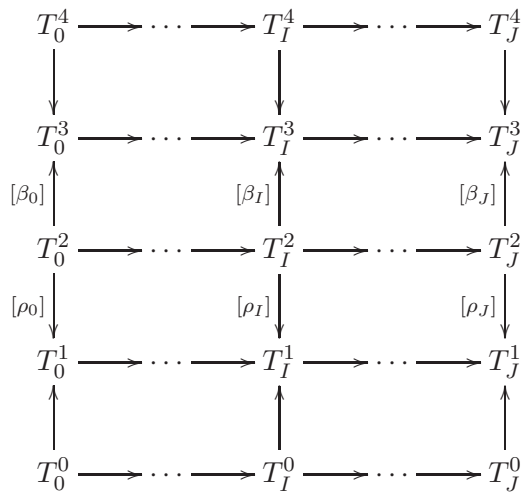
Such a diagram can be constructed, for example, by starting with the bottom row and right side, and applying Propositions~\ref{PropCBC}, then~\ref{PropCBE}, then~\ref{PropCBC}, then~\ref{PropCBE}, in that order, to comb the given fold sequence along each of the four edges of the given zig-zag path. We will also encounter such diagrams constructed by other combing processes involving concatenation and deconcatenation of combing rectangles.

We can visualize Figure~\ref{FigurePullBack1} as a piece of corrugated metal. The $T^2$ row forms a peak of the corrugation which we wish to push down all at once, by parallel applications of Lemma~\ref{LemmaPushingDown}. Of course this is impossible in general, for instance when the $T_J$ column is a geodesic path in $\FS'(F)$. 

We now describe a process which allows us to push down the corrugation peak along the $T^2$ row, at the expense of throwing away the portion of the diagram to the right of the $T_I$ column that is depicted in Figure~\ref{FigurePullBack1}. The next lemma says that this is always possible \emph{as long as} the bottom row has sufficiently many free splitting units between $T^0_I$ and $T^0_J$. As a consequence, the $T_j$ columns for $0 \le j \le I$ are \emph{not} geodesic paths in $\FS'(F)$ because $d(T^0_j,T^4_j) \le 2$, even when the $T_J$ on the far right is geodesic. We thus obtain a key indicator of ``hyperbolic'' behavior: local curve shortening. 

The following proposition introduces the constant $4 \rank(F)-3$ which is needed for the proof of Proposition~\ref{PropFSUContraction}.

\break

\begin{proposition}\label{PropPushdownInToto}
For any commutative diagram as in Figure~\ref{FigurePullBack1}, if the number of free splitting units between $T^0_I$ and $T^0_J$ is $\ge 4 \rank(F) - 3$ then there is a commutative diagram
$$\xymatrix{
T^4_0 \ar[r] \ar[d] 
               & \cdots \ar[r] 
               &T^4_I \ar[d] \\
T^{h3}_0 \ar[r]  \ar@{=}[d]^{\xi_0}                                                                 
               & \cdots \ar[r] 
               &T^{h3}_I \ar@{=}[d]^{\xi_I}    \\
T^{h1}_0  \ar[r]                               
               & \cdots \ar[r] 
               &T^{h1}_I \\
T^0_0 \ar[r] \ar[u]
               & \cdots \ar[r]
               &T^0_I \ar[u]
}$$
such that the following hold: the top and bottom horizontal rows are the same foldable sequences as the top and bottom rows of Figure~\ref{FigurePullBack1} between the $T_0$ and $T_I$ columns; the $T^{h1}$ and $T^{h3}$ rows are foldable sequences; for each $j=0,\ldots,J$ the function $\xi_j$ is a (nonsimplicial) conjugacy between $T^{h1}_j$ and~$T^{h3}_j$; and the top and bottom horizontal rectangles are combing rectangles obtained from the top and bottom combing rectangles of Figure~\ref{FigurePullBack1} between the $T_0$ and $T_I$ columns by application of \emph{Composition of combing rectangles} \ref{LemmaCombingComp}. 
\end{proposition}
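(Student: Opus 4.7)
The plan is to construct an equivariant family of pushdown subgraphs $\kappa_j \subset T^2_j$ for $j \in [0,I]$ compatible with the horizontal maps of the $T^2$ row, then apply Lemma~\ref{LemmaPushingDown} in parallel at each column to obtain baseball diagrams whose conjugacies $\xi_j$ form the middle of the final diagram. First I would normalize column-by-column using the process of Figure~\ref{FigureNormalization} so that we may assume $\beta_j \intersect \rho_j$ contains no edgelets for each $j$; this is compatible with the combing rectangle structure because $\beta_j \intersect \rho_j = (g^j_J)^\inv(\beta_J \intersect \rho_J)$ is itself pullback-invariant. After normalization, a pushdown subgraph of $T^2_j$ exists exactly when $\beta_j, \rho_j$ fails to be a natural blue--red decomposition of $T^2_j$, in which case one may take $\kappa_j$ to be the union of the monochromatic natural edges (a proper equivariant subgraph whose complementary natural edges, being bichromatic, each contain a red and a blue edgelet outside $\kappa_j$).

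The key step is to locate some $j^* \in [I,J]$ at which the decomposition is not natural. Suppose by contradiction that $\beta_j, \rho_j$ is an invariant natural blue--red decomposition of $T^2_j$ for every $j \in [I,J]$. Then Lemma~\ref{LemmaBRNatural} subdivides $[I,J]$ into at most $4\rank(F)-3$ maximal subintervals $[i_{n-1}, i_n]$ on which the complexity $C(\beta_j)$ is constant. On each such subinterval the two combing rectangles of Figure~\ref{FigurePullBack1} relating $T^2$ to $T^1$ and $T^0$ to $T^1$ restrict to a commutative diagram of precisely the form required by Definition~\ref{DefLessThanOneFSU} --- with $T^2$ as the foldable top row equipped with its invariant natural blue--red decomposition of constant complexity, $T^1$ as the middle row $S'$, and $T^0$ as the bottom fold path --- witnessing $<1$ free splitting unit between $T^0_{i_{n-1}}$ and $T^0_{i_n}$. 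Iterating Lemma~\ref{LemmaFSUTriangleInequality} across the at most $4\rank(F)-3$ subintervals bounds the total free splitting units between $T^0_I$ and $T^0_J$ by $(4\rank(F)-3)-1 = 4\rank(F)-4$, contradicting the hypothesis.

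For each $j \in [0,I]$ I would then set $\kappa_j = (g^j_{j^*})^\inv(\kappa_{j^*})$, where $g^j_{j^*}$ denotes composition along the $T^2$ row; equivariance and horizontal compatibility $\kappa_{j-1} = g_j^\inv(\kappa_j)$ are immediate. That each $\kappa_j$ is again a pushdown subgraph follows from two facts: the map $g^j_{j^*}$ respects the blue--red coloring (since both $\beta$ and $\rho$ are defined by pullback) and is injective on each natural edge. Any natural edge $\eta \subset T^2_j$ with $\eta \not\subset \kappa_j$ then crosses some natural edge $\eta^* \subset T^2_{j^*}$ with $\eta^* \not\subset \kappa_{j^*}$, and the red and blue edgelets of $\eta^*$ outside $\kappa_{j^*}$ pull back through $g^j_{j^*}\restrict\eta$ to red and blue edgelets of $\eta$ outside $\kappa_j$. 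Applying Lemma~\ref{LemmaPushingDown} at each column of $[0,I]$ produces baseball diagrams yielding the collapses $T^3_j \mapsto T^{h3}_j$ and $T^1_j \mapsto T^{h1}_j$ and the conjugacies $\xi_j \from T^{h1}_j \to T^{h3}_j$; by Lemma~\ref{LemmaCombingProperties} the new $T^{h3}$ and $T^{h1}$ rows are foldable sequences, and Lemma~\ref{LemmaCombingComp} composes these new vertical rectangles with the top and bottom combing rectangles of Figure~\ref{FigurePullBack1} to give the required horizontal combing rectangles.

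The main obstacle will be verifying that the conjugacies $\xi_j$ actually commute with the horizontal foldable maps of the $T^{h1}$ and $T^{h3}$ rows, so that they form a genuine ``conjugacy column''. This will rely on the observation from Lemma~\ref{LemmaPushingDown} that the collapses $\gamma$ and $\sigma$ do not collapse any full natural edge of $T^p_j$, hence induce bijections of natural vertex sets at every column, and on the fact that pullback-compatibility $\kappa_{j-1} = g_j^\inv(\kappa_j)$ forces the baseball constructions at adjacent columns to be intertwined by the horizontal maps of the $T^2$, $T^3$, $T^1$, and $T^p$ rows. The induced natural-vertex bijections $\gamma \sigma^\inv$ then intertwine correctly across columns, extending to the desired conjugacies $\xi_j$ commuting with the horizontal maps.
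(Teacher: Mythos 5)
Your proposal is correct and follows essentially the same three-step route as the paper's proof: normalize the columns (using that $\beta_j\intersect\rho_j$ is pullback-invariant), combine Lemma~\ref{LemmaBRNatural} with Definition~\ref{DefLessThanOneFSU} and the hypothesis on free splitting units to contradict naturality of the blue--red decomposition somewhere on $[I,J]$, and then pull back a pushdown subgraph to columns $[0,I]$ and run parallel baseball diagrams via Lemma~\ref{LemmaPushingDown}. Your variations are cosmetic (taking $\kappa$ to be the union of monochromatic natural edges instead of the complement of a single bichromatic natural edge orbit, and using Lemma~\ref{LemmaFSUTriangleInequality} instead of counting terms of the witnessing subsequence), and the one step you defer --- choosing the extensions of the natural-vertex bijections $\gamma_j\composed\sigma_j^\inv$ over natural edges so that the $\xi_j$ commute on the nose with the horizontal maps --- is exactly where the paper inserts a decreasing induction setting $\bar\gamma_{j-1}\restrict\eta = (g_j^\inv\composed\bar\gamma_j\composed f_j)\restrict\eta$.
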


\begin{proof} There are three steps to the proof: normalization; pullback; and pushdown.

\smallskip

\textbf{Step 1: Normalization.} Knowing that $T^2_J = \beta_J \union \rho_J$, and knowing for each $j=0,\ldots,J$ that $\beta_j$, $\rho_j$ are the union of the edgelets mapped to $\beta_J$, $\rho_J$, respectively, under the foldable map $T^2_j \mapsto T^2_J$, it follows that $T^2_j = \beta_j \union \rho_j$. If the $T_J$ column is already normalized, that is if $\beta_J \union \rho_J= T_J$ is a blue--red decomposition, then the same is true of $\beta_j \union \rho_j = T_j$, and so each $T_j$ column is normalized and we pass directly to Step~2.

Otherwise, let us assume that $\beta_J$, $\rho_J$ have some edgelets in common. The union of these edgelets is a subgraph with nondegenerate components which by abuse of notation we denote $\beta_J \intersect \rho_J \subset T^2_J$. It follows that for each $j=0,\ldots,J$ the graphs $\beta_j,\rho_j$ have some edgelets in common, these being the edgelets that are mapped to $\beta_J \intersect \rho_J$ by the foldable map $T^2_j \mapsto T^2_J$; their union forms a subgraph $\beta_j \intersect \rho_j \subset T^2_j$. We may now carry out the normalization process depicted in Figure~\ref{FigureNormalization}, in parallel as $j$ varies from $0$ to $J$. The resulting normalization diagrams, commutative diagrams of collapse maps, are shown in Figure~\ref{FigureNormalizationWZigZag}.
\begin{figure}[h]
$$\xymatrix{
T^4_j \ar[dddrr] &&&& T^2_j \ar[dddll]_{[\beta_j]}  \ar[dd]|{[\beta_j\intersect\rho_j]}  \ar[dddrr]^{[\rho_j]} &&&& T^0_j \ar[dddll] \\
        &&&&   &&&&  \\
        &&               && T'{}^2_j \ar[dll]|{\quad[\beta_j\setminus(\beta_j\intersect\rho_j)]} \ar[drr]|{[\rho_j\setminus(\beta_j\intersect\rho_j]\quad} &&       \\
        && T^{3}_j &&&& T^{1}_j
}$$
\caption{Parallel normalization diagrams associated to the W zig-zags from $T^0_j$ to $T^4_j$ in Figure~\ref{FigurePullBack1}.}
\label{FigureNormalizationWZigZag}
\end{figure}
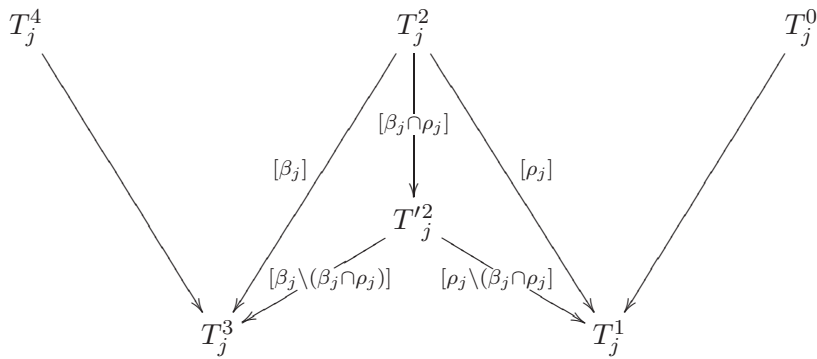

\break

We claim that for each of the seven arrows in Figure~\ref{FigureNormalizationWZigZag}, as $j$ varies from $0$ to $J$ we obtain a combing rectangle. One can visualize this statement as a description of a 3-dimensional commutative diagram where the normalization diagrams are lined up in parallel vertical planes, connected up by six foldable sequences (one for each of the six positions in the normalization diagram) and seven combing rectangles (one for each of the seven arrows). The claim is true by hypothesis for the four arrows on the top of the diagram. To obtain the combing rectangle with vertical arrows from $T^2_j$ to $T'{}^2_j$, since $\beta_j\intersect\rho_j$ is the inverse image of $\beta_J\intersect\rho_J$ under the foldable map $T^2_j \mapsto T^2_J$, by Proposition~\ref{PropCBC} the collapse maps $T^2_j \xrightarrow{[\beta_j\intersect\rho_j]} T'{}^2_j$ fit together in a combing rectangle as follows:
$$\xymatrix{
T^2_0 \ar[r] \ar[d]^{[\beta_0\intersect\rho_0]} 
				& \cdots \ar[r] & T{}^2_{I}  \ar[r]\ar[d]^{[\beta_I\intersect\rho_I]} & \cdots \ar[r]
				& T^2_J \ar[d]^{[\beta_J\intersect\rho_J]} \\
T'{}^2_0 \ar[r]           
				& \cdots \ar[r] & T'{}^2_{I}  \ar[r] & \cdots \ar[r]
				& T'{}^2_J \\
}$$
The two combing rectangles with vertical arrows from $T'{}^2_j$ to $T^1_j$ and from $T'{}^2_j$ to $T^3_j$, respectively, are obtained by two applications of Lemma~\ref{LemmaCombingDecomp} \emph{Decomposition of combing rectangles}, the first application using the $T^2_j$ to $T^1_j$ and the $T^2_j$ to $T'{}^2_j$ combing rectangles, and the second using the $T^2_j$ to $T^3_j$ and the $T^2_j$ to $T'{}^2_j$ combing rectangles. This proves the claim.

The outcome of the claim is a commutative diagram of the form shown in Figure~\ref{FigureNormalOutcome}, in which the top and bottom rectangles are the same combing rectangles as in Figure~\ref{FigurePullBack1}. By construction (see Figure~\ref{FigureNormalization}), the zig zag path on the right side of Figure~\ref{FigureNormalOutcome} is normalized, completing Step~1.

\begin{figure}
$$\xymatrix{
T^4_0 \ar[r] \ar[d] 
               & \cdots \ar[r] & T^4_{I} \ar[r] \ar[d]                  & \cdots \ar[r]  
               & T^4_J \ar[d] \\
T^3_0 \ar[r]                                                                     
               & \cdots \ar[r] & T^3_{I}  \ar[r]                             & \cdots \ar[r]
               & T^3_J           \\
T'{}^2_0  \ar[r] \ar[u] \ar[d] 
               & \cdots \ar[r] & T'{}^2_{I}  \ar[r] \ar[u] \ar[d]   & \cdots \ar[r]
               & T'{}^2_J  \ar[u] \ar[d] \\
T^1_0  \ar[r]                                  
               & \cdots \ar[r] & T^1_I  \ar[r]  & \cdots \ar[r]
               & T^1_J \\
T^0_0 \ar[r] \ar[u]
               & \cdots \ar[r] & T^0_I \ar[r] \ar[u]  & \cdots \ar[r]
               & T^0_J \ar[u]
}$$
\caption{The outcome of normalizing Figure~\ref{FigurePullBack1}, using the parallel normalization diagrams of Figure~\ref{FigureNormalizationWZigZag}.}
\label{FigureNormalOutcome}
\end{figure}
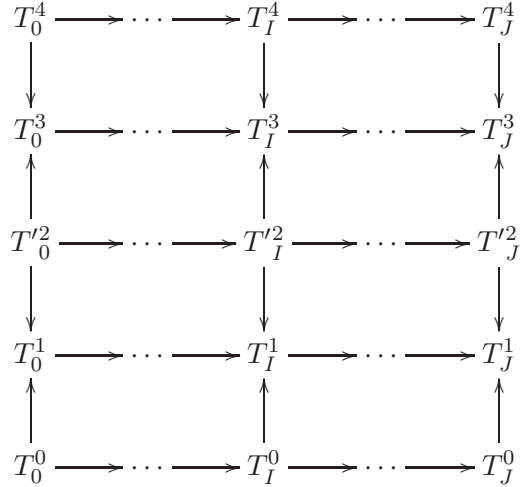

\textbf{Step 2: Pullback.} This is the central argument where the concepts of free splitting units are used to their maximal effect. 

Having carried out Step 1, we may now go back to Figure~\ref{FigurePullBack1} and assume that each $T_j$ column is a normalized W zig-zag. In other words, for each $j$ we have a blue--red decomposition $\beta^2_j \union \rho^2_j = T^2_j$. 

Let $\Upsilon$ be the number of free splitting units along the bottom row of the diagram between $T^0_I$ and $T^0_J$, and choose a sequence $I \le i(0) < \cdots < i(\Upsilon) \le J$ so that for each $u=1,\ldots,\Upsilon$ there is $\ge 1$ free splitting unit between $T^0_{i(u-1)}$ and $T^0_{i(u)}$. By hypothesis we have $\Upsilon \ge 4 \rank(F)-3$.

We prove that the blue--red decomposition $\beta_I \union \rho_I = T^2_I$ is not natural. Arguing by contradiction, suppose that $\beta_I \union \rho_I = T^2_I$ is natural. By Definition~\ref{DefBRDecompos}, it follows that $\beta_i \union \rho_i = T^2_i$ is natural for $I \le i \le J$. By Lemma~\ref{LemmaBRNatural}, the interval $I \le i \le J$ breaks into no more than $4 \rank(F)-3$ subintervals on each of which the complexity of $\beta_i$ is constant. By Definition~\ref{DefLessThanOneFSU}, on each of these subintervals there is $<1$ free splitting unit, and so each of these subintervals contains at most one entry from the sequence $i(0) < \cdots < i(\Upsilon)$. It follows that $\Upsilon \le 4 \rank(F) - 4$, contradicting the hypothesis.

\subparagraph{Remark.} The previous version of this paper contained an invalid argument, starting from the statement that $\beta_i, \rho_i \union T^2_i$ is natural for $I \le i \le J$. The erroneous statement, which incorrectly exploited $\beta_i,\rho_i$, said that if one expands $T^2_i$ by blowing up each vertex $v \in \beta_i \intersect \rho_i$, pulling the blue and red edges at $v$ apart to form two vertices connected by a gray edge, then the resulting tree with $F$-action is a free splitting. The error is that the inserted gray edges might have nontrivial stabilizers. Correcting this error led to a revamping of the theory of free splitting units presented in Section~\ref{SectionFSU}. In particular, the concept of an ``invariant, natural, blue--red decomposition'' in Definition~\ref{DefBRDecompos}, and the diameter bounds of Lemma~\ref{LemmaBRNatural}, are new to this version of the paper and were concocted to correctly exploit the subgraphs $\beta_i,\rho_i \subset T^2_i$.

\smallskip

\textbf{Step 3: Pushdown.} Having carried out Steps~1 and~2, we assume now that we have a commutative diagram as shown in Figure~\ref{FigurePullBack2}, in which each column is normalized and the blue--red decomposition $\beta_I \union \rho_I = T^2_I$ is not natural. It follows that $T^2_I$ has a natural edge $e$ which contains both red and blue edgelets. Using this, we shall produce the commutative diagram needed for the conclusion of Proposition~\ref{PropPushdownInToto}. The argument will be a somewhat more intricate version of the parallel normalization process used in Step~1, using parallel baseball diagrams instead.
\begin{figure}[h]
$$\xymatrix{
T^4_0 \ar[r] \ar[d] 
               & \cdots \ar[r]               
               & T^4_I \ar[d] \\
T^3_0 \ar[r]                                                                     
               & \cdots \ar[r]               
               & T^3_I           \\
T^2_0  \ar[r] \ar[u]^{[\beta_0]} \ar[d]_{[\rho_0]}  
               & \cdots \ar[r]
               & T^2_I  \ar[u]^{[\beta_I]} \ar[d]_{[\rho_I]} \\
T^1_0  \ar[r]                                  
               & \cdots \ar[r] 
               & T^1_I \\
T^0_0 \ar[r] \ar[u]
               & \cdots \ar[r]
               & T^0_I \ar[u]
}$$
\caption{Each of the four horizontal rectangles is a combing rectangle. We assume that every column is a normalized W zig-zag and that the tree $T^2_I$ has an edge $e$ containing both red and blue edgelets.}
\label{FigurePullBack2}
\end{figure}

Define a proper $F$-equivariant natural subgraph $\kappa_I = T^2_I$ to be the complement of the orbit of $e$, and so every natural edge of $T^2_I$ not in $\kappa_I$ contains both a red and a blue edgelet. By decreasing induction on $j \in \{0,\ldots,I-1\}$ define an $F$-equivariant subgraph $\kappa_j \subset T^2_j$ to be the inverse image of $\kappa_{j+1}$ under the foldable map $T^2_j \mapsto T^2_{j+1}$ (ignoring degenerate components as usual); equivalently $\kappa_j$ is the inverse image of $\kappa_I$ under $T^2_j \mapsto T^2_I$. It follows that the subgraphs $\kappa_j \subset T^2_j$ are proper for all $j=0,\ldots,I$.

We claim that for $j=0,\ldots,I$ the graph $\kappa_j$ is a pushdown subgraph of $T^2_j$. To prove this, given a natural edge $\eta_j \subset T^2_j$ such that $\eta_j \not\subset \kappa_j$, we must find a red and a blue edgelet in $\eta_j$ neither of which is in $\kappa_j$. Foldable maps take natural vertices to natural vertices and natural edges to nondegenerate natural edge paths, so the image of $\eta_j$ under the foldable map $T^2_j \mapsto T^2_I$ is a nondegenerate natural edge path denoted $\eta_I \subset T^2_I$. Since $\eta_j \not\subset \kappa_j$, it follows that $\eta_I \not\subset \kappa_I$, and so $\eta_I$ contains a natural edge not in $\kappa_I$ which therefore has both a red and a blue edgelet. Since natural edges not in $\kappa_I$ have interior disjoint from $\kappa_I$ it follows that $\eta_I$ contains a red and a blue edgelet neither of which is in $\kappa_I$. By pulling back under the foldable map $T^2_j \mapsto T^2_I$ we obtain a red and a blue edgelet in $\eta_j$ neither of which is in $\kappa_j$.

We now apply Lemma~\ref{LemmaPushingDown} in parallel to each column $j$ of Figure~\ref{FigurePullBack2} for $j=0,\ldots,I$. The resulting baseball diagrams, commutative diagrams of collapse maps, are shown in Figure~\ref{FigureParameterizedBaseball} (compare Figure~\ref{FigureBaseball}). Lemma~\ref{LemmaPushingDown} also produces conjugacies $T^p_j \mapsto T^{h3}_j$ and $T^p_j \mapsto T^{h1}_j$ and hence conjagacies $T^{h1}_j \to T^{h3}_j$. What we are still missing, however, are the conclusions of Proposition~\ref{PropPushdownInToto} concerned with combing rectangles and commutativity. 
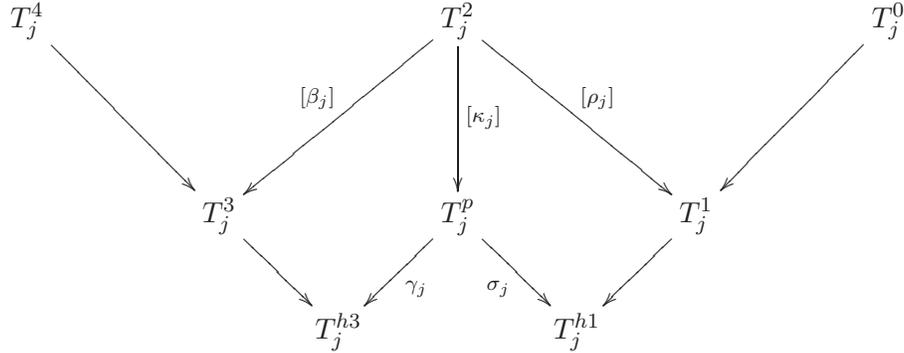
\begin{figure}[h]
$$\xymatrix{
T^4_j \ar[ddrr] &&&& T^2_j \ar[ddll]_{[\beta_j]}  \ar[dd]^{[\kappa_j]}  \ar[ddrr]^{[\rho_j]} &&&& T^0_j \ar[ddll] \\
        &&&&   &&&&  \\
        && T^3_j \ar[dr]           && T^p_j \ar[dl]^{\gamma_j} \ar[dr]_{\sigma_j} &&       T^1_j  \ar[dl]  \\
        &&& T^{h3}_j && T^{h1}_j
}$$
\caption{The baseball diagram associated to the W-diagram from $T^0_j$ to $T^4_j$.}
\label{FigureParameterizedBaseball}
\end{figure}

We claim that for each of the nine arrows in Figure~\ref{FigureParameterizedBaseball}, as $j$ varies from $0$ to $I$ we obtain a combing rectangle. As in Step~1, one visualizes this as a 3-dimensional commutative diagram by lining up the baseball diagrams in parallel vertical planes, connected up by eight foldable sequences (one for each of the eight positions in the baseball diagram) and nine combing rectangles (one for each of the nine arrows). The claim is true by hypothesis for the four arrows on the top of the diagram.

For the arrow from 2nd base to the pitcher's mound, since $\kappa_j$ is the inverse image of $\kappa_J$ under the foldable map $T^2_j \mapsto T^2_I$, by Proposition~\ref{PropCBC} the collapse maps $T^2_j \xrightarrow{[\kappa_j]} T^p_j$ fit together in a combing rectangle
$$\xymatrix{
T^2_0 \ar[r] \ar[d]_{[\kappa_0]} & \cdots\ar@{}[d]|{\text{I}} \ar[r] & T^2_I \ar[d]^{[\kappa_I]} \\
T^p_0 \ar[r]           & \cdots \ar[r] & T^p_I \\
}$$
Notice that for each $j=0,\ldots,J$, the subgraph $\kappa_j \union \rho_j$ is proper, because any natural edge not in $\kappa_j$ contains a blue edgelet not in $\kappa_j$, which is also not in $\kappa_j \union \rho_j$. Similarly the subgraph $\kappa_j \union \beta_j$ is proper. By Proposition~\ref{PropCBC}, since $\kappa_j \union \rho_j$ is the inverse image of $\kappa_{j+1} \union \rho_{j+1}$, and since $\kappa_j \union \beta_j$ is the inverse image of $\kappa_{j+1} \union \beta_{j+1}$, we obtain combing rectangles
$$\xymatrix{
T^2_0 \ar[r] \ar[d]_{[\kappa_0 \union \beta_0]} & \cdots\ar@{}[d]|{\text{II}}  \ar[r] & T^2_I \ar[d]^{[\kappa_I \union \beta_I ]}
& & &
T^2_0 \ar[r] \ar[d]_{[\kappa_0 \union \rho_0]} & \cdots\ar@{}[d]|{\text{III}}  \ar[r] & T^2_I \ar[d]^{[\kappa_I \union \rho_I ]} \\
T^{h3}_0 \ar[r]                           & \cdots \ar[r] & T^{h3}_0
& & &
T^{h1}_0 \ar[r]                           & \cdots \ar[r] & T^{h1}_0
}$$
Rectangles II and III do not correspond to any of the nine arrows in the baseball diagram, but to invisible arrows going from 2nd base to the point halfway between 1st base and home plate and from 2nd base to the point halfway between 3rd base and home plate. 

For the arrows going from the pitcher's mound to the points halfway between 1st and home and halfway between 3rd and home, apply Lemma~\ref{LemmaCombingDecomp} \emph{Decomposition of combing rectangles}, first to combing rectangles II and I and then to combing rectangles III and~I, to obtain combing rectangles 
$$\xymatrix{
T^p_0 \ar[r] \ar[d]_{[\beta_0 \setminus (\kappa_0 \intersect \beta_0)]}^{\gamma_0}
                  & \cdots\ar@{}[d]|{\text{IV}} \ar[r] & T^p_I \ar[d]^{[\beta_I \setminus (\kappa_I \intersect \beta_I)]}_{\gamma_I}
& & & &
T^p_0 \ar[r] \ar[d]_{[\rho_0 \setminus (\kappa_0 \intersect \rho_0)]}^{\sigma_0}           
                  & \cdots\ar@{}[d]|{\text{V}} \ar[r] & T^p_I \ar[d]^{[\rho_I \setminus (\kappa_I \intersect \rho_I)]}_{\sigma_I}
\\
T^{h3}_0 \ar[r]                           & \cdots \ar[r] & T^{h3}_0
& & & &
T^{h1}_0 \ar[r]                           & \cdots \ar[r] & T^{h1}_0
}$$ 
where we follow the same notation convention for subgraphs of $T^p_0$ as used in the original baseball diagram Figure~\ref{FigureBaseball}. 

For the arrows going from 1st base and 3rd base to the points halfway home, applying Lemma~\ref{LemmaCombingDecomp} \emph{Decomposition of combing rectangles} to combing rectangle II and the 2nd base to 3rd base combing rectangle, and then to combing rectangle III and the 2nd base to 1st base combing rectangle, we obtain combing rectangles
$$\xymatrix{
T^3_0 \ar[r] \ar[d]_{[\kappa_0 \setminus (\kappa_0 \intersect \beta_0)]}^{\gamma_0}
                  & \cdots\ar@{}[d]|{\text{VI}} \ar[r] & T^3_I \ar[d]^{[\kappa_I \setminus (\kappa_I \intersect \beta_I)]}
& & & &
T^1_0 \ar[r] \ar[d]_{[\kappa_0 \setminus (\kappa_0 \intersect \rho_0)]}^{\sigma_0}           
                  & \cdots\ar@{}[d]|{\text{VII}} \ar[r] & T^1_I \ar[d]^{[\kappa_I \setminus (\kappa_I \intersect \rho_I)]}
\\
T^{h3}_0 \ar[r]                           & \cdots \ar[r] & T^{h3}_0
& & & &
T^{h1}_0 \ar[r]                           & \cdots \ar[r] & T^{h1}_0
}$$

Applying Lemma~\ref{LemmaCombingComp} \emph{Composition of combing rectangles}, by composing the two combing rectangles corresponding to the arrows along the 1st base foul line in Figure~\ref{FigureParameterizedBaseball} we obtain the combing rectangle from the $T^0$ row to the $T^{h1}$ row needed for the conclusion of Proposition~\ref{PropPushdownInToto}. Similarly, by composing the two combing rectangles corresponding to the arrows along the 3rd base foul line we obtain the combing rectangle from the $T^4$ row to the $T^{h3}$~row.

To complete Step~3 and the proof of the proposition, it remains to construct the commutative diagram of conjugacy maps $\xi_j \from T^{h1}_j \to T^{h3}_j$ in the conclusion of the lemma. For this purpose it suffices to replace combing rectangles IV and V by commutative diagrams of conjugacies of the form 
$$\xymatrix{
T^p_0 \ar[r] \ar[d]^{\bar\gamma_0}
                  & \cdots\ar@{}[d]|{\overline{\text{IV}}} \ar[r] & T^p_I \ar[d]_{\bar\gamma_I}
& & & &
T^p_0 \ar[r] \ar[d]^{\bar\sigma_0}           
                  & \cdots\ar@{}[d]|{\overline{\text{V}}} \ar[r] & T^p_I \ar[d]_{\bar\sigma_I}
\\
T^{h3}_0 \ar[r]                           & \cdots \ar[r] & T^{h3}_I
& & & &
T^{h1}_0 \ar[r]                           & \cdots \ar[r] & T^{h1}_I
}$$ 
for then defining $\xi_j = \bar\gamma_j \composed \bar\sigma_j^\inv \from T^{h1}_j \to T^{h3}_j$ we will be done. While Lemma~\ref{LemmaPushingDown} produces conjugacies $T^{h1}_j \to T^{h3}_j$ for each $j=0,\ldots,J$, if that lemma is used crudely there is no guarantee that these conjugacies will form commutative diagrams as needed. With a little care in how Lemma~\ref{LemmaPushingDown} is applied we can get the needed guarantee. We construct diagram $\overline{\text{IV}}$ in detail, the construction of $\overline{\text{V}}$ being similar. The construction is by induction, starting from the $T_I$ column on the far right and moving leftward. 

First apply Lemma~\ref{LemmaPushingDown} to produce a conjugacy $\bar\gamma_I \from T^p_I \to T^{h3}_I$ so that the restrictions of $\gamma_I$ and $\bar\gamma_I$ to natural vertex sets are the same. Proceeding by decreasing induction on~$j$, suppose that for some $j$ we have produced all the conjugacies from column $T_j$ to $T_I$ in diagram $\overline{\text{IV}}$ making that portion of the diagram commute, and so that the restrictions to natural vertex sets of the conjugacies in diagrams IV and $\overline{\text{IV}}$ are the same from column $T_j$ to column $T_I$. We must choose the conjugacy $\bar \gamma_{j-1} \from T^p_{j-1} \to T^{h3}_{j-1}$ so as to fill in a commutative diagram of $F$-equivariant functions
$$\xymatrix{
T^p_{j-1} \ar@{.>}[d]_{\bar\gamma_{j-1}} \ar[r]^{f_{j}} & T^{p}_{j} \ar[d]^{\bar\gamma_{j}} \\
T^{h3}_{j-1} \ar[r]^{g_j} & T^{h3}_{j}
}$$
where $f_j$, $g_j$ are the foldable maps in Rectangle IV, and where the restrictions of $\bar\gamma_{j-1}$ and $\gamma_{j-1}$ to natural vertex sets are the same. This tells us how to define $\bar\gamma_{j-1}$ on natural vertex sets. Consider a natural edge $\eta \subset T^p_{j-1}$. By Lemma~\ref{LemmaPushingDown} its image $\gamma_{j-1}(\eta) \subset T^{h3}_{j-1}$ is a natural edge whose endpoints are the $\bar\gamma_{j-1}$ images of the endpoints of $\eta$. The foldable map $f_j \from T^p_{j-1} \mapsto T^{p}_j$ is injective on $\eta$, the conjugacy $\bar\gamma_{j}$ is injective on $f_j(\eta)$, and we have the following equation of subsets:
$$g_j(\gamma_{j-1}(\eta)) = \gamma_{j}(f_j(\eta)) = \bar\gamma_{j}(f_j(\eta))
$$
The foldable map $g_j$ is injective on the natural edge $\gamma_{j-1}(\eta)$, and therefore has a homeomorphic inverse $g_j^\inv \from \bar\gamma_{j}(f_j(\eta)) \to \gamma_{j-1}(\eta)$, and so we can define
$$\bar\gamma_{j-1} \restrict \eta = (g_j^\inv \composed \bar\gamma_j \composed f_j) \restrict \eta
$$
This completes Step~3 and the proof of Proposition~\ref{PropPushdownInToto}.
\end{proof}

\subsection{Proof of Proposition \ref{PropFSUContraction}}
\label{SectionProofFSUContraction}

\subparagraph{Prologue.} Consider a fold sequence $S_0 \mapsto\cdots\mapsto S_K$ over $F$, a free splitting $F \act T$, and an augmented projection diagram of maximal depth $k_T=\pi(T)$ as notated in Figure~\ref{FigureMaxProjDiagram} of Section~\ref{SectionStatmentFSUContraction}, whose top row has the fold sequence $T_{k_T} \mapsto\cdots\mapsto T_L=T$ as a terminal segment. Let $\Upsilon$ be the number of free splitting units between $T_{k_T}$ and $T_L=T$. Using the constant $b_1 = 4 \rank(F)-3$ from Proposition~\ref{PropPushdownInToto}, we list every $b_1^{\text{th}}$ term of the back greedy subsequence of this fold sequence~as 
$$k_T \le L_\Omega < L_{\Omega-1} < \cdots < L_1 < L_0=L
$$
where $\Omega = \lfloor \Upsilon / b_1 \rfloor$. Thus $L_\omega$ is the greatest integer $< L_{\omega-1}$ such that there are exactly $b_1$ free splitting units between $T_{L_\omega}$ and $T_{L_{\omega-1}}$, for each $\omega=1,\ldots,\Omega$. Emphasizing only those $T$'s with subscripts from the list $L_\Omega,\ldots,L_0$, and assigning them a superscript~$0$ for later purposes, we may write the augmented projection diagram in the form
$$\xymatrix{
T^0_0 \ar[r] \ar[d] & \cdots \ar[r] & T^0_{k_T} \ar[d] \ar[r] & \cdots \ar[r] & T^0_{L_\Omega} \ar[r] & T^0_{L_{\Omega-1}} \ar[r] & \cdots \ar[r] & T^0_{L_1} \ar[r] & T^0_{L_0} =T 
\\
S'_0 \ar[r]          & \cdots \ar[r] & S'_{k_T} \\
S_0 \ar[r] \ar[u] & \cdots \ar[r] & S_{k_T} \ar[r] \ar[u]  & \cdots \ar[r] & S_K \\
}$$
where the foldable map $T^0_{k_T} \to T^0_{L_\Omega}$ may just be the identity map.

\smallskip

Consider also a vertex $R \in \FS'(F)$ and a geodesic path from $T$ to $R$ in~$\FS'(F)$. We shall assume here that $d(T,R) \ge 3$; the case that $d(T,R) \le 2$ will be considered in the epilogue. If the path from $T$ to $R$ starts with an expansion of $T$, prefix the path with an improper collapse. The result in a zig-zag path of the form 
$$T=T^0_{L_0} \rightarrow T^1_{L_0} \leftarrow T^2_{L_0} \rightarrow T^3_{L_0} \cdots T^D_{L_0} = R
$$
where $D = d(T,R)$ or $d(T,R)+1$ and~$D \ge 3$. The peaks along this zig-zag are the even terms strictly between $0$ and $D$, the first such peak being $T^2_{L_0}$. For each peak along this path, applying Lemma~\ref{LemmaRedBlueUnion} together with the assumption that $d(T,R) \ge 3$ it follows that the peak is the union of its two collapse graphs. The number of peaks along this zig-zag path equals $\lfloor \frac{D-1}{2} \rfloor$ which equals $\frac{D-2}{2}$ if $D$ is even and $\frac{D-1}{2}$ if $D$ is odd. 

By combing the foldable sequence $T^0_0 \mapsto\cdots\mapsto T^0_{L_0}$ across each collapse or expansion of the zig-zag path $T^0_{L_0} \rightarrow T^1_{L_0} \leftarrow \cdots T^D_{L_0} = R$,
alternately applying \emph{Combing by Collapse} \ref{PropCBC} and \emph{Combing by Expansion} \ref{PropCBE}, and by stacking the resulting combing rectangles atop the augmented projection diagram, we obtain The Big Diagram, Step 0, shown in Figure~\ref{FigureBigDiagram0}.
\begin{figure}
$$\xymatrix{
T^D_0 \ar@{.}[d]\ar[r]&\cdots \ar[r]&T^D_{k_T} \ar@{.}[d] \ar[r] &  \cdots \ar[r]& T^D_{L_\Omega} \ar[r]\ar@{.}[d] & \cdots \ar[r] & T^D_{L_1} \ar[r]\ar@{.}[d] &  \cdots \ar[r] & T^D_{L_0}\ar@{.}[d] \ar@{=}[r] & R \\
T^4_0 \ar[r]\ar[d]&\cdots \ar[r]&T^4_{k_T}  \ar[d] \ar[r] &  \cdots \ar[r]& T^4_{L_\Omega} \ar[r]\ar[d] & \cdots \ar[r] & T^4_{L_1} \ar[r]\ar[d] &  \cdots \ar[r] & T^4_{L_0}\ar[d] \\ 
T^3_0 \ar[r]&\cdots \ar[r]&T^3_{k_T} \ar[r] &  \cdots \ar[r]& T^3_{L_\Omega} \ar[r]         & \cdots \ar[r] & T^3_{L_1} \ar[r]         &  \cdots \ar[r] & T^3_{L_0} \\ 
T^2_0 \ar[r]\ar[d]^{[\rho_0]}\ar[u]_{[\beta_0]}&\cdots \ar[r]&T^2_{k_T} \ar[u]_{[\beta_{k_T}]} \ar[d]^{[\rho_{k_T}]} \ar[r] &  \cdots \ar[r]& T^2_{L_\Omega} \ar[r]\ar[d]^{[\rho_{L_\Omega}]}\ar[u]_{[\beta_{L_\Omega}]} & \cdots \ar[r] & T^2_{L_1} \ar[r]\ar[d]^{[\rho_{L_1}]}\ar[u]_{[\beta_{L_1}]} &  \cdots \ar[r] & T^2_{L_0}\ar[d]^{[\rho_{L_0}]}\ar[u]_{[\beta_{L_0}]} \\ 
T^1_0 \ar[r]&\cdots \ar[r]&T^1_{k_T} \ar[r] &  \cdots \ar[r]& T^1_{L_\Omega} \ar[r]         & \cdots \ar[r] & T^1_{L_1} \ar[r]         &  \cdots \ar[r] & T^1_{L_0} \\ 
T^0_0 \ar[r] \ar[d] \ar[u]& \cdots \ar[r] & T^0_{k_T} \ar[d]  \ar[u] \ar[r] &  \cdots \ar[r] & T^0_{L_\Omega} \ar[r]\ar[u] & \cdots \ar[r] & T^0_{L_1} \ar[r]\ar[u] &  \cdots \ar[r] & T^0_{L_0} \ar[u]  \ar@{=}[r] & T
\\
S'_0 \ar[r]          & \cdots \ar[r] & S'_{k_T} \\
S_0 \ar[r] \ar[u] & \cdots \ar[r] & S_{k_T} \ar[r] \ar[u]  & \cdots \ar[r] & S_K \\
}$$
\caption{The Big Diagram, Step 0. \hfill\break We emphasize the columns indexed by $L_\Omega, \ldots, L_1, L_0$. Each horizontal row is a foldable sequence, and the rectangle between any two rows is a combing rectangle. The bottom row is a fold sequence, and the $T^0$ row from $T^0_{k_T}$ to $T^0_{L_0}$ is a fold sequence. Each peak of the $T_{L_0}$ column is the union of its two collapse graphs. Rows in this and subsequent diagrams will be indicated by stripping off subscripts, for instance the ``$T^0$ row'' refers to the foldable sequence $T^0_0 \mapsto\cdots\mapsto T^0_{L_0}$; similarly, columns are indicated by stripping off superscripts. Since each peak of column $T_{L_0}$ between rows $T^0$ and $T^D$ is the union of its two collapse graphs, it follows that each peak of each column $T_j$ between rows $T^0$ and $T^D$ is the union of its two collapse graphs, because the two collapse graphs at a column~$j$ peak $T^{2i}_j$ are the pullbacks under the foldable map $T^{2i}_j \mapsto T^{2i}_{L_0}$ of the two collapse graphs at the corresponding column $L_0$ peak $T^{2i}_{L_0}$.}
\label{FigureBigDiagram0}
\end{figure}

Proposition~\ref{PropFSUContraction} will be proved by explicitly transforming the Big Diagram, Step 0 into a projection diagram from $R$ onto $S_0 \mapsto\cdots\mapsto S_K$ of an appropriate depth $l$ needed to verify the conclusions of the proposition. This transformation is primarily an induction that uses the pushdown tools of Section~\ref{SectionPushingDownPeaks}, followed by an epilogue which uses the pushdown tools one more time. As the proof progresses we will consider the truncated fold sequences $T^0_{k_T} \mapsto\cdots\mapsto T^0_{L_\omega}$ for increasing values of $\omega$, but such truncation will not affect measurements of free splitting units between $T^0_i$ and $T^0_j$ as long as $k_T \le i \le j \le L_\omega$ (see the remark following Definition~\ref{DefGeneralFSU}).

\subparagraph{Induction.} We explain in detail how to carry out the first step of the induction. Under our assumption that $d(T,R) \ge 3$, the $T_{L_0}$ column of the Big Diagram, Step 0 has a peak at $T^2_{L_0}$. Assuming furthermore that $\Upsilon \ge b_1$, equivalently $\Omega \ge 1$, then $L_1$ is defined and there are $\ge b_1 = 4 \rank(F)-3$ free splitting units between $T^0_{L_1}$ and $T^0_{L_0}$. We may therefore apply Proposition~\ref{PropPushdownInToto} to the portion of the diagram between the $T^0$ and $T^4$ rows as follows: trim away all portions of the Big Diagram, Step 0 that lie to the right of the $T_{L_1}$ column and below the $T^D$ row, and use the conclusion of Proposition~\ref{PropPushdownInToto} to replace the combing rectangles between the $T^0$ and $T^4$ rows, to get the Big Diagram, Step 0.1, shown in Figure~\ref{FigureBigDiagram0point1}.

\begin{figure}[h]
$$\xymatrix{
T^D_0 \ar@{.}[d]\ar[r]&\cdots \ar[r]&T^D_{k_T} \ar@{.}[d] \ar[r] &  \cdots \ar[r]& T^D_{L_\Omega} \ar[r]\ar@{.}[d] & \cdots \ar[r] & T^D_{L_1} \ar[r] \ar@{.}[d] &  \cdots \ar[r] & T^D_{L_0} \ar@{=}[r] & R \\
T^4_0 \ar[r]\ar[d]&\cdots \ar[r]&T^4_{k_T}  \ar[d] \ar[r] &  \cdots \ar[r]& T^4_{L_\Omega} \ar[r]\ar[d] & \cdots \ar[r] & T^4_{L_1} \ar[d] \\ 
T^{h3}_0  \ar@{=}[d]^{\xi_0}    \ar[r]  & \cdots\ar[r]& T^{h3}_{k_T}  \ar@{=}[d]^{\xi_{k_T}}   \ar[r]  &\cdots\ar[r] & T^{h3}_{L_\Omega}  \ar@{=}[d]^{\xi_{L_\Omega}} 
          \ar[r] &\cdots \ar[r]  & T^{h3}_{L_1}   \ar@{=}[d]^{\xi_{L_1}}                 
\\ 
T^{h1}_0             \ar[r] & \cdots\ar[r]& T^{h1}_{k_T}             \ar[r]   &\cdots \ar[r] & T^{h1}_{L_\Omega}            
          \ar[r]  &\cdots \ar[r]  & T^{h1}_{L_1}                  
\\ 
T^0_0 \ar[r] \ar[d] \ar[u]& \cdots \ar[r] & T^0_{k_T} \ar[d]  \ar[u] \ar[r] &  \cdots \ar[r] & T^0_{L_\Omega} \ar[r]\ar[u] & \cdots \ar[r] & T^0_{L_1} \ar[u] \\
S'_0 \ar[r]          & \cdots \ar[r] & S'_{k_T} \\
S_0 \ar[r] \ar[u] & \cdots \ar[r] & S_{k_T} \ar[r] \ar[u]  & \cdots \ar[r] & S_K \\
}$$
\caption{The Big Diagram, Step 0.1.}
\label{FigureBigDiagram0point1}
\end{figure}
The rectangles of the Big Diagram, Step 0.1 between the $T^0$ and $T^{h1}$ rows and between the $T^{h3}$ and $T^4$ rows are combing rectangles. Each $\xi_j \from T^{h1}_j \to T^{h3}_j$ is a conjugacy, possibly nonsimplicial. Now we must pause to patch things up in order to make these conjugacies simplicial.

We claim that, by an operation of equivariant subdivision of simplicial structures and re-assignment of barycentric coordinates on edgelets, carried out over all free splittings in Big Diagram, Step 0.1, but \emph{without} changing any of the functions, we may assume that the conjugacies $\xi_i$ are indeed simplicial maps. Here are the details of this operation. 

\break

Consider first the conjugacy $\xi_{L_1} \from T^{h1}_{L_1} \to T^{h3}_{L_1}$. We may subdivide $T^{h1}_{L_1}$ at the pre-image of the vertex set of $T^{h3}_{L_1}$, and simultaneously subdivide $T^{h3}_{L_1}$ at the image of the vertex set of $T^{h1}_{L_1}$, to obtain new equivariant vertex sets on which $\xi_{L_1}$ is a bijection; it is also a bijection of edgelets, although it may not yet respect barycentric coordinates. We may then reassign the barycentric coordinates on one edgelet of $T^{h1}_{L_1}$ in each $F$-orbit, and move these coordinates around by the $F$-action, to obtain a new simplicial structure on~$T^{h1}_{L_1}$. We may then push these coordinates forward under the map $\xi_{L_1}$ to obtain new barycentric coordinates on the edgelets of $T^{h3}_{L_1}$. Having carried out these operations, the map $\xi_{L_1}$ is now a simplicial conjugacy. 

Now we move left one step: by a similar subdivision/re-assignment on $T^{h1}_{L_1-1}$, pulling back vertices and barycentric coordinates under the foldable map $T^{h1}_{L_1-1} \mapsto T^{h1}_{L_1}$, we may assume that this map is simplicial. Similarly, by a subdivision/re-assignment on $T^{h3}_{L_1-1}$, we may assume that the foldable map $T^{h3}_{L_1-1} \mapsto T^{h3}_{L_1}$ is simplicial. We have now verified that in the commutative diagram
$$\xymatrix{
T^{h3}_{L_1-1} \ar[r] \ar@{=}[d]^{\xi_{L_1-1}} & T^{h3}_{L_1} \ar@{=}[d]^{\xi_{L_1}} \\
T^{h1}_{L_1-1} \ar[r]                                      & T^{h1}_{L_1}
}$$
the top, bottom, and right sides are simplicial maps; by commutativity, the left side is therefore automatically simplicial.

Now we continue to move left: by similar subdivisions/re-assignments carried out one at a time on the trees in rows $T^{h1}$ and $T^{h3}$, moving to the left one at a time from the last map in each row, we may assume that these rows are simplicial; having done this, by commutativity each of the maps $\xi_j \from T^{h1}_j \to T^{h3}_j$ is automatically a simplicial conjugacy. Now we move up: by similar subdivisions/re-assignments carried out one at a time on the trees in rows $T^4$, \ldots, $T^D$, starting with the collapse maps $T^4_j \mapsto T^{h3}_j$ and moving upward, we may assume that each vertical arrow above row $T^{h3}$ is simplicial; having done this, each of the horizontal arrows from row $T^{h3}$ upward and between columns $T_0$ and $T_{L_1}$ is automatically simplicial. Now, from $T^D_{L_1}$ we move to the right: by similar subdivisions/re-assignments we may assume that each of the maps $T^D_{L_1}\mapsto\cdots\mapsto T^D_{L_0}=R$ is simplicial. Finally, in a similar manner moving down from row $T^{h3}$ to row $S$, then moving right from $S_{k_T}$ to $S_K$, we have proved the claim. 

Knowing now that we have \emph{simplicial} conjugacies $\xi_j \from T^{h1}_j \to T^{h3}_j$, and using commutativity of the rectangle between rows $T^{h1}$ and $T^{h3}$, we may identify $T^{h1}_j$ and $T^{h3}_j$ via the map $\xi_j$, replacing these two rows by a single row as shown in \emph{The Big Diagram,~Step~1.}

In summary, when $d(T,R) \ge 3$ and $\Upsilon \ge b_1$, we have completed the first iteration of the induction argument: starting from the Big Diagram Step 0, by applying Proposition~\ref{PropPushdownInToto}, trimming away everything to the right of column $T_{L_1}$ and below row $T^D$, and replacing everything between rows $T^0$ and $T^4$, we get the Big Diagram Step 0.1, and then by subdividing and re-assigning barycentric coordinates we may assume that the conjugacies between rows $T^{h1}$ and $T^{h3}$ are simplicial. Identifying rows $T^{h1}$ and $T^{h3}$, we obtain the Big Diagram Step~1, shown in Figure~\ref{FigureBigDiagram1}. In the process we have decreased by~$2$ the lengths of all vertical zig-zag paths and the number of combing rectangles between the $T^0$ and $T^D$ rows. Observe that the conjugacy class of the free splitting $R$, and the equivalence class of the fold sequence $S_0 \mapsto\cdots\mapsto S_K$, have not been altered by these subdivision/re-assigmnent operations. 
\begin{figure}[h]
$$\xymatrix{
T^D_0 \ar@{.}[d]\ar[r]&\cdots \ar[r]&T^D_{k_T} \ar@{.}[d] \ar[r] &  \cdots \ar[r]& T^D_{L_\Omega} \ar[r]\ar@{.}[d] & \cdots \ar[r] & T^D_{L_1} \ar[r] \ar@{.}[d] &  \cdots \ar[r] & T^D_{L_0} \ar@{=}[r] & R \\
T^4_0 \ar[r]\ar[d]&\cdots \ar[r]&T^4_{k_T}  \ar[d] \ar[r] &  \cdots \ar[r]& T^4_{L_\Omega} \ar[r]\ar[d] & \cdots \ar[r] & T^4_{L_1} \ar[d] \\ 
T^{h}_0             \ar[r] & \cdots\ar[r]& T^{h}_{k_T}             \ar[r]   &\cdots \ar[r] & T^{h}_{L_\Omega}            
          \ar[r]  &\cdots \ar[r]  & T^{h}_{L_1}                  
\\ 
T^0_0 \ar[r] \ar[d] \ar[u]& \cdots \ar[r] & T^0_{k_T} \ar[d]  \ar[u] \ar[r] &  \cdots \ar[r] & T^0_{L_\Omega} \ar[r]\ar[u] & \cdots \ar[r] & T^0_{L_1} \ar[u] \\
S'_0 \ar[r]          & \cdots \ar[r] & S'_{k_T} \\
S_0 \ar[r] \ar[u] & \cdots \ar[r] & S_{k_T} \ar[r] \ar[u]  & \cdots \ar[r] & S_K \\
}$$
\caption{The Big Diagram, Step 1}
\label{FigureBigDiagram1}
\end{figure}
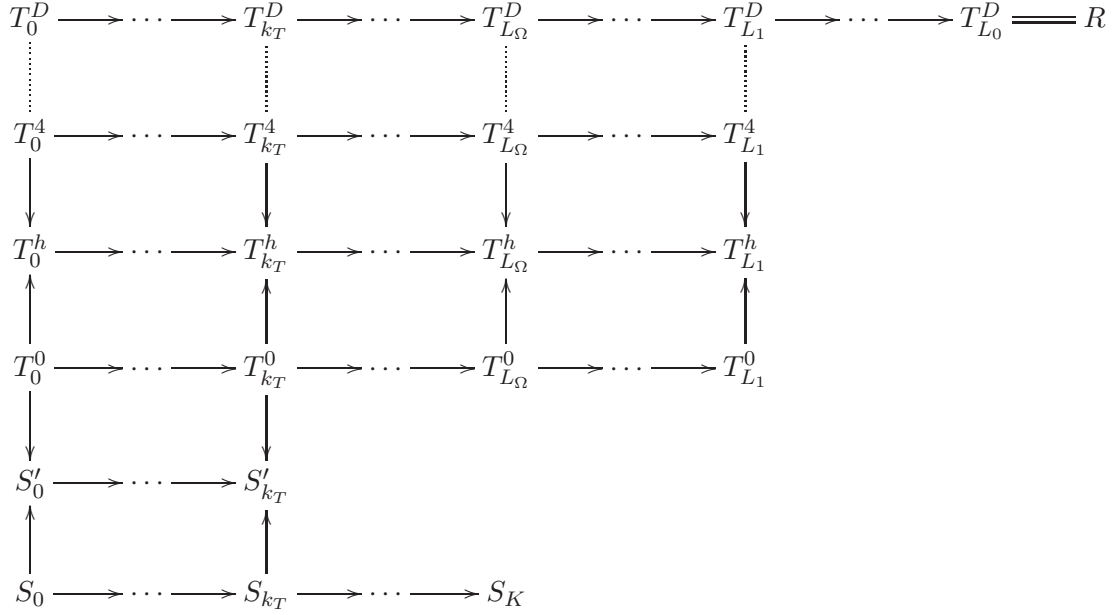

To complete the inductive step there is one last thing to do, namely to verify that along the zig-zag path in column $T_{L_1}$ on the right side of the Big Diagram, Step 1, each peak is the union of its two collapse graphs. This is evident for each peak from $T^6_{L_1}$ upward, since the collapse maps and collapse graphs are unchanged at those peaks from the Big Diagram, Step 0. For the peak at $T^4_{L_1}$, one of the collapse graphs is unchanged from the Big Diagram, Step 0, namely that of the map $T^4_{L_1} \mapsto T^5_{L_1}$. For the collapse graph of the map $T^4_{L_1} \mapsto T^h_{L_1}$, we use the part of the conclusion of Proposition~\ref{PropPushdownInToto} which tells us that the combing rectangle in the Big Diagram Step 1 between the $T^4$ and $T^h$ rows is obtained by an application of \emph{Composition of combing rectangles}, Lemma~\ref{LemmaCombingComp}, using the combing rectangle in the Big Diagram Step 0 between the $T^4$ and $T^3$ rows and between the $T_0$ and $T_{L_1}$ columns. What Lemma~\ref{LemmaCombingComp} allows us to conclude is that the collapse graph of the Step 0 map $T^4_{L_1} \mapsto T^3_{L_1}$ is contained in the collapse graph of Step 1 map $T^4_{L1} \mapsto T^h_{L_1}$. The union of the two collapse graphs of $T^4_{L_1}$ in the Big Diagram, Step 1 is therefore still equal to $T^4_{L_1}$.

\smallskip

\textbf{Remark.} The reader may wonder why the initial normalization step was necessary in the proof of Proposition~\ref{PropPushdownInToto}: we could have started with a \emph{normalized} zig-zag geodesic on the right side of the Big Diagram, Step 0. This would imply that the $T^4$ column in that diagram is normalized at $T^4_{L_1}$. Nonetheless it is possible that the $T^4$ column in the Big Diagram, Step 1 is not normalized at $T^4_{L_1}$, because the collapse graph for $T^4_{L_1} \mapsto T^h_{L_1}$ may be strictly larger than the collapse graph for $T^4_{L_1} \mapsto T^3_{L_1}$. If so then the normalization step of Proposition~\ref{PropPushdownInToto} is inescapable in the next step of the induction.

\smallskip

We now describe the induction step in general. From the hypothesis we have $d(T,R) \le \max\{2\Omega,1\}$. If $d(T,R) \le 2$ then we refer to the Epilogue below. Otherwise, under the assumption $d(T,R) \ge 3$, we have $D \le d(T,R) + 1 \le 2 \Omega + 1$, and so we may repeat the above argument inductively a total of $\lfloor \frac{D-1}{2} \rfloor$ times, removing the corrugation peaks one at a time. For each $\omega = 2,\ldots,\lfloor \frac{D-1}{2} \rfloor$, at the $\omega$ step of the induction we do the following. At the start of the $\omega$ step we have the Big Diagram, Step~$\omega-1$, analogous to the Big Diagram Step~$1$ but with $L_{\omega-1}$ in place of $L_1$ and $L^{2\omega}$ in place of~$L^4$, and with a stack of $D-2\omega+2$ combing rectangles between the $T^0$ and $T^D$ rows. We trim away the portion of the diagram to the right of column $T_{L_\omega}$, on or above row $T^0$, and below row $T^D$. We replace the four combing rectangles between rows $T^0$ and $T^{2\omega+2}$ by two combing rectangles and a commutative diagram of conjugacies. We carry out a subdivision/re-assignment operation which allows us to assume that the conjugacies are simplicial. We then collapse the commutative diagram of conjugacies, identifying its two rows into a single row. We have now produced the Big Diagram, Step~$\omega$, with a stack of $D-2\omega$ combing rectangles between the $T^0$ and $T^D$ rows: we have decreased by~$2$ the lengths of all vertical zig-zag paths between the $T^0$ and $T^D$ rows and decreased by~$1$ the number of corrugation peaks. Finally we verify that each peak along column $T_{L_\omega}$ is still the union of its two collapse graphs. 

At each stage of the induction, we have not altered the conjugacy class of $R$ nor the equivalence class of $S_0\mapsto\cdots\mapsto S_K$.

\subparagraph{Epilogue.} If $d(T,R) \ge 3$, when the induction process stops we have backed up to column~$T_{L_\omega}$ where $\omega = \lfloor \frac{D-1}{2} \rfloor$, and there are no remaining corrugation peaks above row~$T^0$. We obtain the Big Diagram, Step $\lfloor \frac{D-1}{2} \rfloor$, a not-so-big diagram that comes in two cases. The Case~1 diagram occurs when $D$ is even, and it has two combing rectangles between row $T^0$ and row $T^D$; see Figure~\ref{FigureNotSoBigCase1}. The Case~2 diagram occurs when $D$ is odd and has only one such combing rectangle; see Figure~\ref{FigureNotSoBigCase2}. In each of these diagrams, the conjugacy class of $R$ and the equivalence class of the fold sequence $S_1 \mapsto\cdots\mapsto S_K$ have not been changed from the initial setup in the Prologue.

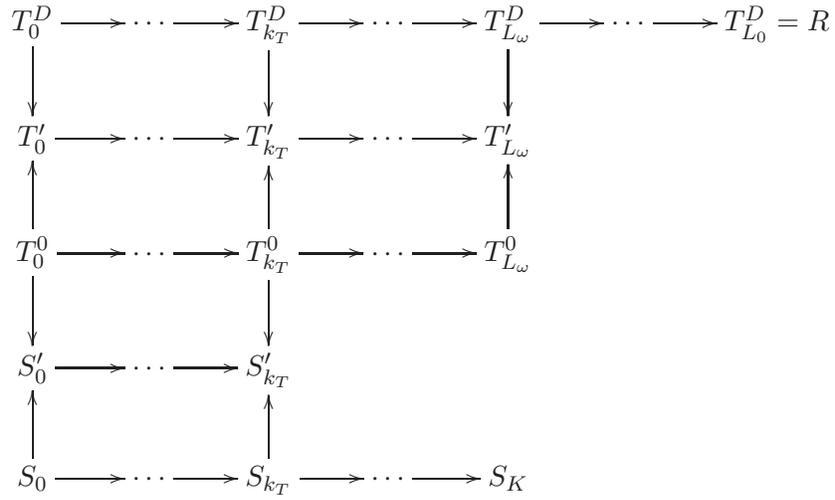
\begin{figure}
$$\xymatrix{
T^D_0 \ar[r] \ar[d]      & \cdots \ar[r] & T^D_{k_T} \ar[r] \ar[d]      & \cdots \ar[r] & T^D_{L_\omega} \ar[r] \ar[d] & \cdots \ar[r] & T^D_{L_0} = R \\
T'_0 \ar[r]                   & \cdots \ar[r] & T'_{k_T} \ar[r]                   & \cdots \ar[r] & T'_{L_\omega} \\
T^0_0 \ar[r] \ar[d] \ar[u] & \cdots \ar[r] & T^0_{k_T} \ar[r] \ar[d] \ar[u] & \cdots \ar[r] & T^0_{L_\omega}\ar[u] \\
S'_0 \ar[r]          & \cdots \ar[r] & S'_{k_T} \\
S_0 \ar[r] \ar[u] & \cdots \ar[r] & S_{k_T} \ar[r] \ar[u]  & \cdots \ar[r] & S_K \\
}$$
\caption{Case 1: a collapse--expand from $T^0$ to $T^D$}
\label{FigureNotSoBigCase1}
\end{figure}

\begin{figure}
$$\xymatrix{
T^D_0 \ar[r]      & \cdots \ar[r] & T^D_{k_T} \ar[r]     & \cdots \ar[r] & T^D_{L_\omega} \ar[r] & \cdots \ar[r] & T^D_{L_0} = R \\
T^0_0 \ar[r] \ar[d] \ar[u] & \cdots \ar[r] & T^0_{k_T} \ar[r] \ar[d] \ar[u] & \cdots \ar[r] & T^0_{L_\omega}\ar[u] \\
S'_0 \ar[r]          & \cdots \ar[r] & S'_{k_T} \\
S_0 \ar[r] \ar[u] & \cdots \ar[r] & S_{k_T} \ar[r] \ar[u]  & \cdots \ar[r] & S_K \\
}$$
\caption{Case 2: a collapse from $T^0$ to $T^D$.}
\label{FigureNotSoBigCase2}
\end{figure}
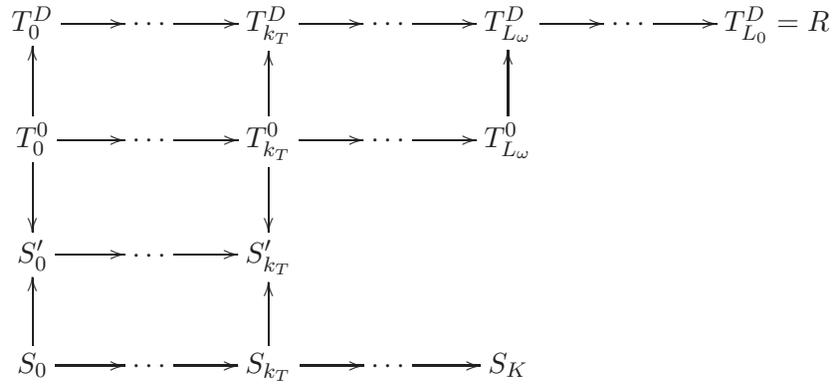

If $d(T,R) \le 2$ then, starting from the augmented projection diagram depicted in the prologue, and depending on the nature of the geodesic from $T$ to $R$, we proceed as follows. If $d(T,R)=1$ and there is a collapse $T \collapses R$, we comb the $T^0$ row along this collapse to obtain the Case~2 diagram with $\omega=0$ and $T^D_{L_\omega}=T^D_{L_0}=R$. If $d(T,R)=1$ and there is an expansion $T \expandsto R$ then we append an improper collapse $T \collapsesto T$ to get a length~2 collapse--expand zig-zag $T \collapsesto T \expandsto R$, and we comb the $T^0$ row along this collapse--expand to obtain the Case~1 diagram with similar notation changes. If $d(T,R)=2$ and there is a collapse--expand from $T$ to $R$ then, combing the $T^0$ row along this collapse--expand, we produce the Case~1 diagram with similar notation changes. Finally, if $d(T,R)=2$ and there is an expand--collapse from $T$ to $R$, then combing the $T^0$ row along this expand--collapse, we obtain the Case~3 diagram in Figure~\ref{FigureNotSoBigCase3}.

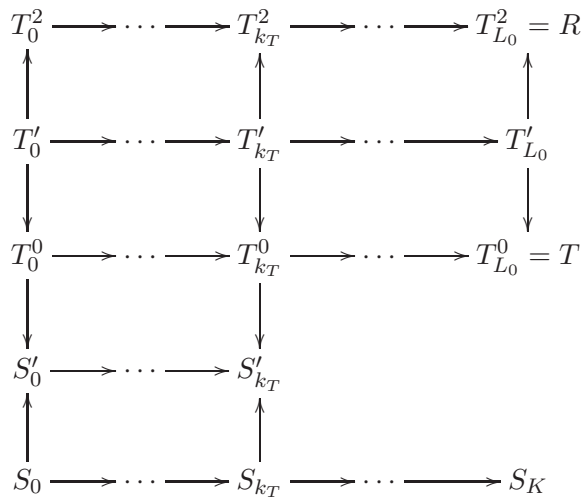
\begin{figure}
$$\xymatrix{
T^2_0 \ar[r]               & \cdots \ar[r] & T^2_{k_T} \ar[r]               & \cdots \ar[r] & T^2_{L_0} = R \\
T'_0 \ar[r] \ar[u]\ar[d] & \cdots \ar[r] & T'_{k_T} \ar[r] \ar[u]\ar[d] & \cdots \ar[r] & T'_{L_0} \ar[u]\ar[d]  \\
T^0_0 \ar[r]\ar[d]       & \cdots \ar[r] & T^0_{k_T} \ar[r] \ar[d]      & \cdots \ar[r] & T^0_{L_0} = T\\
S'_0 \ar[r]          & \cdots \ar[r] & S'_{k_T} \\
S_0 \ar[r] \ar[u] & \cdots \ar[r] & S_{k_T} \ar[r] \ar[u]  & \cdots \ar[r] & S_K \\
}$$
\caption{Case 3: an expand--collapse from $T^0$ to $T^2$.}
\label{FigureNotSoBigCase3}
\end{figure}

We now finish off Case~1; afterwards we shall reduce Cases~2 and~3 to Case~1. In the Case 1 diagram, trim off everything to the right of column~$T_{k_T}$, on or above row~$T^0$, and below row~$T^D$, to obtain the diagram shown in Figure~\ref{FigureCaseOneTrimmed}, which has a corrugation peak along the $T^0$ row. We must consider two subcases, depending on whether the peak $T^0_{k_T}$ of the W zig-zag in column $k_T$ is the union of its two collapse graphs $b_{k_T}$, $\rho_{k_T}$.
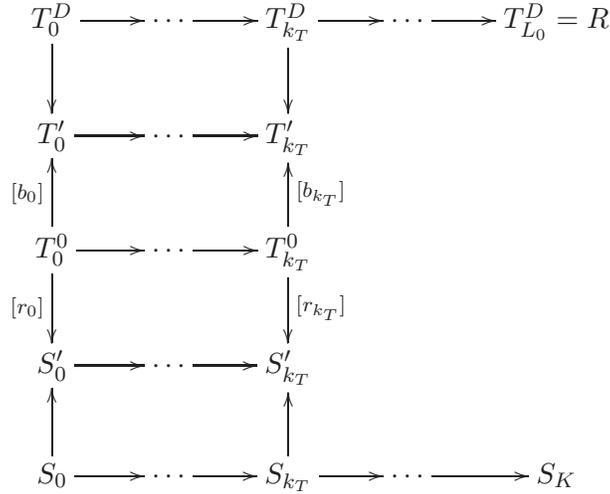
\begin{figure}[h]
$$\xymatrix{
T^D_0 \ar[r] \ar[d]      & \cdots \ar[r] & T^D_{k_T} \ar[r] \ar[d]      & \cdots \ar[r] & T^D_{L_0} = R \\
T'_0 \ar[r]                   & \cdots \ar[r] & T'_{k_T}  \\
T^0_0 \ar[r] \ar[d]_{[r_0]} \ar[u]^{[b_0]} & \cdots \ar[r] & T^0_{k_T} \ar[d]^{[r_{k_T}]} \ar[u]_{[b_{k_T}]}  \\
S'_0 \ar[r]          & \cdots \ar[r] & S'_{k_T} \\
S_0 \ar[r] \ar[u] & \cdots \ar[r] & S_{k_T} \ar[r] \ar[u]  & \cdots \ar[r] & S_K \\
}$$
\caption{The Case~1 diagram, trimmed down.}
\label{FigureCaseOneTrimmed}
\end{figure}

Suppose first that $T^0_{k_T} \ne b_{k_T} \union r_{k_T}$ in Figure~\ref{FigureCaseOneTrimmed}. For each $j=0,\ldots,k_T$, in the tree $T^0_j$ which is the peak of the W zig-zag in column $j$, the union of its two collapse graphs $b_j \union r_j$ is a proper subgraph, that subgraph being the inverse image of $b_{k_T} \union r_{k_T}$ under the foldable map $T^0_j \mapsto T^0_{k_T}$. We may therefore carry out the simplistic pushdown depicted in Figure~\ref{FigureSimplistic}, in parallel as $j$ varies from $0$ to $k_T$, resulting in a diagram of the form depicted in Figure~\ref{FigureCaseOnePushedDown}.
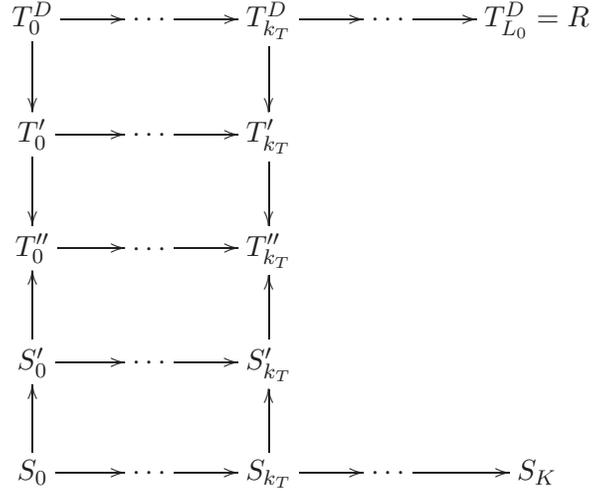
\begin{figure}
$$\xymatrix{
T^D_0 \ar[r] \ar[d]      & \cdots \ar[r] & T^D_{k_T} \ar[r] \ar[d]      & \cdots \ar[r] & T^D_{L_0} = R \\
T'_0 \ar[r] \ar[d]                & \cdots \ar[r] & T'_{k_T} \ar[d] \\
T''_0 \ar[r] & \cdots \ar[r] & T''_{k_T}  \\
S'_0 \ar[r] \ar[u]         & \cdots \ar[r] & S'_{k_T} \ar[u] \\
S_0 \ar[r] \ar[u] & \cdots \ar[r] & S_{k_T} \ar[r] \ar[u]  & \cdots \ar[r] & S_K \\
}$$
\caption{The result of a parallel simplistic pushdown on Figure \ref{FigureCaseOneTrimmed}, in the case when $T^0_{k_T} \ne \beta_{k_T} \union \rho_{k_T}$. Concatenating the upper two combing rectangles into a single one, and the same for the lower two, we obtain a projection diagram.}
\label{FigureCaseOnePushedDown}
\end{figure}
In Figure~\ref{FigureCaseOnePushedDown}, the $T''$ row is obtained by applying Proposition~\ref{PropCBC} \emph{Combing by collapse} using the collapse graphs $b_j \union r_j \subset T^0_j$, and the middle two combing rectangles are each obtained by an application of Lemma~\ref{LemmaCombingDecomp} \emph{Decomposition of combing rectangles}. By applications of Lemma~\ref{LemmaCombingComp} \emph{Composition of combing rectangles}, we may compose the lower two and the upper two combing rectangles of Figure~\ref{FigureCaseOnePushedDown} to produce a depth $k_T$ projection diagram from $R$ to $S_0 \mapsto\cdots\mapsto S_K$, and the proof of Proposition~\ref{PropFSUContraction} is complete in this case. 

Suppose next that $T^0_{k_T} = b_{k_T} \union r_{k_T}$ in Figure~\ref{FigureCaseOneTrimmed}. From the hypothesis of Proposition~\ref{PropFSUContraction}, there are $\ge b_1 = 4 \rank(F)-3$ free splitting units along the bottom row of the diagram between $S_0$ and~$S_{k_T}$. Let $\ell \in \{0,\ldots,k_T\}$ be the largest integer such that there are $\ge b_1$ free splitting units between $S_l$ and $S_{k_T}$, from which it follows that there are exactly $b_1$ free splitting units between $S_I$ and $S_{k_T}$. We may now carry out one last iteration of the Induction. Applying Proposition~\ref{PropPushdownInToto}, remove all portions of the diagram in Figure~\ref{FigureCaseOneTrimmed} to the right of column $l$, above the $S$ row, and below the $T^D$ row, and replace the four combing rectangles by two combing rectangles and a commutative diagram of conjugacies. After an operation of subdivision and re-assignment of barycentric coordinates, we may assume that the conjugacies are all simplicial. After collapsing the commutative diagram of conjugacies, identifying its two rows to a single row, we obtain the diagram depicted in Figure~\ref{FigureOneLastTime}, in which the conjugacy class of the free splitting $R$ and the equivalence class of the fold sequence $S_0 \mapsto\cdots\mapsto S_K$ remain unchanged. This is the desired projection diagram from the free splitting $R$ to the fold sequence $S_0 \mapsto\cdots\mapsto S_K$ which completes the proof of Proposition~\ref{PropFSUContraction} in case~1.
\begin{figure}
$$\xymatrix{
T^D_0 \ar[r] \ar[d]      & \cdots \ar[r] & T^D_{l} \ar[r] \ar[d]      & \cdots \ar[r] & T^D_{L_0} = R \\
S^h_0 \ar[r]                   & \cdots \ar[r] & S^h_{l}  \\
S_0 \ar[r] \ar[u] & \cdots \ar[r] & S_{l} \ar[r] \ar[u]  & \cdots \ar[r] & S_K \\
}$$
\caption{The projection diagram resulting from one last iteration of the Induction carried out on Figure \ref{FigureCaseOneTrimmed}, in the case when $T^0_{k_T} = \beta_{k_T} \union \rho_{k_T}$.}
\label{FigureOneLastTime}
\end{figure}
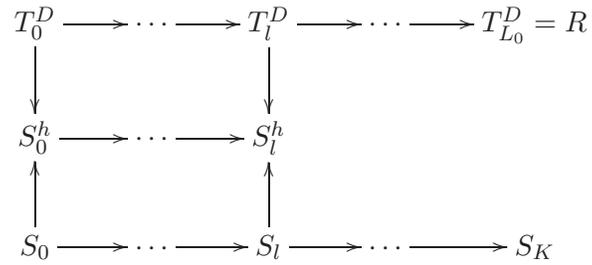

\subparagraph{Remark.} As was remarked earlier regarding the Big Diagram, Step 1 depicted in Figure~\ref{FigureBigDiagram1}, in the context of case~1 depicted in Figure~\ref{FigureOneLastTime}, the initial normalization step in the proof of Proposition~\ref{PropPushdownInToto} cannot be avoided, because there is no guarantee that the $S_{k_T}$ column is normalized at $T^0_{k_T}$.

\bigskip

We reduce case~2 to case~1 by producing a case~1 diagram: just attach an improper combing rectangle to the top of the case 2 diagram, by defining the foldable sequence $T'_0 \mapsto\cdots\mapsto T'_{L_\Omega}$ to equal the foldable sequence $T^D_0 \mapsto\cdots\mapsto T^D_{L_\Omega}$, and defining for each $j=0,\ldots,L_\Omega$ an improper collapse map $T^D_j \to T'_j$ which is just the identity map.

We also reduce case~3 to case~1. First trim away everything in the Case~3 diagram to the right of the $k_T$ column, on or above the $T^0$ row, and below the $T^2$ row. Next, apply Lemma~\ref{LemmaCombingComp}, \emph{Composition of combing rectangles}, to the two combing rectangles between the $S'$ row and the $T'$ row, concatenating them into a single combing rectangle. Finally, attach an improper combing rectangle to the top of the diagram as in case~2. The result is a case~1 diagram, completing the reduction.



\def\cprime{$'$} \def\cprime{$'$}
\providecommand{\bysame}{\leavevmode\hbox to3em{\hrulefill}\thinspace}
\providecommand{\MR}{\relax\ifhmode\unskip\space\fi MR }
\providecommand{\MRhref}[2]{%
  \href{http://www.ams.org/mathscinet-getitem?mr=#1}{#2}
}
\providecommand{\href}[2]{#2}


\begin{theindex}

  \item back greedy subsequence, 51
  \item baseball diagram, 61

  \indexspace

  \item collapse, 6, 8, \see{\emph{Glossary}}{85}
    \subitem map, \see{\emph{Glossary} under \emph{Map}}{85}
    \subitem proper and improper, 9
  \item combing rectangle, 27
  \item conjugacy, 1, 5, \see{\emph{Glossary}}{85}

  \indexspace

  \item derivative, 14
  \item direction, 13

  \indexspace

  \item edgelet, 6, \see{\emph{Glossary}}{85}
    \subitem of a foldable map, 15
  \item expansion, 8, \see{\emph{Glossary}}{85}

  \indexspace

  \item fold, 18
    \subitem full, 18
      \subsubitem improper, 18
      \subsubitem proper, 18
    \subitem type IA, 18
    \subitem type IIIA, 18
  \item fold factorization, 21
  \item fold map, 18, \see{\emph{Glossary} under \emph{Map}}{85}
  \item fold path, 21, 
		\see{\emph{Glossary} under \emph{Foldable sequence}}{85}
  \item fold sequence, 20, 
		\see{\emph{Glossary} under \emph{Foldable sequence}}{85}
    \subitem equivalence, 21
  \item foldable map, 14, \see{\emph{Glossary} under \emph{Map}}{85}
  \item foldable sequence, 27, \see{\emph{Glossary}}{85}
  \item free splitting, 1, 4
  \item free splitting unit, 50
  \item front greedy subsequence, 50

  \indexspace

  \item gate, 14

  \indexspace

  \item map, 5, \see{\emph{Glossary}}{85}
    \subitem collapse, 1, 6
    \subitem fold, 18
    \subitem foldable, 14

  \indexspace

  \item natural cell structure, 5, \see{\emph{Glossary}}{85}
  \item natural core, 8
  \item normalization diagram, 60

  \indexspace

  \item projection diagram, 28
    \subitem augmented, 54

  \indexspace

  \item W diagram, 59

  \indexspace

  \item zig-zag, 10, \see{\emph{Glossary}}{85}

\end{theindex}

\section*{Glossary}

\begin{description}
\item[Collapse and expansion.]\index{collapse|see{\emph{Glossary}}}\index{expansion|see{\emph{Glossary}}} Inverse relations amongst free splittings, denoted $S \collapses T$ and $T \expandsto S$ respectively, defined so that $T$ is obtained from $S$ by collapsing to a point each component of some proper, equivariant, natural subgraph of $S$.
\item[Conjugacy.]\index{conjugacy|see{\emph{Glossary}}} An equivariant homeomorphism between free splittings, which need not be a map.
\item[Edgelet.]\index{edgelet|see{\emph{Glossary}}} A 1-cell of some given simplicial structure on a tree. The term is also used in a relative sense --- given a foldable map $f \from S \to T$ and an edgelet $e$ of $T$, an \emph{$e$-edgelet of $f$} is any edgelet of $S$ mapped by $f$ to $e$.
\item[Foldable sequence.]\index{foldable sequence|see{\emph{Glossary}}} A sequence of maps of free splittings in which any composition of any subinterval of that sequence is a foldable map. 
\begin{itemize}
\item A \emph{fold sequence}\index{fold sequence|see{\emph{Glossary} under \emph{Foldable sequence}}} is a special kind of foldable sequence in which each map is a fold. 
\item A \emph{fold path}\index{fold path|see{\emph{Glossary} under \emph{Foldable sequence}}} is the sequence of vertices in $\FS'(F)$ obtained from the conjugacy classes of the free splittings along a fold sequence.
\end{itemize}
\item[Map.]\index{map|see{\emph{Glossary}}} An equivariant simplicial function between free splittings. Important types of maps include:
\begin{itemize}
\item A \emph{collapse map}\index{collapse!map|see{\emph{Glossary} under \emph{Map}}}
 collapses to a point each edge in an equivariant subgraph. 
\item A \emph{foldable map}\index{foldable map|see{\emph{Glossary} under \emph{Map}}}
 is injective on each natural edge, and has $\ge 3$ gates at each natural vertex.  
\item A \emph{fold map}\index{fold map|see{\emph{Glossary} under \emph{Map}}} is a foldable map defined by identifying initial segments of some pair of natural edges with the same initial vertex. 
\end{itemize}
\item[Natural cell structure.]\index{natural cell structure|see{\emph{Glossary}}} Every tree with no isolated ends and no valence~$1$ vertices --- in particular every free splitting of a free group of rank~$\ge 2$ --- has a natural cell structure, whose vertices are the points that (locally) separate the tree into some number of components~$\ge 3$. A natural subgraph is a subcomplex of the natural cell structure. Any other cell structure on the graph is a refinement of the natural cell structure.
\item[Zig-zag path.]\index{zig-zag|see{\emph{Glossary}}} An edge path in $\FS'(F)$ which alternates between expansions and collapses. Examples include all geodesic edge paths in $\FS'(F)$.
\end{description}

\end{document}